\numberwithin{equation}{section}
\newtheorem{theorem}{Theorem}[section]
\newtheorem{lemma}[theorem]{Lemma}
\newtheorem{corollary}[theorem]{Corollary}
\theoremstyle{remark}
\newtheorem*{remark}{Remark}
\newtheorem*{example}{Example}
\theoremstyle{definition}
\DeclareMathOperator{\aut}{Aut}
\DeclareMathOperator{\hyp}{hyp}
\DeclareMathOperator{\mob}{mob}
\DeclareMathOperator{\lin}{lin}
\DeclareMathOperator{\re}{Re}
\DeclareMathOperator{\im}{Im}
\DeclareMathOperator{\rad}{rad}
\DeclareMathOperator{\rough}{rough}
\DeclareMathOperator{\smooth}{smooth}
\DeclareMathOperator{\thick}{thick}
\DeclareMathOperator{\thin}{thin}
\DeclareMathOperator{\good}{\varepsilon-good}
\DeclareMathOperator{\Mgood}{\varepsilon-M.good}
\DeclareMathOperator{\Mnice}{\varepsilon-M.nice}
\DeclareMathOperator{\Lgood}{\varepsilon-L.good}
\DeclareMathOperator{\Lnice}{\varepsilon-L.nice}
\DeclareMathOperator{\leaf}{leaf}
\DeclareMathOperator{\gen}{gen}
\DeclareMathOperator{\nice}{nice}
\DeclareMathOperator{\sph}{\hat{\mathbb{C}}}
\DeclareMathOperator{\signed}{s}
\DeclareMathOperator{\Euc}{Euc}
\def\({\big(} \def\){\big)}
\def\N{\mathbb N}
\DeclareMathOperator{\err}{Err}
\title{\bf \Large{Inner Functions and Laminations}}
\author{Oleg Ivrii and Mariusz Urba\'nski}
\date{}                                           
\begin{document}

\maketitle

\abstract{ In this paper, we study orbit counting problems for inner functions using geodesic and horocyclic flows on Riemann surface laminations.  For a one component inner function of finite Lyapunov exponent with $F(0) = 0$, other than $z \to z^d$, we show that the number of pre-images of a point $z \in \mathbb{D} \setminus \{ 0\}$ that lie in a ball of hyperbolic radius $R$ centered at the origin satisfies
$$
\mathcal{N}(z, R) \, \sim \, \frac{1}{2} \log \frac{1}{|z|} \cdot \frac{1}{\int_{\partial \mathbb{D}} \log |F'| dm}, \quad \text{as }R \to \infty.
$$
For a general inner function of finite Lyapunov exponent, we show that the above formula holds up to a Ces\`aro average. Our main insight is that iteration along almost every inverse orbit is asymptotically linear. We also prove analogues of these results for parabolic inner functions of infinite height.
}

\tableofcontents

\section{Introduction}

A {\em finite Blaschke product} $F(z)$ is a holomorphic self-map of the unit disk which extends to a continuous dynamical system on the unit circle. Loosely speaking, an {\em inner function} is a holomorphic self-map of the unit disk which extends to a measure-theoretic dynamical system of the unit circle. More precisely, we require that
for a.e.~$\theta \in [0, 2\pi)$, the radial boundary value $F(e^{i\theta}) := \lim_{r \to 1} F(re^{i\theta})$ exists and has absolute value 1.

 If the Denjoy-Wolff point of $F$ is in the unit disk, then without loss of generality we may assume that $F(0) = 0$, so that 0 is an attracting fixed point of $F$ and the normalized Lebesgue measure $m = |d\theta|/2\pi$ is invariant under $F$. (In this case,  we say that $F$ is {\em centered}\/.)

Let $z \in \mathbb{D} \setminus \{ 0 \}$ be a point on the unit disk, other than the origin. For $R > 0$, we may count the number of repeated pre-images $w$ which lie in the ball of hyperbolic radius $R$ centered at the origin:
$$
\mathcal N(z, R) = \# \bigl \{ w \in B_{\hyp} (0,R) : F^{\circ n}(w) = z \text{ for some }n \ge 0 \bigr \}.
$$

Our first main theorem states:
\begin{theorem}
\label{main-thm}
Let $F$ be an inner function of finite Lyapunov exponent
$$
\chi_m = \int_{\partial \mathbb{D}} \log |F'(re^{i\theta})| dm < \infty,
$$
with $F(0) = 0$ which is not a rotation. If $z \in \mathbb{D} \setminus \{ 0 \}$ lies outside a set of Lebesgue zero measure, then
\begin{equation}
\label{eq:main-thm}
\lim_{R\to+\infty}
\frac{1}{R} \int_0^R \frac{\mathcal N(z, S)}{e^S} dS 
=\frac{1}{2} \log \frac{1}{|z|} \cdot \frac{1}{\int_{\partial \mathbb{D}} \log |F'| dm}.
\end{equation}
\end{theorem}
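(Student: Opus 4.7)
The plan is to combine two ingredients. The first is a \emph{conservation law} at each generation: for every $z$ outside an exceptional set of logarithmic capacity zero, Frostman's theorem guarantees that each shift $(F^{\circ n}(w)-z)/(1-\bar z F^{\circ n}(w))$ is a Blaschke product, so Jensen's formula gives
\[
\sum_{w \,\in\, F^{-n}(z)} \log \frac{1}{|w|} \;=\; \log \frac{1}{|z|}.
\]
Since $F$ is not a rotation and $0$ is attracting, preimages of $z$ at generation $n$ accumulate on $\partial \mathbb{D}$ as $n \to \infty$; combined with the boundary asymptotic $\log(1/|w|) = 2 e^{-d_{\hyp}(0,w)}(1+o(1))$ for $|w| \to 1$, this yields
\[
T_n(z) \;:=\; \sum_{w \,\in\, F^{-n}(z)} e^{-d_{\hyp}(0,w)} \;\longrightarrow\; \tfrac{1}{2} \log \tfrac{1}{|z|}, \qquad n\to\infty.
\]

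The second ingredient is the \emph{asymptotic linearity} of iteration along inverse orbits, which is the advertised main insight of the paper: for the natural invariant measure on the Riemann surface lamination of $F$, almost every inverse orbit $(z = w_0, w_1, w_2, \dots)$ with $F(w_{k+1}) = w_k$ satisfies $d_{\hyp}(0, w_n)/n \to \chi_m$. I take this result from the earlier sections.

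To assemble, I would rewrite the Ces\`aro-averaged integral by Fubini as
\[
\int_0^R \frac{\mathcal{N}(z, S)}{e^S} \, dS \;=\; \sum_{n\ge 0} T_n^{\le R}(z) \;-\; e^{-R}\, \mathcal{N}(z, R),
\]
where $T_n^{\le R}(z)$ restricts the sum in $T_n(z)$ to $w$ with $d_{\hyp}(0, w) \le R$. Translating the asymptotic linearity to the weighted mass $e^{-d_{\hyp}(0,w)}$ (which is, up to the bounded factor $\log(1/|z|)$, the natural probability assigned to each preimage by the conservation law), one concludes that $T_n^{\le R}(z) = T_n(z)(1+o(1))$ when $n\chi_m < R - \omega(R)$ and $T_n^{\le R}(z) = o(1)$ when $n\chi_m > R + \omega(R)$ for some sublinear $\omega(R)$. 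Summing over $n$ gives $\sum_n T_n^{\le R}(z) = \frac{R}{2\chi_m}\log(1/|z|) + o(R)$, while the correction $e^{-R}\mathcal{N}(z, R)$ is $O(1)$ on average and vanishes after dividing by $R$; this produces the claimed limit.

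\textbf{Main obstacle.} The main difficulty is upgrading pointwise a.e.\ asymptotic linearity into uniform control of the full generational sum, because inverse orbits may cluster in hyperbolic radius: in the $z\mapsto z^d$ example every $n$-th preimage lies at the same hyperbolic distance, so $\mathcal{N}(z, R)/e^R$ oscillates without a pointwise limit. The Ces\`aro average in the statement is precisely what absorbs this oscillation: morally it corresponds to invoking a mean ergodic statement for the geodesic flow on the lamination rather than a Birkhoff-type pointwise one, and this is exactly what permits the conclusion to extend from the one-component case (no averaging needed) to a general inner function.
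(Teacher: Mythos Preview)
Your reduction (the Fubini computation and the observation that $T_n(z)\to\tfrac12\log(1/|z|)$) is correct and matches the paper's Step~0. However, the core of your argument rests on a misreading of what ``asymptotic linearity'' means here. In the paper this phrase refers to Theorem~\ref{nearly-linear}: for $\xi$-a.e.\ inverse orbit the cumulative linear distortion $\widehat{\delta}({\bf z})<\infty$, meaning that backward iteration is eventually well approximated by an affine map $z\mapsto Az+B$ near the unit circle. It is a \emph{distortion} statement, not a Lyapunov-exponent statement, and it does \emph{not} assert that $d_{\mathbb D}(0,w_n)/n\to\chi_m$ along $\overline c_z$-a.e.\ orbit. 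The latter is what you need to conclude that $T_n^{\le R}(z)\approx T_n(z)$ for $n\chi_m<R-\omega(R)$ and $T_n^{\le R}(z)\approx 0$ for $n\chi_m>R+\omega(R)$; nothing in the earlier sections supplies it, and you give no independent argument. (The claim is true, but deriving it essentially requires the whole machinery of Sections~\ref{sec:area}--\ref{sec:geodesic-folation}: one must relate the transverse measure $\overline c_z$ on $T(z)$ to the measure $\widehat m$ on the solenoid via the landing map $\zeta$ and then invoke Birkhoff for $\log|F'|$ on $(\widehat{S^1},\widehat m)$.)

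The paper proceeds quite differently. It does not organize preimages by generation at all. Instead it builds the geodesic flow $g_t$ on $\widehat X_{\lin}$, proves it is ergodic via the Geodesic Foliation Theorem (Theorem~\ref{geodesic-foliation-theorem}), and then applies the ergodic theorem to a weakly almost invariant function $h_{z,\delta}$ supported on small boxes around linear-good preimages of $z$ (Theorem~\ref{ergodic-theorem2} and Section~\ref{sec:orbit-counting}). The Ces\`aro average in the statement arises from time-averaging along the geodesic flow, not from averaging over generations. Your intuition that the averaging absorbs clustering is morally right, but the mechanism is the continuous-time ergodic theorem on the lamination rather than a large-deviations-style control of $\{n:d_{\mathbb D}(0,w_n)\le R\}$ orbit by orbit.
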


According to the original definition of W.~Cohn in \cite{cohn}, an inner function $F(z)$ is a {\em one component inner function} if the set $\{ z \in \mathbb{D} : |F(z)| < \rho \}$ is connected for some $0 < \rho < 1$. For applications to dynamical systems, it is more useful to say that an inner function 
is a one component inner function if the set of singular values is compactly contained in the unit disk. This implies that backward iteration along every inverse orbit is asymptotically linear. 

Our second main theorem states:

\begin{theorem}
\label{main-thm2}
Let $F$ be a one component inner function of finite Lyapunov exponent with $F(0) = 0$, other than $z \to z^d$ for some $d \ge 2$. Suppose $z \in \mathbb{D} \setminus \{ 0 \}$ lies outside a set of countable set. Then,
\begin{equation}
\label{eq:main-thm2}
\mathcal N(z, R) \, \sim \, \frac{1}{2} \log \frac{1}{|z|} \cdot \frac{1}{\int_{\partial \mathbb{D}} \log |F'| dm} \cdot e^R,
\end{equation}
as $R \to \infty$.
\end{theorem}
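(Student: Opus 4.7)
The plan is to upgrade the Ces\`aro-averaged asymptotic of Theorem~\ref{main-thm} to the pointwise asymptotic \eqref{eq:main-thm2} by exploiting the rigidity of backward orbits in the one-component regime. The one-component hypothesis guarantees that the singular values of $F$ are compactly contained in $\mathbb{D}$, which forces backward iteration along \emph{every} inverse orbit to be asymptotically linear with slope equal to $\chi_m$. This uniformity---rather than almost-sure linearity---is the key extra input beyond Theorem~\ref{main-thm}.

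I would set up the Riemann surface lamination $\mathcal{X}_F$ obtained as the inverse limit of $(\mathbb{D},F)$ and reinterpret $\mathcal{N}(z,R)$ as a lattice-point count on it: the generation-$n$ preimages of $z$ lift to a discrete subset of $\mathcal{X}_F$, and the hyperbolic distance to $0$ in $\mathbb{D}$ pulls back to the intrinsic leafwise hyperbolic distance. The counting function then becomes the number of such lattice points inside the leafwise ball of radius $R$ about the distinguished basepoint.

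Next I would run a Margulis-style orbit counting argument on $\mathcal{X}_F$. The Ces\`aro form of Theorem~\ref{main-thm} corresponds to ergodicity of the geodesic/shift flow on $\mathcal{X}_F$; removing the average requires mixing. Under the one-component hypothesis the lamination admits a compact core---the inverse limit of $\{\,|F|\le\rho\,\}$---and returns to this core are governed by a uniformly hyperbolic Markov system coming from the inverse branches of $F$. From this I would extract mixing of the geodesic flow, combine it with equidistribution of expanding horocycles at the origin, and read off the constant $\tfrac12\log(1/|z|)\cdot\chi_m^{-1}$ by matching the Liouville normalization against harmonic measure.

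The principal obstacle will be establishing mixing (or better) in sufficiently quantitative form on $\mathcal{X}_F$, as ergodicity suffices for the Ces\`aro average but not for a true pointwise asymptotic. Equivalently, one must rule out oscillation of $e^{-R}\mathcal{N}(z,R)$ around its mean---the Tauberian step that fails without a compact core and which the one-component hypothesis makes available. The exclusion of a countable set of points accounts for those $z$ lying on the forward orbit of a critical value of $F$, where inverse branches coalesce and the lattice-count has to be corrected by a finite multiplicity; since $F$ is not $z\mapsto z^d$, such critical values exist but their forward orbits form a countable subset of $\mathbb{D}$.
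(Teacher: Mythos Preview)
Your high-level architecture is right and matches the paper: build the Riemann surface lamination $\widehat{X}_F$, prove that the geodesic flow on it is mixing, and deduce the orbit-counting asymptotic from mixing applied to a suitable test function. Where you diverge is in the mechanism for mixing and in the identification of the exceptional set.

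\textbf{Route to mixing.} You propose to extract mixing from a ``uniformly hyperbolic Markov system'' arising from the inverse branches on a compact core. The paper instead goes through the horocyclic flow: it shows (using a result from \cite{inner-tdf}) that the multipliers of repelling periodic orbits on $\partial\mathbb{D}$ are not contained in a discrete subgroup of $\mathbb{R}^+$, uses Glutsyuk's argument to produce a dense horocycle on the leaf of a repelling fixed point, upgrades this to ergodicity of the horocyclic flow via Coud\`ene's argument, and then deduces mixing of the geodesic flow from the commutation relation $g_{-t}h_s = h_{e^t s}g_{-t}$. Your Markov/thermodynamic route is plausible (the paper's Remark~(iii) in the introduction acknowledges such an approach via \cite{inner-tdf}), but note that it is not the route taken here, and the exclusion of $z\mapsto z^d$ enters precisely at the non-discreteness-of-multipliers step, not via any compactness of a core.

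\textbf{The exceptional set.} Your description of the countable exceptional set is incorrect. It is \emph{not} the forward orbit of the critical values. The set excluded is the forward orbit of the Frostman exceptional points---those $a\in\mathbb{D}$ for which the Frostman shift $F_a$ has nontrivial singular inner factor. By Ahern--Clark this set is countable when $F$ has finite Lyapunov exponent, and outside it the identity $\sum_{F(w)=z}\log(1/|w|)=\log(1/|z|)$ of Lemma~\ref{sum-of-heights} holds with equality; this identity is what drives the constant $\tfrac12\log(1/|z|)\cdot\chi_m^{-1}$ in the Margulis-type count. Critical values need not be excluded (preimages are counted with multiplicity anyway), and conversely an inner function can have Frostman exceptional points without having any critical points in $\mathbb{D}$.
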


We also obtain analogous results for finite Lyapunov exponent parabolic inner functions of infinite height (in this case, the Denjoy-Wolff point lies on the unit circle). Precise statements will be given in Part \ref{part:parabolic} of the paper.

\begin{remark}
(i) Theorems \ref{main-thm} and \ref{main-thm2} may not hold for every point $z \in \mathbb{D}$. For instance, the inner function
$$
f(z) = \exp \biggl ( \frac{z+1}{z-1} \biggr )
$$
omits the value 0. Post-composing with a M\"obius transformation, we get an inner function $F$ with $F(0) = 0$ which omits a value $p \ne 0$. For $z = p$, the set of repeated pre-images of $z$ is empty.

(ii) For $z \to z^d$, $d \ge 2$, repeated pre-images of a point come in packets, so $\mathcal N(z, R)$ is a step function.

(iii) For an alternative approach to orbit counting using thermodynamic formalism, see \cite[Section 7]{ivrii-wp} and \cite{inner-tdf}. The results in this paper are somewhat stronger because they only require the minimal hypotheses on the inner function $F$; however, the techniques are specific to inner functions.  

(iv) For an analytic characterization of inner functions of finite Lyapunov exponent, we refer the reader to the works \cite{inner, stable, IK22}.
\end{remark}

\subsection{An overview of the proofs}

To prove Theorems \ref{main-thm} and \ref{main-thm2}, we study the geodesic flow on the Riemann surface lamination $\widehat{X}_F$ associated to $F$, which was described in \cite{mcmullen} for finite Blaschke products. (Definitions will be given in Section \ref{sec:fbp}.)
McMullen's construction generalizes to one component inner functions without much difficulty. 
According to Sullivan's dictionary, the Riemann surface lamination is analogous to the unit tangent bundle of a Riemann surface. McMullen showed that the geodesic flow on $\widehat{X}_F$ is ergodic by relating it to a suspension flow over the solenoid. Applying the ergodic theorem to a particular function on the lamination shows Theorem  \ref{main-thm2} up to taking a Ces\`aro average.

To give a full proof of Theorem \ref{main-thm2}, one needs to show that the geodesic flow on $\widehat{X}_F$ is mixing. As in the case of the geodesic flow on a finite area hyperbolic surface, this is done by first showing that the horocyclic flow is ergodic. The main step is to show that the horocyclic flow on $\widehat{X}_F$ has a dense orbit. This uses an argument of A.~Glutsyuk \cite{glutsyuk} which involves examining horocycles on a special leaf of $\widehat{X}_F$ associated to a repelling fixed point on the circle. From here, the ergodicity of the horocyclic flow follows from an argument of Y.~Coud\`ene \cite{coudene}.

Theorem \ref{main-thm} requires more work because one has to manually construct the natural volume form $d\xi$ and the geodesic flow $g_t$ on the lamination  $\widehat{X}_F$ for a general inner function $F$ of finite Lyapunov exponent. To do this, we first show that iteration along almost every inverse orbit is asymptotically linear. The proof uses a number of concepts from differential geometry such as Gaussian and geodesic curvatures. 

\begin{remark}
 In \cite[Section 10]{mcmullen-rtree}, one learns that inner functions are close to hyperbolic isometries away from the critical points. Consequently, a generic inverse orbit stays away from the critical points. 
 \end{remark}

\section{Inner functions}
\label{sec:inner}

As is well known, any inner function $F$ can be factored into a Blaschke product and a singular inner function:
$$
B(z) = e^{i\theta} \prod -\frac{a_i}{|a_i|} \cdot \frac{z-a_i}{1-\overline{a_i}z}, \qquad a_i \in \mathbb{D},
$$
$$
S(z) = \exp \biggl ( - \int_{\partial \mathbb{D}} \frac{\zeta+z}{\zeta-z}\, d\mu(\zeta) \biggr ), \qquad \mu \ge 0, \quad \mu \perp m.
$$
In this decomposition, the Blaschke product records the zero set of $F$, while the singular factor records the zeros of $F$ ``dissolved'' on the unit circle.

The above decomposition privileges the set of pre-images of 0. To view an inner function from the perspective of a point $a \in \mathbb{D}$, we consider the {\em Frostman shift}
$$
F_a(z) = \frac{F(z) - a}{1 - \overline{a} F(z)}.
$$
A point $a \in \mathbb{D}$ is called {\em exceptional}\/ if $F_a$ has a non-trivial singular factor.  Frostman showed that the set of exceptional points in the unit disk has logarithmic capacity 0, while Ahern and Clark \cite{ahern-clark} observed that for inner functions of finite Lyapunov exponent, the exceptional set is at most countable.

The following identity will play an important role in this work:

\begin{lemma}
\label{sum-of-heights}
Suppose $F$ is an inner function with $F(0) = 0$. For a {\em non-exceptional} point $z \in \mathbb{D} \setminus \{ 0 \}$,
\begin{equation}
\sum_{F(w) = z} \log \frac{1}{|w|} = \log \frac{1}{|z|}.
\end{equation}
The $\le$ inequality holds for every $z \in \mathbb{D}$.
\end{lemma}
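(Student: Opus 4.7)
The plan is to apply the Frostman shift at $z$ and read off the value at the origin. Set
$$
F_z(w) \;=\; \frac{F(w)-z}{1-\overline{z}\,F(w)},
$$
which is again an inner function whose zero set in $\mathbb{D}$ is exactly $F^{-1}(z)$, counted with multiplicity. Since $F(0)=0$, direct substitution gives $F_z(0) = -z$, so $|F_z(0)| = |z|$.

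Next, I would invoke the canonical inner/outer (really Blaschke/singular) factorization $F_z = \gamma \cdot B_z \cdot S_z$, where $\gamma$ is unimodular, $B_z$ is the Blaschke product formed from the zeros of $F_z$, and $S_z$ is the singular inner factor. Evaluating at $w=0$ and using the standard formula $|B_z(0)| = \prod_{F(w)=z} |w|$ for a Blaschke product (convergence of this product is built into the definition of $B_z$), we obtain
$$
|z| \;=\; |F_z(0)| \;=\; |S_z(0)| \cdot \prod_{F(w)=z} |w|.
$$
Taking $-\log$ of both sides yields
$$
\log\frac{1}{|z|} \;=\; \log\frac{1}{|S_z(0)|} \;+\; \sum_{F(w)=z} \log\frac{1}{|w|}.
$$

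Both terms on the right are non-negative, since a singular inner function satisfies $|S_z(0)| \le 1$ (indeed, $\log|S_z(0)| = -\int_{\partial\mathbb{D}}\! P(0,\zeta)\,d\mu_z(\zeta) \le 0$ where $P$ is the Poisson kernel and $\mu_z\ge 0$ is the singular measure associated with $S_z$). This immediately gives the $\le$ inequality for every $z \in \mathbb{D}\setminus\{0\}$. When $z$ is non-exceptional, $S_z$ is trivial by definition, so $|S_z(0)| = 1$ and equality holds. There is no real obstacle in this argument; the only thing to verify carefully is that the zeros of $F_z$ really coincide with $F^{-1}(z)$ (which is immediate from the defining formula for $F_z$) and that the Blaschke convergence condition is automatic because $F_z$ is bounded.
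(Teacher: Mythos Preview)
Your proof is correct and is precisely the standard argument: the paper itself does not prove this lemma but merely cites \cite[Lemma A.4]{stable}, and your Frostman-shift computation is exactly the expected approach, matching the definition of ``exceptional'' given in the paper.
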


A proof can be found in \cite[Lemma A.4]{stable}. A holomorphic self-map of the unit disk $F$ has an {\em angular derivative} in the sense of Carath\'eodory
at $\zeta \in \partial \mathbb{D}$ if 
$$F(\zeta) := \lim_{r \to 1} F(r \zeta) \in \partial \mathbb{D} \qquad \text{and} \qquad F'(\zeta) := \lim_{r \to 1} F'(r \zeta) < \infty.
$$
We will use the following two lemmas on angular derivatives from \cite{ahern-clark}:

\begin{lemma}
\label{derivative-circle}
If we decompose $F = BS_\mu$ into a Blaschke product with zero set $\{a_i\}$ and a singular inner function with singular measure $\mu$, then
$$
|F'(\zeta)| = \sum \frac{1-|a_i|^2}{|\zeta - a_i|^2} + \int_{\partial \mathbb{D}} \frac{2d\mu(z)}{|\zeta - z|^2}, \qquad \zeta \in \partial \mathbb{D}.
$$
\end{lemma}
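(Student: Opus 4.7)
The plan is to invoke the Julia–Carathéodory theorem, which identifies the angular derivative at $\zeta$ with the radial limit
$$|F'(\zeta)|\;=\;\lim_{r\to1^-}\frac{1-|F(r\zeta)|^2}{1-r^2},$$
with both sides allowed to equal $+\infty$. To convert the multiplicative factorization $F=BS_\mu$ into an additive one, I would replace the numerator by $-\log|F(r\zeta)|^2$; this is legitimate whenever $|F(r\zeta)|\to 1$ as $r\to 1^-$, since $-\log(1-x)=x+O(x^2)$. The problem then splits along $-\log|F|^2=-\log|B|^2-\log|S_\mu|^2$ and the two factors may be handled independently.

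For the singular factor, the definition of $S_\mu$ together with $\re\bigl((\zeta'+z)/(\zeta'-z)\bigr)=(1-|z|^2)/|\zeta'-z|^2$ yields the exact identity
$$\frac{-\log|S_\mu(r\zeta)|^2}{1-r^2}\;=\;2\int_{\partial\mathbb{D}}\frac{d\mu(\zeta')}{|\zeta'-r\zeta|^2};$$
a uniform two-sided comparison of $|\zeta'-r\zeta|$ with $|\zeta'-\zeta|$ for $r$ close to $1$, combined with dominated or Fatou convergence, produces $2\int|\zeta'-\zeta|^{-2}\,d\mu(\zeta')$ in the limit. For the Blaschke factor, starting from the elementary identity
$$1-\Bigl|\tfrac{r\zeta-a_i}{1-\bar a_i r\zeta}\Bigr|^2\;=\;\frac{(1-r^2)(1-|a_i|^2)}{|1-\bar a_i r\zeta|^2},$$
each summand of $-\log|B(r\zeta)|^2/(1-r^2)$ tends to $(1-|a_i|^2)/|\zeta-a_i|^2$. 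The two-sided bound $t\le -\log(1-t)\le t/(1-t)$ sandwiches $-\log|B|^2$ between $\sum_i(1-|b_{a_i}|^2)$ and an expression with the same asymptotics, permitting the interchange of limit with summation.

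The main obstacle is justifying these interchanges uniformly in $r$ and verifying that the two sides of the identity are simultaneously $+\infty$: divergence of the right-hand side forces $-\log|F(r\zeta)|^2/(1-r^2)$ to blow up through the same estimates, so $|F'(\zeta)|=+\infty$ as well. This simultaneous-divergence statement is precisely the Ahern–Clark criterion for existence of a finite angular derivative, and once it is in hand the finite-case identity follows from the same chain of estimates carried out on truncations of the Blaschke sum and of the singular integral.
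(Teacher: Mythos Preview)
The paper does not prove this lemma; it is quoted from Ahern--Clark \cite{ahern-clark} without argument. Your outline is essentially the standard proof (and is in the spirit of the original Ahern--Clark argument): pass from $1-|F|^2$ to $-\log|F|^2$ to make the factorization additive, then use the Poisson-kernel identity for $S_\mu$ and the elementary identity for single Blaschke factors, with Fatou's lemma giving the lower bound and a dominating majorant (via $|\zeta'-r\zeta|\ge c|\zeta'-\zeta|$ for $r$ near $1$) giving the matching upper bound. This is correct and nothing is missing at the level of a plan; the only care needed is exactly where you flagged it, namely the simultaneous-infinity case, which your Fatou argument handles.
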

In particular, if $F(0) = 0$ and $F$ is not a rotation, then $|F'(\zeta)| > c > 1$.
\begin{lemma}
\label{ac-lemma}
If an inner function $F$ has an angular derivative at $\zeta \in \partial \mathbb{D}$, then
\begin{equation}
|F'(r\zeta)| \le 4 |F'(\zeta)|, \qquad 0 < r < 1.
\end{equation}
\end{lemma}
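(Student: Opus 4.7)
The plan is to compare $|F'(r\zeta)|$ directly against the boundary formula supplied by Lemma~\ref{derivative-circle}. Differentiating the factorization $F = B\cdot S_\mu$ term-by-term, and using $|B|,|S_\mu|\le 1$ on $\mathbb D$ together with the identity $|b'_{a_i}(z)| = (1-|a_i|^2)/|1-\bar a_i z|^2$ for a single Blaschke factor and the explicit form of the Herglotz derivative $S'/S$, yields the interior bound
$$
|F'(r\zeta)| \;\le\; \sum_i \frac{1-|a_i|^2}{|1-\bar a_i\, r\zeta|^2} \;+\; \int_{\partial \mathbb D} \frac{2\, d\mu(\zeta_0)}{|\zeta_0 - r\zeta|^2}.
$$
Since $|1 - \bar a \zeta| = |\zeta - a|$ when $|\zeta|=1$, the right-hand side is exactly the formula from Lemma~\ref{derivative-circle} with each boundary denominator replaced by its $r\zeta$ counterpart.

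The lemma will therefore follow from the elementary disk-geometry inequality
$$
|1 - \bar a\, r\zeta|^2 \;\ge\; \tfrac{1}{4}\,|1 - \bar a\, \zeta|^2
$$
for every $a \in \overline{\mathbb D}$, $\zeta \in \partial \mathbb D$, and $r \in (0,1)$, applied once with $a = a_i$ in each Blaschke term and once with $a = \zeta_0$ inside the singular integral (in the latter case, $|\bar a| = 1$ turns $|1-\bar\zeta_0\,r\zeta|$ into $|\zeta_0 - r\zeta|$). Writing $\bar a \zeta = \rho e^{i\psi}$ reduces the inequality to a one-variable calculation in $\cos\psi$, and the ratio is minimized at $\psi = \pi$ with value $(1+r\rho)^2/(1+\rho)^2$, which one checks is $\ge 1/4$ for all $r,\rho \in [0,1]$.

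There is no real obstacle: if $F$ has no angular derivative at $\zeta$ then the formula in Lemma~\ref{derivative-circle} diverges and the inequality is vacuous, so we may assume convergence throughout. Note that the constant $4$ arising from this argument is somewhat crude — the extremal configuration $\rho \to 1$, $r \to 0$ is hard to realize for a single Blaschke term, and a Julia–Carath\'eodory-style argument applied to Schwarz–Pick would sharpen the constant to $2$ — but the factor of $4$ is more than sufficient for the later uses of this estimate in the paper.
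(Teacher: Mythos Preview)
Your argument is correct. The paper does not supply its own proof of this lemma; it simply records the statement and attributes it to Ahern--Clark \cite{ahern-clark}. Your route---bounding $|F'(r\zeta)|$ by the termwise sum coming from the product rule applied to $F = B S_\mu$, then comparing each denominator $|1-\bar a\,r\zeta|^2$ against its boundary value $|1-\bar a\,\zeta|^2 = |\zeta-a|^2$ from Lemma~\ref{derivative-circle}---is exactly the natural proof, and the pointwise inequality you isolate is the heart of the matter. Your identification of the minimum at $\psi=\pi$ with value $(1+r\rho)^2/(1+\rho)^2\ge 1/4$ is correct (the ratio is monotone in $\cos\psi$, as a quick derivative check confirms).

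Two small remarks. First, for an infinite Blaschke product the ``term-by-term differentiation'' requires a word of justification (local uniform convergence of the partial products and their derivatives on compacta of $\mathbb D$), though this is entirely standard. Second, your closing comment about sharpening the constant to $2$ via Julia--Carath\'eodory is plausible but tangential; the constant $4$ is what Ahern--Clark obtain and is all that is needed downstream.
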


The following lemma is a simple consequence of the Schwarz lemma and the triangle inequality:

\begin{lemma}
\label{minimal-translation}
Suppose $F$ is an inner function with $F(0) = 0$, which is not a rotation. There exists a number $\gamma = \gamma(F) > 0$ so that for any $z \in \mathbb{D}$ with $d_{\mathbb{D}}(0, z) \ge 1$, the hyperbolic distance
$$
d_{\mathbb{D}}(0, f(z)) \le d_{\mathbb{D}}(0, z) - 4 \gamma. 
$$
\end{lemma}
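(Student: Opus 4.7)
The idea is to use Schwarz--Pick to propagate a uniform gap obtained on a single compact hyperbolic circle to all of $\{z : d_{\mathbb{D}}(0, z) \ge 1\}$.

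First, I would fix a point $z \in \mathbb{D}$ with $d_{\mathbb{D}}(0, z) \ge 1$ and let $w$ be the point on the hyperbolic geodesic segment from $0$ to $z$ with $d_{\mathbb{D}}(0, w) = 1$ (so that $w = z$ in the boundary case). By additivity along the geodesic,
$$
d_{\mathbb{D}}(0, z) \;=\; 1 + d_{\mathbb{D}}(w, z).
$$
Then, combining the triangle inequality with Schwarz--Pick applied to $F$ (which is $1$-Lipschitz in the hyperbolic metric because $F$ is a holomorphic self-map of the disk), I get
$$
d_{\mathbb{D}}(0, F(z)) \;\le\; d_{\mathbb{D}}(0, F(w)) + d_{\mathbb{D}}(F(w), F(z)) \;\le\; d_{\mathbb{D}}(0, F(w)) + d_{\mathbb{D}}(w, z).
$$
Subtracting and cancelling the $d_{\mathbb{D}}(w, z)$ term yields
$$
d_{\mathbb{D}}(0, z) - d_{\mathbb{D}}(0, F(z)) \;\ge\; 1 - d_{\mathbb{D}}(0, F(w)),
$$
so it suffices to produce a single constant $4\gamma > 0$ such that $d_{\mathbb{D}}(0, F(w)) \le 1 - 4\gamma$ uniformly over $w$ on the hyperbolic circle $C := \{w \in \mathbb{D} : d_{\mathbb{D}}(0, w) = 1\}$.

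Now I would invoke compactness. The circle $C$ is a Euclidean circle of radius $r_0 = \tanh(1/2)$ centered at the origin, hence compactly contained in $\mathbb{D}$. By the strict Schwarz lemma (valid because $F(0) = 0$ and $F$ is not a rotation), $|F(w)| < |w| = r_0$ for every $w \in C$, equivalently $d_{\mathbb{D}}(0, F(w)) < 1$. Since $w \mapsto d_{\mathbb{D}}(0, F(w))$ is continuous on the compact set $C$, it attains a maximum strictly less than $1$; call this maximum $1 - 4\gamma$ with $\gamma = \gamma(F) > 0$. Plugging back in finishes the proof.

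The argument has essentially no obstacle — everything rests on the two ingredients that Schwarz--Pick contracts the hyperbolic metric and that the strict form of the Schwarz lemma produces a uniform gap once one restricts to a compact piece of the disk, which is exactly what the $d_{\mathbb{D}}(0,z) \ge 1$ hypothesis provides. The choice of the radius $1$ is inessential; any fixed positive hyperbolic radius would work, at the cost of changing the constant $\gamma$. The factor of $4$ in the statement is cosmetic and simply reserves room to absorb later constants.
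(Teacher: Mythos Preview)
Your proof is correct and follows exactly the approach the paper indicates: the paper does not give a proof but only states that the lemma is ``a simple consequence of the Schwarz lemma and the triangle inequality,'' and your argument supplies precisely those details via Schwarz--Pick contraction, the strict Schwarz lemma on the compact circle $\{d_{\mathbb{D}}(0,w)=1\}$, and the triangle inequality.
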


The above lemma shows that any ball
$\mathscr B$ of hyperbolic radius $\gamma$ contained in $\{ w \in \mathbb{D} : d_{\mathbb{D}}(0,w) \ge 1 \}$ does not intersect its forward orbit, i.e.~$F^{\circ n}(\mathscr B) \cap \mathscr B = \emptyset$, which implies that its inverse images $\{ F^{-n}(\mathscr B) \}$ are disjoint.

\begin{lemma}
\label{orbit-counting-a-priori}
Let $F(z)$ be an inner function with $F(0) = 0$ that is not a rotation.
For a point  $z \in \mathbb{D}$ in the unit disk with $d_{\mathbb{D}}(0,z) > 1$, we have:
 \begin{equation}
 \label{eq:orbit-counting-a-priori}
 \mathcal N(z, R-1, R) \, := \, \mathcal N(z, R) - \mathcal N(z, R-1) \, \le \, C e^{R - d_{\mathbb{D}}(0,z)}.
\end{equation}
In particular,
\begin{equation}
 \label{eq:orbit-counting-a-priori2}
 \mathcal N(z, R) \le C e^{R - d_{\mathbb{D}}(0,z)},
 \end{equation}
 albeit with a slightly larger constant $C$.
\end{lemma}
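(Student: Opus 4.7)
Combine the iterated sum-of-heights identity (Lemma~\ref{sum-of-heights}) with the disjoint preimage tile structure of Lemma~\ref{minimal-translation}. A useful preliminary: for any $w \in \mathbb{D}$ with $d_{\mathbb{D}}(0,w) \ge 1$, the formula $d_{\mathbb{D}}(0,w) = \log\frac{1+|w|}{1-|w|}$ yields $\log(1/|w|) \asymp e^{-d_{\mathbb{D}}(0,w)}$, with universal implicit constants. In particular, $\log(1/|w|) \asymp e^{-R}$ for $w$ in the shell $B_{\hyp}(0,R)\setminus B_{\hyp}(0,R-1)$, and $\log(1/|z|) \asymp e^{-d_{\mathbb{D}}(0,z)}$ since $d_{\mathbb{D}}(0,z) > 1$.

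For each level $n \ge 1$, iterating Lemma~\ref{sum-of-heights} gives $\sum_{F^{\circ n}(w)=z} \log(1/|w|) \le \log(1/|z|)$. Combined with the height comparison, the number of level-$n$ preimages of $z$ inside the shell is at most $C_1\, e^{R - d_{\mathbb{D}}(0,z)}$, for some constant $C_1 = C_1(F)$. The forward orbit $w, F(w), \ldots, F^{\circ n}(w) = z$ of such a preimage stays inside $\{d_{\mathbb{D}}(0,\cdot) \ge d_{\mathbb{D}}(0,z) > 1\}$ by Schwarz monotonicity, so Lemma~\ref{minimal-translation} applies at each step and yields $d_{\mathbb{D}}(0,w) \ge d_{\mathbb{D}}(0,z) + 4n\gamma$; thus only levels $n \le (R - d_{\mathbb{D}}(0,z))/(4\gamma)$ contribute.

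A naive summation over contributing levels would introduce an extraneous linear factor, which is eliminated by invoking the disjoint preimage tiles: take $\mathscr{B}$ to be a hyperbolic ball of radius $\gamma$ around $z$, after a small shift if necessary so that $\mathscr{B} \subset \{d_{\mathbb{D}}(0,\cdot) \ge 1\}$ (admissible since $d_{\mathbb{D}}(0,z) > 1$). By the remark following Lemma~\ref{minimal-translation}, the sets $F^{-n}(\mathscr{B})$ are pairwise disjoint across $n \ge 0$, and the Schwarz--Pick lemma forces each connected component containing a preimage of $z$ to have hyperbolic area at least $2\pi(\cosh\gamma - 1)$. A packing argument in a slightly fattened shell combined with the per-level bound produces~\eqref{eq:orbit-counting-a-priori}; the ball bound~\eqref{eq:orbit-counting-a-priori2} then follows by summing the resulting geometric series over unit-width shells $B_{\hyp}(0,k) \setminus B_{\hyp}(0,k-1)$, $k = 1, 2, \ldots, \lceil R \rceil$. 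The main obstacle is reconciling the disjoint-tile bound---which on its own yields only $O(e^R)$ independently of $d_{\mathbb{D}}(0,z)$---with the per-level sum-of-heights bound---which is sharp per level but a priori must be summed over $O(R - d_{\mathbb{D}}(0,z))$ levels---to extract the desired sharp estimate.
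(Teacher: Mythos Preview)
Your proposal has a genuine gap, and you in fact identify it yourself in the final sentence: you never explain how to reconcile the per-level bound (sharp, but loses a factor of $R-d_{\mathbb{D}}(0,z)$ when summed) with the disjoint-tile bound (which does not see $d_{\mathbb{D}}(0,z)$ at all). The packing argument cannot close this. The sets $F^{-n}(\mathscr B)$ are indeed disjoint across $n$, and each connected component has hyperbolic area $\ge 2\pi(\cosh\gamma-1)$ by Schwarz--Pick, but a single component can contain \emph{many} pre-images of $z$: within a fixed level $n$ nothing prevents pre-images from clustering (for an infinite-degree inner function they accumulate on $\partial\mathbb{D}$). So the area bound controls the number of \emph{components} meeting the shell, not the number of pre-images, and you cannot extract the factor $e^{-d_{\mathbb{D}}(0,z)}$ this way.

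The missing idea is to work in a \emph{thin} annulus of hyperbolic width $\gamma$ rather than width $1$. By Lemma~\ref{minimal-translation}, if $w_1,w_2$ are repeated pre-images of $z$ with $R-\gamma\le d_{\mathbb{D}}(0,w_i)<R$, then neither can be a forward iterate of the other (one application of $F$ already drops the distance to $0$ below $R-\gamma$). Hence the pre-images in this thin annulus form an antichain in the pre-image tree of $z$, and Lemma~\ref{sum-of-heights} applied along each branch gives
\[
\sum_{w\text{ in thin annulus}} \log\frac{1}{|w|} \;\le\; \log\frac{1}{|z|},
\]
summing over \emph{all} such $w$ simultaneously rather than level by level. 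Your height comparison $\log(1/|w|)\asymp e^{-R}$, $\log(1/|z|)\asymp e^{-d_{\mathbb{D}}(0,z)}$ then yields $\mathcal N(z,R-\gamma,R)\lesssim e^{R-d_{\mathbb{D}}(0,z)}$ directly, with no linear loss. Partitioning the width-$1$ annulus into $1+\lceil 1/\gamma\rceil$ thin annuli of width $\le\gamma$ gives \eqref{eq:orbit-counting-a-priori}, and the geometric sum over unit shells gives \eqref{eq:orbit-counting-a-priori2}. The tile/packing ingredient is not needed.
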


\begin{proof}
Since $F$ is not a rotation, by Lemma \ref{minimal-translation}, 
\begin{equation}
\label{eq:decrease-by-c}
d_{\mathbb{D}}(0, F(w)) \le d_{\mathbb{D}}(0,w) - \gamma,
\end{equation}
for any $w \in \mathbb{D}$ with $d_{\mathbb{D}}(0,w) \ge 1$.
Repeated use of Lemma \ref{sum-of-heights} shows that for any $R \ge 1$,
\begin{equation}
\label{eq:w-and-z}
 \sum \log \frac{1}{|w|} \le \log \frac{1}{|z|},
\end{equation}
where the sum is over $\mathcal N(z, R-\gamma,R)$ repeated pre-images $w$ of $z$ for which
$$
R - \gamma \, \le \, d_{\mathbb{D}}(0, w) \, < \, R.
$$
In terms of hyperbolic distance from the origin, (\ref{eq:w-and-z}) says that
$$
\mathcal N(z, R-\gamma,R) \cdot e^{-R} \lesssim e^{-d_{\mathbb{D}}(0,z)},
$$
which shows (\ref{eq:orbit-counting-a-priori}) with  $\mathcal N(z, R-\gamma,R)$  in place of  $\mathcal N(z, R-1,R)$. To obtain the original statement, one just needs to partition the annulus
$$
\bigl \{w \in \mathbb{D} : R-1 < d_{\mathbb{D}}(0, w) < R \bigr \}
$$
into $1 + \lceil 1/\gamma \rceil$ concentric annuli of hyperbolic widths $\le \gamma$.
\end{proof}

\part{Centered One Component Inner Functions}
\label{part:centered-one-component}

We say that an inner function $F$ is {\em singular} at a point $\zeta \in \partial \mathbb{D}$ if it does not admit any analytic extension to a neighbourhood of $\zeta$. 
Let $\Sigma \subset \partial \mathbb{D}$ be the set of singularities of $F$. It is clear from this definition that $\Sigma$ is a closed set.
While one usually thinks of inner functions as holomorphic self-maps of the unit disk, one may also view $F$ as a meromorphic function 
on $ \hat{\mathbb{C}} \setminus \Sigma$. 

In this work, we say that an inner function $F$ is a one component inner function if there is an annulus $\widetilde{A} = A(0; \rho, 1/\rho)$ such that $F: \sph \setminus \Sigma \to \sph$ is a covering map over $\widetilde{A}$. For the equivalence of this definition with the two definitions from the introduction, we refer the reader to \cite{inner-tdf}.

Throughout Part \ref{part:centered-one-component}, we assume that $F$ is a centered one component inner function of finite Lyapunov exponent that is not a rotation. We denote the class of all such inner functions by $\Lambda$.

In Section \ref{sec:fbp}, we define the Riemann surface lamination $\widehat{X}$ associated to $F$, as well as the geodesic and horocyclic flows on $\widehat{X}$.
In Section \ref{sec:almost invariant}, we discuss almost invariant functions on the unit disk and explain how one can derive orbit counting results from ergodicity and mixing of the geodesic flow.

In Section \ref{sec:mixing-geodesic}, we show that the horocyclic flow is ergodic and deduce that the geodesic flow is mixing.

\section{Background on Laminations}
\label{sec:fbp}

The {\em solenoid} associated to an inner function $F\in\Lambda$ is defined as the inverse limit
$$
\widehat{S^1} \, = \,  \lim_{\longleftarrow} \, ( F : S^1 \to S^1 )
\, = \,  \bigl \{ (u_i)_{i=-\infty}^0 : F(u_i) = u_{i+1} \bigr \}.
$$ 
In other words, a point on the solenoid is given by a point $u_0$ on the unit circle together with a consistent choice of pre-images $u_{-n} = F^{-n}(u_0)$.

Similarly, we can form the space of backwards orbits of $F$ on the unit disk
$$
\widehat{\mathbb{D}} \, = \,  \lim_{\longleftarrow} \, ( F : \mathbb{D} \to \mathbb{D} )\setminus \{ \mathbf{0} \}
\, = \,  \bigl \{ (z_i)_{i=-\infty}^0 : F(z_i) = z_{i+1} \bigr \} \setminus \{ \mathbf{0} \},
$$ 
where ${\bf 0} = \dots \leftarrow 0 \leftarrow 0 \leftarrow 0$ is the constant sequence. As we have removed the constant sequence $\bf 0$, each backward orbit tends to the unit circle, i.e.~$|z_{i}| \to 1$ as $i \to -\infty$.

For both $\widehat{S^1}$ and $\widehat{\mathbb{D}}$, we write $\pi_{-n}$ for the projection onto the $(-n)$-th coordinate, i.e.~the map
$
 (z_i )_{i=-\infty}^0 \mapsto z_{-n}.
 $

Let $\widehat{F}: \widehat{\mathbb{D}} \to \widehat{\mathbb{D}}$ be the map which applies $F$ to each coordinate.
Its inverse 
$
 (z_i )_{i=-\infty}^0 \mapsto (z_{i-1})_{i=-\infty}^0
$
is often called the {\em shift map}\/. The quotient 
$$
\widehat{X} = \widehat{\mathbb{D}} \setminus \widehat F
$$ 
is called the {\em Riemann surface lamination} associated to $F$. 

The term Riemann surface lamination refers to the fact that $\widehat{X}$ is locally homeomorphic to $\mathbb{D} \times \mathcal C$, where $\mathcal C$ is some topological space. By contrast, the solenoid $\widehat{S^1}$ is locally homeomorphic to $(-1, 1) \times \mathcal{C}$. 
When $F$ is a finite Blaschke product, the fiber $\mathcal C$ is a Cantor set, while if $F$ is an infinite-degree one component inner function, then $\mathcal C$ is homeomorphic to the shift space on infinitely many symbols $\{ 1, 2, \dots \}^{\mathbb{N}}$. In particular, the lamination $\widehat{X}$ is a Polish space, that is, a separable completely metrizable topological space. A particular complete metric compatible with the topology will be given in Section \ref{sec:metric-on-lamination}.

We now describe a particularly convenient collection of local charts or {\em flow boxes}  for $\widehat{X}$. Take a ball $\mathscr B = B(a, r)$ contained in the annulus $A \bigl (0; \frac{1+\rho}{2}, 1 \bigr )$ such that $F^{\circ n}(\mathscr B) \cap \mathscr B = \emptyset$ for any $n \ge 1$. 
Under this assumption, the sets $\{ F^{-n}(\mathscr B) \}_{n \ge 0}$ are disjoint.
Furthermore, by Koebe's distortion theorem, for any $n \ge 0$, the connected components of $F^{-n}(\mathscr B)$ are approximately round balls that are conformally mapped onto $\mathscr B$ by $F^{\circ n}$. Let 
$$
\widehat{\mathscr B}:=\pi_0^{-1}(\mathscr B) \subset \widehat{X},
$$ 
i.e.~$\widehat{\mathscr B}$ is the collection of all inverse orbits $\mathbf{z} = (z_i)_{i=-\infty}^0$ with $z_0 \in \mathscr B$.  For a finite Blaschke product, one needs finitely many such flow boxes to cover $\widehat{X}$ but for one component inner functions, which are not finite Blaschke products, one needs countably many. 

\subsection{Transverse measures}
\label{sec:transversals}

For a point $z \in \mathbb{D}$, the transversal $T(z)$ is defined as the collection of inverse orbits ${\bf w}$ with $w_0 = z$. If $w$ is a repeated pre-image of $z$, we write $T(w, z) \subset T(z)$ for the subset of inverse orbits which pass through $w$.
We define the Nevanlinna counting measure on $T(z)$ by specifying it on the ``cylinder'' sets $T(w, z) \subset T(z)$, where $w$ ranges over repeated pre-images of $z$:
$$
c_z(T(w,z)) = \log \frac{1}{|w|}.
$$
We also define the normalized counting measure by
$$
\overline{c}_z(T(w,z)) = \frac{\log \frac{1}{|w|}}{\log \frac{1}{|z|}}.
$$
If $z \in \mathbb{D}$ is not a repeated pre-image of an exceptional point, then $\overline{c}_z$ is a probability measure on $T(z)$.
By Frostman's theorem, this holds for all but a logarithmic capacity zero set of points in the unit disk.

\subsection{Linear structure}
\label{sec:1c-linear-structure}

We now show that each connected component or {\em leaf} of $\widehat{\mathbb{D}}$ associated to a one component inner function from the class $\Lambda$ is conformally equivalent to $(\mathbb{H}, \infty)$, while leaves of the solenoid $\widehat{S^1}$ are homeomorphic to the real line $\mathbb{R} \cong \partial \mathbb{H}$.

The marked point at infinity provides $\mathbb{H}$ with a sense of an upward direction: one can define the {\em upward-pointing vector field} $v_\uparrow(z) = y \cdot \frac{\partial}{\partial y}$ on $\mathbb{H}$. Indeed, $v_\uparrow$ is well-defined since it is invariant under 
$$
\aut (\mathbb{H}, \infty) = \bigl \{ z \mapsto Az + B: \, A > 0, \, B \in \mathbb{R} \bigr \}.
$$ 

As backward iteration is essentially linear near the unit circle, one may define an action of the half-plane $\mathbb{H}$ on $\widehat{X}$  by
\begin{equation}
\label{eq:H-action}
L({\bf z}, w)_j := \lim_{n \to \infty} F^{\circ n}(Z_{j-n}(w)),
\end{equation}
where
$$
Z_j(w) = \frac{z_j}{|z_j|} + \biggl ( z_j - \frac{z_j}{|z_j|} \biggr ) \frac{w}{i}.
$$
With this definition, $L({\bf z}, i) = {\bf z}$ while the leaf $\mathcal L$ of $\widehat{X}$ containing ${\bf z}$ is given by  $\{ L({\bf z}, w) : w \in \mathbb{H} \}$. 

By restricting $w$ to the imaginary axis, we obtain the {\em geodesic flow} on $\widehat{\mathbb{D}}$:
\begin{equation}
\label{eq:H-action2}
g_t({\bf z}) := L({\bf z}, e^t i),  \qquad t \in \mathbb{R}.
\end{equation}
By instead restricting $w$ to the line $\{\im w = 1\}$, we obtain the {\em horocyclic flow} on $\widehat{\mathbb{D}}$:
\begin{equation}
\label{eq:H-action3}
h_s({\bf z}) := L({\bf z}, i+s),  \qquad s \in \mathbb{R}.
\end{equation}
The two flows satisfy the relation
\begin{equation}
\label{eq:gh-commutation-relation}
g_{-t} h_s({\bf z}) = h_{e^t s} g_{-t}({\bf z}), \qquad s, t \in \mathbb{R}.
\end{equation}

The leaves of $\widehat{X}$ are hyperbolic Riemann surfaces covered by $(\mathbb{H}, \infty)$. In fact, most leaves are conformally equivalent to $(\mathbb{H}, \infty)$. The only exceptions are leaves associated to repelling periodic orbits on the unit circle. In this case, one needs to quotient $(\mathbb{H}, \infty)$ by multiplication by the multiplier of the repelling periodic orbit. See Section \ref{sec:dense-horocycle} for details.

It is easy to see that the geodesic and horocyclic flows descend to the Riemann surface lamination $\widehat{X}$.
In Section \ref{sec:mixing-geodesic}, we will see that unless $F(z) = z^d$ for some $d \ge 2$, the geodesic flow on $\widehat{X}$ is mixing, while the horocyclic flow on  $\widehat{X}$ is ergodic. In the exceptional case, the geodesic flow will be ergodic but not mixing.

\subsection{Natural measures}
\label{sec:natural-measures}

We endow the solenoid with the probability measure $\widehat{m}$ obtained by taking the natural extension of the Lebesgue measure on the unit circle with respect to the map $F: S^1 \to S^1$. The measure $\widehat m$ which is uniquely characterized by the property that its pushforward under any coordinate function $\pi_i: \widehat{S^1} \to S^1$, $i\in-\N_0$, is equal to $m$. Equivalently, $\widehat{m}$ is the unique $\widehat{F}$-invariant measure on $\widehat{S^1}$ whose  pushforward under $\pi_0$ is equal to $m$. As the Lebesgue measure $m$ on the unit circle is ergodic for $F:S^1\to S^1$, the measure $\widehat m$ is ergodic for $\widehat{F}: \widehat{S^1} \to \widehat{S^1}$.

We define a natural measure on the Riemann surface lamination $\widehat{X}$ by
$$
d\xi \, = \, \widehat m \times (dy/y) = c_z \, \times \, \frac{dxdy}{y^2},
$$
of total mass $\int_{S^1} \log |F'| \, dm$, where $ x + iy$ is an affine parameter on each leaf of $\widehat{X}$. Note that $dy/y$ is a well-defined 1-form on the Riemann surface lamination since it is invariant under $\aut (\mathbb{H}, \infty)$.
By construction, $d\xi$ is invariant under the geodesic and horocyclic flows on $\widehat{X}$.

For a measurable set $A$ contained in the unit disk, we write $\widehat{A}$ for the collection of inverse orbits ${\bf z}$ with $z_0 \in A$. By Koebe's distortion theorem, we have:

\begin{lemma}
\label{xi-measure-of-a-cylinder}
For a measurable set $A$ contained in the annulus $A \bigl (0; \frac{1+\rho}{2}, 1 \bigr )$,
$$\xi(\widehat{A}) \asymp \int_A \frac{dA(z)}{1-|z|}.$$
In fact, for any $\varepsilon > 0$, there exists an $ \frac{1+\rho}{2} < \rho' < 1$ so that
$$
(1 - \varepsilon) \cdot \frac{1}{2\pi} \int_A \frac{dA(z)}{1-|z|} \, \le \, 
\xi(\widehat{A})\, \le \, (1 + \varepsilon) \cdot \frac{1}{2\pi} \int_A \frac{dA(z)}{1-|z|}$$
for any measurable set $A \subset A(0; \rho', 1)$.
\end{lemma}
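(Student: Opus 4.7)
The plan is to cover $A$ by small flow boxes and compute $\xi(\widehat{\mathscr{B}})$ on each using the description $d\xi = \widehat m \times (dy/y)$; the normalization $m = d\theta/(2\pi)$ of the Lebesgue measure on $S^1$ is what will produce the factor $1/(2\pi)$.

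First I would fix $\varepsilon > 0$ and cover $A$ by countably many balls $\mathscr{B}_k = B(z_k, r_k) \subset A(0; (1+\rho)/2, 1)$ of hyperbolic radius at most $\gamma/2$, so that by Lemma~\ref{minimal-translation} each $\widehat{\mathscr{B}}_k$ is a genuine flow box that embeds once into $\widehat{X}$. Applying Koebe's distortion theorem to the iterates $F^{\circ n}$ in the defining limit~(\ref{eq:H-action}), one obtains that the leaf parametrization $\psi_{\mathbf z}(w) := \pi_0(L(\mathbf z, w))$ satisfies $|\psi_{\mathbf z}'(i)| = (1 + O(\varepsilon))(1 - |z_k|)$ uniformly in $\mathbf z \in \widehat{\mathscr{B}}_k$. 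By countable additivity, $\xi(\widehat{A}) = \sum_k \xi(\widehat{\mathscr{B}}_k)$.

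Next I would compute $\xi(\widehat{\mathscr{B}}_k)$ directly in coordinates $(u, y) \in \widehat{S^1} \times (0, \infty)$ in which $d\xi$ factors as $d\widehat m(u) \cdot (dy/y)$. From the explicit formula $Z_0(x+iy) = (z_k/|z_k|)\bigl(1 - (1-|z_k|)y + i(1-|z_k|)x\bigr)$, the radial line from $\pi_0(u) = e^{i\theta}$ crosses $\mathscr{B}_k$ in a $y$-interval of length $2\sqrt{r_k^2 - s^2}/(1 - |z_k|)$, where $s$ denotes the tangential distance from $z_k$. Using the disintegration $d\widehat m = (d\theta/(2\pi)) \cdot d\widehat m_\zeta$ together with the probability normalization of the fiber conditional $\widehat m_\zeta$, a direct integration yields
$$
\xi(\widehat{\mathscr{B}}_k) \;=\; (1 + O(\varepsilon)) \cdot \frac{1}{2\pi(1 - |z_k|)} \int_{-r_k}^{r_k} 2\sqrt{r_k^2 - s^2}\, ds \;=\; \frac{1 + O(\varepsilon)}{2\pi} \int_{\mathscr{B}_k} \frac{dA(z)}{1-|z|}.
$$
Summing over $k$ produces both the qualitative $\asymp$ bound and the sharp constant $1/(2\pi)$, valid for $A \subset A(0; \rho', 1)$ with $\rho'$ sufficiently close to $1$ so that the Koebe error becomes uniform.

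The main obstacle is the uniform Koebe control of the limit~(\ref{eq:H-action}): one must show that $(F^{\circ n})'(z_{-n}) \cdot (1 - |z_{-n}|)$ converges to $(1 + o(1))(1 - |z_k|)$ uniformly across the transversal $T(z_k)$. This is exactly where the one-component hypothesis is used, since the requirement that $F : \sph \setminus \Sigma \to \sph$ be a covering over $A(0; \rho, 1/\rho)$ forces $F^{\circ n}$ to act as a near-isometry of the hyperbolic metric on round neighborhoods of each $z_{-n}$ in the annulus, so Koebe distortion accumulates in a bounded way and the limiting derivative is uniformly well-controlled.
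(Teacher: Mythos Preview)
Your approach is the right one and matches what the paper intends: the lemma is stated immediately after the sentence ``By Koebe's distortion theorem, we have:'' with no further argument, and your computation via the product description $d\xi = \widehat m \times (dy/y)$ together with the asymptotic linearization supplied by Koebe is exactly the intended justification. The local computation of $\xi(\widehat{\mathscr B}_k)$ on a small ball is correct.

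There is, however, a genuine gap in the line ``By countable additivity, $\xi(\widehat A)=\sum_k \xi(\widehat{\mathscr B}_k)$.'' You have only chosen a \emph{cover} of $A$ by balls; the balls may overlap and need not be contained in $A$, so countable additivity gives neither inequality. This is easy to repair in one of two ways. First, and most simply, drop the ball decomposition entirely: both $A\mapsto \xi(\widehat A)$ and $A\mapsto \tfrac{1}{2\pi}\int_A \tfrac{dA(z)}{1-|z|}$ are Borel measures on the annulus, and the exponential map (\ref{eq:exponential-map-def}) together with the estimate (\ref{eq:exponential-map}) lets you perform the change of variables $(\theta,t)\mapsto z_0$ directly for an arbitrary Borel set $A$, with Jacobian $1+O(\varepsilon)$ once $\rho'$ is close enough to $1$. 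Second, if you prefer to keep the balls, your local computation actually evaluates the Radon--Nikodym derivative $d\xi(\widehat{\,\cdot\,})\big/\tfrac{1}{2\pi}\tfrac{dA}{1-|z|}$ at each point as $1+O(\varepsilon)$; the global two-sided bound for all Borel $A$ then follows from the Lebesgue differentiation theorem, not from summing over a cover.

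A minor remark: you slide between the leafwise parameter $y$ from the $\mathbb H$-action (\ref{eq:H-action}) based at $z_k$ and the suspension parameter $t$ from (\ref{eq:exponential-map-def}). This is harmless because $dy/y = dt/t$ under the affine relation $t\approx(1-|z_k|)\,y$, but it would read more cleanly to stay in one coordinate system throughout.
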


\subsection{Exponential coordinates and the suspension flow}
\label{sec:suspension-flow}

 In order to show that the geodesic flow $g_s: \widehat{X} \to\widehat{X}$ is ergodic, McMullen  \cite[Theorem 10.2]{mcmullen} relates it to a suspension flow over the solenoid. Let $\rho(z) = \log|F'(z)|$. The {\em suspension space}
$$
\widehat{S^1_\rho} \, = \, \widehat{S^1} \times \mathbb{R}_+ \, / \, \bigl ((z,t) \sim (F(z), e^{\rho(z)} \cdot t ) \bigr )
$$ 
carries a natural measure $\widehat{m}_\rho = \widehat{m} \times (dt/t)$ that is invariant under the {\em suspension flow} $\sigma_s : \widehat{S^1_\rho} \to \widehat{S^1_\rho}$ which takes
$(z,t) \to (z, e^s \cdot t)$.

 \begin{theorem}
\label{suspension-theorem}
The geodesic flow $(\widehat X, d\xi, g_s)$ on the Riemann surface lamination is equivalent to the suspension flow $(\widehat{S^1_\rho}, \widehat{m}_\rho, \sigma_s)$ on the suspension of the solenoid with respect to the roof function  $\rho = \log|F'|$.
\end{theorem}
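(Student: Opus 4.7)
The plan is to construct an explicit measure-preserving, flow-equivariant homeomorphism $\Phi : \widehat{S^1_\rho} \to \widehat X$ from the linear structure on leaves described in Section~\ref{sec:1c-linear-structure}, after which the equivalence of the two dynamical systems can be read off from what each flow does in $\mathbb{H}$-coordinates.

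First I would lift solenoid points to backward orbits inside the disk. Given $u = (u_i)_{i \le 0} \in \widehat{S^1}$, the one-component hypothesis guarantees that $F$ is an unramified covering over the annulus $\widetilde A$, so starting from $z_0 = r u_0$ for a fixed $r$ close to $1$ one can consistently select pre-images $z_{-n}$ of $z_{-n+1}$ under $F$ lying on the branch that tracks the radius at $u_{-n}$. This produces a backward orbit $\mathbf{z}(u) \in \widehat{\mathbb{D}}$ with $z_i(u)/|z_i(u)| \to u_i$ for each $i$. I would then set
$$
\Phi(u, t) \;:=\; \bigl[\, L(\mathbf{z}(u), it) \,\bigr] \in \widehat X,
$$
the residual dependence on the choice of $r$ being absorbed after passing to the $\widehat{F}$-quotient.

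The crucial step is to show that $\Phi$ descends to the suspension quotient, i.e.\ that
$$
\Phi\bigl(\widehat{F}(u),\, e^{\rho(u_0)} t\bigr) \;=\; \Phi(u, t) \quad \text{in } \widehat X.
$$
Shifting the solenoid by $\widehat{F}$ re-indexes $\mathbf{z}(u)$, and the induced change of leaf parametrization is, by construction, an element of $\aut(\mathbb{H}, \infty)$ fixing the imaginary axis, hence a pure scaling $w \mapsto \lambda w$. To pin down $\lambda$, I would unfold the defining limit (\ref{eq:H-action}): the boundary rescalings $|z_{j-n} - z_{j-n}/|z_{j-n}||$ appearing in $Z_{j-n}(w)$ telescope in the chain rule for $F^{\circ n}$, and the single surviving derivative at $u_0$ gives $\lambda = |F'(u_0)| = e^{\rho(u_0)}$. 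Lemma~\ref{ac-lemma} controls $|F'|$ radially so that the limit converges uniformly on compacta, and the one-component hypothesis keeps the orbit uniformly away from $\Sigma$.

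The remaining steps are routine. Flow-equivariance $\Phi \circ \sigma_s = g_s \circ \Phi$ is immediate from (\ref{eq:H-action2}) since both $\sigma_s$ and $g_s$ act as $w \mapsto e^s w$ on the $\mathbb{H}$-coordinate of the imaginary point. For measures, the leaf factor $dy/y$ in $d\xi$ agrees with $dt/t$ in $\widehat m_\rho$ under $y = t$, while the transverse counting measure $c_z$ is precisely the disintegration of $\widehat m$ over $\pi_0$ by the construction in Section~\ref{sec:transversals}, a match confirmed on flow boxes by Lemma~\ref{xi-measure-of-a-cylinder}. Bijectivity and continuity follow from the Koebe-controlled flow-box description of $\widehat X$. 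The main obstacle is the identification $\lambda = |F'(u_0)|$: this is where the linear structure is really used, and carrying it out requires careful bookkeeping of the boundary rescalings in $Z_j$ across the chain rule for $F^{\circ n}$ as $n \to \infty$.
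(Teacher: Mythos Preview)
Your proposal is correct in outline and ultimately reaches the same conclusion, but it takes a more circuitous route than the paper. The paper defines the isomorphism directly as an \emph{exponential map}
\[
E(\mathbf{u},t)_j \;=\; \lim_{n\to\infty} F^{\circ n}\!\left(u_{j-n} - \frac{t\,u_{j-n}}{|(F^{n-j})'(u_{j-n})|}\right),
\]
with the boundary derivative built into the perturbation $v_{j-n}$. The suspension relation $E(\widehat F(\mathbf{u}), |F'(u_0)|\,t)=\widehat F(E(\mathbf{u},t))$ and the flow identity $g_s(E(\mathbf{u},t))=E(\mathbf{u},e^s t)$ then drop out of the chain rule in one line. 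Your approach instead factors through an auxiliary disk orbit $\mathbf{z}(u)$ with $z_0=ru_0$, then invokes the $\mathbb{H}$-action $L$; this works, but the scaling $\lambda=|F'(u_0)|$ now emerges only after comparing two \emph{different} disk orbits $\mathbf{z}(\widehat F(u))$ and $\widehat F(\mathbf{z}(u))$ sitting on the same leaf, which is exactly the ``careful bookkeeping'' you flag. The paper's formulation sidesteps this entirely.

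Two small inaccuracies are worth noting. First, shifting the solenoid by $\widehat F$ does \emph{not} merely re-index $\mathbf{z}(u)$: the orbits $\mathbf{z}(\widehat F(u))$ (starting at $rF(u_0)$) and $\widehat F(\mathbf{z}(u))$ (starting at $F(ru_0)$) are distinct points on the leaf, and it is their asymptotic radial ratio, not a re-indexing, that produces $|F'(u_0)|$. Second, the dependence on $r$ is not absorbed by the $\widehat F$-quotient: changing $r$ rescales $t$ by a constant $c\approx(1-r')/(1-r)$, which amounts to precomposing $\Phi$ with the suspension flow $\sigma_{\log c}$, not with $\widehat F$. Neither point is fatal---fix $r$ once and the argument goes through---but they should be stated correctly.
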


\begin{proof}[Sketch of proof] 
The isomorphism between $\widehat{S^1} \times \mathbb{R}_+$ and $\widehat{\mathbb{D}}$ is given by the  {\em exponential map}
\begin{equation}
\label{eq:exponential-map-def}
E(\mathbf{u}, t) = \lim_{n \to \infty} F^{\circ n}(u_{i-n} + v_{i-n}),
\end{equation}
 where
$$
v_{i - n} = - \frac{t \cdot u_{i-n}}{|(F^{n-i})'(u_{i-n})|}.
$$
By Koebe's distortion theorem,
\begin{equation}
\label{eq:exponential-map}
E(\mathbf{u}, t)_{i} = u_{i} + v_i + o(|v_i|).
\end{equation}
In these exponential coordinates, the geodesic flow $g_s: \widehat{\mathbb{D}} \to  \widehat{\mathbb{D}}$ takes the form
\begin{equation}
\label{eq:intertwines-suspension-geodesic-flows}
g_s(E(\mathbf{u},t))= E(\mathbf{u}, e^{s} \cdot t).
\end{equation}
As a result, the exponential map descends to an isomorphism between $\widehat{S^1_\rho}$ and $\widehat{X}$ and intertwines the geodesic and suspension flows.
\end{proof}

 Since $m$ is ergodic under $F$ on the unit circle, $\widehat m$ is ergodic under $\widehat F$ on the solenoid and $\widehat{m}_\rho$ is ergodic under the suspension flow on $\widehat{S^1_\rho}$. The above theorem then implies that the geodesic flow on $\widehat{X}$ is ergodic.

\begin{remark}
The presentation of this section is inspired by \cite[Section 10]{mcmullen}. In Part \ref{part:general-inner-function}, we will give another perspective on the measure $\xi$ and the geodesic and horocyclic flows on $\widehat{X}$, in the context of general inner functions of finite Lyapunov exponent (which may not be one component).
\end{remark}

\section{Almost Invariant Functions}
\label{sec:almost invariant}

We say that a function $h: \mathbb{D} \to \mathbb{C}$ is {\em almost invariant} under $F$ if
$$
\limsup_{|F^{\circ n}(z)| \to 1}{|h(F^{\circ n}(z)) - h(z)|} = 0.
$$
In particular, for every backward orbit ${\bf z} = (z_i)_{i=-\infty}^0 \in \widehat{\mathbb{D}}$, $\lim_{i \to -\infty} h(z_i)$ exists and defines a  function on the Riemann surface lamination:
$$
\widehat{h}({\bf z}) = \lim_{i \to -\infty} h(z_i).
$$

\subsection{Consequences of ergodicity and mixing}
\label{sec:consequences-ergodicity-mixing}

In the following two theorems, we use ergodicity and mixing of the geodesic flow on $\widehat{X}$ to study almost invariant functions. The first theorem is a slight generalization from \cite[Theorem 10.6]{mcmullen}, which was originally stated for finite Blaschke products. For the convenience of the reader, we describe its proof in the setting of one component inner functions.

\begin{theorem}
\label{ergodic-theorem}
Let $F \in \Lambda$ be a one component inner function for which the geodesic flow on $\widehat{X}$ is ergodic.
Suppose $h: \mathbb{D} \to \mathbb{C}$ is a bounded almost invariant function that is uniformly continuous in the hyperbolic metric. Then for almost every $\zeta \in S^1$, we have
$$
\lim_{r \to 1} \frac{1}{|\log(1-r)|} \int_0^r h(s\zeta) \cdot \frac{ds}{1-s} = \fint_{\widehat X} \widehat{h} d\xi.
$$
In particular,
$$
\lim_{r \to 1} \frac{1}{2 \pi |\log(1-r)|} \int_{\mathbb{D}_r} h(z) \cdot \frac{dA(z)}{1-|z|} = \fint_{\widehat X} \widehat{h} d\xi.
$$
\end{theorem}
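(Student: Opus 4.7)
The plan is to recognize the weighted radial integral as a Ces\`aro time average of $\widehat h$ along the geodesic flow, and then to invoke ergodicity via Birkhoff's theorem.

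First, the substitution $s = 1 - e^{-t}$ converts $ds/(1-s)$ into $dt$ and turns the upper limit $r$ into $T := |\log(1-r)|$, so the radial integral becomes $\frac{1}{T}\int_0^T h\bigl((1-e^{-t})\zeta\bigr)\,dt$. To realize $(1-e^{-t})\zeta$ as a geodesic trajectory in the lamination, pick any $\mathbf u \in \widehat{S^1}$ with $\pi_0(\mathbf u) = \zeta$ and set $\mathbf z^* := E(\mathbf u, 1)$. Then \eqref{eq:intertwines-suspension-geodesic-flows} gives $g_{-t}\mathbf z^* = E(\mathbf u, e^{-t})$, whose zeroth coordinate is $(1-e^{-t})\zeta + o(e^{-t})$ by \eqref{eq:exponential-map}. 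The key approximation
\begin{equation*}
\widehat h(g_{-t}\mathbf z^*) - h\bigl((1-e^{-t})\zeta\bigr) \longrightarrow 0 \quad \text{as } t \to \infty
\end{equation*}
follows from two estimates: almost invariance of $h$ forces $\widehat h(g_{-t}\mathbf z^*) - h((g_{-t}\mathbf z^*)_0) \to 0$ because $|(g_{-t}\mathbf z^*)_0|\to 1$, and the Euclidean error $o(e^{-t})$ in the position of $(g_{-t}\mathbf z^*)_0$ translates into an $o(1)$ error in the hyperbolic metric (both points sit at Euclidean depth $\asymp e^{-t}$ from the circle), which is then absorbed by the uniform hyperbolic continuity of $h$.

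Because $h$ is bounded and $\xi(\widehat X) = \chi_m < \infty$, we have $\widehat h \in L^1(\xi)$. Ergodicity of $g_s$ and Birkhoff's theorem then give $\frac{1}{T}\int_0^T \widehat h(g_{-t}\mathbf z^*)\,dt \to \fint_{\widehat X}\widehat h\,d\xi$ for $\xi$-a.e.\ $\mathbf z^*$, so, combined with the key approximation, the desired radial limit holds whenever $\mathbf u$ lies in some $\widehat m$-full measure set $G \subset \widehat{S^1}$ (the set of $\mathbf u$ for which $E(\mathbf u,1)$ is Birkhoff-good, which is $\widehat m$-full by Fubini against $d\xi = d\widehat m\, dt/t$ and the $g_s$-invariance of the Birkhoff-good set). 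To pass to Lebesgue-a.e.\ $\zeta$, disintegrate $\widehat m = \int_{S^1}\delta_\zeta^{\mathrm{fib}}\,dm(\zeta)$ along $\pi_0$: since $\widehat m(\widehat{S^1}\setminus G) = 0$, the conditional measure of $\widehat{S^1}\setminus G$ over the fiber above $\zeta$ vanishes for $m$-a.e.\ $\zeta$, so a good lift exists above almost every $\zeta$. For the second formula, polar coordinates rewrite the area integral as $\int_0^{2\pi}\int_0^r h(se^{i\theta})\,s\,ds/(1-s)\,d\theta$; the $s$-versus-$1$ discrepancy contributes a uniformly bounded correction, negligible after dividing by $|\log(1-r)|$, and bounded convergence applied to the angular densities (which are uniformly bounded by $\|h\|_\infty$ and converge a.e. to the constant $\fint_{\widehat X}\widehat h\,d\xi$ by the first part) concludes.

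The main obstacle is the key approximation: one must simultaneously control the almost-invariance defect $\widehat h - h\circ\pi_0$ and the hyperbolic distortion of the exponential map so that the error decays pointwise in $t$ and hence vanishes under the Ces\`aro average.
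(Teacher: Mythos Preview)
Your proposal is correct and follows essentially the same approach as the paper: identify the weighted radial integral with a Birkhoff time average of $\widehat h$ along the geodesic flow via the exponential/suspension coordinates, and then use almost invariance together with hyperbolic uniform continuity of $h$ to replace $\widehat h$ along the trajectory by $h$ on the radius. The paper states the ergodic theorem directly for $\widehat m$-a.e.\ $\mathbf u \in \widehat{S^1}$ and uses the substitution $s = 1 - t$ rather than your $s = 1 - e^{-t}$, but your more explicit Fubini/disintegration step (to pass from $\widehat m$-a.e.\ to $m$-a.e.) and your bounded-convergence treatment of the area statement are in the same spirit and simply fill in what the paper leaves implicit.
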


\begin{proof}
The ergodic theorem tells us that for almost every ${\bf{u}} \in \widehat{S^1}$, the backward time averages
$$
\lim_{T \to 0} \frac{1}{|\log T|} \int_T^1 \widehat{h}(E({\bf{u}}, t)) \cdot \frac{dt}{t} = \fint_{\widehat X} \widehat{h} d\xi.
$$
Write ${\bf{z}}(t) = E({\bf{u}},t)$. By almost-invariance, we have
$$
\widehat{h}(E({\bf{u}}, t))= h(z_0(t)) + o(1), \qquad \text{as }t \to 0^+,
$$
while
$$
h(z_0(t))= h((1-t)u_0) + o(1), \qquad \text{as }t \to 0^+,
$$
by  (\ref{eq:exponential-map}) and the uniform continuity of $h$ in the hyperbolic metric. Consequently,
$$
\lim_{T \to 0} \frac{1}{|\log T|} \int_T^1 \widehat{h}((1-t) u_0) \cdot \frac{dt}{t} = \fint_{\widehat X} \widehat{h} d\xi.
 $$
The proof is completed after making the change of variables $s = 1-t$ and relabeling $\zeta = u_0$ and $T = 1-r$.
\end{proof}

\begin{theorem}
\label{mixing-theorem}
Let $F \in \Lambda$ be a one component inner function for which the geodesic flow on $\widehat{X}$ is mixing.
 Suppose $h: \mathbb{D} \to \mathbb{C}$ is a bounded almost invariant function that is uniformly continuous in the hyperbolic metric. Then,
$$
\lim_{r \to 1}  \int_{|z|=r} h(z) dm = \frac{1}{\int_{S^1} \log|F'| dm} \int_{\widehat X} \widehat{h} d\xi.
$$
\end{theorem}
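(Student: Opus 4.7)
The plan is to work in the suspension-flow model of Theorem \ref{suspension-theorem} and analyze the cross-section integral
$$\Phi(t) \; := \; \int_{\widehat{S^1}} \widehat{h}\bigl(E(\mathbf{u}, t)\bigr) \, d\widehat m(\mathbf u), \qquad t > 0,$$
whose behaviour as $t \to 0^+$ is what the theorem really concerns. First I would show that $\Phi(1-r) = \int_{|z|=r} h \, dm + o(1)$ as $r \to 1$: almost-invariance of $h$ gives $\widehat h(E(\mathbf u, t)) = h(E(\mathbf u, t)_0) + o(1)$ uniformly in $\mathbf u$ as $t \to 0^+$, the Koebe expansion (\ref{eq:exponential-map}) yields $E(\mathbf u, t)_0 = (1 - t) u_0 + o(t)$ uniformly, and the uniform hyperbolic continuity of $h$ absorbs the $o(t)$-displacement. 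Since $(\pi_0)_* \widehat m = m$, the theorem reduces to showing $\Phi(t) \to \frac{1}{\int \log|F'| dm} \int_{\widehat X} \widehat h \, d\xi$ as $t \to 0^+$.

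To invoke the mixing hypothesis I would pair $\widehat h$ against a thin ``horocyclic'' indicator on the suspension. Fix $\epsilon > 0$ smaller than $\inf \log|F'|$, which is strictly positive by Lemma \ref{derivative-circle}. Then
$$f(\mathbf u, t) \; := \; \tfrac{1}{\epsilon} \chi_{[1, e^\epsilon]}(t)$$
is a legitimate bounded observable on the fundamental domain $\{1 \le t < e^{\rho(u_0)}\}$ of $\widehat{S^1_\rho}$, with $\int f \, d\widehat m_\rho = 1$. Mixing of $\sigma_s$ (equivalently of $g_s$, by Theorem \ref{suspension-theorem}) applied to $f$ and the bounded function $\widehat h$ gives
$$\int f \cdot (\widehat h \circ g_s) \, d\widehat m_\rho \; \longrightarrow \; \frac{1}{\int \log|F'| dm} \int_{\widehat X} \widehat h \, d\xi \qquad \text{as } s \to -\infty.$$
Unwinding the left-hand side with (\ref{eq:intertwines-suspension-geodesic-flows}) and the substitution $u = e^s t$ in the inner integral, this becomes exactly the log-average
$$\frac{1}{\epsilon} \int_T^{T e^\epsilon} \Phi(u) \, \frac{du}{u} \; \longrightarrow \; \frac{1}{\int \log|F'| dm} \int_{\widehat X} \widehat h \, d\xi, \qquad T = e^s \to 0^+.$$

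The final step promotes this log-average to a pointwise limit by an equicontinuity argument. From the representation $\Phi(t) = \int_{S^1} h((1-t)\zeta) \, dm(\zeta) + o(1)$ and the fact that the hyperbolic distance between $(1-t)\zeta$ and $(1-t')\zeta$ is at most $\epsilon + o(1)$ whenever $t, t' \in [T, T e^\epsilon]$ and $T \to 0^+$, the uniform hyperbolic continuity of $h$ gives $|\Phi(t) - \Phi(t')| \le \omega(\epsilon) + o(1)$ on such windows, where $\omega$ is the modulus of continuity of $h$. A standard triangle inequality --- first fix $\epsilon$ so that $\omega(\epsilon)$ is small, then send $T \to 0^+$ to invoke the log-averaged mixing conclusion above --- yields the desired convergence. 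The main obstacle is precisely this transfer from log-average to pointwise value: mixing alone would only produce the weaker Ces\`aro-type conclusion of Theorem \ref{ergodic-theorem}, and it is the uniform hyperbolic continuity of $h$ that makes it possible to extract a genuine pointwise limit.
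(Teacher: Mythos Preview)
Your proof is correct and essentially mirrors the paper's: the paper applies mixing to the indicator of a thin hyperbolic annulus $\widehat{A}$ with $A = A_{\hyp}(0; R_0, R_0+\delta)$ paired against $\widehat{h}$, then uses almost-invariance and uniform hyperbolic continuity to reduce the resulting average over $g_t(\widehat A)\approx\widehat{A_t}$ to the circle integral $\int_{\partial B_{\hyp}(0,R_0+t)} h\,dm$. Under the isomorphism of Theorem~\ref{suspension-theorem}, the paper's thin annulus is exactly your thin slice $\{t\in[1,e^\epsilon]\}$ in the suspension model, and its final passage $\delta\to 0$ is your equicontinuity promotion from the $\log$-average of $\Phi$ to the pointwise limit.
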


\begin{proof}
Consider a thin annulus
$$
A \, = \, A_{\hyp}(0; R_0, R_0 + \delta) \, = \,  \{w: R_0 < d_{\mathbb{D}}(0, w) \, < \,  R_0 + \delta \} \, \subset \, \mathbb{D}$$ of hyperbolic width $\delta$. Let $\widehat{A} \subset \widehat{X}$ be the collection of backwards orbits that pass through $A$. Since the geodesic flow is mixing, we have that
\begin{equation}
\label{eq:mixing-equation}
\lim_{t\to \infty}\frac{1}{\xi(\widehat{A})} \cdot \langle \chi_{\widehat{A}} \circ g_{t}, h \rangle =
\frac{1}{\int_{S^1} \log|F'| dm} \int_{\widehat{X}} \widehat{h} d\xi.
\end{equation}
In view of Lemma \ref{xi-measure-of-a-cylinder}, when $R_0 > 0$ is large, $$\xi(\widehat{A}) \, \approx \, \frac{1}{2\pi} \int_A \frac{dA(z)}{1-|z|} \, \approx \, \delta,
 \qquad \chi_{\widehat{A}} \circ g_{t} \approx \chi_{\widehat{A_t}},
 $$
where $A_t = A_{\hyp}(0;  R_0+t, R_0+t+\delta)$. Therefore, by the almost invariance of $h$, the left hand side of (\ref{eq:mixing-equation}) is approximately
$$
\frac{1}{2\pi \delta} \int_{A_t} h(z) \, \frac{dA(z)}{1-|z|} .
$$
When $\delta > 0$ is small, by the uniform continuity of $h$, this is approximately
$$\int_{\partial B_{\hyp}(0,R_0+t)} h(z) dm$$
as desired.
\end{proof}

\subsection{Orbit counting in presence of mixing}
\label{sec:1c-orbit-counting}

For a point $z \in \mathbb{D}$ sufficiently close to the unit circle and $0 < \delta < 1$, we construct an almost invariant function $h_{z, \delta}$ concentrated on a hyperbolic $O(\delta)$-neighbourhood of the inverse images of $z$:

\begin{enumerate}

\item By a {\em box} in the unit disk, we mean a set of the form
$$
\square = \{ w \in \mathbb{D} \, : \, \theta_1 < \arg w < \theta_2, \, r_1 < |w| < r_2 \}.
$$
For a point $z$ with $|z| > 1/2$ and $\delta > 0$ small, we write $\square = \square(z, \delta)$ for the box centered at $z$ of hyperbolic height $\delta$ and hyperbolic width $\delta$.

\item Recall that a one component inner function $F$ acts as a covering map over an $\widetilde{A} = A(0; \rho, 1/\rho)$. In particular, when $|z|$ is close to 1 and $\delta$ is small, the repeated pre-images $F^{-n}(\square)$ consist of disjoint squares of roughly the same hyperbolic size as the original, albeit distorted by a tiny amount. Define $h_{\rough}(w) = 1$ if $w \in F^{-n}(\square)$ for some $n \ge 0$ and $h_{\rough}(w) = 0$ otherwise.

\item We now smoothen the function from the previous step. To that end, consider a slightly smaller box $\square_2 = \square(z, \delta - \eta)$ with $\eta <\!\!< \delta$. Define $h_{z,\delta}$ to be a smooth function on $\square$ which is $1$ on $\square_2$, $0$ on $\partial \square$, and takes values between 0 and 1. Extend $h_{z,\delta}$ to $\bigcup_{n \ge 1} F^{-n}(\square)$ by backward invariance. Finally, extend $h_{z,\delta}$ by $0$ to the rest of the unit disk. Using the Schwarz lemma, it is not hard to see that $h_{z,\delta}$ is uniformly continuous in the hyperbolic metric.
\end{enumerate}

\begin{theorem}
\label{main-thm2a}
Let $F \in \Lambda$ be a one component inner function for which the geodesic flow on $\widehat{X}$ is mixing.
 Suppose $z \in \mathbb{D} \setminus \{ 0 \}$ lies outside a set of countable set. Then,
\begin{equation}
\label{eq:main-thm2a}
\mathcal N(z, R) \, \sim \, \frac{1}{2} \log \frac{1}{|z|} \cdot \frac{1}{\int_{\partial \mathbb{D}} \log |F'| dm} \cdot e^R,
\end{equation}
as $R \to \infty$.
\end{theorem}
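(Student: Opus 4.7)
The plan is to apply Theorem~\ref{mixing-theorem} to the almost invariant function $h = h_{z,\delta}$ constructed immediately before the statement and then read off the pointwise asymptotic for $\mathcal N(z,R)$ from the two sides of the resulting identity. Since the construction of $h_{z,\delta}$ requires $|z|$ close to $\partial\mathbb{D}$, I would first reduce to this case: for $z$ outside the countable set of exceptional points of $F$ and their forward images, pick $N$ large enough that every $z'\in F^{-N}(z)$ is non-exceptional with $|z'|>1/2$, note that
\[
\mathcal N(z,R)\;=\;o(e^R)\;+\;\sum_{z'\in F^{-N}(z)}\mathcal N(z',R),
\]
and collapse the sum via Lemma~\ref{sum-of-heights}: $\sum_{z'\in F^{-N}(z)}\log(1/|z'|)=\log(1/|z|)$. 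The dominated convergence step is justified uniformly by the a priori bound $\mathcal N(z',R)\le Ce^R\log(1/|z'|)$ obtained from Lemma~\ref{orbit-counting-a-priori}.

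\textbf{Matching the two sides.} Now assume $|z|$ is close to $1$ and fix a small $\delta>0$; set $\square=\square(z,\delta)$. On the left-hand side of Theorem~\ref{mixing-theorem}, Koebe distortion shows that each pre-image box $F^{-n}(\square)$ is an approximate Euclidean square of side $\delta(1-|w|)$ centered at a pre-image $w\in F^{-n}(z)$; it straddles the circle $|w'|=r$ precisely when $|d_{\mathbb{D}}(0,w)-R|\lesssim\delta/2$ (writing $R=d_{\mathbb{D}}(0,r)$), and then cuts out an arc of $m$-measure $\asymp\delta(1-r)/(2\pi)$. Together with $1-r\sim 2e^{-R}$ this yields
\[
\int_{|w|=r}h_{z,\delta}(w)\,dm \;\asymp\; \frac{\delta(1-r)}{2\pi}\,\bigl[\mathcal N(z,R+\tfrac{\delta}{2})-\mathcal N(z,R-\tfrac{\delta}{2})\bigr].
\]
On the right-hand side, the cylinder $\widehat\square$ serves as a cross-section for the $\widehat F$-action on the set of orbits that ever visit $\square$, and $\widehat h_{z,\delta}({\bf z})=h_{z,\delta}(z_0)$ there; hence Lemma~\ref{xi-measure-of-a-cylinder} gives
\[
\int_{\widehat X}\widehat h_{z,\delta}\,d\xi \;=\;\int_{\widehat\square} h_{z,\delta}(z_0)\,d\xi \;\asymp\; \frac{1}{2\pi}\int_\square h_{z,\delta}(w)\,\frac{dA(w)}{1-|w|} \;\asymp\; \frac{\delta^2\log(1/|z|)}{2\pi}.
\]
Letting $r\to 1$ in Theorem~\ref{mixing-theorem} and equating the two computed quantities produces the increment asymptotic
\[
\mathcal N(z,R+\tfrac{\delta}{2})-\mathcal N(z,R-\tfrac{\delta}{2})\;\sim\;\frac{\delta\log(1/|z|)}{2\chi_m}\,e^R.
\]

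\textbf{From increments to pointwise; main obstacle.} To upgrade the increment statement to the claimed $\mathcal N(z,R)\sim(\log(1/|z|)/(2\chi_m))\,e^R$, I would run the above with two smoothed bumps of widths $\delta$ and $\delta-\eta$, squeezing $\mathcal N(z,R)$ between matching upper and lower asymptotics over overlapping hyperbolic windows, and let first $\delta\to 0$ and then $\eta\to 0$; the a priori upper bound of Lemma~\ref{orbit-counting-a-priori} supplies the uniform error control. The hardest step is precisely this sandwich: both $\asymp$ signs in the displays above must be upgraded to multiplicative errors $1\pm\varepsilon$ that are uniform in $\delta$, which on the right-hand side comes from the sharpened form of Lemma~\ref{xi-measure-of-a-cylinder} (valid for boxes sufficiently close to $\partial\mathbb{D}$) and on the left from a careful Koebe-distortion bookkeeping. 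The mixing hypothesis (which rules out $F=z^d$) is essential here: under ergodicity alone only the Ces\`aro-averaged version holds, as witnessed by the genuine step-function behaviour of $\mathcal N(z,R)$ when $F=z^d$.
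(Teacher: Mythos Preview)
Your proposal is correct and matches the paper's proof: apply Theorem~\ref{mixing-theorem} to $h_{z,\delta}$, evaluate the right side via Lemma~\ref{xi-measure-of-a-cylinder} and the left via Koebe distortion to obtain the increment asymptotic for $\mathcal N(z,R-\delta,R)$, then integrate in $R$ and send $\delta\to 0$.

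The only organizational difference is in the reduction from general $z$ to $|z|$ near $\partial\mathbb{D}$. You do it first, writing $\mathcal N(z,R)=o(e^R)+\sum_{z'\in F^{-N}(z)}\mathcal N(z',R)$ and invoking dominated convergence over the (possibly infinite) fibre $F^{-N}(z)$, with Lemma~\ref{orbit-counting-a-priori} supplying the summable dominator $C\log(1/|z'|)$. The paper does it last and more explicitly, proving separate lower and upper bounds (its Steps~2 and~3) by choosing \emph{finite} subsets $G_k\subset F^{-k}(z)$ at each level $0\le k\le m$ that capture all but $\varepsilon$ of the height, and bounding the complement via Lemma~\ref{orbit-counting-a-priori}. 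Your DCT packaging is a bit slicker; one small point worth making explicit is that your $o(e^R)$ for the pre-images of order $<N$ relies on the elementary fact that $\sum_j a_j<\infty$ forces $\#\{j:a_j>\epsilon\}=o(1/\epsilon)$ (applied with $a_j=\log(1/|w_j|)$ and $\epsilon\asymp e^{-R}$), which is true but not entirely trivial when $F$ has infinite degree.
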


\begin{proof}
We will show (\ref{eq:main-thm2a}) for any point $z \in \mathbb{D} \setminus \{ 0 \}$ which does not belong to a forward orbit of an exceptional point of $F$. From the results of Ahern and Clark discussed in Section \ref{sec:inner}, it is easy to see that this set is at most countable.

Below, we write $A \sim_\varepsilon B$ if
$$
1 - C\varepsilon \, \le \, A/B \, \le \, 1 + C\varepsilon,
$$
for some constant $C$ depending only on the inner function $F$ (and not on $z$ or $R$). More generally, we use the notation
$A \sim_{\varepsilon,\delta, R} B$ to denote that
$$
(1-o(1)) (1 - C\varepsilon) \, \le \, A/B \, \le \, (1+o(1))(1 + C\varepsilon)
$$
as $\delta \to 0^+$ and $R \to \infty$.

\medskip

{\em Step 1.} Suppose $z \in A(0; 1 - \varepsilon, 1)$ where $\varepsilon > 0$ is sufficiently small so the function $h_{z, \delta}$ is defined. In this step, we show that
\begin{equation}
\label{eq:z-close-to-the-unit-circle}
\mathcal N(z, R) \, \sim_{\varepsilon,R} \, \frac{1}{2} \log \frac{1}{|z|} \cdot \frac{1}{\int_{\partial \mathbb{D}} \log |F'| dm} \cdot e^R.
\end{equation}
To this end, we apply Theorem \ref{mixing-theorem} with $h = h_{z,\delta}$. 
In view of Lemma \ref{xi-measure-of-a-cylinder},
$$
 \frac{1}{\int_{S^1} \log|F'| dm} \int_{\widehat X} \widehat{h}_{z,\delta}\, d\xi \, \sim_{\varepsilon, \delta} \,  \frac{1}{\int_{S^1} \log|F'| dm} \cdot \frac{\delta^2}{2\pi} \cdot \log \frac{1}{|z|}.
 $$
 Since hyperbolic distance $\delta$ along the circle $\partial B_{\hyp}(0,R)$ corresponds to Euclidean distance of roughly $(2/e^R)\delta$,
$$
\int_{\partial B_{\hyp}(0,R)} h_{z,\delta}\, dm \, \sim_{\varepsilon, \delta, R} \, \frac{1}{2\pi} \cdot \frac{2\delta}{e^R} \cdot \mathcal N(z, R-\delta, R),
$$
where
$$
\mathcal N(z, R-\delta, R) = \# \bigl \{ w \in A_{\hyp} (0; R-\delta, R) : F^{\circ n}(w) = z \text{ for some }n \ge 0 \bigr \}.
$$
Comparing the two equations above, we see that
$$
\mathcal N(z, R-\delta, R) \sim_{\varepsilon,\delta,R}  \frac{\delta}{\int_{S^1} \log|F'| dm} \cdot \log \frac{1}{|z|} \cdot \frac{e^R}{2}.
$$
Integrating with respect to $R$ and taking $\delta \to 0$ shows (\ref{eq:z-close-to-the-unit-circle}).
 
 \medskip
 
 {\em Step 2.} Let $z \in \mathbb{D} \setminus \{ 0 \}$ be an arbitrary point in the punctured unit disk, which is not contained in the forward orbit of an exceptional point. In view of Lemma \ref{minimal-translation}, for any $\varepsilon > 0$, one can find an integer $m \ge 0$, so that any $m$-fold pre-image of $z$ is contained in $A (0; 1 - \varepsilon, 1 )$.
 
 By Lemma \ref{sum-of-heights},
  $$
 \sum_{F^{\circ m}(w) = z} \log \frac{1}{|w|} = \log \frac{1}{|z|}.
 $$ 
 We choose a finite set of $m$-fold pre-images $G_m$ so that
 $$
 \sum_{w \in G_m} \log \frac{1}{|w|} > \log \frac{1}{|z|} - \varepsilon.
 $$
 By Step 1, there exists a constant $C > 0$ (depending on $F$) so that
 $$
 \mathcal N(z, R) \, \ge \, \sum_{w \in G_m} \mathcal N(w, R) \, \ge \, (1 - C\varepsilon) \cdot \frac{1}{2} \log \frac{1}{|z|} \cdot \frac{1}{\int_{\partial \mathbb{D}} \log |F'| dm} \cdot e^R,
 $$
  for any $R > R_0(F, z)$ sufficiently large, depending on the inner function $F$ and the point $z \in \mathbb{D} \setminus \{ 0 \}$.

\medskip

{\em Step 3.}
  It remains to prove a matching upper bound. We use the same $m\ge 0$ as in the previous step. For any $0 \le k \le m$, let $T_k$ denote the set of repeated pre-images of $z$ of order $k$. Since
 $$
 \sum_{w \in T_k} \log \frac{1}{|w|} = \log \frac{1}{|z|}
 $$
 is finite by  Lemma \ref{sum-of-heights}, one can find a finite set $G_k \subset T_k$ so that
\begin{equation}
\label{eq:bad-k}
 \sum_{w \in T_k \setminus G_k} \log \frac{1}{|w|} < \varepsilon/m.
 \end{equation}
 Let $G = \bigcup_{k=0}^m G_k$ and $B =  \bigcup_{k=0}^m (T_k \setminus G_k)$. A somewhat crude estimate shows that
$$
\mathcal N(z, R) \le |G| + \sum_{w \in G_m} \mathcal N(w, R) + \sum_{w \in B} \mathcal N(w, R).
$$
By Step 1, 
$$
\sum_{w \in G_m} \mathcal N(w, R) \, \le \, (1 + C\varepsilon) \cdot \frac{1}{2} \log \frac{1}{|z|} \cdot \frac{1}{\int_{\partial \mathbb{D}} \log |F'| dm} \cdot e^R,
$$
while $\sum_{w \in B} \mathcal N(w, R)$ can be estimated using (\ref{eq:bad-k}) and Lemma \ref{orbit-counting-a-priori}.
\end{proof}

\subsection{Orbit counting in presence of ergodicity}

We now explain how to use the ergodicity of the geodesic flow to show orbit counting up to a Ces\`aro average:

\begin{theorem}
\label{main-thm-a}
Let $F \in \Lambda$ be a one component inner function for which the geodesic flow on $\widehat{X}$ is ergodic.
If $z \in \mathbb{D} \setminus \{ 0 \}$ lies outside a countable set, then
\begin{equation}
\label{eq:main-thm-a}
\lim_{R\to+\infty}
\frac{1}{R} \int_0^R \frac{\mathcal N(z, S)}{e^S} dS 
=\frac{1}{2} \log \frac{1}{|z|} \cdot \frac{1}{\int_{\partial \mathbb{D}} \log |F'| dm}.
\end{equation}
\end{theorem}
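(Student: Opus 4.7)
The plan is to adapt the proof of Theorem \ref{main-thm2a}, substituting Theorem \ref{ergodic-theorem} for Theorem \ref{mixing-theorem}; the price of giving up mixing is that the pointwise asymptotic for $\mathcal N(z,R)/e^R$ is replaced by the weaker Ces\`aro statement.

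First I would handle the case where $z\in A(0;1-\varepsilon,1)$ lies close to the unit circle, so that the almost invariant function $h_{z,\delta}$ from Section \ref{sec:1c-orbit-counting} is defined. I apply the disk-integral form of Theorem \ref{ergodic-theorem} to $h_{z,\delta}$ and choose $r=\tanh(R/2)$, so that $\mathbb{D}_r$ essentially coincides with $B_{\hyp}(0,R)$ and $|\log(1-r)|\sim R$. The right-hand side is computed as in Step 1 of Theorem \ref{main-thm2a}:
$$\fint_{\widehat X}\widehat{h}_{z,\delta}\,d\xi \;\sim_{\varepsilon,\delta}\; \frac{1}{\int_{S^{1}}\log|F'|\,dm}\cdot\frac{\delta^{2}}{2\pi}\log\frac{1}{|z|}.$$
For the left-hand side, I would decompose $\int_{\mathbb{D}_r}h_{z,\delta}\cdot dA(z')/(1-|z'|)$ over the pre-image boxes $F^{-n}(\square(z,\delta))$. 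By Koebe distortion each such box, centered at some $w^{*}$ with $F^{\circ n}(w^{*})=z$, has hyperbolic area $\asymp \delta^{2}$; since $dA(z')/(1-|z'|)=(1-|z'|)\,dA_{\hyp}(z')$ and $1-|w^{*}|\asymp 2e^{-d_{\mathbb D}(0,w^{*})}$, its contribution is $\asymp 2\delta^{2}e^{-d_{\mathbb D}(0,w^{*})}$. Summing over all pre-images inside $\mathbb{D}_r$:
$$\int_{\mathbb{D}_r}h_{z,\delta}\cdot\frac{dA(z')}{1-|z'|}\;\sim\;2\delta^{2}\int_{0}^{R}e^{-S}\,d\mathcal N(z,S).$$
Integration by parts yields $\int_{0}^{R}e^{-S}\,d\mathcal N(z,S)=e^{-R}\mathcal N(z,R)+\int_{0}^{R}e^{-S}\mathcal N(z,S)\,dS$, and by Lemma \ref{orbit-counting-a-priori} the boundary term $e^{-R}\mathcal N(z,R)$ is $O(1)$, so it disappears upon dividing by $R$. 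Matching the two sides and then letting $\delta\to 0$ and $\varepsilon\to 0$ gives the asserted limit for such $z$.

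For general $z\in\mathbb D\setminus\{0\}$ outside the (countable) forward orbit of an exceptional point, I would reduce to the near-boundary case exactly as in Steps 2 and 3 of the proof of Theorem \ref{main-thm2a}. Lemma \ref{minimal-translation} supplies $m\ge 0$ such that every $m$-fold pre-image of $z$ sits in $A(0;1-\varepsilon,1)$, and Lemma \ref{sum-of-heights} allows the choice of finite sets $G_k\subset T_k$ with $\sum_{w\in T_k\setminus G_k}\log(1/|w|)<\varepsilon/m$, absorbing into $G_k$ the finitely many pre-images not yet near the unit circle. Applying the first step to each $w\in G_m$ gives the lower bound for the Ces\`aro average; the upper bound follows from
$$\mathcal N(z,R)\;\le\;|G|+\sum_{w\in G_m}\mathcal N(w,R)+\sum_{w\in B}\mathcal N(w,R),$$
with $G=\bigcup_{k}G_k$ and $B=\bigcup_{k}(T_k\setminus G_k)$. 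After integration of $\mathcal N(w,S)e^{-S}$ over $[0,R]$ and division by $R$, Lemma \ref{orbit-counting-a-priori} controls the last sum by $C\sum_{w\in B}e^{-d_{\mathbb D}(0,w)}\le C'\sum_{w\in B}\log(1/|w|)\le 2C'\varepsilon$.

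The main obstacle is Step 1: converting a spatial average over $\mathbb{D}_r$ into a genuine Ces\`aro time average of $\mathcal N(z,S)e^{-S}$. The crucial manoeuvres are the layer-cake decomposition of the integral over pre-image boxes, the integration by parts that isolates the Ces\`aro integral, and the a priori bound $\mathcal N(z,R)\lesssim e^{R}$ from Lemma \ref{orbit-counting-a-priori} that kills the boundary term $e^{-R}\mathcal N(z,R)/R$. Once Step 1 is in hand, the extension to arbitrary $z$ is essentially the same bookkeeping argument as in the mixing case, with Ces\`aro integrals replacing pointwise values throughout.
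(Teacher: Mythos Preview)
Your proposal is correct and follows essentially the same approach as the paper: both apply Theorem \ref{ergodic-theorem} to $h_{z,\delta}$, recognize the disk integral as a weighted sum $\sum_{w} e^{-d_{\mathbb D}(0,w)}$ over pre-images, convert this to the Ces\`aro integral $\int_0^R e^{-S}\mathcal N(z,S)\,dS$ via integration by parts (killing the boundary term with Lemma \ref{orbit-counting-a-priori}), and then extend to general $z$ exactly as in Steps 2--3 of Theorem \ref{main-thm2a}. The only organizational difference is that the paper isolates the integration-by-parts reduction as a preliminary ``Step 0'' before the ergodic computation, whereas you fold it into Step 1.
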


As the proof follows the same pattern as that of Theorem \ref{main-thm2a}, we only sketch the differences.

\begin{proof}[Sketch of proof]  

{\em Step 0.} The theorem boils down to showing
\begin{equation}
\label{eq:orbit-counting-goal}
\frac{1}{R} \sum_{\substack{F^{n}(w) = z, \, n \ge 0 \\ w \in B_{\hyp}(0,R)}} e^{-d_{\mathbb{D}}(0, w)} \, \to \,
\frac{1}{2} \log \frac{1}{|z|} \cdot \frac{1}{\int_{\partial \mathbb{D}} \log |F'| dm},
\end{equation}
as $R \to \infty$. Indeed, once we show (\ref{eq:orbit-counting-goal}), the theorem follows from the following computation:
\begin{align*}
\frac{1}{R} \int_0^R \frac{\mathcal N(z, S)}{e^{S}} dS & = \frac{1}{R}\sum_{\substack{F^{n}(w) = z, \, n \ge 0 \\ w \in B_{\hyp}(0,R)}}  \, \int_{d_{\mathbb{D}}(0,w)}^R e^{-S} dS \\
& = \frac{1}{R} \sum_{\substack{F^{n}(w) = z, \, n \ge 0 \\ w \in B_{\hyp}(0,R)}} \, (e^{-d_{\mathbb{D}(0,w)}}  - e^{-R}) \\
& =   \frac{1}{R} \sum_{\substack{F^{n}(w) = z, \, n \ge 0 \\ w \in B_{\hyp}(0,R)}} \, e^{-d_{\mathbb{D}(0,w)}}  + o(1),
\end{align*}
where in the last step we have used the a priori bound  (\ref{eq:orbit-counting-a-priori}) to estimate the number of terms.

\medskip

{\em Step 1.} Suppose $z \in A(0; 1 - \varepsilon, 1)$ where $\varepsilon > 0$ is sufficiently small so the function $h_{z, \delta}$ is defined.
In this step, we show that
\begin{equation}
\label{eq:step1-proof2}
\frac{1}{R} \sum_{\substack{F^{n}(w) = z, \, n \ge 0 \\ w \in B_{\hyp}(0,R)}} e^{-d_{\mathbb{D}}(0, w)} \, \sim_{\varepsilon,R} \, \frac{1}{2} \log \frac{1}{|z|} \cdot \frac{1}{\int_{\partial \mathbb{D}} \log |F'| dm}.
\end{equation}
Applying Theorem \ref{ergodic-theorem} to the almost invariant function $h = h_{z,\delta}$, we get
\begin{equation}
\label{eq:step4-ergodic}
\lim_{R \to \infty} \biggl \{ \frac{1}{2\pi R} \int_{B_{\hyp}(0, R)} h(x) \, \frac{dA(x)}{1-|x|} \biggr \} = \frac{1}{\int_{\partial \mathbb{D}} \log |F'| dm} \int_{\widehat{X}} \widehat{h} d\xi.
\end{equation}
The left hand side of (\ref{eq:step4-ergodic}) is approximately
\begin{align*}
& \sim_{\varepsilon, \delta, R} \frac{1}{2\pi R} \sum_{\substack{F^{n}(w) = z, \, n \ge 0 \\ w \in B_{\hyp}(0,R)}} \int_{\square(w, \delta)} h(x)  \, \frac{dA(x)}{1-|x|} \\
& \sim_{\varepsilon, \delta, R}  \frac{1}{\pi R} \sum_{\substack{F^{n}(w) = z, \, n \ge 0 \\ w \in B_{\hyp}(0,R)}} e^{-d_{\mathbb{D}}(0,w)} \cdot \int_{\square(w,\delta)} h(x) \, \frac{dA(x)}{(1-|x|^2)^2}.
\end{align*}
Meanwhile, by Lemma \ref{xi-measure-of-a-cylinder}, the right hand side of (\ref{eq:step4-ergodic}) is more or less
\begin{align*}
& \sim_{\varepsilon, \delta} \frac{1}{2\pi \int_{\partial \mathbb{D}} \log |F'| dm} \int_{\square(z,\delta)} h(x) \cdot \frac{dA(x)}{1-|x|} \\
& \sim_{\varepsilon, \delta}   \frac{1}{2\pi \int_{\partial \mathbb{D}} \log |F'| dm} \cdot \log \frac{1}{|z|} \cdot \int_{\square(z,\delta)} h(x) \, \frac{dA(x)}{(1-|x|^2)^2}.
\end{align*}
As $h$ is almost invariant, we have
$$
\int_{\square(w,\delta)} h(x) \, \frac{dA(x)}{(1-|x|^2)^2} \, \sim_{\varepsilon, \delta} \, \int_{\square(z,\delta)} h(x) \, \frac{dA(x)}{(1-|x|^2)^2},
$$
for any repeated pre-image $w$ of $z$.
Putting the above equations together and taking $\delta \to 0^+$, we get (\ref{eq:step1-proof2}).

\medskip

{\em Step 2.} Let $z \in \mathbb{D} \setminus \{ 0 \}$ be an arbitrary point in the punctured unit disk, which is not contained in the forward orbit of an exceptional point. Arguing as in Step 2 of Theorem \ref{main-thm2a}, one can show that for any $\varepsilon > 0$,
\begin{equation}
\label{eq:step2-proof2}
\frac{1}{R} \sum_{\substack{F^{n}(w) = z, \, n \ge 0 \\ w \in B_{\hyp}(0,R)}} e^{-d_{\mathbb{D}}(0, w)} \, \ge \,  (1 - C\varepsilon) \cdot \frac{1}{2} \cdot \log \frac{1}{|z|} \cdot \frac{1}{\int_{\partial \mathbb{D}} \log |F'| dm},
\end{equation}
provided that $R > R_0(F, z)$ is sufficiently large, which may depend on the inner function $F$ and the point $z \in \mathbb{D} \setminus \{ 0 \}$.

\medskip

{\em Step 3.} Arguing as in Step 3 of Theorem \ref{main-thm2a}, it is not difficult to find a matching upper bond
\begin{equation}
\label{eq:step3-proof2}
\frac{1}{R} \sum_{\substack{F^{n}(w) = z, \, n \ge 0 \\ w \in B_{\hyp}(0,R)}} e^{-d_{\mathbb{D}}(0, w)} \, \le \,  (1 + C\varepsilon) \cdot \frac{1}{2} \cdot \log \frac{1}{|z|} \cdot \frac{1}{\int_{\partial \mathbb{D}} \log |F'| dm},
\end{equation}
for $R > R_0(F, z)$ is sufficiently large. As $\varepsilon > 0$ was arbitrary in Steps 2 and 3, the proof is complete.
\end{proof}

\section{Mixing of the Geodesic Flow}
\label{sec:mixing-geodesic}

In this section, $F \in \Lambda$ will be a centered one component inner function of finite Lyapunov exponent, which is not $z \to z^d$ for some $d \ge 2$. 
We will show that the horocyclic flow on $\widehat{X}$ is ergodic and the geodesic flow  on $\widehat{X}$ is mixing.
The proof proceeds in four steps.

\begin{enumerate}
\item One first shows that the multipliers of the repelling periodic orbits are not contained in a discrete subgroup of $\mathbb{R}^+$. This step has been completed in \cite[Section 5]{inner-tdf}. This provides a large supply of homoclinic orbits.

\item We use an argument of Glutsyuk \cite{glutsyuk} to show that the horocyclic flow has a dense trajectory.

\item We use an argument of  Coud\`ene \cite{coudene} to promote the existence of a dense horocycle to the ergodicity of the horocyclic flow.

\item Finally, we use the ergodicity of the horocyclic flow to show the mixing of the geodesic flow. This can be done as in the case of a hyperbolic toral automorphism.
\end{enumerate}

 \subsection{A metric on the lamination}
 \label{sec:metric-on-lamination}
 
In order to discuss uniformly continuous functions on $\widehat{X}$, we endow $\widehat{X}$ with a metric that is compatible with the topology described in Section \ref{sec:fbp}.
 For ${\bf z}, {\bf w} \in \widehat{\mathbb{D}}$, we define
$$
 d_{\widehat{\mathbb{D}}} \bigl ({\bf z}, {\bf w}) := \min_{n \in \mathbb{Z}} \, \bigl \{ \max( 1 - |z_{-n}|, 1 - |w_{-n}| \bigr ) + d_{\mathbb{D}}(z_{-n}, w_{-n})   \bigr \}.
$$
To define a metric on the lamination, we try to align the indices as closely as possible:
$$
 d_{\widehat{X}}({\bf z}, {\bf w}) := \min_{m \in \mathbb{Z}}  d_{\widehat{\mathbb{D}}}({\bf z}, \widehat{F}^{\circ m}({\bf w})).
$$
As the above metric is complete and separable,  $\widehat{X}$ is a Polish space, but it is not locally compact unless $F$ is a finite Blaschke product. 

\begin{lemma}
Any leaf $\mathcal L$ is dense in $\widehat{X}$.
\end{lemma}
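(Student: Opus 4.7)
The plan is to transfer the density question from $\widehat X$ to the solenoid $\widehat{S^1}$ via the exponential-coordinate isomorphism $E:\widehat{S^1}\times\mathbb{R}_+\to\widehat{\mathbb{D}}$ sketched in the proof of Theorem~\ref{suspension-theorem}. Since $E$ intertwines $\widehat F$ with the natural shift identification on $\widehat{S^1_\rho}$, the leaves of $\widehat X=\widehat{\mathbb{D}}/\widehat F$ correspond under $E$ to images of $\mathcal P\times\mathbb{R}_+$, where $\mathcal P$ is a path component of $\widehat{S^1}$. Hence density of $\mathcal L$ in $\widehat X$ reduces to density of the corresponding path component $\mathcal P$ in $\widehat{S^1}$.

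To prove that every path component $\mathcal P\subset\widehat{S^1}$ is dense, I would fix $\mathbf u\in\mathcal P$, a target $\mathbf v\in\widehat{S^1}$, and parameters $N\ge 0$, $\eta>0$, and produce $\mathbf u'\in\mathcal P$ with $d_{S^1}(u'_{-n},v_{-n})<\eta$ for $0\le n\le N$. Such $\mathbf u'$ is the endpoint of a continuous path in $\widehat{S^1}$ starting at $\mathbf u$, encoded by a base curve $\gamma:[0,1]\to S^1$ with $\gamma(0)=u_0$: the higher coordinates $u'_{-n}=\gamma_{-n}(1)$ are the endpoints of the lifts of $\gamma$ through the coverings $F^n:S^1\setminus F^{-n}(\Sigma)\to S^1$ starting from $u_{-n}$. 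The key identity $F^{N-n}\circ\gamma_{-N}=\gamma_{-n}$ (by uniqueness of lifts) implies that if $\gamma_{-N}(1)$ matches $v_{-N}$ to within $\varepsilon$, then every shallower coordinate $\gamma_{-n}(1)$ matches $v_{-n}$ to within $(\sup|F'|)^{N-n}\varepsilon$. So one takes $\varepsilon$ smaller than $\eta\cdot(\sup|F'|)^{-N}$ and seeks a $\gamma$ ending at $v_0$ whose level-$(-N)$ lift ends $\varepsilon$-close to the prescribed pre-image $v_{-N}\in(F^N)^{-1}(v_0)$.

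The dynamical input is that $F:S^1\to S^1$ is expanding (Lemma~\ref{derivative-circle}) and topologically mixing on $S^1$, so the monodromy action of $\pi_1(S^1)\cong\mathbb Z$ on the fiber $(F^N)^{-1}(v_0)\subset S^1$ has dense orbits: as the winding number of $\gamma$ from $u_0$ to $v_0$ varies over $\mathbb Z$, the endpoint $\gamma_{-N}(1)$ ranges over a dense subset of $(F^N)^{-1}(v_0)$. Hence $\gamma_{-N}(1)$ can be arranged within $\varepsilon$ of $v_{-N}$ for any prescribed $\varepsilon>0$. Letting $N\to\infty$ and $\eta\to 0$ then yields a sequence in $\mathcal P$ converging to $\mathbf v$, so $\mathcal P$ is dense. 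The main obstacle is this last density of the monodromy orbit in the fiber—trivial for finite Blaschke products of degree $\ge 2$ (the orbit is the entire finite fiber by an elementary counting argument), while in the infinite-degree case one must invoke topological mixing of $F$ on $S^1$ together with the covering structure of $F$ over $\widetilde A$.
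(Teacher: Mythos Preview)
Your reduction to density of path components of $\widehat{S^1}$ via the exponential map is fine, and for finite Blaschke products the monodromy argument works: the cover $F^N:S^1\to S^1$ is connected, so $\pi_1(S^1)$ acts transitively on each fiber and you hit $v_{-N}$ exactly. In the infinite-degree case, however, the monodromy step has a real gap. When $\Sigma\ne\emptyset$, the domain $S^1\setminus\Sigma$ of the covering $F:S^1\setminus\Sigma\to S^1$ is a disjoint union of open arcs; any lift $\gamma_{-1}$ starting at $u_{-1}$ is confined to the arc containing $u_{-1}$, and iterating, $\gamma_{-N}$ stays in a single component of the domain of $F^N$. As the winding number of $\gamma$ varies, $\gamma_{-N}(1)$ ranges only over the portion of $(F^N)^{-1}(v_0)$ lying in that component, and there is no reason these points come close to a prescribed $v_{-N}$ sitting in a different arc. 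Topological mixing of $F$ on $S^1$ does not rescue this: it governs forward images of open sets, not which component of the covering domain a given pre-image occupies. (Separately, $\sup|F'|=\infty$ in the infinite-degree case, though that is easily repaired with local Lipschitz bounds near $v_0,\dots,v_{-N}$.)

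The paper's argument avoids this by never descending to the circle. For large $n$ both $z_{-n}$ and $w_{-n}$ lie in the annulus $A(0;\rho,1)\subset\mathbb{D}$, over which $F$ is a covering and which, crucially, is \emph{connected}. One joins $z_{-n}$ to $w_{-n}$ by a curve $\gamma$ inside this annulus and analytically continues the tail $(z_{-n-m})_{m\ge 0}$ of $\mathbf{z}$ along $\gamma$ through the coverings $F^m$. The resulting $\mathbf{z}'\in\mathcal L$ has $z'_{-n}=w_{-n}$ exactly, hence $z'_{-k}=w_{-k}$ for all $0\le k\le n$, and $d_{\widehat X}(\mathbf{z}',\mathbf{w})\to 0$ as $n\to\infty$. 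Moving the connecting curve off the circle into the open disk is precisely what dissolves your obstruction; your closing remark about the covering over $\widetilde A$ points in the right direction, but once you use it the detour through $\widehat{S^1}$ becomes unnecessary.
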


\begin{proof}
Suppose ${\bf z} \in \mathcal L$ and we want to show that ${\bf w} \in \widehat{X}$ lies in the closure of $\mathcal L$. For all $n\ge 0$ sufficiently large,
the points $z_{-n}$ and $w_{-n}$ lie in the annulus $A(0; \rho, 1)$. Connect $z_{-n}$ and
$w_{-n}$ by a curve $\gamma$ that lies in $A(0; \rho, 1)$. Following the inverse orbit ${\bf z}$ along the curve $\gamma$, we come to a point
${\bf z'} \in \mathcal L$ which agrees with ${\bf w}$ up to $z'_{-n} = w_{-n}$. From the definition of $d_{\widehat{X}}$, it is clear that as $n \to \infty$, these inverse orbits converge to ${\bf w}$.
\end{proof}

\subsection{Finding a dense horocycle}
\label{sec:dense-horocycle}

Pick a repelling fixed point $\xi$ on the unit circle. Let $r = F'(\xi)$ be its multiplier; it is real and positive. The leaf $\mathcal L_\xi$ which consists of all backwards orbits that tend to $\xi$ is conformally equivalent to $\mathbb{H}/(\cdot\, r)$. Let ${\bf z} \in \mathcal L_\xi$ be a point in this leaf and consider the horocycle $H({\bf z}) = \{ h_s({\bf z}) : s \in \mathbb{R} \}$ passing through ${\bf z}$. The horocycle is just a horizontal line in  $\mathcal L_\xi \cong \mathbb{H}/(\cdot\, r)$. Lifting to the upper half-plane $\mathbb{H}$, we get countably many horizontal lines. 

\begin{lemma}
\label{dense-horocycle}
The horocycle $H({\bf z})$ is dense in the leaf $\mathcal L_\xi$ and hence dense in the lamination $\widehat{X}$.
\end{lemma}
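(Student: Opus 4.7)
My plan is to run Glutsyuk's argument on the annular leaf $\mathcal{L}_\xi\cong\mathbb{H}/\langle z\mapsto rz\rangle$, exploiting the winding of the horocycle near its funnel ends. Fix a lift of $\mathbf{z}$ to $i\in\mathbb{H}$ under the covering $\mathbb{H}\to\mathcal{L}_\xi$, so that $H(\mathbf{z})$ is the projection of the horizontal line $\ell=\mathbb{R}+i$, with full preimage $\bigcup_{n\in\mathbb{Z}}r^n\ell$ as noted in the excerpt.

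First, I would describe the image of $H(\mathbf{z})$ in the annulus using the strip coordinate $w=\log z$, in which $\mathcal{L}_\xi$ becomes the cylinder $(\mathbb{R}/(\log r)\mathbb{Z})\times(0,\pi)$ and the projected horocycle is the graph $\{(-\log\sin\theta\bmod\log r,\,\theta):\theta\in(0,\pi)\}$. Because $|-\cot\theta|\to\infty$ as $\theta\to 0^+$ or $\theta\to\pi^-$, arbitrarily small arcs of this curve near either funnel end wrap around the $u$-direction arbitrarily many times, so the horocycle accumulates along both ideal boundary circles of $\mathcal{L}_\xi$.

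To conclude, given $\mathbf{w}\in\widehat{X}$ and $\varepsilon>0$, I would first choose $\mathbf{w}'\in\mathcal{L}_\xi$ with $d_{\widehat{X}}(\mathbf{w},\mathbf{w}')<\varepsilon/2$, which is possible because leaves are dense in $\widehat{X}$. I would then approximate $\mathbf{w}'$ by a point $h_s(\mathbf{z})$ using the winding from the previous step: inside a flow box $\widehat{\mathscr{B}}\subset\widehat{X}$ meeting $\mathcal{L}_\xi$ in many distinct plaques near the end corresponding to $\xi$, the winding of the horocycle is forced to meet every plaque, allowing us to match the transversal coordinate of $\mathbf{w}'$ to within $\varepsilon/2$ in the metric $d_{\widehat{X}}$. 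Since $\mathcal{L}_\xi$ is dense in $\widehat{X}$, density of $H(\mathbf{z})$ in $\mathcal{L}_\xi$ (in the subspace topology) is equivalent to density of $H(\mathbf{z})$ in $\widehat{X}$.

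The main obstacle is this final approximation step. Intrinsically, the projected horocycle is a closed, nowhere-dense curve on the annulus $\mathcal{L}_\xi$, so the required density must be extracted from the lamination topology on $\mathcal{L}_\xi$, which is strictly coarser than the intrinsic Riemann-surface topology. Making this work cleanly requires using the explicit form of $d_{\widehat{X}}$ together with the hyperbolic linearisation of $F$ near $\xi$ to control how transversals through a flow box near $\xi$ parametrise neighbouring leaves, and to show that the winding near the funnel end indeed hits each such transversal densely.
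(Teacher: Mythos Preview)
Your proposal has a genuine gap, and it is precisely the one you flag as the ``main obstacle'' but do not resolve. The winding argument you give is an intrinsic statement about the curve $\{(-\log\sin\theta\bmod\log r,\theta)\}$ on the annulus $\mathbb{H}/\langle\cdot\,r\rangle$; it never uses the hypothesis $F\neq z^d$. But for $F(z)=z^d$ the leaf $\mathcal{L}_1\cong\mathbb{H}/\langle\cdot\,d\rangle$ exists and the horocycle winds in exactly the same way, yet the horocyclic flow is \emph{not} ergodic and $H(\mathbf{z})$ is not dense. So the winding cannot, by itself, force density in the lamination topology.

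Concretely, the plaques of $\mathcal{L}_\xi$ inside a flow box $\widehat{\mathscr{B}}$ are indexed by $\xi$-homoclinic inverse orbits, and the height (in $\mathbb{R}^+/(\cdot\,r)$) at which a given plaque sits in the $\mathbb{H}$-picture is governed by the multiplier $m(\mathbf{x})=\lim_n (F^{\circ n})'(x_{-n})/r^n$ of the corresponding homoclinic orbit. Your horocycle, sitting at one fixed height class, meets a plaque only when that plaque's height class matches; so ``meets every plaque'' is equivalent to the density of homoclinic multipliers in $\mathbb{R}^+/(\cdot\,r)$. This is exactly the lemma the paper proves first (using that the multipliers of repelling periodic orbits span a dense subgroup of $\mathbb{R}^+$ when $F\neq z^d$), and it is the substantive input you are missing. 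Once that lemma is in hand, the paper modifies the tail of any $\mathbf{z}'\in\mathcal{L}_\xi$ by a homoclinic orbit of prescribed multiplier to produce $\mathbf{w}\in\mathcal{L}_\xi$ with $d_{\widehat{X}}(\mathbf{w},\mathbf{z}')$ small and $\operatorname{Im}H(\mathbf{w})=\operatorname{Im}H(\mathbf{z})$, i.e.\ $\mathbf{w}\in H(\mathbf{z})$. Your strip-coordinate picture is a fine description of $H(\mathbf{z})$ inside the leaf, but it is not a substitute for this multiplier-density step.
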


We may view $\im H({\bf z})$ as a number in $\mathbb{R}^+ / (\cdot\, r)$.
Glutsyuk's idea was to modify the backward orbit ${\mathbf z} \in L_\xi$ to obtain a new orbit ${\bf w} \in \mathcal L_\xi$ with $d_{\widehat{X}}({\bf z}, {\bf w})$ small, so that 
$\im H({\bf w})$ is close to any given number  in $\mathbb{R}^+ / (\cdot\, r)$.

By a $\xi$-{\em homoclinic orbit} ${\bf x} \in \widehat{S^1}$, we mean an inverse orbit 
$$
 \dots \to x_{-3} \to x_{-2} \to x_{-1} \to x_0, \qquad x_{-n} \in S^1,
$$
on the unit circle so that
$$
x_0 = \xi, \qquad \lim_{n \to \infty} {x_{-n}} = \xi.
$$
We can view the ``multiplier''
$$
m({\bf x}) = \lim_{n \to \infty} \frac{(F^{\circ n})'({x}_{-n})}{r^n}
$$
as an element of  $\mathbb{R}^+ / (\cdot\, r)$.

\begin{lemma}
The multipliers of $\xi$-homoclinic orbits are dense in $\mathbb{R}^+ / (\cdot\, r)$. 
\end{lemma}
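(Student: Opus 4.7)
My approach is to leverage Step 1 (the multipliers of the repelling periodic orbits of $F$ are not contained in a discrete subgroup of $\mathbb{R}^+$) by producing $\xi$-homoclinic orbits whose multipliers modulo $r$ are controlled by those of suitable repelling periodic orbits.

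First, I observe that a $\xi$-homoclinic orbit is specified by a finite \emph{excursion} $\xi = \xi_0 \to \xi_1 \to \cdots \to \xi_K = w$, where each arrow is an inverse branch of $F$, terminating at a point $w$ in the basin of the local inverse branch $g_0$ of $F$ fixing $\xi$; the tail is then $x_{-K-i} = g_0^i(w) \to \xi$. Unwinding the definition of $m({\bf x})$, using that $g_0'(\xi) = 1/r$ and that the telescoping product $\prod_i F'(g_0^i(w))/r$ converges (by geometric contraction of $g_0$ near $\xi$ and H\"older continuity of $F'$), one finds
\[
m({\bf x}) \;\equiv\; (F^K)'(w) \cdot \prod_{i=1}^{\infty} \frac{F'(g_0^i(w))}{r} \pmod{(\cdot\, r)}.
\]

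Next, for each repelling periodic point $y \in S^1$ of period $p$ and multiplier $\lambda = (F^p)'(y)$, I would construct a family $\{{\bf x}^{(k)}\}_{k \ge 1}$ of $\xi$-homoclinic orbits satisfying $m({\bf x}^{(k)}) \equiv C_y \cdot \lambda^k \cdot (1 + o(1)) \pmod{(\cdot\, r)}$ for some constant $C_y > 0$. Since $F\colon S^1 \to S^1$ is uniformly expanding (built into one-component inner function), pre-images under iterates of $F$ are dense in $S^1$, so I can fix once and for all a pre-image $z \in F^{-L_2}(\xi)$ close to $y$ and a pre-image $w \in F^{-L_1}(y)$ close to $\xi$, together with specific inverse-branch sequences realizing them. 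For each $k \geq 1$, the excursion is
\[
\xi \;\rightsquigarrow\; z \;\xrightarrow{\;h^k\;}\; h^k(z) \;\rightsquigarrow\; w^{(k)},
\]
where $h$ denotes the inverse branch of $F^p$ near $y$ fixing $y$ (so $h'(y) = 1/\lambda$) and $w^{(k)}$ is the image of $h^k(z)$ under the same inverse branches that send $y$ to $w$. For $k$ large, $h^k(z) \to y$ and $w^{(k)} \to w$, so $w^{(k)}$ enters the basin of $g_0$. The derivative along the excursion splits into three blocks: the two bridge derivatives (constants independent of $k$), and the cycle derivative $(F^{kp})'(h^k(z)) = 1/(h^k)'(z) \sim \lambda^k/\tau$, with $\tau = \prod_{j \ge 0} \bigl(\lambda \cdot h'(h^j(z))\bigr)$ a convergent infinite product. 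Plugging these into the displayed formula and reducing modulo $r$ yields the stated asymptotic.

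Finally, to upgrade to density: by Step 1 together with the fact that $r$ is itself a periodic-orbit multiplier, the image of $\{\lambda_y\}$ in $\mathbb{R}^+/(\cdot\, r)$ is dense. If some $\lambda_y$ has $\log \lambda_y / \log r$ irrational, then the one-parameter family $\{C_y \lambda_y^k : k \ge 1\}$ is already dense in $\mathbb{R}^+/(\cdot\, r)$. Otherwise I chain together excursions through several repelling periodic orbits --- the bridges between distinct periodic points are again supplied by density of pre-images --- producing homoclinic orbit multipliers of the form $C \cdot \prod_j \lambda_{y_j}^{k_j} \pmod{(\cdot\, r)}$, which are dense in $\mathbb{R}^+/(\cdot\, r)$ by Step 1. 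The main obstacle is largely technical: verifying that the inverse-branch compositions are consistently defined along the excursion, and that the asymptotic error in the multiplier formula is genuinely $o(1)$ as $k \to \infty$. Both issues reduce to the exponential contraction of $g_0$ near $\xi$ and $h$ near $y$, i.e.\ to uniform expansivity of $F$ on $S^1$.
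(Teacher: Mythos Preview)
Your proposal is correct and follows essentially the same approach as the paper: build $\xi$-homoclinic orbits that shadow a repelling periodic orbit of multiplier $\lambda$ for a variable number $k$ of periods, so that the homoclinic multiplier is $\sim C\lambda^k$ modulo $r$, and then invoke the non-discreteness of the periodic-multiplier group. The paper presents the argument under the simplifying assumption that a single periodic multiplier $\lambda$ together with $r$ already spans a dense subgroup of $\mathbb{R}^+$, whereas you also sketch the general case by chaining through several periodic orbits; both arguments are the same in spirit.
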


\begin{proof}
As explained in \cite{inner-tdf}, if $F \in \Lambda$ is a centered one component inner function of finite Lyapunov exponent, which is not $z \to z^d$ for some $d \ge 2$, then the multipliers of repelling periodic orbits on the unit circle span a dense subgroup of $\mathbb{R}^+$.

For simplicity of exposition, assume that there is a single repelling periodic orbit $F^{\circ k}(\eta) = \eta$ on the unit circle such that $(F^{\circ k})'(\eta)$ and $r$ span a dense subgroup of $\mathbb{R}^+$.
As the inverse iterates of a point are dense on the unit circle \cite[Lemma 3.4]{inner-tdf}, for any $\varepsilon > 0$, one can find a $\xi$-homoclinic orbit ${\bf x}$ which passes within $\varepsilon$ of $\eta$\,:
 $$
 \dots \to x_{-3} \to x_{-2} \to x_{-1} \to x_0, \qquad |x_{-n} - \eta| < \varepsilon.
 $$
We can form a new $\xi$-homoclinic orbit ${\bf x}^{(p)}$ which starts with
$$
x_{-n} \to \dots \to x_{-3} \to x_{-2} \to x_{-1} \to x_0,
$$
then follows the periodic orbit $F^{\circ k}(\eta) = \eta$ for $p k$ steps, where $p \ge 1$ is a positive integer, and then follows the tail of ${\bf x}$:
$$
 \dots \to x_{-n -3} \to x_{-n-2} \to x_{-n-1} \to x_{-n} \to  \dots.
$$
Above, ``to follow an inverse orbit'' means to use the same branches of $F^{-1}$ defined on balls $B(\zeta, 1-\rho)$, centered on the unit circle.
By construction, for any given $p \ge 1$, we can make
$m({\bf x}^{(p)})$ as close to $(F^{\circ k})'(\eta)^p \cdot m({\bf x})$ as we want by requesting $\varepsilon > 0$ to be small. By the assumption on the multiplier of $\eta$, the numbers $(F^{\circ k})'(\eta)^p \cdot m({\bf x})$ are dense in  $\mathbb{R}^+ / (\cdot\, r)$.
\end{proof}

\begin{proof}[Proof of Lemma \ref{dense-horocycle}]
Let ${\bf z} \in L_\xi$ be a backward orbit in the unit disk. We can form a new backward orbit ${\bf w}$ by keeping
$$
z_{-n+1} \to \dots \to z_{-3} \to z_{-2} \to z_{-1} \to z_0
$$
and approximating
$$
\dots \to z_{-n-3} \to z_{-n-2} \to z_{-n-1} \to z_{-n}
$$
with a $\xi$-homoclinic orbit
$$
 \dots \to x_{-3} \to x_{-2} \to x_{-1} \to x_0.
$$
In other words, for $m \ge 0$, we replace $z_{-n-m}$ with a point close to $x_{-m}$. By choosing $n \ge 0$ sufficiently large and the $\xi$-homoclinic orbit appropriately, this construction produces inverse orbits ${\bf w} \in \mathcal L_\xi$ as close to ${\bf z} \in \mathcal L_\xi$ as we want with $\im H({\bf w})$ prescribed to arbitrarily high accuracy in $\mathbb{R}^+ / (\cdot\, r)$.
\end{proof}

\subsection{Ergodicity of the horocyclic flow}

\begin{lemma}
\label{the-horocyclic-flow-is-ergodic}

Suppose $F \in \Lambda$ is a centered one component inner function of finite Lyapunov exponent, other than $F(z) \ne z^d$ with $d \ge 2$. The horocyclic flow $h_s$ on the Riemann surface lamination $\widehat{X}$ is ergodic.
\end{lemma}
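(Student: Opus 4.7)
The plan is to follow step (3) of the strategy outlined at the start of this section, adapting the argument of Coud\`ene \cite{coudene} to the lamination setting. The ingredients required are: the ergodicity of the geodesic flow $g_t$ on $(\widehat X, \xi)$ from Theorem \ref{suspension-theorem}; the existence of a dense horocyclic orbit $H({\bf z}_0)$ from Lemma \ref{dense-horocycle}; the commutation relation (\ref{eq:gh-commutation-relation}), which in the equivalent form $g_t h_s = h_{e^{-t}s} g_t$ says that forward geodesic flow contracts horocyclic arcs by the factor $e^{-t}$; and the fact that $\widehat X$ is a Polish space (Section \ref{sec:metric-on-lamination}) carrying the finite Radon measure $\xi$ (of total mass $\int_{\partial \mathbb D}\log|F'|dm < \infty$ by Lemma \ref{xi-measure-of-a-cylinder}), so that Luzin's theorem and Poincar\'e recurrence for $g_t$ are at our disposal.

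The argument has two parts. First, I would observe that every bounded uniformly continuous $h$-invariant function $\phi: \widehat X \to \mathbb R$ is constant: $h$-invariance forces $\phi({\bf z}_0) = \phi(h_s {\bf z}_0)$ for every $s \in \mathbb R$, so the density of $H({\bf z}_0)$ in $\widehat X$ together with continuity yield $\phi \equiv \phi({\bf z}_0)$.

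Second, I would upgrade this to measurable invariance, following Coud\`ene. Given an $h$-invariant Borel set $A \subset \widehat X$, approximate $\chi_A$ in $L^1(\xi)$ by uniformly continuous functions $\phi_\varepsilon$ on $\widehat X$ that agree with $\chi_A$ on a compact set $K_\varepsilon$ of $\xi$-measure at least $\xi(\widehat X) - \varepsilon$ (Luzin regularity). By Poincar\'e recurrence for $g_t$, $\xi$-a.e.\ point visits $K_\varepsilon$ along a set of times of positive density. Using the horocyclic contraction of $g_t$, one then pushes $\phi_\varepsilon$ forward along the geodesic flow to produce approximately $h$-invariant continuous functions on $\widehat X$; the ergodicity of $g_t$ is invoked to control the $L^1(\xi)$-error in this transfer. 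Passing to the limit $\varepsilon \to 0$ yields a continuous $h$-invariant function equal $\xi$-a.e.\ to $\chi_A$, which must be constant by the first step. Hence $\xi(A) \in \{0, \xi(\widehat X)\}$.

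The main obstacle is to carry out the transfer in the second part rigorously in the non-locally-compact Polish space $\widehat X$. However, Coud\`ene's framework in \cite{coudene} depends only on the ingredients listed in the first paragraph, all of which have been verified here; the non-local-compactness of $\widehat X$ when $F$ has infinite degree creates no essential difficulty, since Radon regularity on the Polish space $\widehat X$ supplies all the compact approximations needed.
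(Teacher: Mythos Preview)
Your first paragraph correctly identifies the ingredients, and your first step (continuous $h$-invariant functions are constant because of the dense horocycle) is exactly the endgame of the paper's proof. However, your second step is \emph{not} Coud\`ene's argument, and as written it has a genuine gap.

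Coud\`ene's mechanism, which the paper follows, is not a Luzin/Poincar\'e-recurrence approximation of $\chi_A$. Instead one works with an arbitrary $f\in UC(\widehat X)$ and the averaging operators
\[
\mathcal M_t f({\bf z})=\int_0^1 f\bigl(g_{-\log t}(h_s({\bf z}))\bigr)\,ds,
\qquad
\frac{1}{t}\int_0^t f(h_s{\bf z})\,ds=\mathcal M_t f(g_{\log t}{\bf z}).
\]
The key lemma (which uses the commutation relation together with Koebe distortion) is that $\{\mathcal M_t f\}_{t>0}$ is uniformly equicontinuous on $\widehat X$; Arzel\`a--Ascoli then produces subsequential limits $\overline f\in UC(\widehat X)$. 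Von Neumann's ergodic theorem for the flow $h_s$ gives an $L^2$ function $Pf$ with $(1/t)S_t f\to Pf$ in $L^2$, and comparing the two limits along $g_{\log t_k}$ forces $\overline f$ to be an $L^2$-limit of the $h_s$-invariant functions $Pf\circ g_{-\log t_k}$. Hence $\overline f$ is $h_s$-invariant \emph{and} continuous, so constant by your first step; since every subsequential limit is the same constant, $S_t f/t\to\fint f\,d\xi$ in $L^2$ and ergodicity follows.

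The gap in your version is the sentence ``Passing to the limit $\varepsilon\to 0$ yields a continuous $h$-invariant function equal $\xi$-a.e.\ to $\chi_A$.'' Pushing $\phi_\varepsilon$ forward by $g_t$ does make it approximately $h_s$-invariant on bounded $s$-scales (since $\phi_\varepsilon\circ g_t\circ h_s=\phi_\varepsilon\circ h_{e^{-t}s}\circ g_t$), but $\phi_\varepsilon\circ g_t$ approximates $\chi_A\circ g_t$, not $\chi_A$; $h$-invariance of $A$ does not make $A$ invariant under $g_t$. You therefore never land on a function that is simultaneously continuous, $h$-invariant, and equal a.e.\ to $\chi_A$. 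Neither Poincar\'e recurrence nor ergodicity of $g_t$ closes this gap; the latter is in fact not used in the paper's proof. What Coud\`ene's operators buy is precisely a compactness mechanism (equicontinuity $+$ Arzel\`a--Ascoli) that manufactures a genuinely continuous $h$-invariant object directly from the ergodic averages $S_tf/t$, without ever trying to approximate an invariant set pointwise.
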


Following Coud\`ene, for $t > 0$, we define the operators
\begin{equation}
\label{eq:coudene-operators}
\mathcal M_t f({\bf z}) = \int_0^1 f(g_{-\log t}(h_s({\bf z}))) ds
\end{equation}
on the space of uniformly continuous functions $UC(\widehat{X})$. 
Let 
$$
S_t f({\bf z}) = \int_0^t f(h_{s}({\bf z})) ds
$$ denote the integral along the trajectory of the horocyclic flow up to time $t$. The motivation
for the operators (\ref{eq:coudene-operators}) is the relation
$$
\frac{S_t f ({\bf z})}{t} = \mathcal M_t f(g_{\log t}({\bf z})),
$$
which follows from \eqref{eq:gh-commutation-relation} and a change of variables.

\begin{lemma}\label{l120230825}
Suppose $F \in \Lambda$ is a centered one component inner function of finite Lyapunov exponent, other than $F(z) \ne z^d$ with $d \ge 2$.
If $f$ is a bounded uniformly continuous function on $\widehat{X}$, then the functions $\{M_t f\}_{t \ge 0}$, defined on $\widehat{X}$, form a uniformly equicontinuous family.
\end{lemma}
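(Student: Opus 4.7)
The plan is to show that $\{\mathcal M_t f\}_{t \ge 0}$ has a modulus of continuity independent of $t$. Using the commutation relation $g_{-\log t}h_s = h_{ts}g_{-\log t}$ and the change of variable $u = ts$, I first rewrite
\[
\mathcal M_t f({\bf z}) \;=\; \frac{1}{t}\int_0^t f\bigl(h_u g_{-\log t}({\bf z})\bigr)\,du,
\]
so that $\mathcal M_t f({\bf z})$ is an average of $f$ along a horocyclic arc of hyperbolic length $t$. For nearby points ${\bf z}, {\bf w}\in\widehat X$, I would then work in the local product structure of a flow box $\widehat{\mathscr B}$ and decompose the displacement ${\bf z}\mapsto{\bf w}$ into horocyclic, geodesic, and transverse components, bounding each contribution separately.

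In the horocyclic direction, if ${\bf w} = h_\sigma({\bf z})$ with $|\sigma|$ small, the commutation relation gives $\mathcal M_t f({\bf w}) = \frac{1}{t}\int_{t\sigma}^{t+t\sigma} f(h_u g_{-\log t}({\bf z}))\,du$, and comparing the two integration intervals yields $|\mathcal M_t f({\bf w}) - \mathcal M_t f({\bf z})| \le 2|\sigma|\,\|f\|_\infty$, uniformly in $t$. In the geodesic direction, if ${\bf w} = g_\tau({\bf z})$ with $|\tau|$ small, the identity $h_u g_\tau = g_\tau h_{ue^\tau}$ together with a change of variables produces an analogous expression, and the error splits into a $O(|\tau|)\|f\|_\infty$ contribution from the change of integration bounds plus a $\omega_f(\cdot)$ contribution from composing with $g_\tau$ inside $f$ (with $g_\tau$ acting as an isometry on each leaf).

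The transverse direction is the most subtle. In the exponential coordinates of Section~\ref{sec:suspension-flow}, I would write ${\bf z} = E({\bf u}, t_0)$ and ${\bf w} = E({\bf u}', t_0)$ with ${\bf u}, {\bf u}'$ close in $\widehat{S^1}$. If their backward orbits agree to depth $N$, then $d_{\widehat X}({\bf z}, {\bf w})$ is comparable to $t_0/|(F^{\circ N})'|$, where Koebe's distortion theorem applies because the one component hypothesis keeps the singular values of $F$ compactly inside $\mathbb D$. The geodesic flow $g_{-\log t}$ replaces $t_0$ by $t_0/t$ while preserving the agreement depth $N$, so it contracts the transverse distance by $1/t$ for $t\ge 1$; the horocyclic flow $h_u$ acts by parallel translation on solenoidal leaves and preserves the agreement depth. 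Hence $d_{\widehat X}(h_u g_{-\log t}({\bf z}), h_u g_{-\log t}({\bf w})) = O\bigl(d_{\widehat X}({\bf z}, {\bf w})\bigr)$ uniformly in $u \in [0,t]$ and $t\ge 1$, and the uniform continuity of $f$ bounds the integrand in $\mathcal M_t f({\bf z}) - \mathcal M_t f({\bf w})$ by $\omega_f(O(\delta))$. The range $0 < t \le 1$ is compact and handled directly from the joint continuity of the flows.

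The main obstacle is the transverse case: making the contraction of transverse distances under $g_{-\log t}$ quantitative requires invoking Koebe's distortion theorem for iterates of $F$ near $\partial\mathbb D$, where the one component hypothesis is indispensable for the required bounded distortion. Once this is in hand, combining the three cases gives a modulus of continuity for $\{\mathcal M_t f\}_{t\ge 0}$ depending only on the modulus of $f$ and $\|f\|_\infty$, completing the proof.
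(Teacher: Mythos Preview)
Your approach is correct and aligns with the paper's own sketch. Both arguments begin by rewriting $\mathcal M_t f({\bf z})$ as the average of $f$ over a horocyclic arc of hyperbolic length $t$ started at $g_{-\log t}({\bf z})$, and both identify Koebe's distortion theorem (available thanks to the one component hypothesis) as the mechanism that keeps nearby horocycles uniformly close. The paper's sketch simply asserts that ``the horocycles of length $t$ started at points $g_{-\log t}({\bf w})$, with $d_{\widehat X}({\bf z},{\bf w})<\varepsilon$, are within $O(\varepsilon)$ of one another''; your decomposition into horocyclic, geodesic, and transverse displacements is a natural and correct way to flesh out that assertion.

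One small clarification for the transverse step: the reason the Euclidean extent of the horocyclic arc stays bounded is that, at depth $n$ where $d_{\widehat X}({\bf z},{\bf w})$ is realized, both points are within $\delta$ of the unit circle, so $g_{-\log t}$ pushes them to height $O(\delta/t)$ and the length-$t$ horocycle moves the $(-n)$-th coordinate by Euclidean distance $O(\delta)$. This is what makes Koebe applicable uniformly in $t$, and it is exactly the paper's remark that ``while the length of the horocycle is increasing, we are also starting it from the point $g_{-\log t}({\bf z})$.''
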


\begin{proof}[Sketch of proof]
The point is that if we do not change the point ${\bf z}$ much, we also do not change the horocycle of length $t$ from the point $g_{-\log t}({\bf z})$ much.
While the length of the horocycle is increasing (we are running it for time $t$), we are also starting it from the point $g_{-\log t}({\bf z})$. 
Koebe's distortion theorem implies that the horocycles of length $t$ started at points
 $g_{-\log t}({\bf w})$, with  $d_{\widehat X}({\bf z}, {\bf w}) < \varepsilon$, are within $O(\varepsilon)$ of one another.
\end{proof}

\begin{proof}[Proof of Lemma~\ref{the-horocyclic-flow-is-ergodic}]

In view of Lemma \ref{l120230825}, the Arzela-Ascoli theorem tells us that any sequence of functions $\mathcal M_{t_k}f$ with $t_k\to\infty$ contains a subsequence that converges uniformly on compact subsets of  ${\widehat X}$ to a function in $UC(\widehat{X})$.
Our goal is to show that for a positive function $f \in UC(\widehat{X})$, any accumulation point $\overline{f}$ of $\mathcal M_t f$ as $t \to \infty$, is a constant function $c = c(f)$, which would necessarily be $\fint_{\widehat{X}} f d\xi$. Once we have done this, 
the rest is easy: as the functions $\mathcal M_t f$ converge uniformly on compact subsets of ${\widehat X}$ to $c$ as $t\to\infty$,
 they also converge to $c$ in $L^2(\widehat{X}, d\xi)$. 
Here we are using that the metric space ${\widehat X}$ is Polish, which implies that the measure $\xi$ is inner regular on open sets and so there exists an increasing sequence of compact sets $K_n \subset \widehat{X}$ such that $\xi(K_n) \to \xi(\widehat{X})$.
 Consequently, $S_t(f)/t \to c$ in $L^2(\widehat{X}, d\xi)$ and the flow $h_s$ is ergodic.

Let $\{ t_k \}$ be a sequence of times tending to infinity for which
$\mathcal M_{t_k} f$
converges uniformly on compact subsets to an accumulation point $\overline{f} \in UC(\widehat{X})$. Using the invariance of the measure $\xi$ under geodesic flow, we see that
$$
\lim_{k\to\infty}\| (1/t_k) S_{t_k}(f) - \overline{f} \circ g_{\log t_k} \|_{L^2(\widehat{X}, d\xi)}=0.
$$
According to von Neumann's ergodic theorem, there is an $h_s$-invariant $L^2$ function $Pf$ on ${\widehat X}$ such that
$$
\lim_{t\to\infty}\|(1/t)S_{t}(f) - Pf \|_{L^2(\widehat{X}, d\xi)}=0.
$$
From these two observations and the $g_t$-invariance of $\xi$, we get:
$$
\| \overline{f} - Pf \circ g_{-\log t_k} \|_{L^2(\widehat{X}, d\xi)} \, = \,
\| \overline{f} \circ g_{\log t_k}  - Pf \|_{L^2(\widehat{X}, d\xi)} \, \to \, 0, \quad \text{as }k \to \infty.
$$
The commutativity property of the geodesic and horocyclic flows \eqref{eq:gh-commutation-relation} shows that $Pf \circ g_{-\log t_k}$ is invariant under the horocyclic flow $h_s$.
Therefore, $\overline{f}$ must also be invariant under $h_s$. As $\overline{f}$ is a continuous function with a dense $h_s$-orbit, it must be constant. The proof is complete.
\end{proof}

\subsection{Mixing of the geodesic flow}

We now deduce the mixing of the geodesic flow from the ergodicity of the horocyclic flow:

\begin{lemma}
If $F \in \Lambda$ is a centered one component inner function of finite Lyapunov exponent, other than $F(z) \ne z^d$ with $d \ge 2$, then the geodesic flow $g_{-t}$ on the Riemann surface lamination $\widehat{X}$ with respect to the measure $\xi$ is mixing.
\end{lemma}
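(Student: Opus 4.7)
The plan is to run the classical Hopf-style argument that converts ergodicity of the horocyclic flow into mixing of the geodesic flow, the key inputs being the commutation relation \eqref{eq:gh-commutation-relation}, the $h_s$-invariance of the measure $\xi$, and the horocyclic ergodicity established in Lemma \ref{the-horocyclic-flow-is-ergodic}. By a standard density argument (available because $\widehat{X}$ is Polish and $\xi$ is finite and inner regular on open sets), it is enough to prove
$$
\int_{\widehat{X}} (f \circ g_{-t})\, \phi \, d\xi \ \longrightarrow \ \fint_{\widehat{X}} f \, d\xi \cdot \int_{\widehat{X}} \phi \, d\xi, \qquad t \to \infty,
$$
for every pair $f, \phi \in UC(\widehat{X}) \cap L^\infty(\widehat{X})$.

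The engine of the argument is the identity obtained from \eqref{eq:gh-commutation-relation} after the substitution $u = e^t s$,
$$
\frac{1}{L}\int_0^L f\bigl(g_{-t} h_s({\bf z})\bigr)\, ds \ = \ \frac{1}{e^t L}\int_0^{e^t L} f\bigl(h_u(g_{-t}({\bf z}))\bigr)\, du,
$$
which displays a short horocyclic average of $f \circ g_{-t}$ as a long horocyclic average of $f$ seen along $g_{-t}$. By the $L^2$ ergodic theorem applied to $h_s$, together with the $g_t$-invariance of $\xi$, the right-hand side converges in $L^2(\widehat{X}, d\xi)$ to $\fint_{\widehat{X}} f\, d\xi$ as $t \to \infty$ (for every fixed $L > 0$). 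Pairing this with $\phi$ and using the $h_s$-invariance of $\xi$ to push the horocyclic average onto $\phi$, one obtains
$$
\int_{\widehat{X}} \Bigl(\frac{1}{L}\int_0^L \phi \circ h_{-s}\, ds\Bigr) (f \circ g_{-t})\, d\xi \ \xrightarrow[\,t\to\infty\,]{} \ \int_{\widehat{X}} \phi\, d\xi \cdot \fint_{\widehat{X}} f\, d\xi.
$$

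It remains to replace the backward horocyclic average of $\phi$ by $\phi$ itself, with small error. Since $\phi$ is uniformly continuous on $\widehat{X}$ and the horocyclic flow translates a point ${\bf z}$ within its own leaf at intrinsic hyperbolic speed bounded by $|s|$, one has $\|(1/L)\int_0^L \phi \circ h_{-s}\, ds - \phi\|_\infty \to 0$ as $L \to 0^+$. Given $\varepsilon > 0$, I would pick $L$ small enough that this supremum is below $\varepsilon$ and then send $t \to \infty$, obtaining
$$
\Bigl| \int_{\widehat{X}}\phi\, (f \circ g_{-t})\, d\xi - \fint_{\widehat{X}} f\, d\xi \cdot \int_{\widehat{X}}\phi\, d\xi \Bigr| \ \le \ \varepsilon \|f\|_\infty \, \xi(\widehat{X}) + o_{t}(1).
$$
Letting $\varepsilon \to 0^+$ concludes the proof.

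The only technical point I expect to require care is the uniform-in-${\bf z}$ bound $d_{\widehat{X}}({\bf z}, h_{-s}({\bf z})) = O(s)$ for small $s$, which is what powers the uniform approximation $(1/L)\int_0^L \phi \circ h_{-s}\, ds \to \phi$ above. Because $d_{\widehat{X}}$ minimizes over shifts $m \in \mathbb{Z}$, one has to verify that the shift $m = 0$ already governs this minimum; this should follow from Koebe distortion applied to the leaf parametrization $L({\bf z}, \cdot)$, which shows that ${\bf z}$ and $h_{-s}({\bf z})$ project under $\pi_{-n}$ to nearby points in $\mathbb{D}$ whose hyperbolic distance remains comparable to $|s|$ along the backward orbit.
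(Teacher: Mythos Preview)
Your proof is correct and follows essentially the same route as the paper's: use the commutation relation \eqref{eq:gh-commutation-relation} to convert a short $h_s$-average of $f \circ g_{-t}$ into a long $h_s$-average of $f$ (composed with $g_{-t}$), apply von Neumann's ergodic theorem for the horocyclic flow, and then let the short averaging window shrink. The only difference is in this last step, where the paper works in $L^2$ rather than $L^\infty$ --- since $h_s({\bf z}) \to {\bf z}$ pointwise and the test functions are bounded and continuous, dominated convergence gives $\|u - S_r u\|_{L^2(\widehat{X})} \to 0$ directly, so you can bypass the uniform estimate $d_{\widehat{X}}({\bf z}, h_{-s}({\bf z})) = O(s)$ that you correctly flagged as the one delicate point.
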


\begin{proof}
For $t\in\mathbb R$, the Koopman operator $[g_{-t}]u=u\circ g_{-t}$ acts isometrically on $L^2(\widehat{X})$. For $r>0$, let $S_r(u)$ be the average of $u\circ h_s$ over $s \in [-r,r]$, i.e.
$$
S_r(u)(x)=\frac{1}{2r}\int_{-r}^ru(h_s(x))ds.
$$
This defines a bounded linear operator $S_r:L^2(\widehat{X})\to L^2(\widehat{X})$.
The commutation relation (\ref{eq:gh-commutation-relation}) tells us that
\begin{equation}\label{120230826}
S_r[g_{-t}]=[g_{-t}]S_{e^t r}
\end{equation}
as operators on $L^2(\widehat{X})$.

Let $u, v \in C^b({\widehat X})$ be two bounded continuous functions of zero mean with respect to the measure $\xi$. Since $\xi$ is invariant with respect to both the horocyclic flow $h_s$ and the geodesic flow $g_{-t}$, by using Fubini's Theorem, we get for every $r>0$ and $t\in\mathbb R$ that
\begin{equation}\label{220230826}
\langle S_r u,[g_{-t}]v \rangle
= \langle u, S_r[g_{-t}]v \rangle 
=  \langle u,[g_{-t}] S_{e^t r} v \rangle
= \langle [g_{t}]u, S_{e^t r} v \rangle.
\end{equation}
As $\int_{\widehat{X}} vd\xi=0$, it follows from the ergodicity of the horocyclic flow and von Neumann's Ergodic Theorem that $S_{e^t r} v \to 0$ in $L^2(\widehat{X})$ as $t\to+\infty$. Since the set $\{[g_{t}]u:t\in\mathbb R\}$ is bounded in $L^2(\widehat{X})$, \eqref{220230826} tells us that
$$
\lim_{t\to+\infty}\langle S_r u,[g_{-t}]v \rangle=0,
$$
for any $r > 0$. As 
$$
\lim_{r\to 0}\| u - S_r u\|_{L^2(\widehat{X})}=0,
$$
we also have
$$
\lim_{t\to+\infty}\langle u,[g_{-t}] v \rangle =0.
$$ 
The result now follows from the density of $C^b({\widehat X})$ in $L^2(\widehat{X})$.
\end{proof}

\part{Background in Geometry and Analysis}

In this part of the manuscript, we gather some facts from differential geometry and complex analysis that will allow us to study the dynamics of inner functions with finite Lyapunov exponent.

In Section \ref{sec:curves-hyperbolic-space}, we see use (hyperbolic) geodesic curvature to estimate how much a curve in the unit disk deviates from a radial ray $[0, \zeta)$. In Section \ref{sec:mobius-distortion}, we define the M\"obius distortion of a holomorphic self-map $F$ of the unit disk and use it to estimate the curvature of $F([0,\zeta))$.

In Section \ref{sec:notions-of-distortion}, we give another interpretation of the  M\"obius distortion in terms of how much  $F^{-1}$ expands the hyperbolic metric and define the linear distortion of $F$. Finally, in Section \ref{sec:distortion-along-radial-rays}, we give a bound on the total linear distortion of $F$ along $[0,\zeta)$ in terms of the angular derivative $|F'(\zeta)|$, from which we conclude that if $F$ is an inner function with finite Lyapunov exponent then the total linear distortion of $F$ on the unit disk is finite.

\label{sec:curves-hyperbolic-space}

\section{Curves in Hyperbolic Space}

We first recall the definition and basic properties of geodesic curvature in the Euclidean setting. Suppose $\gamma: [a,b] \to \mathbb{R}^2$ is a $C^2$ curve, parameterized with respect to arclength.
Its curvature $$\kappa_{\Euc}(\gamma;t) = \| \gamma''(t) \|$$ measures the rate of change of the tangent vector of $\gamma$. The signed curvature $\kappa_{\signed,\,\Euc}(\gamma;t) = \pm\, \kappa_{\Euc}(\gamma;t)$ also takes into account if $\gamma$ is turning left or right. It is well known that a curve is uniquely determined (up to an isometry) by its signed curvature, e.g.~see \cite[Theorem 2.1]{pressley}.

\begin{example} A circle of radius $R$ has constant curvature $1/R$. The signed curvature is either $-1/R$ or $1/R$ depending on the orientation of $\gamma$.
\end{example}

We now turn our attention to the hyperbolic setting. Let $\gamma: [a, b] \to \mathbb{D}$ be a $C^2$ curve, parametrized with respect to hyperbolic arclength. The hyperbolic geodesic curvature $\kappa_{\hyp}(\gamma;t)$ measures how much $\gamma$ deviates from a hyperbolic geodesic at $\gamma(t)$.

We now describe a convenient way to compute  $\kappa_{\hyp}(\gamma;t)$.
Suppose first $\gamma$ passes through the origin, e.g.~$\gamma(t_0) = 0$ for some $t_0 \in [a,b]$.
As the hyperbolic metric osculates the Euclidean metric to order 2 at the origin, but is twice as large there, the hyperbolic geodesic curvature
of $\gamma$ is half the Euclidean geodesic curvature of $\gamma$.
One may compute the hyperbolic geodesic curvature at other points by means of $\aut (\mathbb{H})$ invariance.

\begin{example}
(i)  Hyperbolic geodesics have zero geodesic curvature.

(ii) To compute the curvature of a horocycle, we may assume that the horocycle passes through the origin and compute its curvature there.
Since a horocycle which passes through the origin is a circle of Euclidean radius $1/2$, its Euclidean geodesic curvature at the origin is $2$. Consequently, every horocycle has constant hyperbolic geodesic curvature $1$.

(iii) Curves of constant hyperbolic geodesic curvature $\kappa \in (0,1)$ are circular arcs which cut the unit circle at two points at an angle $\theta \in (0, \pi/2)$
with $\kappa = \cos \theta$.
\end{example}
 
The following two lemmas are well-known:

\begin{lemma}
If $\gamma: [a, b] \to \mathbb{D}$ is a $C^2$ curve with hyperbolic geodesic curvature $\kappa_{\hyp}(\gamma;t) \le 1$, then $\gamma$ is a simple curve.
\end{lemma}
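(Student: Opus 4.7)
My approach is to argue by contradiction via the smallest enclosing hyperbolic disk. Suppose $\gamma$ has a self-intersection, and choose a pair $t_1 < t_2$ with $\gamma(t_1) = \gamma(t_2)$ minimizing $t_2 - t_1$. Then the restriction $\tilde{\gamma} := \gamma|_{[t_1, t_2]}$ is a simple closed curve in $\mathbb{D}$, $C^2$ everywhere except possibly at the basepoint $p = \gamma(t_1) = \gamma(t_2)$, where it may have a single corner (interior angle $\alpha \in [0, 2\pi]$). Since the image of $\tilde{\gamma}$ is compact in $\mathbb{D}$, the Jordan curve theorem shows it bounds a relatively compact open region $D$ with $\overline{D} \subset \mathbb{D}$.

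Let $\overline{B}(q, r)$ be the hyperbolic disk of minimal radius containing $\overline{D}$; its existence follows from compactness of $\overline{D}$ together with the (strict) convexity of the function $y \mapsto \sup_{x \in \overline{D}} d_{\mathbb{D}}(x, y)$ on the CAT($-1$) space $\mathbb{H}^2$. The crucial claim is that $\overline{D} \cap \partial B(q, r)$ contains at least two distinct points. Indeed, if the set of farthest points were a singleton $\{x_0\}$, then by compactness of $\overline{D}$ and continuity of the distance function, the supremum of $d_{\mathbb{D}}(\cdot, q)$ over $\overline{D} \setminus U$ would be strictly less than $r$ for any neighborhood $U$ of $x_0$; perturbing $q$ a small amount along the geodesic toward $x_0$ would then strictly decrease the distance to $x_0$ while keeping every other point of $\overline{D}$ at distance less than $r$, contradicting minimality. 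Since $\partial D$ has at most one non-smooth point, I may select a smooth point $x_0 = \gamma(t_0) \in \partial D \cap \partial B(q, r)$.

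At this smooth point, the arc $\partial D$ is tangent to the hyperbolic circle $\partial B(q, r)$ and lies inside $\overline{B}(q, r)$ on a neighborhood of $x_0$. A standard second-order contact argument, carried out in hyperbolic geodesic normal coordinates based at $x_0$, then yields the curvature comparison
$$\kappa_{\hyp}(\gamma; t_0) \, = \, \kappa_{\hyp}(\partial D; x_0) \, \ge \, \kappa_{\hyp}(\partial B(q, r); x_0) \, = \, \coth(r) \, > \, 1,$$
contradicting the hypothesis $\kappa_{\hyp}(\gamma; t) \le 1$ and completing the proof. The main obstacle is the two-farthest-points claim, which is the geometric heart of the argument and requires combining a first-order perturbation with uniform control via compactness; the extraction of the simple loop, the Jordan bounded region, and the osculating curvature comparison are all routine.
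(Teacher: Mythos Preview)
The paper does not supply a proof of this lemma; it is merely labeled ``well-known.'' Your argument is correct and is one of the standard routes to this fact. The decisive point is that a hyperbolic circle of radius $r$ has geodesic curvature $\coth r$, which is strictly greater than $1$ for every finite $r$; combined with the second-order contact comparison at a smooth tangency point of $\partial D$ with $\partial B(q,r)$, this forces $\kappa_{\hyp}(\gamma;t_0) \ge \coth r > 1$, giving the contradiction. Your extraction of a simple loop by minimizing $t_2 - t_1$ and your use of the two-contact-point property to dodge the possible corner are both sound.

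One small remark: the perturbation argument for the two-contact-point claim is slightly underspecified as written. You control points in $\overline{D}\setminus U$ via compactness, but you also need to control points of $\overline{D}$ inside $U\setminus\{x_0\}$, whose distance to the center can \emph{increase} under the perturbation. The clean fix is to invoke the first-variation formula for the distance function: moving $q$ a distance $\epsilon$ toward $x_0$ changes $d(q,y)$ by $-\epsilon\cos\theta_y + O(\epsilon^2)$, where $\theta_y$ is the angle at $q$ between the geodesics to $x_0$ and to $y$. Since $\theta_y \to 0$ as $y \to x_0$, one has $\cos\theta_y > 1/2$ on a small enough $U$, which handles the nearby points; the faraway points are handled by your compactness step. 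Alternatively, one may simply cite the strict convexity of $q \mapsto \sup_{y\in\overline{D}} d(q,y)$ in a CAT$(0)$ space. Either way this is routine and does not affect the validity of your approach.
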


\begin{lemma}
\label{quasigeodesic-property-of-hyperbolic-space}
If $\gamma: [a, b] \to \mathbb{D}$ is a $C^2$ curve with hyperbolic geodesic curvature $\kappa_{\hyp}(\gamma;t) \le c < 1$, then $\gamma$ lies within a bounded hyperbolic distance of some geodesic.
\end{lemma}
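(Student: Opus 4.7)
The plan is to prove the lemma in two stages. First, I will show that $\gamma$ is a \emph{quasi-geodesic} in $(\mathbb{D}, d_{\mathbb{D}})$: there exist constants $\lambda = \lambda(c) > 0$ and $\mu = \mu(c) \geq 0$ such that
$$d_{\mathbb{D}}(\gamma(s), \gamma(t)) \geq \lambda|s-t| - \mu, \qquad s, t \in [a, b].$$
Combined with the matching upper bound $d_{\mathbb{D}}(\gamma(s), \gamma(t)) \leq |s-t|$, which is automatic from unit hyperbolic speed, this is the quasi-geodesic property. Second, I will invoke the classical Morse stability lemma in the hyperbolic plane: every quasi-geodesic lies within a uniformly bounded tubular neighborhood of a true geodesic, with tube width depending only on $\lambda$ and $\mu$. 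This is standard and follows, for instance, from $\delta$-thinness of hyperbolic triangles.

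The natural value of $\lambda$ is dictated by the extremal case of \emph{constant} curvature $c$: by the third part of the Example, such curves are hypercycles meeting $\partial \mathbb{D}$ at angle $\arccos c$, equidistant from their axis geodesic, and orthogonal projection onto the axis contracts hyperbolic arclength by exactly the factor $\sqrt{1-c^2}$. Thus hypercycles satisfy the desired inequality with $\lambda = \sqrt{1-c^2}$, and one expects the same rate for an arbitrary $C^2$ curve with $|\kappa_{\hyp}| \leq c < 1$.

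To implement the comparison, I would pass to the upper half-plane model and set $\gamma(s) = (x(s), y(s))$ with $\dot x = y\sin\psi(s)$, $\dot y = y\cos\psi(s)$, where $\psi$ is the angle the hyperbolic unit tangent makes with the upward vertical. A short computation with the Levi-Civita connection for the conformal metric $(dx^2 + dy^2)/y^2$ yields the Frenet-type identity
$$\kappa_{\hyp}(\gamma;s) = \sin\psi(s) - \dot\psi(s),$$
so the hypothesis becomes the differential inequality $|\dot\psi - \sin\psi| \leq c$. A phase-line analysis then shows that after a transient of length $O_c(1)$, $\psi(s)$ is trapped in one of the two invariant arcs $[-\arcsin c,\, \arcsin c]$ or $[\pi - \arcsin c,\, \pi + \arcsin c]$, on each of which $|\cos\psi| \geq \sqrt{1-c^2}$. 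Consequently $|\dot y / y| \geq \sqrt{1-c^2}$, so $|\log y(s)|$ grows linearly at rate at least $\sqrt{1-c^2}$; together with the bound $|\dot x| \leq c\, y$, which forces $x(s)$ to drift by at most a multiple of $\max y$ over the sub-arc considered, this translates (via the standard formula $\cosh d_{\mathbb{H}}(z_1, z_2) = 1 + \frac{|z_1 - z_2|^2}{2\,\im z_1 \im z_2}$) into the desired quasi-geodesic lower bound for $d_{\mathbb{D}}$.

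The main obstacle is ensuring that the trapping of $\psi$ is robust under a curvature perturbation of size $\leq c$; this is precisely where the strict inequality $c < 1$ enters, since it provides a uniform gap $\sqrt{1-c^2}$ between the sinusoidal drift and the perturbation (at $c = 1$ the two invariant arcs collapse). Once trapping is in place, converting it to the quasi-geodesic bound and invoking the Morse stability lemma are both routine.
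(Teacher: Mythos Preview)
The paper does not prove this lemma: it is listed among facts that ``are well-known,'' so there is no proof in the paper to compare against. Your overall strategy---show that $\gamma$ is a $(\lambda,\mu)$-quasi-geodesic with $\lambda=\sqrt{1-c^2}$, then invoke the Morse stability lemma---is the standard one and is correct, and your Frenet identity $\kappa_{\hyp}=\sin\psi-\dot\psi$ checks out on the test cases (vertical geodesics, horocycles, semicircular geodesics).

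There is, however, a real gap in the phase-line step. You assert that after a transient of length $O_c(1)$, $\psi$ is trapped in one of the two ``invariant arcs'' $J_0=[-\arcsin c,\arcsin c]$ or $J_\pi=[\pi-\arcsin c,\pi+\arcsin c]$. Only $J_\pi$ is forward-invariant: at $\psi=\arcsin c$ one has $\sin\psi=c$ and hence $\dot\psi=\sin\psi-\kappa\in[0,2c]$, so $\psi$ can exit $J_0$ upward. Worse, the transit time through the complementary arc $(\arcsin c,\pi-\arcsin c)$ is not $O_c(1)$: in the extremal case $\kappa\equiv c$ one gets $\dot\psi=\sin\psi-c$, and $\psi$ only reaches $\pi-\arcsin c$ asymptotically. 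During such a transit $\cos\psi$ changes sign, so $\log y$ is not monotone and the $1$-Lipschitz bound on $\log y$ gives nothing for that pair of endpoints.

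The fix is simple and eliminates the transient altogether. For each pair $s<t$ in $[a,b]$, apply an isometry of $\mathbb{H}$ taking $\gamma(s)$ to $i$ with $\gamma'(s)$ pointing straight down, so that in these coordinates $\psi(s)=\pi\in J_\pi$. Since $J_\pi$ \emph{is} forward-invariant (at both endpoints $\dot\psi$ points weakly inward), one has $\psi(\tau)\in J_\pi$ for all $\tau\in[s,t]$, hence $\cos\psi(\tau)\le -\sqrt{1-c^2}$ and
\[
d_{\mathbb{H}}\bigl(\gamma(s),\gamma(t)\bigr)\ \ge\ \log y(s)-\log y(t)\ =\ -\int_s^t\cos\psi\,d\tau\ \ge\ \sqrt{1-c^2}\,(t-s).
\]
This gives the quasi-geodesic inequality with $\lambda=\sqrt{1-c^2}$ and $\mu=0$, after which the Morse lemma finishes exactly as you outline.
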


We also record the following comparison theorem:

\begin{theorem}
\label{comparison-theorem}
Suppose $\gamma: [a, \infty) \to \mathbb{D}$ is a $C^2$ curve with hyperbolic geodesic curvature $\kappa_{\hyp}(\gamma;t) \le \kappa \le 1$. Let $\gamma_1, \gamma_2: [a, \infty) \to \mathbb{D}$ be curves with constant signed geodesic curvatures $\kappa$ and $-\kappa$ respectively that have the same tangent vector at $t = a$, i.e.~
$$
\gamma_1(a) = \gamma_2(a) = \gamma(a), \qquad \gamma'_1(a) = \gamma'_2(a) = \gamma'(a).
$$
Then, $\gamma$ lies between $\gamma_1$ and $\gamma_2$.
\end{theorem}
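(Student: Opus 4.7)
The plan is a Sturm-type ODE comparison carried out in Fermi coordinates along the initial tangent geodesic. After applying a hyperbolic isometry I may assume $\gamma(a) = 0$ and $\gamma'(a) = i$, so the geodesic $\gamma_0$ with the same initial data is the upper imaginary radius, and $\gamma_1, \gamma_2$ are mirror images of each other across $\gamma_0$. Because $\kappa \le 1$, Lemma \ref{quasigeodesic-property-of-hyperbolic-space} tells us each $\gamma_i$ is simple and stays within bounded hyperbolic distance of its tangent geodesic, so together they bound a closed Jordan region $\overline{\Omega} \subset \overline{\mathbb{D}}$ whose interior contains $\gamma_0 \setminus \{0\}$. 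The conclusion to be proved is $\gamma([a, \infty)) \subset \overline{\Omega}$.

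I would work in Fermi coordinates $(s, n)$ along $\gamma_0$, in which the hyperbolic metric takes the form $\cosh^2(n)\, ds^2 + dn^2$. For an arc-length-parametrized $C^2$ curve, let $\phi(t)$ be the angle the unit tangent makes with $\partial_s$; a short computation with the Levi-Civita connection yields the first-order system
\begin{equation*}
\dot s = \cos\phi / \cosh(n), \qquad \dot n = \sin\phi, \qquad \dot\phi = \kappa_s(t) + \tanh(n)\cos\phi,
\end{equation*}
where $\kappa_s$ is the signed hyperbolic geodesic curvature in the left-handed convention. With the sign convention placing $\gamma_1$ in $\{n \ge 0\}$, we have $\kappa_s \equiv +\kappa$ for $\gamma_1$, $\kappa_s \equiv -\kappa$ for $\gamma_2$, and $|\kappa_s(\gamma; t)| \le \kappa$ for $\gamma$; all three curves start from $(s, n, \phi) = (0, 0, 0)$.

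The core step is to rule out by contradiction that $\gamma$ ever exits $\overline{\Omega}$. Suppose $t_* > a$ is a first exit time; by reflection symmetry assume $\gamma$ meets $\gamma_1$ at $\gamma(t_*) = \gamma_1(t'_*)$. Since $\gamma$ lies in $\overline{\Omega}$ on $[a, t_*]$, the two curves are tangent at the contact. Applying a second hyperbolic isometry carrying $(\gamma(t_*), \gamma'(t_*))$ to $(0, i)$, and if necessary reversing orientation along $\gamma_1$ so that the tangents agree in direction, reduces the analysis to the case $t_* = a = 0$. Setting $N(t) := n_\gamma(t) - n_{\gamma_1}(t)$ and $\Phi(t) := \phi_\gamma(t) - \phi_{\gamma_1}(t)$, both vanishing at $t = 0$, the system gives
\begin{equation*}
\dot\Phi = \bigl(\kappa_s(\gamma; t) - \kappa\bigr) + \bigl(\tanh(n_\gamma)\cos\phi_\gamma - \tanh(n_{\gamma_1})\cos\phi_{\gamma_1}\bigr), \qquad \dot N = \sin\phi_\gamma - \sin\phi_{\gamma_1}.
\end{equation*}
The first term of $\dot\Phi$ is nonpositive by hypothesis, and the remaining terms are $O(|N| + |\Phi|)$. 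A Gronwall estimate then forces $\Phi(t) \le 0$, hence $\dot N = \cos(\phi_{\gamma_1})\,\Phi + O(\Phi^2) \le 0$, so $N(t) \le 0$ for all small $t > 0$. Thus $\gamma$ remains on the $\gamma_0$-side of $\gamma_1$, contradicting $t_*$ being an exit time. The degenerate borderline case in which $\kappa_s(\gamma) \equiv \kappa$ on an interval reduces by ODE uniqueness to $\gamma \equiv \gamma_1$ globally, also ruling out an exit.

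The main technical obstacle is the Sturm step itself: the coupled $(N, \Phi)$ system is not manifestly monotone, so one has to exploit the simultaneous vanishing of $N$ and $\Phi$ at the initial time together with the one-sided sign $\kappa_s(\gamma; t) - \kappa \le 0$ to bootstrap both inequalities in a single Gronwall argument. Once that is carried out, the reduction via isometries and the derivation of the Fermi-coordinate ODE system are routine, as is the analogous treatment of $\gamma_2$.
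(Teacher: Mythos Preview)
The paper does not prove this theorem; it is introduced with ``We also record the following comparison theorem'' and then used (in Lemma~\ref{l120240109}) without argument. There is therefore no paper proof to compare against.

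Your Fermi-coordinate strategy is the standard one and is essentially correct, but two technical points in the write-up need tightening. First, the quantity $N(t)=n_\gamma(t)-n_{\gamma_1}(t)$ compares the two curves at equal arc-length, whereas the geometric statement ``$\gamma$ is on the $\gamma_0$-side of $\gamma_1$'' means $n_\gamma(t)\le h_1(s_\gamma(t))$ with $\gamma_1$ written as a graph $n=h_1(s)$. Because $\dot s=\cos\phi/\cosh n$ differs for the two curves these are not the same; the discrepancy is $O(t^3)$ near the tangency, so your local argument survives, but it is cleaner to reparametrize both curves by $s$ and compare $n_\gamma(s)$ with $h_1(s)$ directly. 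Second, the clause ``if necessary reversing orientation along $\gamma_1$'' is dangerous: reversing orientation flips the sign of the geodesic curvature, turning $+\kappa$ into $-\kappa$ and destroying the inequality $\kappa_s(\gamma)-\kappa\le 0$. You should instead argue that no reversal is ever needed: since $\gamma([a,t_*])\subset\overline\Omega$ and $\gamma$ touches $\partial\Omega$ from the inside at $t_*$, the unit tangents of $\gamma$ and $\gamma_1$ at the contact point must agree (an anti-parallel contact would force $\gamma$ to have been outside $\overline\Omega$ just before $t_*$). With these two points addressed, the Gr\"onwall/Sturm step you already identified as the crux goes through.
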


\subsection{Inclination from the Vertical Line}

We now switch to the upper half-plane model of hyperbolic geometry. In this section, we assume that $\gamma: [a, \infty) \to \mathbb{H}$ is a $C^2$ curve of curvature $\kappa \le 0.2$, parametrized with respect to arclength. 
For any $a \le t < \infty$, we can look at the tangent vector $\gamma'(t)$ to $\gamma$ at the point $\gamma(t)$.
We define $\alpha(t) \in [0, \pi]$ to be the angle that $\gamma'(t)$ makes
with the  downward pointing vector field $v_{\downarrow} = - y \cdot \frac{\partial}{\partial y}$.

\begin{figure}[h]
\centering
\includegraphics[scale=0.6]{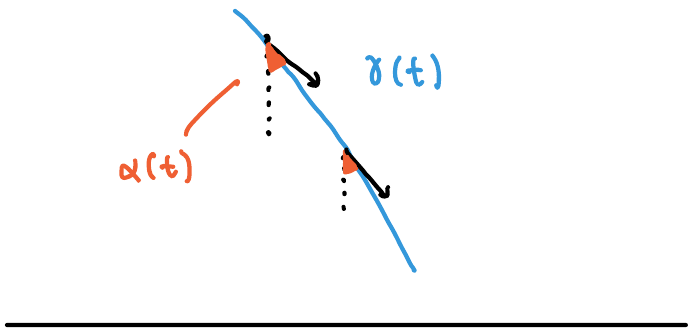}
 \caption{Inclination from the vertical line}
 \label{fig:curvature-lemma1}
\end{figure}

We first describe the behaviour of $\alpha(t)$ when $\gamma$ is a hyperbolic geodesic in the upper half-plane.
 Inspection shows that the  derivative $\alpha'(t) \le 0$, where equality holds if and only if $\gamma$ is a vertical line, pointing straight up or straight down. If $\gamma$ is not a vertical line, then $\alpha(t)$ satisfies the differential equation
$$
\alpha'(t) = -G(\alpha(t)),
$$
for some non-negative differentiable function $G: [0, \pi] \to \mathbb{R}$, which vanishes only at the endpoints. 
(The function $G$ does not depend on the geodesic $\gamma$ since any two non-vertical geodesics in the upper half-plane are related by a mapping of the form $z \to Az + B$ with $A > 0$ and $B \in \mathbb{R}$.) For future reference, we note that $G'(0) > 0$.

\begin{lemma}
\label{inclination-bound-geodesic-case}
Suppose $\gamma: [a, b] \to \mathbb{H}$ is a piece of a hyperbolic geodesic. If $\alpha(a) \le 2\pi/3$, then
\begin{equation}
\label{eq:inclination-bound-geodesic-case}
\int_a^b \alpha(t) \lesssim \alpha(a),
\end{equation}
where the implicit constant is independent of $b$.
\end{lemma}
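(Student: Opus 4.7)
The plan is to exploit the autonomous ODE $\alpha'(t) = -G(\alpha(t))$ governing the evolution of the inclination along a non-vertical hyperbolic geodesic, together with the fact that $G$ linearizes at the stable equilibrium $\alpha = 0$ with a nonzero rate, which forces exponential decay of $\alpha$.

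First I would dispose of the vertical case. If $\gamma$ is a vertical geodesic, then $\alpha$ is identically $0$ or identically $\pi$; the first case makes the inequality trivial, and the second is ruled out by the hypothesis $\alpha(a) \le 2\pi/3$. From now on I may assume $\gamma$ is non-vertical, so that the ODE $\alpha'(t) = -G(\alpha(t))$ holds on $[a,b]$, with $G:[0,\pi]\to \mathbb{R}$ continuous, nonnegative, vanishing only at the two endpoints, and differentiable at $0$ with $G'(0) > 0$.

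The key observation is that $G(\alpha)$ is linearly comparable to $\alpha$ on the relevant range. Since $G \ge 0$, the function $\alpha$ is non-increasing, so the hypothesis forces
$$
\alpha(t) \in [0,\alpha(a)] \subseteq [0, 2\pi/3], \qquad t \in [a,b].
$$
The linear lower bound $G(\alpha) \gtrsim \alpha$ near $0$ provided by $G'(0) > 0$, combined with the strict positivity of $G$ on the compact interval $[\delta, 2\pi/3]$ for any fixed $\delta > 0$, yields a constant $c > 0$, depending only on $G$, such that
$$
G(\alpha) \;\ge\; c\,\alpha, \qquad \alpha \in [0, 2\pi/3].
$$

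Substituting into the ODE gives $\alpha'(t) \le -c\,\alpha(t)$, so Gr\"onwall's inequality yields $\alpha(t) \le \alpha(a)\,e^{-c(t-a)}$ for every $t \in [a,b]$. Integrating,
$$
\int_a^b \alpha(t)\, dt \;\le\; \alpha(a) \int_a^{\infty} e^{-c(t-a)}\, dt \;=\; \frac{\alpha(a)}{c},
$$
with an implicit constant $1/c$ independent of $b$, which is the claim. There is no serious obstacle: the only point requiring care is that $c$ must be chosen uniformly over the whole range $[0,2\pi/3]$, which forces one to combine the infinitesimal linearization $G'(0) > 0$ with the global positivity of $G$ on the rest of the interval — both of which are already recorded just before the lemma.
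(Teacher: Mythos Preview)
Your proof is correct and follows essentially the same approach as the paper: derive the differential inequality $\alpha'(t) \le -c\,\alpha(t)$ from $\alpha' = -G(\alpha)$ and the properties of $G$, apply Gr\"onwall to obtain exponential decay, and integrate. You spell out a bit more than the paper does (the vertical case and the reason $G(\alpha)\gtrsim\alpha$ holds uniformly on $[0,2\pi/3]$), but the argument is the same.
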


\begin{proof}
From the discussion above, it follows that $\alpha(t)$ satisfies the differential inequality
$$
\alpha'(t) \le -c_1 \alpha(t), \qquad t \in [a,\infty),
$$
for some $c_1 > 0$. In view of Gr\"onwall's inequality, $\alpha(t)$ decreases exponentially quickly, which clearly implies (\ref{eq:inclination-bound-geodesic-case}).
\end{proof}

We now turn to investigating $\alpha(t)$ for general curves $\gamma$ with small geodesic curvature. We begin with the following preliminary observation:

\begin{lemma}\label{l120240109}
If $\gamma: [a, \infty) \to \mathbb{H}$ is a $C^2$ curve parametrized with respect to hyperbolic arclength with curvature $\kappa \le 0.2$.
If $\alpha(a) < 2\pi/3$, then 
$$
\alpha(t) \le 2\pi/3
$$
for all $t \in [a,\infty)$.
\end{lemma}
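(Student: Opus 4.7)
The plan is to convert the hypothesis on geodesic curvature into a first-order differential inequality for $\alpha(t)$ and then run a standard barrier argument.

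First, I start from the identity
$$
\alpha'(t) + G(\alpha(t)) = \kappa_s(\gamma;t),
$$
where $\kappa_s$ denotes the signed hyperbolic geodesic curvature of $\gamma$ at time $t$ and $G$ is the function introduced before the lemma. Geometrically, the left-hand side measures the rate at which the tangent of $\gamma$ rotates relative to the tangent of the osculating geodesic, which by definition is the signed geodesic curvature. Combined with $|\kappa_s(\gamma;t)| \le \kappa_g(\gamma;t) \le 0.2$, this yields
$$
\alpha'(t) \le -G(\alpha(t)) + 0.2.
$$

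Next I would pin $G$ down explicitly. Writing the unit tangent as $\gamma' = \sin \alpha \cdot y \partial_x - \cos \alpha \cdot y\partial_y$ in the upper half-plane and plugging into the Levi-Civita connection of $ds^2 = y^{-2}(dx^2+dy^2)$, a direct calculation shows that $\nabla_{\gamma'}\gamma'$ has hyperbolic norm $|\alpha' + \sin\alpha|$, so $G(\alpha) = \sin\alpha$. In particular $G(2\pi/3) = \sqrt{3}/2 > 0.2$, and at any instant $t_\ast$ where $\alpha(t_\ast) = 2\pi/3$ the inequality above forces
$$
\alpha'(t_\ast) \le -\tfrac{\sqrt{3}}{2} + 0.2 < 0.
$$

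The barrier step then closes the lemma. Suppose for contradiction that $\alpha(t_1) > 2\pi/3$ for some $t_1 > a$. By continuity of $\alpha$ and the hypothesis $\alpha(a) < 2\pi/3$, there is a first time $t_0 \in (a, t_1]$ with $\alpha(t_0) = 2\pi/3$, at which necessarily $\alpha'(t_0) \ge 0$. This contradicts the strict inequality above. Note that at $\alpha = 2\pi/3$ the tangent is bounded away from the vertical directions, so $\alpha$ is genuinely $C^1$ there and $\alpha'(t_0)$ is well-defined.

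The only step requiring care is the first one: fixing sign conventions so that the ODE for geodesics appearing in the preceding discussion really matches the one computed from $\nabla_{\gamma'}\gamma'$, and verifying $G(\alpha) = \sin\alpha$. Once these are in place, the constant $0.2$ in the hypothesis is chosen precisely so that $0.2 < G(2\pi/3)$, which makes the barrier argument immediate.
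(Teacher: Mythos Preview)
Your proof is correct and is essentially the same barrier argument as the paper's: both observe that holding $\alpha$ constant at $2\pi/3$ would require geodesic curvature $\sqrt{3}/2 > 0.2$, so $\alpha$ cannot rise past that level. The paper phrases this via the comparison theorem with the Euclidean straight line of inclination $2\pi/3$ (a constant-curvature curve with $\kappa = \cos(\pi/6) = \sqrt{3}/2$), whereas you derive the identity $\alpha' + \sin\alpha = \kappa_s$ directly from the Levi--Civita connection and then argue pointwise; in doing so you make explicit that $G(\alpha) = \sin\alpha$, which the paper leaves implicit.
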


\begin{proof}[Sketch of proof]
From the discussion above, a straight line in the upper half-plane with $\alpha(t) = 2\pi/3$ has constant curvature $\kappa = \sqrt{3/2} > 0.2$. By Theorem \ref{comparison-theorem}, if $\alpha(t) = 2\pi/3$ then $\alpha'(t) \le 0$. Consequently, $\alpha(t)$ cannot rise above $2\pi/3$.
\end{proof}

\begin{lemma}
If $\gamma \subset \mathbb{H}$ is a $C^2$ curve parametrized with respect to hyperbolic arclength, with curvature $\le 0.2$, then
$$
\alpha'(t) \le - G(\alpha(t)) + 4 \, \kappa_{\hyp}(\gamma;t).
$$
\end{lemma}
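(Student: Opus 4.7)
The strategy is to derive an exact identity of the form
\[
\alpha'(t) \; = \; -G(\alpha(t)) + \kappa_{\signed,\,\hyp}(\gamma;t),
\]
where $\kappa_{\signed,\,\hyp}$ is the signed hyperbolic geodesic curvature of $\gamma$, and then bound the last term by $|\kappa_{\signed,\,\hyp}|\le \kappa_{\hyp}$. The key point is simply that adding a curvature term to the geodesic equation produces a curvature term in the evolution of $\alpha$.

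First I would set up coordinates. Write $\gamma(t)=(x(t),y(t))$ in the upper half-plane. The hyperbolic unit-speed condition reads $\dot x^2+\dot y^2=y^2$, so the inclination $\alpha(t)$ from the field $v_\downarrow$ is characterized by
\[
\dot x(t)=y(t)\sin\alpha(t), \qquad \dot y(t)=-y(t)\cos\alpha(t).
\]
Since the hyperbolic and Euclidean metrics are conformal, angles measured in the two metrics agree, and $\alpha(t)$ is unambiguously defined.

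Next I would compute the covariant acceleration $\nabla_{\dot\gamma}\dot\gamma$ in these coordinates. Using the nonzero Christoffel symbols $\Gamma^x_{xy}=-1/y$, $\Gamma^y_{xx}=1/y$, $\Gamma^y_{yy}=-1/y$ of the Poincar\'e half-plane, a short differentiation of the parametrization above yields
\[
\nabla_{\dot\gamma}\dot\gamma \; = \; \bigl(\dot\alpha(t)+\sin\alpha(t)\bigr)\, N(t),
\]
where $N(t)$ is the hyperbolic unit normal obtained by rotating $\dot\gamma(t)$ counterclockwise by $\pi/2$ (Euclidean components $(y\cos\alpha,\, y\sin\alpha)$). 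Pairing with $N(t)$ identifies $\kappa_{\signed,\,\hyp}(\gamma;t)=\dot\alpha(t)+\sin\alpha(t)$, and taking the hyperbolic norm gives $\kappa_{\hyp}(\gamma;t)=|\dot\alpha(t)+\sin\alpha(t)|$.

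Specializing this identity to a non-vertical hyperbolic geodesic (where $\kappa_{\signed,\,\hyp}\equiv 0$) recovers $\dot\alpha=-\sin\alpha$, so the function $G$ introduced in the text is $G(\alpha)=\sin\alpha$, a non-negative smooth function on $[0,\pi]$ vanishing only at the endpoints with $G'(0)=1>0$, consistent with what was stated. For a general $\gamma$ we thus obtain
\[
\alpha'(t)\; =\; -G(\alpha(t))+\kappa_{\signed,\,\hyp}(\gamma;t)\; \le\; -G(\alpha(t))+\kappa_{\hyp}(\gamma;t),
\]
which is actually sharper than the claimed bound; the generous constant $4$ in the statement simply absorbs any later normalization choices. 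The only step requiring genuine care is the Christoffel computation above, but this is routine, and conceptually all that is happening is the two-dimensional Frenet equation $\nabla_{\dot\gamma}\dot\gamma=\kappa_{\signed}\, N$ written out in half-plane coordinates.
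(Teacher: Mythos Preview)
Your approach is correct and in fact more precise than the paper's. You carry out a direct computation in half-plane coordinates using the Christoffel symbols, arriving at the exact Frenet-type identity $\alpha' + \sin\alpha = \kappa_{\signed,\,\hyp}$ and thereby identifying $G(\alpha)=\sin\alpha$ explicitly; this yields the inequality with the sharp constant $1$. The paper instead gives only a sketch: it transports the point in question to the origin of the disk model, where the hyperbolic metric is twice the Euclidean one, and argues heuristically that a factor of $2$ from the reparametrization and a factor of $2$ from the curvature rescaling combine to produce the constant $4$ in front of $\kappa_{\hyp}$. Your route is more computational but pins down both $G$ and the optimal constant; the paper's route is more conceptual but deliberately loose.

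One small point worth tightening: the paper's $\alpha\in[0,\pi]$ is the \emph{unsigned} angle, so your parametrization $\dot x = y\sin\alpha$ tacitly forces $\dot x\ge 0$. The clean fix is to run your computation with a signed angle $\theta\in(-\pi,\pi]$, obtain $\theta'=-\sin\theta+\kappa_{\signed,\,\hyp}$, and then note that for $\alpha=|\theta|$ both cases $\theta\gtrless 0$ give $\alpha'\le -\sin\alpha+\kappa_{\hyp}$.
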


\begin{proof}[Sketch of proof]
We have seen that at the origin, the hyperbolic metric is twice as large as the Euclidean metric. As a result, the parametrization with respect to the hyperbolic arclength is twice as fast as with Euclidean arclength. In addition, the Euclidean geodesic curvature is twice as large as the hyperbolic geodesic curvature. Consequently, the instrinsic change in the direction of the tangent vector $\gamma'(t)$ is four times the signed hyperbolic geodesic curvature. 

However, in hyperbolic geometry, we must also account for the fact that geodesics naturally change direction with respect to the vertical, which is described by the first term in the equation above.
\end{proof}

To conclude this section, we extend Lemma \ref{inclination-bound-geodesic-case} to the case of small geodesic curvature:

 \begin{lemma}
\label{inclination-bound}
Suppose $\gamma: [a, b] \to \mathbb{H}$ has geodesic curvature at most $0.2$. If $\alpha(a) \le 2\pi/3$, then
$$
\int_a^b \alpha(t) \lesssim \alpha(a) + \int_a^b k_{\hyp}(\gamma;t).
$$
\end{lemma}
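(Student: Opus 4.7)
The plan is to convert the differential inequality from the previous lemma into a linear one and then apply Gr\"onwall's inequality in its integrating-factor form.

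\textbf{Step 1: a linear lower bound on $G$.} First I would invoke Lemma~\ref{l120240109} to conclude that $\alpha(t) \le 2\pi/3$ for every $t \in [a, b]$. This confines $\alpha$ to a region where the damping function $G$ is comparable to its argument. Indeed, since $G(0) = 0$ with $G'(0) > 0$, one has $G(\alpha) \ge (G'(0)/2)\,\alpha$ on some small interval $[0, \delta]$; on the compact complement $[\delta, 2\pi/3]$ the continuity and strict positivity of $G$ give a lower bound $G(\alpha) \ge G_{\min} \ge (G_{\min}/(2\pi/3))\,\alpha$. Combining the two estimates, there is a constant $c > 0$ such that $G(\alpha) \ge c \alpha$ throughout $[0, 2\pi/3]$.

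\textbf{Step 2: Gr\"onwall.} Inserting this into the differential inequality of the preceding lemma yields
$$
\alpha'(t) \le -c\, \alpha(t) + 4\, \kappa_{\hyp}(\gamma; t).
$$
Multiplying through by the integrating factor $e^{c(t-a)}$ and integrating from $a$ to $t$ gives
$$
\alpha(t) \le \alpha(a)\, e^{-c(t-a)} + 4 \int_a^t e^{-c(t-s)}\, \kappa_{\hyp}(\gamma; s)\, ds.
$$
Integrating this pointwise bound over $t \in [a, b]$, swapping the order of integration in the double integral, and using $\int_s^b e^{-c(t-s)}\, dt \le 1/c$, one obtains
$$
\int_a^b \alpha(t)\, dt \;\le\; \frac{\alpha(a)}{c} \;+\; \frac{4}{c} \int_a^b \kappa_{\hyp}(\gamma; s)\, ds,
$$
which is the desired estimate.

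\textbf{Main obstacle.} The only non-mechanical ingredient is Step 1: ensuring that $\alpha$ stays bounded away from $\pi$, so that $-G(\alpha(t))$ really acts as a linear damping term with a uniform rate. This step is where the hypothesis $\kappa \le 0.2$ and the initial condition $\alpha(a) \le 2\pi/3$ are both used, and the work has already been done in Lemma~\ref{l120240109} via the comparison Theorem~\ref{comparison-theorem}. Once that is in hand, the rest is just integration of a scalar linear ODE with a forcing term.
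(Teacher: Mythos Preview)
Your proof is correct and follows essentially the same approach as the paper: linearize the differential inequality $\alpha' \le -G(\alpha) + 4\kappa_{\hyp}$ to $\alpha' \le -c\,\alpha + 4\kappa_{\hyp}$ (using that $\alpha$ stays in $[0,2\pi/3]$), apply Gr\"onwall, and integrate. Your write-up is in fact more explicit than the paper's about why $G(\alpha) \ge c\alpha$ on $[0,2\pi/3]$ and about the final integration step, but the argument is the same.
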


\begin{proof}
From the lemma above, it follows that
\begin{equation}
\label{220240109}
\alpha'(t) \le -c_1 \alpha(t) + 4 \, \kappa_{\hyp}(\gamma;t),
\end{equation}
for some $c_1 > 0$. Gr\"onwall's inequality shows that
$$
\alpha(t) \le e^{-c_1 t} \biggl ( \alpha(a) + 4\int_a^t e^{c_1 s} \cdot \kappa_{\hyp}(\gamma;s) ds \biggr ),
$$
for all $t \in [a,b]$. Integrating over $t$ proves the result.
\end{proof}

\section{M\"obius Distortion}
\label{sec:mobius-distortion}

Let $\lambda_{\mathbb{D}} = \frac{2}{1-|z|^2}$ be the hyperbolic metric on the unit disk.
A holomorphic self-map $F$ of the unit disk naturally defines the pullback metric
$$
\lambda_F = F^* \lambda_{\mathbb{D}} = \frac{2|F'(z)|}{1-|F(z)|^2}.
$$
With the above definition, if $\gamma \subset \mathbb{D}$ is a rectifiable curve, then the hyperbolic length of $F(\gamma)$ is $\int_\gamma \lambda_F$.

By the Schwarz lemma, $\mu_F(z) := 1 - (\lambda_F/\lambda_{\mathbb{D}})(z)$ is zero if and only if $F$ is a M\"obius transformation.
In general, for any $a\in\mathbb{D}$, the {\em M\"obius distortion} $\mu_F(a)$ measures how much $F$ deviates from being a M\"obius transformation near $a$. A normal families argument shows that when $\mu_F(a)$ is small, $F$ is close to a M\"obius transformation $m \in \aut(\mathbb{D})$ near $a$. The following lemma provides a more quantitative estimate:

\begin{lemma}
\label{small-mobius-distortion}
Let $F$ be a holomorphic self-map of the unit disk.  For any $R, \varepsilon > 0$, there exists a $\delta > 0$, so that
if $\mu_F(a) < \delta$, $a \in \mathbb{D}$, then on $B_{\hyp}(a, R)$,
$F$ is univalent  and $d_{\mathbb{D}} \bigl (F(z), m(z) \bigr ) < \varepsilon$ for some M\"obius transformation $m \in \aut(\mathbb{D})$ which takes $a$ to $F(a)$. Furthermore, for a fixed $R > 0$, $\delta$ can be taken to be comparable to $\varepsilon$.
\end{lemma}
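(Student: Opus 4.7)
The plan is to reduce to a normalized situation by Möbius invariance, and then quantify the Schwarz--Pick lemma.

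First I would pre-compose $F$ with $\phi_1\in\aut(\mathbb{D})$ sending $0\mapsto a$ and post-compose with $\phi_2\in\aut(\mathbb{D})$ sending $F(a)\mapsto 0$. Since $\phi_1,\phi_2$ are hyperbolic isometries, both $\lambda_{\mathbb{D}}$ and the pull-back $\lambda_{(\cdot)}$ transform equivariantly, so the conjugate $G:=\phi_2\circ F\circ\phi_1$ satisfies $G(0)=0$ and $\mu_G(0)=\mu_F(a)=:\eta$. In particular $|G'(0)|=1-\eta$, and we write $G'(0)=(1-\eta)e^{i\theta}$. The eventual Möbius transformation will be $m:=\phi_2^{-1}\circ(e^{i\theta}\,\cdot\,)\circ\phi_1^{-1}\in\aut(\mathbb{D})$, which automatically sends $a\mapsto F(a)$.

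The next step is the Schwarz--Pick estimate. Since $G(0)=0$, by Schwarz's lemma $h(z):=G(z)/z$ extends holomorphically to $\mathbb{D}\to\overline{\mathbb{D}}$ with $h(0)=(1-\eta)e^{i\theta}$. Applying Schwarz--Pick to $h$ gives
\[
d_{\mathbb{D}}\bigl(h(z),\,(1-\eta)e^{i\theta}\bigr)\le d_{\mathbb{D}}(z,0)\le R
\]
for every $z\in B_{\hyp}(0,R)$. A direct computation using the Möbius isometry $w\mapsto(w-p)/(1-\bar p w)$ with $p=(1-\eta)e^{i\theta}$ shows that $B_{\hyp}(p,R)$ is a Euclidean disk of diameter $\le C_1(R)\,\eta$, because $p$ lies only $\eta$-far from $\partial\mathbb{D}$. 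Consequently $|h(z)-e^{i\theta}|\le (C_1(R)+1)\,\eta$, and
\[
|G(z)-e^{i\theta}z|=|z|\cdot|h(z)-e^{i\theta}|\le C_1'(R)\,\eta
\]
on $B_{\hyp}(0,R)$. Since $B_{\hyp}(0,R)\subset\overline{B_{\Euc}(0,\tanh(R/2))}$ is bounded away from $\partial\mathbb{D}$, this Euclidean bound upgrades to a hyperbolic bound $d_{\mathbb{D}}(G(z),e^{i\theta}z)\le C_2(R)\,\eta$.

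Finally, the isometric property of $\phi_1^{-1},\phi_2^{-1}$ transfers this to $d_{\mathbb{D}}(F(\zeta),m(\zeta))\le C_2(R)\,\eta$ for every $\zeta\in B_{\hyp}(a,R)$, so setting $\delta:=\varepsilon/C_2(R)$ gives the claimed linear comparability. For univalence I would simply run the argument on $B_{\hyp}(a,R+1)$: this produces a univalent Möbius map $m$ that $F$ approximates uniformly on the larger ball, and since $m$ has a definite lower injectivity modulus on the compact set $\overline{B_{\hyp}(a,R)}$, a standard Hurwitz/Rouché argument forces $F$ itself to be injective on $B_{\hyp}(a,R)$ once $\delta$ is small enough. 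The only real obstacle is the Euclidean-diameter estimate for $B_{\hyp}(p,R)$ with $p$ near the boundary, but that is a routine explicit calculation with the disk isometry.
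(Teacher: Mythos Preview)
Your proposal is correct and follows essentially the same route as the paper's proof (which in turn is taken from \cite[Proposition 10.9]{mcmullen-rtree}): reduce by M\"obius invariance to $G(0)=0$, apply the Schwarz lemma to $G(z)/z$, and exploit that $G'(0)$ sits at Euclidean distance $\eta$ from $\partial\mathbb{D}$ so that a bounded hyperbolic ball about it has Euclidean diameter $O(\eta)$. The only cosmetic differences are that the paper rotates to make $G'(0)$ real and positive, and that it obtains univalence by passing through Cauchy's integral formula to get $|G'(z)-1|=O(\eta)$ on $B_{\hyp}(0,R)$ (which forces injectivity on the convex Euclidean disk directly), whereas you invoke Hurwitz/Rouch\'e on a slightly larger ball; both are valid and equivalent in strength.
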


The argument below is taken from \cite[Proposition 10.9]{mcmullen-rtree}:

\begin{proof}
By M\"obius invariance, we may assume that $a = F(a) = 0$ and $0 \le F'(0) \le 1$. For convenience, we abbreviate $\mu = \mu_F(0) = 1 - F'(0)$.
Applying the Schwarz lemma to $F(z)/z$ shows that the hyperbolic distance
$$
d_{\mathbb{D}}(F(z)/z, F'(0)) = O(1), \qquad \text{for }z \in B_{\hyp}(0, R+1).
$$
 Taking note of the location of $F'(0) \in \mathbb{D}$, this implies that $|F(z) - z| = O(\mu)$ on $B_{\hyp}(0, R+1)$. Cauchy's integral formula then tells us that
 $$
 |F'(z) - 1| = O(\mu), \qquad  |F''(z)| = O(\mu), \qquad \text{for }z \in B_{\hyp}(0,R).
 $$
 From here, the lemma follows from the definition of the M\"obius distortion and some arithmetic.
\end{proof}

\begin{lemma}
\label{geodesic-curvature-and-mobius-distortion}
Suppose $\gamma$ is a hyperbolic geodesic in the unit disk passing through $z \in \mathbb{D}$.
Let $F(\gamma)$ be the image of $\gamma$ under a holomorphic self-map $F$ of the unit disk. Then the geodesic curvature of $F(\gamma)$ at $F(z)$ is bounded by
$$
\min \bigl (1, \kappa_{F(\gamma)}(F(z)) \bigr ) \lesssim \mu(z).
$$
\end{lemma}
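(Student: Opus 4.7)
The plan is to exploit M\"obius invariance of both the hyperbolic geodesic curvature and the M\"obius distortion. Precomposing and postcomposing $F$ with elements of $\aut(\mathbb{D})$ leaves $\kappa_{\hyp}(F(\gamma); F(z))$ and $\mu_F(z)$ unchanged, so I may assume $z = F(z) = 0$ and $F'(0) \ge 0$ is real. Under this normalization $\mu := \mu_F(0) = 1 - F'(0)$.

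If $\mu \ge 1/2$ the inequality is immediate since $\min(1,\kappa_{F(\gamma)}(F(z))) \le 1 \le 2\mu$, so I reduce to the regime where $\mu$ is as small as I need. In that regime I invoke the same Schwarz lemma estimate on $F(z)/z$ used in the proof of Lemma \ref{small-mobius-distortion}: it gives $|F(z) - z| = O(\mu)$ on a fixed hyperbolic ball around $0$, whence Cauchy's integral formula yields $|F''(0)| = O(\mu)$ (and $F'(0) \ge 1/2$ once $\mu$ is small).

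Because the hyperbolic geodesic through the origin is a Euclidean diameter, I parametrize $\gamma(t) = tv$ for some $v \in \mathbb{C}$. Setting $c(t) := F(\gamma(t)) = F(tv)$, I get $c'(0) = F'(0)\,v$ and $c''(0) = F''(0)\,v^{2}$, so the Euclidean curvature of $c$ at the origin satisfies
\[
\kappa_{\Euc}(c;0) \;=\; \frac{|\im(\overline{c'(0)}\, c''(0))|}{|c'(0)|^{3}} \;\le\; \frac{|F''(0)|}{|F'(0)|^{2}} \;=\; O(\mu).
\]
Since the hyperbolic metric osculates the Euclidean metric to second order at $0$ and is exactly twice as large there, the hyperbolic geodesic curvature of $F(\gamma)$ at $F(0)=0$ is half the Euclidean one, so it is also $O(\mu)$, which yields the desired bound in the small-$\mu$ regime.

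I do not anticipate a real obstacle: the crucial simplification is that a hyperbolic geodesic through $0$ is a Euclidean straight line, which makes the chain rule for $F\circ\gamma$ trivial at the base point, while M\"obius invariance handles every other base point. The only minor subtlety is packaging the two regimes using the $\min(1,\cdot)$ on the left-hand side, which reflects the fact that $\mu_F$ only produces useful quantitative control when it is itself small.
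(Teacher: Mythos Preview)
Your proof is correct and follows essentially the same approach as the paper: reduce by M\"obius invariance to $z=F(z)=0$ with $F'(0)\ge 0$, use the Schwarz lemma argument from Lemma~\ref{small-mobius-distortion} to obtain $|F''(0)|=O(\mu)$, and convert the resulting second-order bound into a curvature bound at the origin. The only cosmetic difference is that the paper packages the last step geometrically (showing $F(\gamma)$ lies in a wedge $\{|y|<C\mu x^2\}$), whereas you compute the Euclidean curvature directly via the formula $|\im(\overline{c'}c'')|/|c'|^3$; both routes encode the same estimate $\kappa\lesssim |F''(0)|/|F'(0)|^2$.
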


\begin{proof}
By M\"obius invariance, one can consider the case when $\gamma = [-1,1]$, $z = 0$, $F(0) = 0$ and $F'(0) > 0$.
Arguing as in the proof of Lemma \ref{small-mobius-distortion}, we get $|F''(0)| = O(\mu)$ where $\mu = 1 - F'(0)$. Therefore, $F(\gamma)$ lies in a wedge
$$
\bigl \{ x + iy: |y| < C\mu x^2 \bigr  \}
$$
near $z=0$, which gives the desired curvature bound.
\end{proof}

The same argument shows:

\begin{lemma}[Stability of $\mu$ under perturbations]
\label{mu-derivative}
There exists a constant $K > 0$ so that any holomorphic self-map $F$ of the unit disk,
$$
|\nabla_{\hyp} \mu(a)| \le K \mu(a), \qquad a \in \mathbb{D}.
$$
In particular, for any two points $a, b \in \mathbb{D}$, we have 
$$
e^{-K d_{\mathbb{D}}(a, b)}  \mu(a) \, \le \, \mu(b) \, \le \, e^{K d_{\mathbb{D}}(a, b)}  \mu(a).
$$
\end{lemma}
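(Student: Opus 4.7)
The plan is to reduce the gradient bound to the base point $a = 0$ by M\"obius invariance and then reuse the Schwarz–Cauchy estimates already deployed in the proof of Lemma~\ref{small-mobius-distortion}. The exponential two-point bound then drops out by integrating the pointwise gradient inequality along a hyperbolic geodesic.

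First I would observe that $\mu_F$ transforms naturally under pre- and post-composition by elements of $\aut(\mathbb{D})$: if $m_1,m_2 \in \aut(\mathbb{D})$ and $\tilde F = m_2 \circ F \circ m_1$, then $\mu_{\tilde F}(z) = \mu_F(m_1(z))$, since both the numerator and denominator in $\lambda_F/\lambda_{\mathbb{D}}$ are pulled back correctly by hyperbolic isometries. Since $m_1$ is also a hyperbolic isometry, $|\nabla_{\hyp} \mu_{\tilde F}|(z) = |\nabla_{\hyp} \mu_F|(m_1(z))$. Choosing $m_1$ with $m_1(0) = a$, $m_2$ with $m_2(F(a)) = 0$, and rotating $m_1$ suitably, it suffices to prove the inequality at $a = 0$ for a map $F$ with $F(0)=0$ and $F'(0) \in [0,1]$. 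Write $\mu_0 := \mu(0) = 1 - F'(0)$.

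Next, running the Schwarz-lemma argument on $g(z) := F(z)/z$ exactly as in the proof of Lemma \ref{small-mobius-distortion} yields $|F(z)-z| = O(\mu_0)$ on a fixed hyperbolic disk $B_{\hyp}(0,R)$; Cauchy's formula then gives
\[
|F''(0)| = O(\mu_0).
\]
Now I compute $\mu$ to first order near the origin. Writing $c = F'(0) = 1-\mu_0$ and Taylor expanding,
\[
|F'(z)| = c + \re\bigl(F''(0)\,z\bigr) + O(|z|^2), \qquad |F(z)|^2 = c^2|z|^2 + O(|z|^3),
\]
so that
\[
\frac{(1-|z|^2)|F'(z)|}{1-|F(z)|^2} = c + \re\bigl(F''(0)\,z\bigr) + O(|z|^2).
\]
Therefore
\[
\mu(z) = \mu_0 - \re\bigl(F''(0)\,z\bigr) + O(|z|^2),
\]
which gives $|\nabla_{\Euc}\mu(0)| = |F''(0)| = O(\mu_0)$. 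Because the hyperbolic metric equals twice the Euclidean metric at $0$, this translates into $|\nabla_{\hyp}\mu(0)| \le K\mu(0)$ for some absolute constant $K$. By the reduction in the first step, the same bound holds at every $a \in \mathbb{D}$.

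Finally, for the multiplicative comparison between $\mu(a)$ and $\mu(b)$, I would parametrize the hyperbolic geodesic $\gamma:[0,d_{\mathbb{D}}(a,b)] \to \mathbb{D}$ from $a$ to $b$ by hyperbolic arclength. Since $\mu > 0$ wherever it is finite (and, where it vanishes, the inequality is trivial up to an easy continuity argument), I compute
\[
\left| \frac{d}{ds}\log\mu(\gamma(s)) \right| \le \frac{|\nabla_{\hyp}\mu(\gamma(s))|}{\mu(\gamma(s))} \le K.
\]
Integrating from $0$ to $d_{\mathbb{D}}(a,b)$ gives $|\log\mu(b) - \log\mu(a)| \le K d_{\mathbb{D}}(a,b)$, which is exactly the claimed two-point bound. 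The only step that is not essentially bookkeeping is the Taylor expansion above; the main thing to be careful about is keeping the $|F'(z)|$ (not holomorphic) and $|F(z)|^2$ expansions consistent to first order in $z$, and then remembering the factor of $\tfrac12$ relating Euclidean and hyperbolic gradients at the origin.
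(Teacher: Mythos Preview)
Your proposal is correct and follows essentially the same route as the paper: the paper's proof is the one-line remark ``the same argument shows,'' referring back to the M\"obius reduction to $a=F(a)=0$, $F'(0)\in[0,1]$ and the estimate $|F''(0)|=O(\mu_0)$ obtained via the Schwarz lemma on $F(z)/z$ and Cauchy's formula. You have simply made explicit the Taylor expansion that extracts $|\nabla_{\hyp}\mu(0)|$ from $|F''(0)|$ and the integration along a geodesic giving the two-point bound, both of which are exactly what the paper intends.
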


 For a finer estimate, we refer the reader to \cite[Corollary 5.7]{beardon-minda}.

\paragraph*{Hyperbolic expansion factor.}

Suppose $F$ is a holomorphic self-map of the unit disk. By the Schwarz lemma, the {\em hyperbolic expansion factor} $E(a) := \| F'(a) \|_{\hyp}^{-1} \ge 1$. The hyperbolic expansion factor could be infinite if $a$ is a critical point of $F$. 
The hyperbolic expansion factor is related to the M\"obius distortion via
$$
E(a) = \frac{1}{1-\mu(a)}.
$$
As a result, the two quantities are essentially interchangeable.

\section{Linear Distortion}
\label{sec:notions-of-distortion}

Recall that the downward pointing vector field $v_{\downarrow} = - y \cdot \frac{\partial}{\partial y}$ assigns each point in $\mathbb{H}$ a vector of hyperbolic length 1 which points toward the real axis. By the Schwarz lemma, the quotient
$$
p(z) = \frac{F_* v_{\downarrow}(z)}{v_{\downarrow}(F(z))} \, \in \, \mathbb{D}.
$$
We consider the following quantities:
\begin{itemize}
\item M\"obius distortion: $\mu \, = \, 1 - |p|.$
\item Linear distortion: $\delta \, = \, |1 - p|.$
\item Vertical inefficiency: $\eta \, = \, \re(1 - p).$
\item Vertical inclination: $\alpha \, = \, |\arg p| \, \in \, [0, \pi).$
\end{itemize}

\begin{figure}[h]
\centering
\includegraphics[scale=0.6]{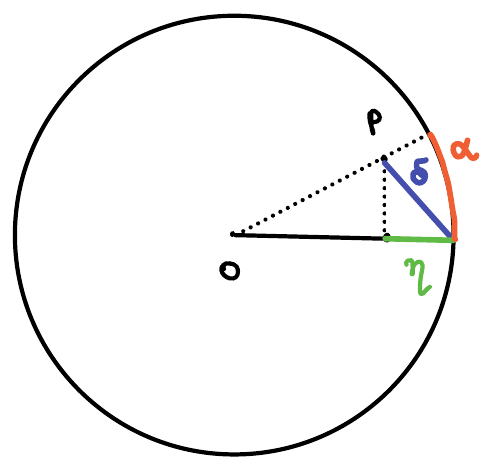}
 \caption{Notions of distortion}
 \label{fig:aed}
\end{figure}

In practice, estimating $\delta(a)$ directly is rather difficult. From the picture above, it is clear that $\alpha(a) + \eta(a) \ge \delta(a)$, which allows us to estimate linear distortion by estimating the vertical inefficiency and vertical inclination separately.

For a holomorphic self-map of the upper half-plane $F$, the linear distortion $\delta_F(a)$ measures how much $F$ deviates from the unique linear map 
$L_{a \to F(a)} \in \aut (\mathbb{H}, \infty)$ which takes $a$ to $F(a)$.
 Evidently, the linear distortion is zero if and only if $F = L_{a \to F(a)}$. Similarly to Lemma \ref{small-mobius-distortion}, we have:

\begin{lemma}
\label{small-linear-distortion}
Let $F$ be a holomorphic self-map of the upper half-plane. For any $R, \varepsilon > 0$, there exists a $\delta > 0$, so that
if $\delta_F(a) < \delta$, $a \in \mathbb{H}$, then on $B^{\mathbb{H}}_{\hyp}(a, R)$, $F$ is univalent and $d_{\mathbb{H}} \bigl (F(w), L_{a \to F(a)}(w) \bigr ) < \varepsilon$.
Furthermore, for a fixed $R > 0$, $\delta$ can be taken to be comparable to $\varepsilon$.
\end{lemma}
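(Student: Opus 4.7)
The plan is to reduce to Lemma \ref{small-mobius-distortion} via the Cayley transform, leveraging the fact that the linear distortion $\delta_F(a)$ retains more information about $F'(a)$ than the M\"obius distortion does — namely the complex argument of $F'(a)$.

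First I would normalize. Writing an arbitrary element of $\aut(\mathbb{H}, \infty)$ as $L(w) = Aw + B$ with $A > 0$ and $B \in \mathbb{R}$, the constraint $L(a) = F(a)$ forces $A = \im F(a)/\im a$ and $B = \re F(a) - A \re a$, so $L_{a \to F(a)}$ is indeed unique. Since the downward vector field $v_{\downarrow}$ is invariant under $\aut(\mathbb{H}, \infty)$, the quantity $\delta_F(a)$ is unchanged by pre- and post-composition of $F$ with such automorphisms, so I may arrange $a = F(a) = i$ and $L_{a \to F(a)} = \mathrm{Id}$. Under this normalization $p(i) = F'(i)$ and $\delta = |1 - F'(i)|$.

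Next I would pass to the disk through the Cayley transform $\phi: \mathbb{D} \to \mathbb{H}$ with $\phi(0) = i$. Setting $\tilde{F} := \phi^{-1} \circ F \circ \phi$, one obtains a holomorphic self-map of $\mathbb{D}$ with $\tilde{F}(0) = 0$ and $\tilde{F}'(0) = F'(i)$; consequently $\mu_{\tilde{F}}(0) = 1 - |\tilde{F}'(0)| \le |1 - \tilde{F}'(0)| = \delta$. Since $\phi$ is a hyperbolic isometry sending $B_{\hyp}^{\mathbb{D}}(0, R)$ onto $B_{\hyp}^{\mathbb{H}}(i, R)$, invoking Lemma \ref{small-mobius-distortion} on $B_{\hyp}^{\mathbb{D}}(0, R)$ with tolerance $\varepsilon/3$ provides, for $\delta$ sufficiently small, a M\"obius transformation $m \in \aut(\mathbb{D})$ fixing the origin — necessarily a rotation $m(w) = e^{i\theta} w$ — such that $\tilde{F}$ is univalent on $B_{\hyp}^{\mathbb{D}}(0, R)$ and $d_{\mathbb{D}}(\tilde{F}(w), m(w)) < \varepsilon/3$ there.

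The final step is to show that $m$ is itself close to the identity. A Cauchy estimate on a slightly smaller ball gives $|\tilde{F}'(0) - e^{i\theta}| \lesssim \varepsilon/3$; combined with $|1 - \tilde{F}'(0)| = \delta$ this forces $|e^{i\theta} - 1| = O(\delta + \varepsilon)$, which bounds $|m(w) - w|$ uniformly on $\mathbb{D}$ and hence bounds $d_{\mathbb{D}}(m(w), w)$ on $B_{\hyp}^{\mathbb{D}}(0, R)$ by the same order. Adding the two proximity estimates yields $d_{\mathbb{D}}(\tilde{F}(w), w) < \varepsilon$ throughout $B_{\hyp}^{\mathbb{D}}(0, R)$ once $\delta$ is small enough, and pulling back through the (isometric) Cayley transform gives the desired $d_{\mathbb{H}}(F(z), L_{a \to F(a)}(z)) < \varepsilon$ on $B_{\hyp}^{\mathbb{H}}(i, R)$; univalence of $F$ transfers directly from that of $\tilde{F}$, and the comparability $\delta \asymp \varepsilon$ (for fixed $R$) is inherited from the corresponding statement in Lemma \ref{small-mobius-distortion}. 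There is no serious obstacle; the only conceptual point is that Lemma \ref{small-mobius-distortion} alone produces only closeness to \emph{some} rotation $m$, whose angle $\theta$ is the argument of $\tilde{F}'(0)$ — a quantity invisible to $\mu$. It is precisely the extra information carried by $\delta_F(i) = |1 - F'(i)|$, as opposed to $\mu_F(i) = 1 - |F'(i)|$, that pins $\theta$ down to order $\delta$ and bridges the gap from ``close to some isometry'' to ``close to the specific $L_{a \to F(a)}$''.
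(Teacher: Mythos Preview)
Your proposal is correct and follows essentially the same route as the paper: normalize to $a = F(a) = i$, conjugate by a Cayley map to a self-map $G$ of the disk fixing $0$, and use that $\delta_F(i)$ controls $|G'(0)-1|$ (not merely $1-|G'(0)|$) to pin down the rotation. The only organizational difference is that the paper re-enters the proof of Lemma~\ref{small-mobius-distortion} directly with the hypothesis $|G'(0)-1|=O(\delta_F(i))$ to obtain $|G(z)-z|=O(\delta_F(i))$ in one stroke, whereas you apply Lemma~\ref{small-mobius-distortion} as a black box and then bound the rotation angle separately via a Cauchy estimate; both arrive at the same conclusion.
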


\begin{proof}[Proof]
By $\aut(\mathbb{H}, \infty)$ invariance, we may assume that $a = F(a) = i$.
In view of Lemma \ref{small-mobius-distortion}, $F$ is injective on the ball $B_{\hyp}^{\mathbb{H}}(a, R)$, where it resembles an elliptic M\"obius transformation in $\aut(\mathbb{H})$ which fixes $i \in \mathbb{H}$. We need to show that $F$ is close to the identity mapping.

Let $m_{\mathbb{D} \to \mathbb{H}}$ be a M\"obius transformation which maps $\mathbb{D}$ to $\mathbb{H}$ and takes the tangent vector $(\partial/\partial x)(0)$ to $v_\downarrow(i)$. As $\delta_F(i) < \delta$, the composition $$G = m_{\mathbb{H} \to \mathbb{D}} \circ F \circ m_{\mathbb{D} \to \mathbb{H}}$$ defines a holomorphic self-map of the unit disk 
with $G(0) =0$ and $|G'(0) - 1| = O(\delta_F(i))$. Following the proof of Lemma \ref{small-mobius-distortion}, we see that
$|G(z) - z| = O(\delta_F(i))$ for $z \in B_{\hyp}(0, R)$, which
in the upper half-plane translates to
\begin{equation}
\label{eq:small-displacement}
|F(w) - w| = O(\delta_F(i)), \qquad \text{for }w \in B^{\mathbb{H}}_{\hyp}(i, R),
\end{equation}
as desired.
\end{proof}

\begin{lemma}
\label{small-linear-distortion2}
In the lemma above, we may choose $\delta \asymp \varepsilon$ so that
\begin{equation}
\label{eq:small-linear-distortion2}
 (1 - \varepsilon) \cdot \frac{1}{\im w} \, < \,  \frac{|F'(w)|^2}{\im F(w)} \, < \, (1+ \varepsilon) \cdot \frac{1}{\im w},
\end{equation}
for any $w \in B^{\mathbb{H}}_{\hyp}(a, R)$.
\end{lemma}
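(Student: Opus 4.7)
The plan is to build directly on Lemma~\ref{small-linear-distortion} and extend its argument by one more computation. As there, I would first invoke $\aut(\mathbb{H}, \infty)$-invariance to normalize so that $a = F(a) = i$, in which case $L_{a \to F(a)}$ is the identity; in this frame, the claim reduces to verifying
$$
\frac{|F'(w)|^2 \im w}{\im F(w)} = 1 + O(\varepsilon)
$$
uniformly for $w \in B^{\mathbb{H}}_{\hyp}(i, R)$.

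Next, I would apply Lemma~\ref{small-linear-distortion} on a slightly enlarged ball $B^{\mathbb{H}}_{\hyp}(i, R+1)$ to obtain the displacement estimate $|F(w) - w| = O(\delta)$ there, where $\delta = \delta_F(i)$. Then, exactly mirroring the Cauchy integral formula step used in the proof of Lemma~\ref{small-mobius-distortion}, I would apply Cauchy's formula on a fixed-radius Euclidean disk around each $w \in B^{\mathbb{H}}_{\hyp}(i, R)$ (safely contained in the larger hyperbolic ball) to upgrade this to the derivative bound $|F'(w) - 1| = O(\delta)$.

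The remaining work is arithmetic. Since $\im w$ is bounded below by a constant $c(R) > 0$ on $B^{\mathbb{H}}_{\hyp}(i, R)$, the displacement bound yields $\im F(w) = \im w + O(\delta) = \im w \cdot (1 + O(\delta))$, and the derivative bound gives $|F'(w)|^2 = 1 + O(\delta)$. Dividing produces $|F'(w)|^2 \im w / \im F(w) = 1 + O(\delta)$, and choosing $\delta = c(R)\,\varepsilon$ with the correct $R$-dependent constant delivers the asserted two-sided bound.

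The only point that deserves care is the insistence that $\delta$ be taken proportional to $\varepsilon$, rather than merely some function tending to zero with $\varepsilon$. This linear dependence is inherited automatically because each estimate in the chain (displacement, derivative, ratio) introduces only an $O(\delta)$ error with constants depending on $R$ but not on $\delta$. I do not foresee any substantive obstacle beyond tracking these constants.
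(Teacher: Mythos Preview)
Your proposal is correct and follows essentially the same approach as the paper: normalize to $a=F(a)=i$, use the displacement estimate \eqref{eq:small-displacement} together with Cauchy's integral formula (on a ball enlarged by $1$ to absorb the loss) to get $|F'(w)-1|=O(\delta_F(i))$, and then do the arithmetic to conclude. The only cosmetic difference is that the paper proves the estimate on $B^{\mathbb{H}}_{\hyp}(i,R-1)$ first and then replaces $R$ by $R+1$ at the end, whereas you enlarge the ball at the start; these are equivalent.
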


\begin{proof}
We continue to use the normalization $a = F(a) = i$. In view of  (\ref{eq:small-displacement}) and Cauchy's integral formula, there exists a constant $C > 0$ depending only on $R$ so that 
$$
1 - C\delta_F(i)  \, < \,  |F'(w)| \, < \, 1+C\delta_F(i),
$$
for any $w \in B_{\hyp}^{\mathbb{H}}(a, R-1)$. Together with (\ref{eq:small-displacement}), this shows
$$
 (1 - C' \delta_F(i)) \cdot \frac{1}{\im w} \, < \,  \frac{|F'(w)|^2}{\im F(w)} \, < \, (1+ C' \delta_F(i)) \cdot \frac{1}{\im w},
$$
for some constant $C' > 0$ which also only depends on $R$. To obtain the corollary as stated, we work with $R+1$ in place of $R$ and divide $\delta$ by a constant if necessary so that $C' \delta < \varepsilon$.
\end{proof}

\begin{lemma}[Stability of $\delta$ under perturbations]
\label{delta-derivative}
There exists a constant $K > 0$ such that for any holomorphic self-map $F$ of the upper half-plane,
$$
|\nabla_{\hyp} \delta(a)| \le K \delta(a), \qquad a \in \mathbb{H}.
$$
In particular, for any two points $a, b \in \mathbb{H}$, we have 
$$
e^{-K d_{\mathbb{D}}(a, b)}  \delta(a) \, \le \, \delta(b) \, \le \, e^{K d_{\mathbb{D}}(a, b)}  \delta(a).
$$
\end{lemma}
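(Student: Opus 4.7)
The plan is to reduce the statement to a pointwise Wirtinger computation at a normalized point, exploiting the fact that $\delta_F$ is invariant under pre- and post-composition by elements of $\aut(\mathbb{H}, \infty)$. A direct check shows that if $L_1, L_2 \in \aut(\mathbb{H}, \infty)$ and $G = L_2 \circ F \circ L_1^{-1}$, then $p_G(L_1(a)) = p_F(a)$. Choosing the unique $L_1, L_2$ that send $a$ and $F(a)$ respectively to $i$, we may assume throughout that $a = F(a) = i$, in which case $L_{a \to F(a)} = \operatorname{id}$, $\delta := \delta_F(i) = |1 - F'(i)|$, and any hyperbolic-gradient bound at $i$ transfers back to the corresponding bound at the original $a$ by hyperbolic isometry of $L_1$.

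The second step is to compute the Wirtinger derivatives of $p_F(z) = F'(z)\, \im z / \im F(z)$. Using that $F$ is holomorphic, so $\partial_{\bar z} \im z = i/2$ and $\partial_{\bar z} \im F(z) = i\overline{F'(z)}/2$, a routine manipulation gives
\begin{equation*}
\partial_z p_F \,=\, F''(z)\frac{\im z}{\im F(z)} + \frac{iF'(z)}{2\,\im F(z)}\bigl(p_F(z) - 1\bigr), \qquad
\partial_{\bar z} p_F \,=\, \frac{iF'(z)}{2\,\im F(z)}\bigl(1 - \overline{p_F(z)}\bigr).
\end{equation*}
At $z = i$ we have $\im z = \im F(z) = 1$ and $|F'(i)| \le 1$ by Schwarz-Pick; hence $|\partial_{\bar z} p_F(i)| \le \delta/2$ and the second term of $\partial_z p_F(i)$ is likewise bounded by $\delta/2$. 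The only piece not manifestly of size $O(\delta)$ is the contribution of $F''(i)$.

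The main obstacle is therefore to control $|F''(i)|$ by $C\delta$ with $C$ universal, which I would handle by splitting into two regimes. If $\delta$ exceeds a fixed threshold $\delta_0 > 0$, then passing to the disk model via $\phi(z) = (z-i)/(z+i)$ and applying Cauchy's estimates to the Schur function $\tilde F(w)/w$ (well-defined since $\tilde F(0) = 0$) yields a universal bound $|F''(i)| \le C_0$, and hence $|F''(i)| \le (C_0/\delta_0)\delta$. If instead $\delta < \delta_0$ with $\delta_0$ chosen sufficiently small, Lemma \ref{small-linear-distortion} provides $|F(w) - w| = O(\delta)$ on the hyperbolic ball $B^{\mathbb{H}}_\hyp(i, 2)$, and two applications of Cauchy's integral formula to the holomorphic function $F - \operatorname{id}$ on a fixed Euclidean disk around $i$ yield $|F''(i)| = O(\delta)$.

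Putting the pieces together gives $|\nabla_{\mathrm{Euc}} p_F(i)| \lesssim \delta$, and since the hyperbolic and Euclidean norms coincide at $z = i$, we obtain $|\nabla_\hyp p_F(i)| \lesssim \delta$. Because $\delta_F = |1 - p_F|$ and $|\nabla|1 - p_F|| \le |\nabla p_F|$ wherever $\delta_F \ne 0$, the pointwise bound $|\nabla_\hyp \delta_F(a)| \le K \delta_F(a)$ follows at every $a$, and integrating $|\nabla_\hyp \log \delta_F| \le K$ along a hyperbolic geodesic from $a$ to $b$ yields the stated Harnack-type inequality. On the exceptional set where $\delta_F$ vanishes, the identity $p_F(a) = 1$ forces $F'(a)$ to be real and positive, so Schwarz-Pick rigidity (via $|F'(a)|_\hyp = 1$) forces $F$ to be a linear M\"obius map; hence $\delta_F \equiv 0$ and the inequality holds trivially.
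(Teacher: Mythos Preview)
Your proof is correct and follows essentially the same approach as the paper: normalize via $\aut(\mathbb{H},\infty)$ to $a = F(a) = i$, identify $|F''(i)|$ as the only nontrivial term, and bound it by $O(\delta)$ using Lemma~\ref{small-linear-distortion} together with Cauchy's integral formula (the large-$\delta$ case being trivial). The only organizational difference is that the paper decomposes $\delta$ into $\mu$ and $\alpha$ and reuses Lemma~\ref{mu-derivative} for the $\mu$-part, whereas you compute the Wirtinger derivatives of $p_F$ directly; both routes reduce to the same $F''(i)$ estimate.
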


\begin{proof}
 Since we have already estimated $|\nabla_{\hyp} \mu(a)|$ in Lemma \ref{mu-derivative}, it remains to control the gradient of the angular inclination 
 $|\nabla_{\hyp} \alpha(a)|$. 
  By $\aut(\mathbb{H}, \infty)$ invariance, we may assume that $a = F(a) = i$. We may also assume that $\delta(i) < 1/2$, otherwise the lemma is trivial, in which case, $|F'(i)| \asymp 1$ and the gradient 
$|\nabla \alpha(i)| = |\nabla \arg F'(i)|$ is controlled by the second derivative $|F''(i)|$.

As in the proof of Lemma \ref{small-linear-distortion}, we have
$
|F(w) - w| = O(\delta(i))$ for $w \in B^{\mathbb{H}}_{\hyp}(i, 1).
$
An application of Cauchy's integral formula gives the desired estimate $|F''(i)| = O(\delta(i))$.
\end{proof}

\paragraph*{Working in the unit disk.} For a centered holomorphic self-map $F$ of the unit disk, one can define the notions of $\delta, \eta, \alpha$ using the {\em radial vector field} $v_{\rad}(z) = \frac{2}{1- r^2} \cdot \frac{\partial}{\partial r}$, which assigns each point in $\mathbb{D} \setminus \{ 0 \}$ an outward pointing vector of hyperbolic length 1. Note that $\delta, \eta, \alpha$ are only defined when $a, F(a) \ne 0$ and as a result are somewhat awkward to work with. Nevertheless, near the unit circle, $\delta, \eta, \alpha$ resemble their counterparts in the upper half-plane.

 Assuming that $a, F(a) \ne 0$, the radial distortion $\delta_F(a)$ measures how much $F$ deviates from  $m_{a \to F(a)}$ near $a$, the ``straight'' M\"obius transformation which takes
$$
a \to F(a), \qquad \frac{a}{|a|} \to \frac{F(a)}{|F(a)|}, \qquad  -\frac{a}{|a|} \to -\frac{F(a)}{|F(a)|}.
$$
To ensure that $F$ is close to the linear map $\ell_{a \to F(a)}$ which takes $a \to F(a)$ and $\frac{a}{|a|} \to \frac{F(a)}{|F(a)|}$ on $B_{\hyp}(a,R)$, we will often ask that $1-|F(a)| < \delta/e^R$ in addition to $\delta_F(a) < \delta$.

\section{Distortion Along Radial Rays}
\label{sec:distortion-along-radial-rays}

Suppose $F$ is a holomorphic self-map of the unit disk. Recall that $F$ has an {\em angular derivative} at $\zeta \in \partial \mathbb{D}$ in the sense of Carath\'eodory if $F(\zeta) := \lim_{r \to 1} F(r \zeta)$ belongs to the unit circle and $F'(\zeta) := \lim_{r \to 1} F'(r \zeta)$ is finite.
The following theorem says that the logarithm of the angular derivative at $\zeta$ controls the total linear distortion along the radial geodesic $[0, \zeta)$:

\begin{theorem}
\label{total-radial-distortion}
Suppose $F$ is a holomorphic self-map of the disk with $F(0) = 0$. If $F$ has an angular derivative at $\zeta \in \partial \mathbb{D}$, then
\begin{equation}
\label{eq:total-radial-distortion}
\int_0^\zeta \delta \, d\rho \lesssim \log |F'(\zeta)|.
\end{equation}
In particular, if $F$ is an inner function with finite Lyapunov exponent,
\begin{equation}
\label{eq:total-radial-distortion2}
\int_{\mathbb{D}}\delta(z) \cdot \frac{dA(z)}{1-|z|} \lesssim \int_{\partial \mathbb{D}} \log |F'(re^{i\theta})| dm.
\end{equation}
\end{theorem}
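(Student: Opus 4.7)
The plan is to reduce the first inequality to two one-variable estimates for the vertical inefficiency $\eta$ and vertical inclination $\alpha$ along the radial ray, using $\delta \le \eta + |\im p| \le \eta + \alpha$, and then to deduce \eqref{eq:total-radial-distortion2} from \eqref{eq:total-radial-distortion} by Fubini.

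\emph{Controlling $\eta$.} Parametrize the ray by hyperbolic arclength, $\gamma(\rho) = \tanh(\rho/2)\zeta$, and set $\tilde\gamma := F\circ\gamma$. By construction $\re p(\gamma(\rho))$ is the hyperbolic-speed component of $F_* v_\rad$ along $v_\rad(F(\gamma))$, i.e.~the rate at which $\tilde\gamma$ recedes from the origin in the hyperbolic metric. Since $F(0)=0$, this gives
$$
d_\mathbb{D}\bigl(0, F(r\zeta)\bigr) \,=\, \int_0^\rho \re p(\gamma(s))\,ds \,=\, \rho - \int_0^\rho \eta(\gamma(s))\,ds.
$$
By the Julia--Wolff--Carath\'eodory theorem, the left-hand side equals $\rho - \log|F'(\zeta)| + o(1)$ as $\rho\to\infty$. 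Passing to the limit using $\eta\ge 0$ yields $\int_0^\infty\eta\,d\rho = \log|F'(\zeta)|$. Since $\mu = 1 - |p| \le 1 - \re p = \eta$, also $\int_0^\infty \mu\,d\rho \le \log|F'(\zeta)|$.

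\emph{Controlling $\alpha$.} Transport $F(\zeta)$ to $\infty$ in the upper half-plane, so that $\tilde\gamma$ becomes a curve escaping to infinity whose tangent makes angle $\alpha$ with $v_\uparrow$. By Lemma~\ref{geodesic-curvature-and-mobius-distortion}, the hyperbolic geodesic curvature of $\tilde\gamma$ at $\tilde\gamma(\rho)$ is $\lesssim \mu(\gamma(\rho))$, so in the arclength parameter $\tilde\rho = \int(1-\mu)\,d\rho$,
$$
\int_0^\infty \kappa_\hyp(\tilde\gamma;\tilde\rho)\,d\tilde\rho \,\lesssim\, \int_0^\infty \mu(\gamma(\rho))(1-\mu)\,d\rho \,\le\, \log|F'(\zeta)|.
$$
Decompose the $\rho$-domain into the \emph{bad} set $B = \{\mu > c_0\}\cup\{\alpha > 2\pi/3\}$ and its complement $G$. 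On $\{\alpha > 2\pi/3\}$ one has $\re p = |p|\cos\alpha < -|p|/2$, so $\eta > 1$; hence the $\rho$-measure of $B$ satisfies $|B| \lesssim \log|F'(\zeta)|$, and the trivial bound $\delta\le 2$ gives $\int_B \delta\,d\rho \lesssim \log|F'(\zeta)|$. On each connected component $(a,b)$ of $G$, both hypotheses of Lemma~\ref{inclination-bound} hold for $\tilde\gamma|_{[a,b]}$, yielding $\int_a^b \alpha\,d\tilde\rho \lesssim \alpha(a) + \int_a^b \kappa_\hyp\,d\tilde\rho$; since $1-\mu \ge 1-c_0$ on $G$, one has $d\rho \le 2\,d\tilde\rho$, and summing over components gives $\int_G \alpha\,d\rho \lesssim \log|F'(\zeta)|$. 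Combining the two cases proves \eqref{eq:total-radial-distortion}.

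\emph{Global estimate.} In polar coordinates $z = re^{i\theta}$, a direct computation gives $\tfrac{dA(z)}{1-|z|} = \tfrac{r(1+r)}{2}\,d\rho\,d\theta$, which is $\asymp d\rho\,dm(\zeta)$ near $\partial\mathbb{D}$; on $\{|z|\le 1/2\}$ the integrand $\delta$ is uniformly bounded and contributes $O(1)$. Because $F$ has a finite angular derivative at $m$-a.e.~$\zeta\in\partial\mathbb{D}$ under the finite Lyapunov exponent hypothesis, Fubini combined with \eqref{eq:total-radial-distortion} applied pointwise yields \eqref{eq:total-radial-distortion2}.

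\emph{Main obstacle.} The delicate point is the inclination estimate: a priori $G$ may have many connected components, and one must control $\sum_i \alpha(a_i)$ over their starting points. The remedy is that every excursion into $B$ absorbs a definite amount of either $\int\mu\,d\rho$ or $\int\eta\,d\rho$ (both bounded by $\log|F'(\zeta)|$), so the components that contribute nontrivially are effectively controlled by $\log|F'(\zeta)|$; a careful analysis, exploiting real-analyticity of $\mu$ and $\alpha$ along the radial ray, rules out pathological oscillation.
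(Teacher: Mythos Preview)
Your overall strategy coincides with the paper's: split $\delta \le \eta + \alpha$, bound $\int_0^\zeta\eta\,d\rho$ directly via Julia--Carath\'eodory (this is Lemma~\ref{radial-inefficiency}, and your computation in fact gives equality), then bound $\int_0^\zeta\alpha\,d\rho$ by feeding the curvature estimate $\kappa \lesssim \mu$ (Lemma~\ref{geodesic-curvature-and-mobius-distortion}) into the Gr\"onwall-type inclination bound (Lemma~\ref{inclination-bound}). The Fubini reduction to~\eqref{eq:total-radial-distortion2} is also fine.

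The gap is exactly where you flag it. Your proposed remedy --- ``every excursion into $B$ absorbs a definite amount of $\int\mu\,d\rho$ or $\int\eta\,d\rho$'' --- is false as stated: a component of $\{\mu > c_0\}$ can have arbitrarily small length (the graph of $\mu$ may just graze $c_0$ from above), so it contributes arbitrarily little to $\int\mu\,d\rho$; similarly for $\{\alpha > 2\pi/3\}$. Real-analyticity only makes the components locally finite in number, which is not quantitative. Hence $\sum_i \alpha(a_i)$ remains uncontrolled and the argument does not close.

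The paper resolves this with a \emph{buffer}. Rather than a single threshold, it decomposes the ray into \emph{thin} intervals (start where $\eta < 0.1$, run while $\eta < 0.2$) and \emph{thick} intervals (the complement, on which $\eta \ge 0.1$). A compactness statement (Lemma~\ref{continuity-of-the-radial-inefficiency}) says $\eta$ cannot cross the gap between the two thresholds in less than a fixed hyperbolic distance; consequently each transition costs a definite amount of $\int\eta\,d\rho$, and the number of intervals is $\lesssim \log|F'(\zeta)|$. On thin intervals the condition $\eta < 0.2$ forces $\alpha < 2\pi/3$ automatically, so Lemma~\ref{inclination-bound} applies there with initial-condition terms summing to $\lesssim n(\zeta) \lesssim \log|F'(\zeta)|$. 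Your decomposition can be salvaged the same way: replace $\{\mu > c_0\}$ by maximal intervals that begin when $\mu$ exceeds $c_0$ and run until $\mu$ drops below $c_0/2$, then invoke the stability estimate $|\nabla_{\hyp}\mu| \le K\mu$ (Lemma~\ref{mu-derivative}) to show each such interval has length $\gtrsim 1$ and hence contributes $\gtrsim c_0$ to $\int\mu\,d\rho$.
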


In view of the inequality $\alpha(z) + \eta(z) \ge \delta(z)$, we may split the proof Theorem \ref{total-radial-distortion} into two lemmas:

\begin{lemma}
\label{radial-inefficiency}
Suppose $F$ is a holomorphic self-map of the disk with $F(0) = 0$. If $F$ has an angular derivative at $\zeta \in \partial \mathbb{D}$, then
\begin{equation}
\label{eq:radial-inefficiency}
\int_0^\zeta \eta \, d\rho \le \log |F'(\zeta)|.
\end{equation}
\end{lemma}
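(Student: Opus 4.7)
My strategy is to parametrize the radial ray $[0,\zeta)$ by hyperbolic arclength $\tau$ and recognize the integrand as a telescoping quantity: $\eta(\tau) = 1 - \sigma'(\tau)$, where $\sigma(\tau) := d_{\mathbb{D}}(0, F(r(\tau)))$. Once this identification is made, the integral collapses to a single boundary term, which the Julia--Carath\'eodory theorem identifies as $\log|F'(\zeta)|$.

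By rotation invariance, I may assume $\zeta = 1$. Parametrize $[0,1)$ by $r = r(\tau) = \tanh(\tau/2)$, so the paper's element $d\rho$ agrees with $d\tau$ and $dr/d\tau = (1 - r^2)/2$. Write $F(r) = \rho(r) e^{i\phi(r)}$ with $\rho(r) = |F(r)|$. Taking the unit outward radial vector at $w \in \mathbb{D}\setminus\{0\}$ to be $\frac{1-|w|^2}{2}\cdot\frac{w}{|w|}$ (as a complex tangent vector) and using $F'(r)e^{-i\phi(r)}=\rho'(r)+i\rho(r)\phi'(r)$, a direct calculation gives
\[
p(r) \;=\; \frac{F_{*}v_{\rad}(r)}{v_{\rad}(F(r))} \;=\; \frac{F'(r)\,e^{-i\phi(r)}\,(1-r^2)}{1-\rho(r)^2} \;=\; \frac{\bigl(\rho'(r) + i\rho(r)\phi'(r)\bigr)(1-r^2)}{1-\rho(r)^2}.
\]
Differentiating $\sigma(\tau) = \log\frac{1+\rho}{1-\rho}$ via the chain rule yields $\sigma'(\tau) = \frac{\rho'(r)(1-r^2)}{1-\rho(r)^2} = \re p(r)$, so $\eta(\tau) = 1 - \sigma'(\tau)$. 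The Schwarz--Pick lemma gives $|p|\le 1$, so $\eta \ge 0$.

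The integral now telescopes: for any $\tau_0 < \infty$,
\[
\int_0^{\tau_0} \eta \, d\tau \;=\; \tau_0 - \sigma(\tau_0) \;=\; \log\frac{1+r(\tau_0)}{1+\rho(r(\tau_0))} \,+\, \log\frac{1-\rho(r(\tau_0))}{1-r(\tau_0)}.
\]
Since the integrand is nonnegative, this is monotone in $\tau_0$; I then pass to the limit $\tau_0 \to \infty$, i.e.\ $r \to 1^-$. The Carath\'eodory angular derivative hypothesis at $\zeta = 1$ forces $|F(1)|=1$, so $\rho(r) \to 1$ and the first logarithm tends to $\log 1 = 0$. The Julia--Carath\'eodory theorem further gives $(1-\rho(r))/(1-r) \to |F'(\zeta)|$, so the second logarithm tends to $\log |F'(\zeta)|$. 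Combining, $\int_0^\zeta \eta \, d\rho = \log|F'(\zeta)|$, which is the claimed bound (with equality, in fact).

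The one real step is the computational identification $\eta = 1 - \sigma'$; after that the proof is pure bookkeeping plus classical Julia--Carath\'eodory. I anticipate no substantive obstacle, though one must be mildly careful to disambiguate the paper's arclength variable $\rho$ in $d\rho$ from $\rho(r) = |F(r)|$.
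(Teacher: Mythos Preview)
Your proof is correct and follows essentially the same approach as the paper's: both recognize that $\int_0^{r\zeta}\eta\,d\rho = d_{\mathbb{D}}(0,r\zeta) - d_{\mathbb{D}}(0,F(r\zeta))$ and then let $r\to 1$ using Julia--Carath\'eodory. You carry out the identification $\eta = 1-\sigma'$ explicitly via the formula for $p$, whereas the paper simply asserts this consequence of the definition of radial inefficiency in one line; your observation that one in fact obtains equality (not merely $\le$) is correct and slightly sharper than what the lemma states.
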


\begin{lemma}
\label{radial-inclination}
Suppose $F$ is a holomorphic self-map of the disk with $F(0) = 0$. If $F$ has an angular derivative at $\zeta \in \partial \mathbb{D}$, then
\begin{equation}
\label{eq:radial-inclination}
\int_0^\zeta \alpha \, d\rho \lesssim \log |F'(\zeta)|.
\end{equation}
\end{lemma}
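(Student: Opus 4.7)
The strategy is to apply Lemma~\ref{inclination-bound} to the image curve $\Gamma := F([0,\zeta))$, parametrized by hyperbolic arc length $s$. By Lemma~\ref{geodesic-curvature-and-mobius-distortion}, one can fix a small constant $c \in (0,1)$ such that whenever $\mu(z) < c$, the geodesic curvature of $\Gamma$ at $F(z)$ is at most $0.2$, so that Lemma~\ref{inclination-bound} is available on the locus $\{\mu < c\}$. Since $ds = (1-\mu)\,d\rho$, on this locus the $\rho$- and $s$-integrals of $\alpha$ differ by at most a factor $(1-c)^{-1}$.

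I first truncate the integral away from two ``bad'' subsets of $[0,\zeta)$. The set $B_1 = \{\mu \geq c\}$ has $\rho$-measure at most $c^{-1}\int\mu\,d\rho$, and since $\mu \leq \eta$, Lemma~\ref{radial-inefficiency} gives $|B_1|_\rho \lesssim \log|F'(\zeta)|$. Using the pointwise identity $\eta = \mu + (1-\mu)(1-\cos\alpha)$, on the set $B_2 = \{\mu < c,\,\alpha > 2\pi/3\}$ we have $\eta \geq (1-c)\cdot\tfrac{3}{2}$, so $|B_2|_\rho \lesssim \log|F'(\zeta)|$ as well. Since $\alpha \leq \pi$ throughout, $B_1$ and $B_2$ contribute $\lesssim \log|F'(\zeta)|$ to $\int_0^\zeta\alpha\,d\rho$, and the problem reduces to controlling the integral over the ``very good'' set $VG = \{\mu < c,\,\alpha \leq 2\pi/3\}$.

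On each maximal subinterval $(a_i^s, b_i^s)$ of $VG$ in the $s$-parametrization, Lemma~\ref{inclination-bound} yields $\int_{a_i^s}^{b_i^s}\alpha\,ds \lesssim \alpha(a_i^s) + \int_{a_i^s}^{b_i^s}\kappa\,ds$, with $\alpha(a_i^s) \leq 2\pi/3$ by construction and $\kappa \leq 0.2$ throughout. Summing, the total curvature contribution is $\lesssim \int\kappa\,d\rho \lesssim \int\mu\,d\rho \lesssim \log|F'(\zeta)|$ via Lemma~\ref{geodesic-curvature-and-mobius-distortion}, while the sum of starting values is at most $(2\pi/3)N$, where $N$ is the number of components of $VG$. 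To bound $N$, every component boundary is either a crossing of $\mu = c$ or a downward crossing of $\alpha = 2\pi/3$. For the $\mu$-crossings, the stability estimate $|\nabla_\hyp\mu| \lesssim \mu$ of Lemma~\ref{mu-derivative} gives $\mathrm{TV}(\mu) \lesssim \int|\mu'|\,d\rho \lesssim \int\mu\,d\rho \lesssim \log|F'(\zeta)|$, which in turn controls the number of level-$c$ crossings. For the $\alpha$-crossings, the key observation is that $\sin(2\pi/3) = \tfrac{\sqrt{3}}{2} > 0.8 = 4\cdot 0.2$, so the differential inequality $\alpha' \leq -\sin\alpha + 4\kappa$ forces $\alpha' < 0$ at $\alpha = 2\pi/3$ whenever $\kappa \leq 0.2$; consequently $\alpha$ cannot re-cross $2\pi/3$ upward within a single $\mu$-good component, so the $\alpha$-crossings are controlled by the $\mu$-component count as well. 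Altogether $N \lesssim \log|F'(\zeta)|$, and conversion back to the $\rho$-parametrization, which only costs a bounded factor $(1-c)^{-1}$, finishes the argument.

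The main obstacle is precisely the bound on $N$: controlling the oscillations of $\mu$ around the threshold $c$ crucially requires the stability estimate of Lemma~\ref{mu-derivative} to convert a total variation statement into the bounded integral $\int\mu\,d\rho$, while the rigidity of $\alpha$ near $2\pi/3$ on $\mu$-good intervals is what prevents an uncontrolled proliferation of $\alpha$-level crossings between the regimes $\alpha \leq 2\pi/3$ and $\alpha > 2\pi/3$.
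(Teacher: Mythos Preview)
Your overall decomposition is in the right spirit and close to the paper's argument, but the key step bounding $N$ (the number of components of $VG$) has a genuine gap. The assertion that $\mathrm{TV}(\mu)\lesssim\int\mu\,d\rho$ ``controls the number of level-$c$ crossings'' is not correct: by the Banach indicatrix formula $\mathrm{TV}(\mu)=\int N(y)\,dy$, total variation bounds only the \emph{average} of the crossing count over levels, not $N(c)$ for a fixed level $c$. Nothing prevents $\mu$ from oscillating across the single level $c$ arbitrarily often with arbitrarily small amplitude; the stability bound $|\mu'|\le K\mu$ forces each oscillation to be slow, not to be rare. The fix is to introduce a \emph{second} threshold and use hysteresis: count only upcrossings from, say, $c/2$ to $c$. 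Since $|(\log\mu)'|\le K$, each such upcrossing occupies hyperbolic length at least $K^{-1}\log 2$ on which $\mu\ge c/2$, hence contributes a definite amount to $\int\mu\,d\rho$, and their number is $\lesssim\log|F'(\zeta)|$. With this repair your argument goes through.

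This hysteresis device is exactly how the paper organizes the proof, but using $\eta$ rather than $\mu$: the thin/thick decomposition is based on the pair of thresholds $0.1$ and $0.2$ for $\eta$, with Lemma~\ref{continuity-of-the-radial-inefficiency} playing the role your Lemma~\ref{mu-derivative} plays. Working with $\eta$ is slightly more economical, since $\eta<0.2$ already forces $\alpha\le|\arg(0.8+0.2i)|<2\pi/3$, so there is no need to track $\alpha$-crossings separately; your observation that $\alpha'\le -\sin\alpha+4\kappa<0$ at $\alpha=2\pi/3$ on the $\mu$-good set is correct and handles this, but it becomes unnecessary under the paper's setup. One further minor point you omit (and the paper flags in a remark): Lemma~\ref{inclination-bound} lives in the upper half-plane, so applying it to the disk quantity $\alpha$ requires $F(z)$ to be close to $\partial\mathbb{D}$; this costs only $O(1)$ per good interval and is harmless once $N$ is under control.
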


\subsection{Bounding the radial inefficiency}

We first estimate the radial inefficiency:

\begin{proof}[Proof of Lemma \ref{radial-inefficiency}]
Let $\zeta$ be a point on the unit circle where $F$ has an angular derivative. Join the points $0$ and $\zeta$ by a hyperbolic geodesic 
 $\gamma = [0, \zeta)$.  The image $F(\gamma)$ is a curve which connects $0$ to $F(\zeta) \in \partial \mathbb{D}$. From the definition of the radial inefficiency,
 it is clear that
$$
\int_{0}^\zeta \eta \, d\rho \, \le \,
 \lim_{r \to 1} \bigl \{ d_{\mathbb{D}}(0,r\zeta) -  d_{\mathbb{D}}(0,F(r\zeta))  \bigr \}
 \, = \,  \log |F'(\zeta)|,
$$
as desired.
\end{proof}

In view of the elementary estimate $\mu \le \eta$ and  Lemma \ref{geodesic-curvature-and-mobius-distortion}, the total M\"obius distortion and geodesic curvature are finite along $F([0, \zeta))$:
\begin{corollary}
\label{mobius-distortion}
Suppose $F$ is a holomorphic self-map of the disk with $F(0) = 0$. If $F$ has an angular derivative at $\zeta \in \partial \mathbb{D}$, then
$$
\int_0^\zeta \mu \, d\rho \le \log |F'(\zeta)|.
$$
and
$$
\int_0^\zeta \min \bigl (1, \kappa_{F([0,\zeta))}(F(z)) \bigr ) \, d\rho(z) \lesssim \log |F'(\zeta)|.
$$
\end{corollary}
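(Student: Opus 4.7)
The plan is to extract both estimates directly from Lemma \ref{radial-inefficiency} combined with the pointwise comparisons recorded just before the corollary; no new analytic ingredient is needed, and the corollary is essentially a bookkeeping consequence.

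For the first inequality, I would start by observing that $\mu(z) \le \eta(z)$ pointwise. This is immediate from the definitions in Section \ref{sec:notions-of-distortion}: writing $p = F_* v_\downarrow / v_\downarrow(F) \in \overline{\mathbb{D}}$, we have $\mu = 1 - |p|$ and $\eta = \re(1-p) = 1 - \re p$, and since $|p| \ge \re p$ it follows that $\mu \le \eta$. Integrating along the radial ray and invoking Lemma \ref{radial-inefficiency} gives
\[
\int_0^\zeta \mu \, d\rho \le \int_0^\zeta \eta \, d\rho \le \log |F'(\zeta)|,
\]
which is the first assertion.

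For the second inequality, I would apply Lemma \ref{geodesic-curvature-and-mobius-distortion} pointwise along the radial geodesic. For each $z \in [0, \zeta)$, that lemma (applied to the hyperbolic geodesic $\gamma = [0, \zeta)$ passing through $z$) yields
\[
\min\bigl(1, \kappa_{F([0,\zeta))}(F(z))\bigr) \lesssim \mu(z).
\]
Integrating this estimate with respect to $d\rho$ and then applying the first half of the corollary gives the second assertion, with the same constant up to the implicit multiple in Lemma \ref{geodesic-curvature-and-mobius-distortion}.

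There is really no significant obstacle here: the content of the corollary is already contained in the preceding lemma and the definitional inequality $\mu \le \eta$, together with the geometric translation from M\"obius distortion to geodesic curvature of the image curve. The only point worth flagging is the $\min(1, \cdot)$ on the curvature side, which is needed because Lemma \ref{geodesic-curvature-and-mobius-distortion} only controls the curvature up to size $1$ (reflecting the fact that $\mu \in [0,1]$), so one cannot hope for a pointwise curvature bound without this truncation.
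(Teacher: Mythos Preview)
Your proposal is correct and follows essentially the same approach as the paper: the paper also derives the first inequality from the elementary pointwise estimate $\mu \le \eta$ together with Lemma~\ref{radial-inefficiency}, and the second from Lemma~\ref{geodesic-curvature-and-mobius-distortion}.
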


Below, we will use the following lemma which follows from compactness:

\begin{lemma}
\label{continuity-of-the-radial-inefficiency}
There exists a $\delta > 0$ so that for any holomorphic self-map $F$ of the unit disk and any point $z \in \mathbb{D}$ with $d_{\mathbb{D}}(0,z) \ge 1$, we have
$$
\eta(z) < 0.1 \qquad \implies \qquad \eta(w) < 0.15, \quad w \in B_{\hyp}(z, \delta).
$$
\end{lemma}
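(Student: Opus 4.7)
The natural strategy is to establish an infinitesimal Lipschitz-type bound on $\eta$ and integrate it. Concretely, I plan to prove
\begin{equation*}
|\nabla_{\hyp}\eta_F(a)| \, \le \, K\,\delta_F(a)
\end{equation*}
for an absolute constant $K > 0$ at every point $a$ where $\eta_F$ is defined, and then deduce the lemma by integrating this bound along the hyperbolic geodesic from $z$ to $w$. Note that this is a genuine refinement of Lemma~\ref{delta-derivative}: although $\eta_F$ can be much smaller than $\delta_F$, its hyperbolic gradient is still controlled by $\delta_F$, not by $\eta_F$ itself.

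To establish the gradient bound, pass to the upper half-plane picture by a M\"obius conjugation sending $a \mapsto i$ and $F(a) \mapsto i$ and aligning the radial vector field at $a$ with the downward vector field at $i$. Following the proofs of Lemmas~\ref{small-linear-distortion} and~\ref{delta-derivative}, the Schwarz-lemma bootstrap yields $|F(w) - w| = O(\delta_F(i))$ on $B_{\hyp}^{\mathbb{H}}(i, 1)$, and Cauchy's integral formula then gives $|F''(i)| = O(\delta_F(i))$. Differentiating the explicit formula
\begin{equation*}
\eta_F(w) \, = \, 1 - \re \frac{\im w \cdot F'(w)}{\im F(w)}
\end{equation*}
at $w = i$ produces $|\nabla_{\Euc}\eta_F(i)| = O(\delta_F(i))$, and since the conformal factor $\im w$ equals $1$ at $w = i$, this is also the hyperbolic gradient bound. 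The regime $\delta_F(i) \gtrsim 1$ is handled by a trivial absolute bound on $|\nabla_{\hyp}\eta|$.

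For the integration step, observe that $\eta_F(z) < 0.1$ together with the Schwarz-Pick bound $|p_F(z)| \le 1$ forces $(\im p_F(z))^2 \le 1 - (0.9)^2 = 0.19$, hence
\begin{equation*}
\delta_F(z)^2 \, = \, (1 - \re p_F(z))^2 + (\im p_F(z))^2 \, < \, 0.01 + 0.19 \, = \, 0.2.
\end{equation*}
Along the hyperbolic geodesic $\gamma$ of length $r := d_{\mathbb{D}}(z, w)$ from $z$ to $w$, Lemma~\ref{delta-derivative} gives $\delta_F(\gamma(s)) \le e^{Ks}\sqrt{0.2}$, and integrating the gradient bound along $\gamma$ yields
\begin{equation*}
|\eta_F(w) - \eta_F(z)| \, \le \, \int_0^r K\,\delta_F(\gamma(s))\,ds \, \le \, \sqrt{0.2}\,(e^{Kr} - 1).
\end{equation*}
Choosing the hyperbolic radius $\delta > 0$ so small that $\sqrt{0.2}\,(e^{K\delta} - 1) < 0.05$ then gives $\eta_F(w) < \eta_F(z) + 0.05 < 0.15$ for every $w \in B_{\hyp}(z, \delta)$, as required.

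The main technical point is the gradient estimate; the integration is routine. The naive attempt $|\nabla_{\hyp}\eta| \lesssim \eta$ fails because $\eta$ can vanish to first order at points where $F$ is not M\"obius --- for instance, at $a = F(a) = i$ with $F'(i) = 1$ and $F''(i) \ne 0$ one has $\eta(i) = 0$ but $\nabla\eta(i) \ne 0$. The estimate must therefore be phrased with $\delta$ on the right, and this is precisely what the $|F''| = O(\delta)$ bound from the Schwarz-lemma bootstrap provides.
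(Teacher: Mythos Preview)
Your approach via the gradient bound $|\nabla_{\hyp}\eta_F| \le K\delta_F$ is more explicit than the paper's one-word ``compactness'', but the passage to the half-plane has a real gap. The M\"obius maps you use align $v_{\rad}$ with $v_\downarrow$ only at the single points $a$ and $F(a)$; at nearby points the two direction fields differ, so the disk quantity $\eta_F(w)$ is \emph{not} equal to the half-plane quantity $\eta_{\tilde F}(\phi(w))$ for $w\ne a$. Hence your displayed formula $\eta_F(w)=1-\re\bigl(\im w\cdot F'(w)/\im F(w)\bigr)$ is the half-plane $\eta$, not the disk $\eta$ transported. The discrepancy feeds an extra term into $\nabla_{\hyp}\eta_F(a)$ of hyperbolic size roughly $(1-|F(a)|^2)/|F(a)|$ --- the rate at which the direction $F(w)/|F(w)|$ turns --- and this is unbounded as $F(a)\to 0$. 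So the gradient bound fails on the disk without a lower bound on $|F(a)|$.

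In fact the lemma as literally stated is false, so the paper's compactness sketch has the same defect. Take $F(z)=z\,(z-a)/(1-\bar a z)$ with $a\in(0,1)$ close to $1$, and $z=a+\epsilon\, e^{i\theta}$ with $0<\epsilon\ll 1-a^2$. One computes $p_F(z)\approx a\,e^{-i\theta}$, hence $\eta_F(z)\approx 1-a\cos\theta$; choosing $\theta_z=0.35$ and $\theta_w=0.6$ gives $\eta_F(z)<0.1$ and $\eta_F(w)>0.15$, while $d_{\hyp}(z,w)\asymp\epsilon/(1-a^2)$ can be made arbitrarily small, and $d_{\mathbb{D}}(0,z)\ge 1$ throughout. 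Your half-plane estimate \emph{is} correct (the computation with $|F''(i)|=O(\delta)$ goes through verbatim), and it yields the disk statement once one also assumes $d_{\mathbb{D}}(0,F(z))$ is bounded below --- precisely the regime in which, as the paper notes, ``near the unit circle, $\delta,\eta,\alpha$ resemble their counterparts in the upper half-plane''. That restricted version is what the argument in Section~\ref{sec:distortion-along-radial-rays} should be using.
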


\subsection{Bounding the radial inclination}

To complete the proof of Theorem \ref{total-radial-distortion}, it remains to estimate the radial inclination.
We parametrize the radial geodesic $\gamma(t) = [0, \zeta)$ with respect to arclength. 
We break up $(\gamma(1),\gamma(\infty))$ into a union of thick and thin intervals. By a {\em thin} interval $(\gamma(p_i), \gamma(q_i)) \subset (\gamma(1),\gamma(\infty))$, we mean a maximal interval for which $\eta(\gamma(p_i)) < 0.1$ and $\eta(\gamma(q_i)) < 0.2$.
   The thick intervals are then defined as the connected components of the complement of the thin intervals.

 In view of Lemma \ref{continuity-of-the-radial-inefficiency}, the hyperbolic length of a thin interval is bounded from below. Therefore, by Lemma \ref{radial-inefficiency}, the number of thin intervals $n(\zeta) \lesssim \log |F'(\zeta)|$. As thin and thick intervals alternate, the number of thick intervals is also $\lesssim \log |F'(\zeta)|$. 

 \begin{proof}[Proof of Lemma \ref{radial-inclination}]
Since $\eta(t) \ge 0.1$ on any thick interval, by Lemma \ref{radial-inefficiency}, the sum of the hyperbolic lengths of the thick intervals is $\lesssim \log |F'(\zeta)|$, so that
$$
  \sum_{\gamma_i \thick} \int_{\gamma_i} \alpha \, d\rho \lesssim\, \log|F'(\zeta)|.
  $$
 From the definitions of the radial inclination and the radial inefficiency, it follows that on a thin interval $\alpha(t) \le |\arg(0.8 + 0.2i)| \approx 0.644 < 2\pi/3$, so that
Lemma \ref{inclination-bound} is applicable (see the remark below).
 Together with Corollary \ref{mobius-distortion}, this shows
  $$
  \sum_{\gamma_i \thin} \int_{\gamma_i} \alpha \, d\rho \, \lesssim \, n(\zeta) +   \sum_{i=1}^n \int_{\gamma_i} \kappa \, d\rho \,  \lesssim\, \log|F'(\zeta)|.
  $$
  The proof is complete.
  \end{proof}
  
  \begin{remark}
   Actually, one needs to be a bit more precise in the proof above since Lemma \ref{inclination-bound} is stated on the upper half-plane. On the unit disk, one can only apply Lemma \ref{inclination-bound} as long as one is working sufficiently close to the unit circle. As $\eta(z) < 0.2$ on a thin interval, $F(z)$ moves towards the unit circle at a definite rate, so after $O(1)$ time, Lemma  \ref{inclination-bound} will indeed be applicable. The waiting time contributes at most $O(1)$ to each integral $\int_{\gamma_i} \alpha \, d\rho$ over a thin interval $\gamma_i$.
   \end{remark}

\part{General Centered Inner Functions of Finite Lyapunov Exponent}
\label{part:general-inner-function}

In this part, $F$ will denote an arbitrary centered inner function of finite Lyapunov exponent, other than a rotation. 
In Section \ref{sec:mobius-linear-laminations}, we define the M\"obius and linear laminations $\widehat{X}_{\mob}$ and $\widehat{X}_{\lin}$ associated to $F$ and describe the geodesic and horocyclic flows on $\widehat{X}_{\lin}$. To be fair, the term ``lamination'' is not entirely accurate here as $\widehat{X}_{\lin}$ and $\widehat{X}_{\mob}$ may not locally be product sets.

In Section \ref{sec:area}, we construct a natural volume form $d\xi$ on $\widehat{X}$.
According to Theorem \ref{total-mass}, the total volume of $\widehat{X}$ is just the Lyapunov exponent of $F$. From the finiteness of volume, it follows
that iteration along almost every backward orbit is asymptotically M\"obius, i.e.~$\xi(\widehat{X} \setminus \widehat{X}_{\mob}) = 0$.
In Section \ref{sec:linear-structure}, we improve this to asymptotically linearity, i.e.~$\xi(\widehat{X} \setminus \widehat{X}_{\lin}) = 0$.

 In Section \ref{sec:geodesic-folation}, we study how the trajectories of the geodesic flow foliate 
$\widehat{\mathbb{D}}_{\lin}$ and conclude that the geodesic flow on $\widehat{X}_{\lin}$ is ergodic. Finally, in Section \ref{sec:orbit-counting}, we apply the ergodic theorem to a slight modification of the almost invariant function from Section \ref{sec:1c-orbit-counting} to prove Theorem \ref{main-thm2}.
 
\section{M\"obius and Linear Laminations}
\label{sec:mobius-linear-laminations}

For a general centered inner function, the lamination $\widehat{X} = \widehat{\mathbb{D}} \, / \, \widehat F$ defined in Section \ref{sec:fbp} has limited use. In this section, we describe two subsets
$$\widehat{X}_{\mob} = \widehat{\mathbb{D}}_{\mob} \, / \, \widehat F \qquad \text{and} \qquad \widehat{X}_{\lin} = \widehat{\mathbb{D}}_{\lin} \, / \, \widehat F,
$$
which we refer to as the M\"obius and linear laminations of $F$ respectively. Here, $\widehat{\mathbb{D}}_{\mob} \subset \widehat{\mathbb{D}}$ is the collection of inverse orbits ${\bf z}  = (z_{-n})_{n=0}^\infty$ on which backward iteration is asymptotically M\"obius:
$$
 \mu_{F^{\circ m}}(z_{-m-n}) \to 0, \qquad \text{as } m,n \to \infty,
$$
while $\widehat{\mathbb{D}}_{\lin} \subset \widehat{\mathbb{D}}$ consists of inverse orbits on which backward iteration is asymptotically linear:
$$
 \delta_{F^{\circ m}}(z_{-m-n}) \to 0, \qquad \text{as } m,n \to \infty.
$$
(As $\delta_{F^{\circ m}}(z_{-m-n})$ is small, $F^{\circ m}$ is close to a straight M\"obius transformation near $z_{-m-n}$. Asymptotic linearity follows from the fact that $|z_{-n}| \to 1$.) Since $\mu \le \delta$, it is clear that $\widehat{X}_{\lin} \subset \widehat{X}_{\mob} \subset \widehat{X}$. 

On the set ${\widehat X}_{\mob} \subset \widehat{X}$, one can define a leafwise hyperbolic Laplacian and study mixing properties of hyperbolic Brownian motion, but we will not pursue this here. On $\widehat{X}_{\lin} \subset \widehat{X}$, one can define geodesic and horocyclic flows as in Section \ref{sec:fbp}. 

\subsection{Rescaling along inverse orbits} 

Inspection shows that a backward orbit ${\bf z} = (z_{-n})_{n=0}^\infty \in \widehat{\mathbb{D}}$ belongs to ${\widehat{\mathbb{D}}}_{\mob}$ if and only if there exists a sequence of M\"obius transformations ${m}_{-N} \in \aut(\mathbb{D})$, $N \in\N$, with
$$
{m}_{-N}(0)=z_{-N}
$$ 
for which the sequence 
$$
(F^{\circ N} \circ {m}_{-N})_{N=0}^\infty
$$ 
converges uniformly on compact subsets of the unit disk as $N \to \infty$. In this case, we denote the limiting map by $F_{{\bf z}, 0}$. In fact, when $F_{{\bf z},0}$ exists, so does
$$
F_{{\bf z}, -n} = \lim_{N \to \infty} F^{\circ (N-n)} \circ {m}_{-N}, \qquad \text{for any }n \in \mathbb{N},
$$
and $(F_{{\bf z}, -n}(w))_{n=0}^\infty$ defines an inverse orbit in ${\widehat{\mathbb{D}}}_{\mob}$ for any $w \in \mathbb{D}$.

We say that a backward orbit ${\bf w} = (w_{-n})_{n=0}^\infty$ lies in the same leaf of ${\widehat{\mathbb{D}}}_{\mob}$ as ${\bf z} = (z_{-n})_{n=0}^\infty$ if
there is a $w \in \mathbb{D}$ such that 
$$
w_{-n} = F_{{\bf z}, -n}(w),
$$
for all integers $n \in \mathbb{N}$.

For a point $p \in \mathbb{D} \setminus 0$, we write $M_{p}$ for the conformal map from $\mathbb{H}$ to $\mathbb{D}$ which takes
$$
0 \to \frac{p}{|p|}, \qquad i \to p, \qquad \infty \to -\frac{p}{|p|}.
$$
Similarly, a backward orbit ${\bf z} = (z_{-n})_{n=0}^\infty \in \widehat{\mathbb{D}}$ belongs to ${\widehat{\mathbb{D}}}_{\lin}$ if and only if for some
 (and hence any) $n \in \mathbb{N}$, the sequence of rescaled iterates
$$
F^{\circ (N - n)} \circ M_{z_{-N}},
$$
converges uniformly on compact subsets of $\mathbb{H}$ as $N \to \infty$. We denote the limiting maps by 
$$
F_{{\bf z}, -n} := \lim_{N \to \infty} F^{\circ (N-n)} \circ M_{z_{-N}}.
$$
We partition ${\widehat{\mathbb{D}}}_{\lin}$ into a union of leaves analogously to ${\widehat{\mathbb{D}}}_{\mob}$.

\begin{lemma}
\label{when-are-two-orbits-in-the-same-leaf}
Two inverse orbits ${\bf z} = (z_{-n})_{n=0}^\infty$ and ${\bf z'} = (z_{-n}')_{n=0}^\infty$ in $\widehat{\mathbb{D}}_{\lin}$ belong to the same leaf $\mathcal L \subset \widehat{\mathbb{D}}_{\lin}$ if and only if $\(d_{\mathbb{D}}(z_{-n}, z'_{-n}))_{n=0}^\infty$ is uniformly bounded. In this  case, the leafwise hyperbolic distance
$$
d_{\mathcal L}({\bf z}, {\bf z'}) = \lim_{n \to \infty} d_{\mathbb{D}}(z_{-n}, z'_{-n}).
$$
\end{lemma}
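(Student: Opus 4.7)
The plan is to prove both directions of the equivalence together by constructing the parameter $w \in \mathbb{H}$ that witnesses ${\bf z'}$ lying in the leaf of ${\bf z}$ as a subsequential limit of naturally defined approximations.

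The forward direction is essentially Schwarz--Pick. If ${\bf z}$ and ${\bf z'}$ lie in the same leaf, then by definition $z'_{-n} = F_{{\bf z}, -n}(w)$ for some $w \in \mathbb{H}$ and every $n$. Since $F_{{\bf z}, -n}\colon \mathbb{H} \to \mathbb{D}$ is the locally uniform limit of the holomorphic maps $F^{\circ(N-n)} \circ M_{z_{-N}}$, it is itself holomorphic, and Schwarz--Pick yields $d_{\mathbb{D}}(z_{-n}, z'_{-n}) \le d_{\mathbb{H}}(i, w)$ for every $n$. Applying Schwarz--Pick directly to $F$ and to the inverse-orbit relations $F(z_{-n-1}) = z_{-n}$, $F(z'_{-n-1}) = z'_{-n}$ also shows that $d_{\mathbb{D}}(z_{-n}, z'_{-n})$ is non-decreasing in $n$, so the limit as $n \to \infty$ exists and is at most $d_{\mathbb{H}}(i, w)$.

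For the converse, assume $d_{\mathbb{D}}(z_{-n}, z'_{-n}) \le C$ uniformly in $n$. Set $w_N := M_{z_{-N}}^{-1}(z'_{-N}) \in \mathbb{H}$; since $M_{z_{-N}}$ is a M\"obius isometry sending $i$ to $z_{-N}$, we have $d_{\mathbb{H}}(i, w_N) = d_{\mathbb{D}}(z_{-N}, z'_{-N}) \le C$, so the $w_N$ lie in the compact hyperbolic ball $\overline{B_{\hyp}^{\mathbb{H}}(i, C)}$. Extract a convergent subsequence $w_{N_k} \to w$. I claim $F_{{\bf z}, -n}(w) = z'_{-n}$ for every $n$. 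By construction, $F^{\circ(N_k - n)} \circ M_{z_{-N_k}}(w_{N_k}) = F^{\circ(N_k - n)}(z'_{-N_k}) = z'_{-n}$ as soon as $N_k \ge n$, and the triangle inequality
$$
d_{\mathbb{D}}\bigl(F_{{\bf z}, -n}(w),\, z'_{-n}\bigr) \;\le\; d_{\mathbb{D}}\bigl(F_{{\bf z}, -n}(w),\, F_{{\bf z}, -n}(w_{N_k})\bigr) \;+\; d_{\mathbb{D}}\bigl(F_{{\bf z}, -n}(w_{N_k}),\, F^{\circ(N_k - n)} \circ M_{z_{-N_k}}(w_{N_k})\bigr)
$$
settles it: the first term tends to $0$ by continuity of $F_{{\bf z}, -n}$ at $w$, and the second tends to $0$ by the locally uniform convergence of $F^{\circ(N-n)} \circ M_{z_{-N}}$ to $F_{{\bf z}, -n}$ on the compact set $\{w\} \cup \{w_{N_k}\}_{k}$. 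Hence ${\bf z'}$ lies in the leaf of ${\bf z}$ at parameter $w$.

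The distance formula follows for free: the construction yields $d_{\mathbb{H}}(i, w) = \lim_k d_{\mathbb{H}}(i, w_{N_k}) = \lim_k d_{\mathbb{D}}(z_{-N_k}, z'_{-N_k})$, and by the monotonicity established in the forward direction, this subsequential limit agrees with the full limit $\lim_{n \to \infty} d_{\mathbb{D}}(z_{-n}, z'_{-n})$. Since the leafwise hyperbolic distance is $d_{\mathcal L}({\bf z}, {\bf z'}) = d_{\mathbb{H}}(i, w)$ by definition of the parametrization of $\mathcal L$ through $F_{{\bf z}, \cdot}$, the claimed equality falls out. The main technical obstacle is the swap-of-limits step in the converse direction; it is overcome precisely because the uniform bound $C$ confines all $w_{N_k}$ to a common compact subset of $\mathbb{H}$, which upgrades the locally uniform convergence of the rescaled iterates into the genuinely uniform control needed at the moving approximating points.
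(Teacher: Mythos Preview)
The paper states this lemma without proof, so there is no argument to compare against; your task was to supply the details the authors left implicit, and you have done so correctly. The forward direction via Schwarz--Pick, the monotonicity of $d_{\mathbb{D}}(z_{-n}, z'_{-n})$, the compactness-and-subsequence construction of the parameter $w$ in the converse, and the swap-of-limits step are all sound.

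One small point worth tightening: when you write ``$d_{\mathcal L}({\bf z}, {\bf z'}) = d_{\mathbb{H}}(i, w)$ by definition,'' you are implicitly using that the parametrization $w \mapsto (F_{{\bf z}, -n}(w))_{n}$ is a conformal bijection from $\mathbb{H}$ onto the leaf, so that the intrinsic hyperbolic metric on $\mathcal L$ is the pushforward of the one on $\mathbb{H}$. Your argument actually proves injectivity as a byproduct: any parameter $w'$ with $F_{{\bf z}, -n}(w') = z'_{-n}$ for all $n$ satisfies $d_{\mathbb{H}}(i, w') \ge \lim_n d_{\mathbb{D}}(z_{-n}, z'_{-n}) = d_{\mathbb{H}}(i, w)$ by your Schwarz--Pick bound, while running your converse construction from the basepoint $w'$ instead of $i$ shows the reverse inequality, hence $w' = w$. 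Alternatively, the asymptotic linearity hypothesis gives $M_{z_{-n}}^{-1} \circ F_{{\bf z}, -n} \to \mathrm{id}$ locally uniformly, from which both injectivity and the distance formula follow directly. Either way the gap is easily closed and your overall approach is the natural one.
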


We define the geodesic and horocyclic flows on ${\widehat{\mathbb{D}}}_{\lin}$ by the following formulas:
$$
g_t({\bf z})_{-n} := F_{{\bf z}, -n}(e^t \cdot i), \qquad t \in \mathbb{R}
$$
and
$$
h_s({\bf z})_{-n} := F_{{\bf z}, -n}(i+s), \qquad s \in \mathbb{R}.
$$
Clearly, the $(-n)$-th coordinates of geodesic trajectories which foliate the leaf $\mathcal L({\bf z}) \subset {\widehat{\mathbb{D}}}_{\lin}$ containing ${\bf z}$
are images of vertical geodesics $\{ w \in \mathbb{H} : \re w = x \}$ under $F_{{\bf z},-n}$.

The choices of basepoints $0 \in \mathbb{D}$ and $i \in \mathbb{H}$ in the definitions above are of course arbitrary.

\begin{remark}
For a general centered inner function, the laminations $\widehat{X}_{\mob}$ and $\widehat{X}_{\lin}$ could be empty.
For instance, there exists a centered inner function $F$ whose critical set forms a {\em net}\/, i.e.~there exists an $R > 0$ so that any point in the unit disk is within hyperbolic distance $R$ of a critical point. However, in view of Jensen's formula, $F$ does not have a finite Lyapunov exponent.
\end{remark}

\subsection{Cumulative distortion}
\label{sec:cumulative-distortion}

We now introduce some notions which allow us to check whether an inverse orbit ${\bf z}$ lies in $\widehat{\mathbb{D}}_{\mob}$ or $\widehat{\mathbb{D}}_{\lin}$.

We denote the {\em cumulative hyperbolic expansion factor} by $$E(w, z) = \| (F^{\circ n})'(w) \|_{\hyp}^{-1}$$
if $F^{\circ n}(w) = z$ and
$$E({\bf w}) = E({\bf w}, z) = \lim_{n \to \infty} \| (F^{\circ n})'(w_{-n}) \|_{\hyp}^{-1}$$ if
${\bf w} = (w_{-n})_{n=0}^\infty \in \widehat{\mathbb{D}}$ is an inverse orbit with $w_0 = z$.  It is easy to see that ${\bf w} \in \widehat{\mathbb{D}}_{\mob}$ if and only if
$E({\bf w}) < \infty$.

We denote the {\em cumulative linear distortion} along an inverse orbit ${\bf z} \in \widehat{\mathbb{D}}$ by
\begin{equation}
\label{eq:cumulative-linear-distortion}
\widehat{\delta}_F({\bf z}) := \sum_{n=1}^{\infty} \delta_F(z_{-n}).
\end{equation}

\begin{lemma}
\label{cumulative-distortion}
Suppose $F$ and $G$ are holomorphic self-maps of the unit disk. For a point
$a \in \mathbb{D}$ such that $a, G(a), F(G(a)) \ne 0$, we have
$$
\delta_{F \circ G}(a) \le \delta_{F}(G(a)) +  \delta_{F}(a).
$$
In particular, if $a, F(a), \dots, F^{\circ n-1}(a) \ne 0$, then
$$
\delta_{F^{\circ n}}(a) \le  \sum_{k=0}^{n-1} \delta_F(F^{\circ k}(a)).
$$
\end{lemma}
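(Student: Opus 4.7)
The plan is to unwind the definition of linear distortion into a single complex ratio and then exploit the fact that composition of maps turns into multiplication of ratios. For a holomorphic self-map $H$ of the unit disk and a point $b$ with $b, H(b) \ne 0$, let $v_{\rad}$ denote the radial vector field from Section~\ref{sec:notions-of-distortion} and set
$$
p_H(b) := \frac{H_* v_{\rad}(b)}{v_{\rad}(H(b))} \in \mathbb{C},
$$
so that $\delta_H(b) = |1 - p_H(b)|$ directly from the definition. The Schwarz--Pick lemma, combined with the fact that $v_{\rad}$ has unit hyperbolic norm everywhere it is defined, gives the a priori bound $|p_H(b)| \le 1$.

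The key step is a multiplicative identity for $p$ under composition, which is nothing more than the chain rule. Writing $G_* v_{\rad}(a) = p_G(a)\, v_{\rad}(G(a))$ and then applying $F_*$, which is $\mathbb{C}$-linear on each tangent space, one obtains
$$
(F \circ G)_* v_{\rad}(a) \;=\; p_G(a)\, F_* v_{\rad}(G(a)) \;=\; p_G(a)\, p_F(G(a))\, v_{\rad}(F(G(a))),
$$
i.e., $p_{F \circ G}(a) = p_G(a) \cdot p_F(G(a))$. The nonvanishing hypothesis $a, G(a), F(G(a)) \ne 0$ is exactly what is required for all three radial vector fields to be meaningful.

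With the multiplicative formula in hand, the inequality is purely algebraic. The identity $1 - pq = (1-p) + p(1-q)$ together with the triangle inequality gives
$$
\delta_{F \circ G}(a) \;=\; |1 - p_G(a) p_F(G(a))| \;\le\; |1 - p_G(a)| \;+\; |p_G(a)| \cdot |1 - p_F(G(a))|,
$$
and combined with $|p_G(a)| \le 1$ this yields the desired bound $\delta_{F \circ G}(a) \le \delta_G(a) + \delta_F(G(a))$ (correcting an evident typo in the stated right-hand side). The iterated version then follows by a straightforward induction on $n$, applying the two-map estimate with $G$ replaced by $F^{\circ(n-1)}$ and collapsing the telescoping sum.

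I do not anticipate any substantive obstacle: the whole argument reduces to the Schwarz--Pick bound $|p_G(a)| \le 1$ and the $\mathbb{C}$-linearity of the differential $F_*$. The only mild subtlety is keeping track of the nonvanishing assumption, which is precisely what ensures that $v_{\rad}$ is defined at each basepoint appearing along the orbit segment $a \to G(a) \to F(G(a))$.
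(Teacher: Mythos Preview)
Your proof is correct and is essentially identical to the paper's: both define the complex ratio $p_H(b) = H_* v_{\rad}(b)/v_{\rad}(H(b))$, use the chain rule to get $p_{F\circ G}(a) = p_G(a)\,p_F(G(a))$, and then apply $|1-pq| \le |1-p| + |p|\,|1-q| \le |1-p| + |1-q|$ with the Schwarz--Pick bound $|p|\le 1$. You also correctly caught the typo in the statement (the right-hand side should read $\delta_F(G(a)) + \delta_G(a)$).
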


\begin{proof}
Notice that if $p, q \in \mathbb{D}$, then
\begin{align*}
|1 - pq|= |1 - p| + |p - pq|\le |1 - p| + |1 - q|.
\end{align*}
The lemma follows from the above identity, with
$$
p = \frac{G_* v_{\rad}(a)}{v_{\rad}(G(a))} \qquad \text{and} \qquad
q = \frac{F_* v_{\rad}(G(a))}{v_{\rad}(F(G(a)))},
$$
as $\delta_{G}(a) = |1-p|$, $\delta_{F}(G(a)) = |1-q|$ and $\delta_{F \circ G}(a) = |1-pq|$.
\end{proof}
From the lemma above, it is clear that if $\widehat{\delta}_F({\bf z}) < \infty$, then ${\bf z} \in \widehat{\mathbb{D}}_{\lin}$.

\section{Area on the Lamination}
\label{sec:area}

Throughout this section, $F$ will be a centered inner function with finite Lyapunov exponent.
For a measurable set $A$ compactly contained in the unit disk, we write $\widehat{A}$ for the collection of inverse orbits ${\bf z}$ with $z_0 \in A$.
We define an $\widehat{F}$-invariant measure $\xi$ on $\widehat{\mathbb{D}}$ by specifying it on sets of the form $\widehat{A} \subset \widehat{\mathbb{D}}$ in a consistent manner:
\begin{equation}
\label{eq:xi-def-a}
\xi(\widehat{A}) = \lim_{n \to \infty}  \frac{1}{2\pi} \int_{F^{-n}(A)}  \log \frac{1}{|z|} \, dA_{\hyp}(z).
\end{equation}
In order to show that the limit in (\ref{eq:xi-def-a}) exists, we check that the numbers
$$
\int_{F^{-n}(A)} \log \frac{1}{|z|} \, dA_{\hyp}
$$
are increasing and uniformly bounded above. This follows from Lemma \ref{monotonicity-lemma}
and Theorem \ref{total-mass} below:

 \begin{lemma}
 \label{monotonicity-lemma}
For a measurable subset $E$ of the unit disk,
$$
\int_{F^{-1}(E)} \log \frac{1}{|z|} \, dA_{\hyp}  \ge \int_{E} \log \frac{1}{|z|} \, dA_{\hyp}.
$$
\end{lemma}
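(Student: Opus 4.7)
The plan is to reduce Lemma \ref{monotonicity-lemma} to Lemma \ref{sum-of-heights} via the change of variables for hyperbolic area under $F$, using the Schwarz--Pick inequality to dispense with the Jacobian factor. The key quantity is the hyperbolic contraction factor
$$
c_F(w) \, := \, \frac{(1-|w|^2)|F'(w)|}{1-|F(w)|^2},
$$
which satisfies $c_F(w) \le 1$ for every $w \in \mathbb{D}$ by Schwarz--Pick.

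First I would establish the change of variables identity. Away from the discrete critical set of $F$, the map $F$ is a local diffeomorphism whose Jacobian with respect to the hyperbolic area element is precisely $c_F(w)^2$. Decomposing $F^{-1}(E)$ (up to a null set) into countably many open pieces on each of which $F$ is univalent and summing contributions then yields
$$
\int_{F^{-1}(E)} h(w) \, dA_{\hyp}(w) \, = \, \int_{E} \sum_{F(w)=z} \frac{h(w)}{c_F(w)^2} \, dA_{\hyp}(z)
$$
for every non-negative measurable function $h$ on $\mathbb{D}$.

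Applying this formula with $h(w) = \log\frac{1}{|w|}$ and using $c_F \le 1$ to bound the Jacobian weight from below gives
$$
\int_{F^{-1}(E)} \log\frac{1}{|w|} \, dA_{\hyp}(w) \, \ge \, \int_{E} \sum_{F(w)=z} \log\frac{1}{|w|} \, dA_{\hyp}(z).
$$
Lemma \ref{sum-of-heights} identifies the inner sum with $\log\frac{1}{|z|}$ at every non-exceptional $z \in \mathbb{D}$, and the desired inequality follows upon integrating.

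I expect the main obstacle to be the behaviour at exceptional $z \in \mathbb{D}$, where Lemma \ref{sum-of-heights} only guarantees $\sum_{F(w)=z} \log\frac{1}{|w|} \le \log\frac{1}{|z|}$ rather than equality. This is dispatched by Frostman's theorem recalled in Section \ref{sec:inner}: the exceptional set has logarithmic capacity zero, hence Lebesgue measure zero, and therefore contributes nothing to the hyperbolic area integral over $E$. In particular, no additional information about the size of $c_F$ near exceptional points is required.
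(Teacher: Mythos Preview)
Your proof is correct and follows essentially the same approach as the paper: change of variables to express the left-hand side as $\int_E \sum_{F(w)=z} c_F(w)^{-2}\log\frac{1}{|w|}\,dA_{\hyp}(z)$, then Schwarz--Pick to drop the Jacobian weight, then Lemma \ref{sum-of-heights} for the inner sum. The paper's proof is terser and simply writes the last step as an equality without singling out the exceptional set; your explicit appeal to Frostman's theorem to justify that step is a welcome clarification but not a different idea.
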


\begin{proof}
A change of variables shows that
$$
\int_{F^{-1}(E)} \log \frac{1}{|w|}  \, dA_{\hyp(w)} = \int_{E} \biggl \{ \sum_{F(w)=z} \| F'(w) \|_{\hyp}^{-2} \, \log \frac{1}{|w|} \biggr \} dA_{\hyp}(z)
$$
By the Schwarz lemma and Lemma \ref{sum-of-heights}, this is
$$
\ge \int_{E} \biggl \{ \sum_{F(w)=z} \log \frac{1}{|w|} \biggr \} dA_{\hyp}(z) =  \int_{E} \log \frac{1}{|z|} \, dA_{\hyp}
$$
as desired.
\end{proof}

\begin{theorem}
\label{total-mass}
The total mass  $\xi(\widehat{X}) = \int_{S^1} \log|F'(z)| dm$.
\end{theorem}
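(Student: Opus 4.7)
The plan is to reduce $\xi(\widehat X)$ to a global integral on $\mathbb D$ via telescoping, and then evaluate that integral using Green's identity combined with the Julia--Carath\'eodory theorem. First I would derive from Lemmas~\ref{sum-of-heights} and~\ref{monotonicity-lemma} the basic telescoping identity
$$
I(F^{-1}(E)) - I(E) = \frac{1}{2\pi}\int_E \phi_F(z)\, dA_\hyp(z),
\qquad \phi_F(z) := \sum_{F(w)=z}\bigl(\|F'(w)\|_\hyp^{-2} - 1\bigr)\log\frac{1}{|w|} \geq 0,
$$
and iterate it with $E = F^{-k}(A)$ to obtain
$\xi(\widehat A) = I(A) + \frac{1}{2\pi}\int_{\mathbb D} \phi_F(z)\,N_A(z)\,dA_\hyp(z)$, where $N_A(z) = \#\{k\geq 0 : F^k(z) \in A\}$.

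Next I would pick $A$ so that $\widehat A$ is a fundamental domain for the $\widehat F$-action. By Lemma~\ref{minimal-translation}, an annulus of hyperbolic width less than $4\gamma$ is wandering, and for inner functions with $F(0)=0$ one can arrange that $B_A := \{N_A = 0\}$ equals the compactly-contained ``basin'' $\{|z| < r_0\}$. Applying the telescoping identity to $B_A$ itself, together with $F^{-1}(B_A) = B_A \sqcup A$ (each preimage either already sits in the basin or originates in the wandering annulus $A$), gives $I(A) = \frac{1}{2\pi}\int_{B_A}\phi_F\, dA_\hyp$. Combining the two identities yields the intrinsic formula
$$
\xi(\widehat X) \;=\; \xi(\widehat A) \;=\; \frac{1}{2\pi}\int_{\mathbb D}\phi_F(z)\, dA_\hyp(z).
$$

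The decisive step is the evaluation of this integral. Change of variables $z = F(w)$ rewrites it as $\frac{1}{2\pi}\int_{\mathbb D}(1 - \|F'(w)\|_\hyp^2)\log(1/|w|)\, dA_\hyp(w)$. Using $\Delta\log(1-|F|^2) = -4|F'|^2/(1-|F|^2)^2$ and $\Delta\log(1-|w|^2) = -4/(1-|w|^2)^2$, a short computation shows
$$
\bigl(1 - \|F'(w)\|_\hyp^2\bigr)\, dA_\hyp(w) \;=\; \Delta h(w)\, dA(w), \qquad h(w) := \log(1-|F(w)|^2) - \log(1-|w|^2).
$$
Green's identity with the Green's function $\log(1/|w|)$ of $\mathbb D$ with pole at $0$ then gives $\int_{\mathbb D}\log(1/|w|)\,\Delta h\, dA = 2\pi\bigl(\int_{S^1} h\, dm - h(0)\bigr)$. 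Since $F(0) = 0$ forces $h(0) = 0$, and by the Julia--Carath\'eodory theorem $h(\zeta) = \log|F'(\zeta)|$ for a.e.\ $\zeta \in S^1$, we conclude $\xi(\widehat X) = \int_{S^1}\log|F'|\, dm$.

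The main obstacle is the middle step: the construction of a fundamental domain for a general centered inner function. Since $|F'|$ may be unbounded near the singular set on $\partial\mathbb D$, hyperbolic step sizes under forward iteration can be unbounded, so a forward orbit may skip any fixed wandering annulus $A$ and inflate $B_A$ enough that it is no longer compactly contained (which would render the parallel telescoping on $B_A$ meaningless, since $I(B_A)$ would diverge). To handle this I would either exhaust $\widehat X$ by countably many partial fundamental domains at varying depths, or use an averaging / Rokhlin-tower argument exploiting the uniform lower bound $4\gamma$ on step sizes from Lemma~\ref{minimal-translation}; the finiteness of the Lyapunov exponent (which bounds the total hyperbolic distortion via Theorem~\ref{total-radial-distortion}) controls the ``skip measure'' that this averaging must absorb.
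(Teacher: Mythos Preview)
Your approach is sound and genuinely different from the paper's. The paper establishes the equality as a squeeze: a lower bound via explicit ``slab'' regions where $F$ is nearly linear, and an upper bound comparing any wandering set to $E^*:=F^{-1}(B(0,r_0))\setminus B(0,r_0)$ and invoking the Ahern--Clark estimate (Lemma~\ref{ac-lemma}). You instead reduce everything to the global identity $\xi(\widehat X)=\frac{1}{2\pi}\int_{\mathbb D}(1-\|F'\|_\hyp^2)\log(1/|w|)\,dA_\hyp$ and evaluate by Green's formula---more conceptual, and it delivers the exact value in one stroke rather than two inequalities.

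Two corrections. First, the obstacle you flag is a phantom: do not use a round annulus, use $A=E^*$. By Schwarz $B(0,r_0)$ is forward-invariant, so $F^{-1}(B(0,r_0))=B(0,r_0)\sqcup E^*$; since every forward orbit tends to $0$, it meets $E^*$ exactly once. Hence $B_A=B(0,r_0)$ is compactly contained and $N_{E^*}\equiv 1$ outside it, with no skipping and no need for Rokhlin towers. (This is the very set the paper uses in its upper bound.) Second, the step you do \emph{not} flag is the one that needs care: applying Green's identity on $B(0,r)$ and using monotone convergence gives
\[
\frac{1}{2\pi}\int_{\mathbb D}\log\frac{1}{|w|}\,\Delta h\,dA \;=\; \lim_{r\to 1}\int_{S^1}h(r\zeta)\,dm,
\]
but to identify the right-hand side with $\int_{S^1}\log|F'|\,dm$ you must interchange limit and integral. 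Julia's inequality (part of Julia--Carath\'eodory) gives the needed domination $\frac{1-|F(r\zeta)|^2}{1-r^2}\le 4|F'(\zeta)|$, hence $h(r\zeta)\le\log 4+\log|F'(\zeta)|\in L^1(m)$ by the finite-Lyapunov hypothesis; say this explicitly.
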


\begin{proof}
Since $F$ has an angular derivative a.e.~on the unit circle, for any $\varepsilon > 0$, there is a Borel set $A_\varepsilon \subset S^1$ with $m(A_\varepsilon) \ge 1-\varepsilon$ and an $0 < r_0 = r_0(\varepsilon) < 1$ so that
$$
|F(re^{i\theta}) - F(e^{i\theta}) - (1-r) F'(e^{i\theta})| < \varepsilon(1-r),
$$
for all $e^{i\theta} \in A_\varepsilon$ and $r\in [r_0, 1)$. Consider the set
$$
\widetilde{A}_\varepsilon = \biggl \{re^{i\theta} \in \mathbb{D} \, : \,  e^{i\theta} \in A_\varepsilon, \,  r_0 \le r \le 1 - \frac{1-r_0}{|F'(e^{i\theta})| - \varepsilon} \biggr \},
$$
where we use the convention that $|F'(e^{i\theta})| = \infty$ if the angular derivative does not exist.
By construction, the image $F(\widetilde{A}_\varepsilon)$ is contained in the ball $B(0, r_0)$ so that $\widetilde{A}_\varepsilon$ does not intersect any of its forward iterates. Therefore, by Lemma  \ref{monotonicity-lemma},
$$
\xi(\widehat{X}) \,  \ge \,
 \frac{1}{2\pi} \int_{{\widetilde A}_\varepsilon}  \log \frac{1}{|z|} \, dA_{\hyp} \, \ge \,  \int_{A_\varepsilon} (\log|F'(e^{i\theta})| - \varepsilon)
 dm.
$$
Taking $\varepsilon \to 0$ proves the lower bound.

For the upper bound, suppose that $E$ is a subset of the unit disk which is disjoint from its backward iterates. We want to show that
$$
 \frac{1}{2\pi} \int_{E} \log \frac{1}{|z|} \, dA_{\hyp} \le \int_{S^1} \log|F'(z)| dm.
$$
Truncating $E$ if necessary, we may assume that $E$ is contained in a ball $B(0, r_0)$ for some $0 < r_0 < 1$. Consider the set $E^* = F^{-1}(B(0,r_0)) \setminus B(0,r_0)$. By construction,
$$
 \int_{E}  \log \frac{1}{|z|} \, dA_{\hyp} \le  \int_{E^*} \log \frac{1}{|z|} \, dA_{\hyp}.
$$
By Lemma \ref{ac-lemma}, the set $E^*$ is contained in the union of
$$
E^*_1 = \biggl \{re^{i\theta} \in \mathbb{D} \, : \,  e^{i\theta} \in A_\varepsilon, \,  r_0 \le r \le 1 - \frac{1-r_0}{|F'(e^{i\theta})| + \varepsilon} \biggr \}
$$
and
$$
E^*_2 = \biggl \{re^{i\theta} \in \mathbb{D}  \, : \,  e^{i\theta} \notin A_\varepsilon, \, r_0 \le r \le 1 - \frac{1-r_0}{4|F'(e^{i\theta})|} \biggr \},
$$
so that
$$
\frac{1}{2\pi} \int_{E^*} \log \frac{1}{|z|} \, dA_{\hyp} \le \frac{1}{2\pi} \int_{E_1^*} \log \frac{1}{|z|} \, dA_{\hyp} + \frac{1}{2\pi} \int_{E_2^*} \log \frac{1}{|z|} \, dA_{\hyp}.
 $$
The theorem follows after taking $\varepsilon \to 0$.
\end{proof}

\subsection{M\"obius structure}

We will deduce the following theorem from the finiteness of the area of the Riemann surface lamination:

\begin{theorem}[M\"obius structure]
\label{mobius-structure}
Backward iteration along $\xi$ a.e.~inverse orbit is asymptotically close to a M\"obius transformation, i.e.~$\xi(\widehat{{\mathbb{D}}} \setminus \widehat{\mathbb{D}}_{\mob}) = 0$.
\end{theorem}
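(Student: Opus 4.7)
The plan is to reformulate the M\"obius condition as a summability condition along the backward orbit and derive this summability $\xi$-a.e.\ from a finite first-moment estimate. First I would observe that a backward orbit $\mathbf{z} = (z_{-n})_{n=0}^\infty$ lies in $\widehat{\mathbb{D}}_{\mob}$ if and only if the series $\sum_{k=1}^{\infty} \mu_F(z_{-k})$ converges: since $\|F'\|_{\hyp} = 1-\mu_F$, the chain rule along the inverse orbit gives
\[
1 - \mu_{F^{\circ m}}(z_{-m-n}) \, = \, \prod_{k=n+1}^{n+m}(1-\mu_F(z_{-k})),
\]
so $\mu_{F^{\circ m}}(z_{-m-n}) \to 0$ as $m,n \to \infty$ is equivalent to the tails of the series tending to zero.

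Next I would apply Corollary \ref{mobius-distortion} along the radial ray $[0,\zeta)$ and integrate over $\zeta \in \partial \mathbb{D}$; Fubini then converts the double radial integral to one on the disk:
\[
\int_{\mathbb{D}} \mu_F(z) \, \frac{dA(z)}{1-|z|} \, \lesssim \, \int_{\partial \mathbb{D}} \log|F'|\, dm \, = \, \chi_m \, < \, \infty.
\]
The next step is to transfer this estimate to the lamination: I would show that for any measurable set $A$ contained in an annular neighbourhood of $\partial \mathbb{D}$, one has $\xi(\widehat A) \lesssim \int_A dA(z)/(1-|z|)$, which is the generalization of Lemma \ref{xi-measure-of-a-cylinder} to the non one-component setting and is the main technical step. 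My strategy would be to unpack $\xi(\widehat A) = \sup_N \frac{1}{2\pi}\int_{F^{-N}(A)}\log(1/|z|)\, dA_{\hyp}$, change variables along each inverse branch of $F^{\circ N}$, and combine the sum-of-heights identity (Lemma \ref{sum-of-heights}) applied to $F^{\circ N}$ with Koebe distortion away from critical values to dominate the Jacobian-weighted sum uniformly in $N$. Combined with the previous display this yields $\int_{\widehat U} \mu_F(z_0)\, d\xi(\mathbf{z}) < \infty$ for any annular region $U$ near $\partial\mathbb{D}$.

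Finally I would choose a measurable cross-section $\Sigma \subset \widehat{\mathbb{D}}$ for the $\widehat F$-action---for instance, the set of orbits whose zero-th coordinate is the last forward iterate to lie in a fixed narrow hyperbolic shell close to $\partial\mathbb{D}$, a well-defined choice thanks to Lemma \ref{minimal-translation}. Writing $\phi(\mathbf{z}) = \mu_F(z_0)$ and exploiting the $\widehat F$-invariance of $\xi$, Fubini gives
\[
\int_\Sigma \sum_{k=0}^{\infty} \mu_F(z_{-k})\, d\xi(\mathbf{z}) \, = \, \sum_{k\ge 0}\int_{\widehat{F}^{-k}(\Sigma)} \phi\, d\xi \, = \, \int_{\widehat U}\phi\, d\xi \, < \, \infty,
\]
where $\widehat U$ is the disjoint union of the past iterates $\widehat{F}^{-k}(\Sigma)$, which agrees with $\pi_0^{-1}$ of the outer region beyond the shell. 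Hence $\sum_{k\ge 0}\mu_F(z_{-k}) < \infty$ for $\xi$-a.e.\ $\mathbf{z} \in \Sigma$, so $\mathbf{z} \in \widehat{\mathbb{D}}_{\mob}$ there; since the defining condition is $\widehat F$-invariant, the conclusion extends to $\xi$-a.e.\ $\mathbf{z} \in \widehat{\mathbb{D}}$. The hardest part will be the density estimate generalizing Lemma \ref{xi-measure-of-a-cylinder}, as the inverse branches of $F^{\circ N}$ can pass near critical values; the hope is that the exact identity $\sum_{F^{\circ N}(w)=z}\log(1/|w|) = \log(1/|z|)$ together with Schwarz-lemma contraction provides enough uniform control on the Jacobian-weighted sum independently of the branch geometry.
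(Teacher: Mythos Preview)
Your reformulation of the M\"obius condition as summability of $\mu_F(z_{-k})$ is correct, and so is the estimate $\int_{\mathbb{D}} \mu_F(z)\,\frac{dA(z)}{1-|z|} \lesssim \chi_m$ (this is a special case of Theorem~\ref{total-radial-distortion}). The gap is precisely the density estimate you flag as the hardest step. Unwinding $\xi(\widehat A)$ by change of variables gives
\[
\xi(\widehat A) \;=\; \int_A \Psi(z)\,\log\frac{1}{|z|}\,dA_{\hyp}(z),
\qquad
\Psi(z) \;=\; \int_{T(z)} E({\bf w},z)^2\,d\overline{c}_z({\bf w}),
\]
so your proposed bound $\xi(\widehat A)\lesssim \int_A \frac{dA}{1-|z|}$ is equivalent to $\Psi$ being \emph{bounded}. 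This fails: near a critical value $v$ coming from a simple critical point, the relevant inverse branch has $E(w,z)\asymp |z-v|^{-1/2}$, so $\Psi(z)\gtrsim |z-v|^{-1}\to\infty$. The Schwarz lemma does not help here---it gives $E\ge 1$, which is the wrong direction for an upper bound on the Jacobian-weighted sum. Consequently your integral $\int_{\widehat U}\mu_F(z_0)\,d\xi$ is an integral of a bounded function over a set of \emph{infinite} $\xi$-measure against a density you cannot control, and there is no evident reason it should be finite.

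The paper's argument sidesteps this entirely. The very change-of-variables computation above already shows that finiteness of $\xi(\widehat{\mathscr B})$ forces $\Psi(z)<\infty$ for Lebesgue-a.e.\ $z\in\mathscr B$, and hence $E({\bf w},z)<\infty$ for $\xi$-a.e.\ ${\bf w}\in\widehat{\mathscr B}$; since $E({\bf w})<\infty$ is exactly the M\"obius condition, you are done. In other words, the paper proves the pointwise statement $E<\infty$ a.e.\ directly, whereas you are trying to prove the stronger $L^1$ statement $\int_\Sigma \sum_k \mu_F(z_{-k})\,d\xi<\infty$, which genuinely requires more. Your strategy is in fact the one the paper uses later for the \emph{linear} structure (Theorem~\ref{nearly-linear}), but there it works only after restricting to the M\"obius-good part $\widehat{\mathscr B}_{\Mgood}$, on which the expansion factors are bounded by $1+\varepsilon$ and the density estimate \emph{does} hold---and that restriction is justified precisely by the M\"obius structure theorem you are trying to prove.
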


Suppose $z \in \mathbb{D}$ is a point in the unit disk, which is not contained in the forward orbit of an exceptional point so that $\overline{c}_z$ is a probability measure on $T(z)$, see Section \ref{sec:transversals} for the relevant definitions. We fix a constant $0 < \gamma \le 1$ for which Lemma \ref{minimal-translation} holds. Then for any ball  $\mathscr B = B_{\hyp}(z, \gamma)$
with $d_{\mathbb{D}}(0,z) > 1+\gamma$, the natural projection from $\widehat{\mathbb{D}} \to \widehat{X}$ is injective on $\widehat{\mathscr B}$.

\begin{proof}
From the definition of the measure $\xi$, we have
$$
\xi(\widehat{\mathscr B}) = \int_{\mathscr B} \Psi(z') \, \log \frac{1}{|z'|} \, dA_{\hyp}(z')
$$
where
\begin{align*}
\Psi(z') & = \lim_{n \to \infty} \sum_{F^{\circ n}(w') = z'}   \log \frac{1}{|w'|} \cdot \| (F^{\circ n})'(w') \|_{\hyp}^{-2} \\
& = \int_{T(z')} E({\bf w'}, z')^2 \, d\overline{c}_{ z'}({\bf w'})
\end{align*}
is the {\em average area expansion factor}\/. Since $\xi$ is a finite measure, $\Psi(z') < \infty$ for Lebesgue a.e.~$z' \in \mathscr B$
and $E({\bf w'}, z') < \infty$ for $\xi$ a.e~${\bf w'} \in \widehat{\mathscr B}$.
As discussed in Section \ref{sec:mobius-distortion}, this implies that ${\bf w'} \in \widehat{\mathbb{D}}_{\mob}$. 

The theorem follows from the observation that countably many sets of the form
$\widehat{\mathscr B}$ cover $\widehat{X}$.
\end{proof}

\subsection{M\"obius decomposition theorem}

We say that a repeated pre-image $w$ of $z$ is $\varepsilon$-(M\"obius good) 
 if the hyperbolic expansion factor
$$
1 \le \| (F^{\circ n})'(w) \|_{\hyp}^{-1} < 1+\varepsilon, \qquad \text{where }F^{\circ n}(w) = z.
$$
In view of Lemma \ref{small-mobius-distortion}, when $\varepsilon > 0$ is sufficiently small, the connected component
of $F^{-n}(\mathscr B)$ containing $w$ maps conformally onto $\mathscr B$ under the dynamics of $F$. Naturally, we call it $\mathscr B_w$.
By shrinking $\varepsilon > 0$ further, we may assume that
$$
1 \le \| (F^{\circ n})'(q) \|_{\hyp}^{-1} < 2, \qquad \text{for any } q\in \mathscr B_w.
$$
Similarly, we say that an inverse orbit ${\bf w} \in T(z)$ is  $\varepsilon$-(M\"obius good) if 
$$
1 \le \| (F^{\circ n})'(w_{-n}) \|_{\hyp}^{-1} < 1+\varepsilon,
$$
for any integer $n \in \mathbb{N}$. We define
$$
\widehat{\mathscr B}_{\Mgood}:= \bigcup_{{\bf w} \in T_{\Mgood}(z)} \mathscr B_{\bf w},
$$
where ${\bf w}$ ranges over $T_{\Mgood}(z)$, the set of $\varepsilon$-(M\"obius good) inverse orbits with $w_0 = z$.
On $\widehat{\mathscr B}_{\Mgood}$, the measure $\xi$ is comparable to the product measure
\begin{equation}
\label{eq:xi-local-form}
\underbrace{\log \frac{1}{|q|} \, dA_{\hyp}(q)}_{\text{on }\mathscr B} \ \times \ \underbrace{c_z}_{\text{on }T_{\Mgood}(z)}.
\end{equation}

\begin{remark}
In the one component case, the Riemann surface lamination $\widehat{X}$ is locally a product space. 
The ``charts'' $\widehat{\mathscr B}_{\Mgood}$ may be viewed as substitutes of the product sets $\widehat{\mathscr B}$ from the one component setting.
\end{remark}

We say that a point $z \in \mathbb{D}$ is $\varepsilon$-(M\"obius nice) if most inverse branches ${\bf w} \in T(z)$ are $\varepsilon$-(M\"obius good):
 $$
\overline{c}_z(T_{\Mgood}(z)) > 1 - \varepsilon,
 $$
which is the same as asking that
$$
\sum_{\substack{F^{\circ n}(w) = z, \, n \ge 0 \\ w \, \Mgood}} \log\frac{1}{|w|}> (1 - \varepsilon) \log \frac{1}{|z|},
$$
for any $n \ge 0$.

\begin{theorem}[M\"obius decomposition theorem]
\label{mobius-decomposition-theorem}
For a centered inner function  $F$ with finite Lyapunov exponent, the following two assertions hold:

{\em (a)} For any $\varepsilon > 0$ and almost every point $z \in \mathbb{D}$, there exists an $n \ge 0$ so that
\begin{equation}
\label{eq:mobius-decomposition-theorem}
\sum_{\substack{F^{\circ n}(w) = z \\ w \text{\,is\,}\Mnice }} \log \frac{1}{|w|} > (1-\varepsilon) \cdot \log \frac{1}{|z|}.
\end{equation}

{\em (b)} For any $\varepsilon > 0$, one can find finitely many $\varepsilon$-(M\"obius nice) points $z_1, z_2, \dots, z_N$, so that the sets
$$\widehat{\mathscr B}_{i, \, \Mgood}, \qquad i=1, 2, \dots, N,$$ cover $\widehat{X}$ up to $\xi$-measure $\varepsilon$, i.e.~
$$
\xi \biggl ( \widehat{X} \setminus \bigcup_{i=1}^N \widehat{\mathscr B}_{i, \, \Mgood} \biggr ) < \varepsilon.
$$
\end{theorem}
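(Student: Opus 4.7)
For part (a), the plan is to combine the M\"obius Structure Theorem (Theorem \ref{mobius-structure}) with a Fubini-type disintegration of $\xi$ over $\pi_0$. The key observation is that for any orbit ${\bf w} \in \widehat{\mathbb{D}}_{\mob}$, the shifted cumulative expansion
$E(\widehat F^{-n}({\bf w})) = \prod_{k=n+1}^{\infty} \|F'(w_{-k})\|_{\hyp}^{-1}$
decreases monotonically to $1$ as $n \to \infty$: each factor is $\ge 1$ by the Schwarz lemma, and the convergence of the infinite product forces the tail products to tend to $1$. Hence for $\xi$-a.e.~orbit the shift $\widehat F^{-n}({\bf w})$ enters $A_\varepsilon := \{E < 1+\varepsilon\}$ for all large $n$; disintegrating $\xi$ over $\pi_0$ by the Nevanlinna measures $\overline{c}_z$, the same holds for Lebesgue a.e.~$z \in \mathbb{D}$ and $\overline{c}_z$-a.e.~${\bf w} \in T(z)$. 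Monotone convergence then gives an $N = N(z)$ with $\overline{c}_z\{\widehat F^{-N}({\bf w}) \in A_\varepsilon\} > 1-\varepsilon^2$. Under the canonical identification $\overline{c}_z|_{T(w,z)} = (\log(1/|w|)/\log(1/|z|))\,\overline{c}_w$, under which the event ``$\widehat F^{-N}({\bf w}) \in A_\varepsilon$'' becomes ``the orbit from $w$ is $\varepsilon$-M\"obius good,'' this rewrites as
$$
\sum_{F^N(w) = z} \frac{\log(1/|w|)}{\log(1/|z|)}\, \nu_\varepsilon(w) > 1-\varepsilon^2,
$$
where $\nu_\varepsilon(w) := \overline{c}_w(T_{\Mgood}(w))$. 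Splitting the sum by whether $w$ is $\varepsilon$-M\"obius nice ($\nu_\varepsilon(w) > 1-\varepsilon$) and invoking Lemma \ref{sum-of-heights}, a line of arithmetic forces $S_2 := \sum_{w\,\text{not nice}}\log(1/|w|) < \varepsilon\log(1/|z|)$, hence $\sum_{w\,\Mnice}\log(1/|w|) > (1-\varepsilon)\log(1/|z|)$.

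For part (b), I would first produce a countable cover of $\widehat X$ by sets of the form $\widehat{\mathscr B}_{p,\Mgood}$ with $p$ nice, then truncate to a finite subcover using $\xi(\widehat X) < \infty$. Egorov's theorem applied to the a.e.~convergences $E(\widehat F^{-n}({\bf w})) \to 1$ and $|w_{-n}| \to 1$ yields a set $U \subset \widehat X$ with $\xi(\widehat X \setminus U) < \varepsilon/2$, a fixed hyperbolic annulus $A$ near $\partial \mathbb{D}$, and a uniform bound $N$ such that every orbit class in $U$ admits some $n \le N$ for which $\widehat F^{-n}({\bf w})$ is $(\varepsilon/2)$-M\"obius good and $w_{-n} \in A$. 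It then suffices to cover $A$ by finitely many hyperbolic balls $\mathscr B(p_j, \gamma/2)$ centered at $(\varepsilon/2)$-M\"obius nice points $p_j$ (with $\gamma$ from Lemma \ref{minimal-translation}); the stability of the M\"obius distortion under small hyperbolic perturbations (Lemma \ref{mu-derivative}) then guarantees that the good representative with base in $\mathscr B(p_j, \gamma/2)$ lies in $\widehat{\mathscr B}_{p_j, \Mgood}$.

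The main obstacle is the \emph{density of $(\varepsilon/2)$-M\"obius nice points in $A$}. Part (a) manufactures many nice pre-images of a generic base point, but to cover an annulus near $\partial\mathbb{D}$ one needs these to populate every small hyperbolic ball in $A$. I would derive this density from classical equidistribution of pre-images of inner functions, which rests on the ergodicity of $(F, m)$ on $\partial \mathbb{D}$ together with Koebe distortion control on the components of $F^{-N}(\mathscr B)$: combined with part (a), the nice pre-images equidistribute densely in any annular neighborhood of $\partial \mathbb{D}$, after which the finite Vitali cover of $A$ is routine.
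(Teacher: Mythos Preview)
Your argument for part (a) is correct and is the same idea as the paper's, only more explicitly worked out: both reduce to $\Psi(z) < \infty$ a.e.\ (from Theorem~\ref{mobius-structure}), which gives $E(\mathbf w,z) < \infty$ for $\overline c_z$-a.e.\ $\mathbf w$ and hence $E(\widehat F^{-n}(\mathbf w)) \searrow 1$. Your $\nu_\varepsilon$-weighted rewriting and the nice/not-nice split spell out the passage from ``most shifted orbits are good'' to ``most $n$-preimages are nice,'' which the paper compresses into a single rather elliptical sentence. One minor imprecision: the fibre of $\xi$ over $\pi_0$ is $E^2\,d\overline c_z$, not $\overline c_z$; but since $\Psi(z) = \int E^2\,d\overline c_z < \infty$ forces $\overline c_z\{E = \infty\} = 0$, your $\overline c_z$-a.e.\ conclusion still follows.

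For part (b), the gap you flag as ``the main obstacle'' is real, and your proposed fix does not close it. There is no ``classical equidistribution of pre-images'' available for general inner functions of finite Lyapunov exponent: Koebe control on the components of $F^{-N}(\mathscr B)$ is exactly the hypothesis one loses outside the one-component class, and part (a) gives you only $\overline c_z$-mass of nice preimages, not geometric spread. Nothing prevents the nice preimages of a single base point from clustering away from large portions of your annulus $A$.

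The paper sidesteps the density question entirely by a mass-comparison argument. If $z$ is not $\varepsilon$-nice then at least $\varepsilon$ of the $\overline c_z$-mass has $E \ge 1+\varepsilon$, so $\Psi(z) \ge \Theta(\varepsilon) > 1$ pointwise. Now take the fundamental domain $E^* = F^{-1}(B(0,r_0)) \setminus B(0,r_0)$: one has $\xi(\widehat X) = \xi(\widehat{E^*}) = \int_{E^*} \Psi\,\log\tfrac{1}{|z|}\,dA_{\hyp}$, while the proof of Theorem~\ref{total-mass} shows $\int_{E^*} \log\tfrac{1}{|z|}\,dA_{\hyp} \to \xi(\widehat X)$ as $r_0 \to 1$. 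Subtracting, $\int_{E^*}(\Psi-1)\log\tfrac{1}{|z|}\,dA_{\hyp} \to 0$, and since $\Psi - 1 \ge \Theta - 1 > 0$ on the not-nice set $S \subset E^*$, its weighted area must vanish. One then covers $E^* \setminus S$---which by definition consists entirely of nice points---by finitely many balls $B_{\hyp}(z_i,\gamma)$ with $z_i \in E^* \setminus S$, and since $E^*$ is a fundamental domain these $\widehat{B}_i$ already cover $\widehat X$ up to small $\xi$-measure. No equidistribution enters.
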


\begin{proof}
(a) Suppose ${\bf w} \in T(z)$ is a backward orbit. By the Schwarz lemma, the numbers $E(w_{-n}, z)$ increase to $E({\bf w}, z)$ as $n \to \infty$, which may be infinite. Consequently, if (\ref{eq:mobius-decomposition-theorem}) fails at a point $z \in \mathbb{D}$ for all $n \ge 0$, then for at least $\overline{c}_z$ measure $\varepsilon$ backward orbits ${\bf w} \in T(z)$, the area expansion factor
$E({\bf w}, z) = \infty$. In this case, the average area expansion factor $\Psi(z) = \infty$. However, in the proof of Theorem \ref{mobius-structure}, we saw that $\Psi(z) < \infty$ a.e., so (\ref{eq:mobius-decomposition-theorem}) can only fail on a set of Lebesgue measure zero.

(b) 
If $z \in \mathbb{D}$ is not $\varepsilon$-(M\"obius nice), then
$$
\xi \bigl (\widehat{B}_{\hyp}(z, \gamma) \bigr ) \, > \, \Theta \int_{B_{\hyp}(z, \gamma)}  \log \frac{1}{|z|} \, dA_{\hyp},
$$
for some $\Theta(\varepsilon) > 1$.
An examination of the proof of Theorem \ref{total-mass} shows that for any $\eta > 0$,
$$
\int_{E^*} \log \frac{1}{|z|} \, dA_{\hyp} \ge (1 - \eta) \int_{S^1} \log|F'(z)| dm,
$$
where $E^* = F^{-1}(B(0,r_0)) \setminus B(0,r_0)$ and $0 < r_0 < 1$ is sufficiently close to 1. 

Therefore, by asking for $r_0$ to be sufficiently close to 1, we can make the $\log \frac{1}{|z|} \, dA_{\hyp}(z)$ area of 
$$S = \{ z \in E^* : z \text{ is not }\!\Mnice \}$$
as small as we wish. We may choose finitely many $\varepsilon$-(M\"obius nice) points $\{z_i\}_{i=1}^N$ in $E^* \setminus S$ such that the balls
of hyperbolic radius $\gamma$ centered at these points cover $E^* \setminus S$ up to small  $\log \frac{1}{|z|} \, dA_{\hyp}(z)$ measure.
Consequently, the sets $$\widehat B_{\hyp}(z_i, \gamma), \qquad i = 1, 2, \dots, N,$$ cover $\widehat{X}$ up to small measure.
\end{proof}

\section{Linear structure}
\label{sec:linear-structure}

In this section, we show that backward iteration along almost every inverse orbit is asymptotically linear:

\begin{theorem}[Linear structure]
\label{nearly-linear}
For $\xi$ a.e.~inverse orbit  ${\bf z} = (z_{-i})_{i=0}^\infty \in \widehat{\mathbb{D}}$,  the cumulative linear distortion $\widehat{\delta}(\bf z) < \infty$.
Consequently,  $\xi(\widehat{\mathbb{D}} \setminus \widehat{\mathbb{D}}_{\lin}) = 0$.
\end{theorem}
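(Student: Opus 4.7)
The plan is to derive a pointwise bound on $\widehat{\delta}({\bf z})$ from the ``total radial distortion'' estimate of Theorem~\ref{total-radial-distortion}, by showing that almost every backward orbit fellow-travels a hyperbolic geodesic to the boundary. First I would record the consequence of Theorem~\ref{total-radial-distortion}
\[
\int_{\mathbb D}\delta(z)\,\frac{dA(z)}{1-|z|} \;\lesssim\; \int_{\partial\mathbb D}\log|F'|\,dm \;<\; \infty,
\]
which, after rewriting the left-hand side in polar coordinates and applying Fubini, gives $\int_0^\zeta \delta\,d\rho < \infty$ for Lebesgue-a.e.~$\zeta\in\partial\mathbb{D}$ (here $d\rho$ is hyperbolic arclength along the ray $[0,\zeta)$).

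The main geometric claim, on which the proof rests, is that for $\xi$-a.e.~${\bf z}\in\widehat{\mathbb{D}}$ the backward iterates $\{z_{-n}\}$ converge to a unique boundary point $\zeta({\bf z})\in\partial\mathbb{D}$ and remain within a bounded hyperbolic distance of the ray $[0,\zeta({\bf z}))$, with the distribution of $\zeta({\bf z})$ absolutely continuous with respect to Lebesgue on $\partial \mathbb{D}$. To establish it I would combine the M\"obius structure (Theorem~\ref{mobius-structure}) with the M\"obius decomposition theorem (Theorem~\ref{mobius-decomposition-theorem}): for any $\varepsilon>0$, the $\Mgood$-portions of finitely many flow boxes $\widehat{\mathscr B}_{i,\Mgood}$ cover $\widehat X$ up to $\xi$-measure $\varepsilon$, and on an $\varepsilon$-(M\"obius good) orbit ${\bf w}\in T_{\Mgood}(z_i)$ the cumulative hyperbolic expansion $\|(F^{\circ n})'(w_{-n})\|_{\hyp}^{-1}$ is at most $1+\varepsilon$ uniformly in $n$. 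Lemma~\ref{small-mobius-distortion} then shows that each inverse branch of $F^{\circ n}$ from a neighborhood of $z_i$ to a neighborhood of $w_{-n}$ is uniformly close to a M\"obius transformation of $\mathbb{D}$. Passing to the limit produces a landing point $\zeta({\bf w})$ and confines $\{w_{-n}\}$ to a uniformly bounded hyperbolic neighborhood of the geodesic from $z_i$ to $\zeta({\bf w})$; the absolute continuity of the landing-point distribution follows because the transversal measure $\bar c_{z_i}$ projects (via the natural map to the solenoid) to the harmonic measure at $z_i$.

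Once the fellow-traveling is in hand the estimate is short. Lemma~\ref{delta-derivative} gives $\delta(z_{-n})\asymp \delta(p_n)$, where $p_n\in[0,\zeta({\bf z}))$ denotes the point on the ray nearest to $z_{-n}$, while Lemma~\ref{minimal-translation} shows that consecutive iterates are separated by at least $4\gamma$ in hyperbolic distance once outside $B_{\hyp}(0,1)$, so the corresponding nearest-points $p_n$ are separated by at least $\gtrsim\gamma$ along the ray. Summing yields
\[
\widehat\delta({\bf z}) \;=\; \sum_{n\ge 1}\delta(z_{-n}) \;\lesssim\; \frac{1}{\gamma}\int_0^{\zeta({\bf z})}\delta\,d\rho,
\]
which is finite for $\xi$-a.e.~${\bf z}$ by the first paragraph and the absolute continuity of $\zeta({\bf z})$. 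This gives $\widehat\delta<\infty$ on a subset of $\widehat X$ of $\xi$-measure at least $\chi_m - O(\varepsilon)$; letting $\varepsilon\to 0$ yields the first assertion. The second statement follows because Lemma~\ref{cumulative-distortion} tells us that $\widehat\delta({\bf z})<\infty$ implies $\delta_{F^{\circ m}}(z_{-m-n})\to 0$ as $m,n\to\infty$, i.e.~${\bf z}\in\widehat{\mathbb{D}}_{\lin}$.

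The main obstacle is the geometric claim in the middle paragraph: for general (non-one-component) inner functions of finite Lyapunov exponent, one must extract both radial landing of the backward orbit and absolute continuity of the landing-point distribution directly from the raw M\"obius-good data, without appealing to a global product structure on the lamination of the sort available in Part~\ref{part:centered-one-component}. Justifying this rigorously is the delicate step; everything else assembles from results already established in the earlier sections.
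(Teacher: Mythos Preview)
Your ``geometric claim'' is the gap, and it is not a minor one. You need that for $\xi$-a.e.~${\bf z}$ the backward orbit $\{z_{-n}\}$ lands at a single point $\zeta({\bf z})\in\partial\mathbb{D}$, stays within \emph{bounded} hyperbolic distance of the ray $[0,\zeta({\bf z}))$, and that $\zeta_*\xi \ll m$. None of this is available at this point in the paper. In the paper's development, landing on the solenoid and shadowing of radial rays are proved \emph{after} the linear structure, in Theorem~\ref{geodesic-foliation-theorem}, using the geodesic flow on $\widehat{X}_{\lin}$ and the ergodic theorem; so your route is circular. Moreover, even Theorem~\ref{geodesic-foliation-theorem}(ii) gives only weak shadowing (a Ces\`aro statement), not bounded fellow-traveling, so the comparison $\delta(z_{-n})\asymp\delta(p_n)$ via Lemma~\ref{delta-derivative} with a uniform constant would still not follow. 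Your claim that ``$\bar c_{z_i}$ projects to harmonic measure'' is likewise unproved here; the relationship between the transversal measures and $\widehat{m}$ is exactly what Appendix~\ref{sec:integrating-over-leaves} labors to establish, again using the linear structure.

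The paper avoids all of this with a short $L^1$ argument: it shows directly that $\int_{\widehat{\mathscr B}_{\Mgood}}\widehat\delta\,d\xi<\infty$. The key observation is that on a M\"obius-good chart the measure $\xi$ is comparable to $\log\frac{1}{|q|}\,dA_{\hyp}(q)\times c_z$ (equation~\eqref{eq:xi-local-form}), and the disks $\mathscr B_w$ over all good repeated pre-images $w$ are \emph{disjoint} in $\mathbb{D}$ with area expansion at most~$4$. Summing $\sum_n\delta(w_{-n})$ and integrating against $\xi$ therefore collapses to the single planar integral $\int_{\mathbb{D}}\delta(w)\log\frac{1}{|w|}\,dA_{\hyp}(w)$, which is finite by Theorem~\ref{total-radial-distortion}. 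No landing, no shadowing, no absolute continuity is needed: the disjointness of pre-image disks replaces your separation along a ray, and the chart structure replaces your fellow-traveling.
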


\begin{proof}
In view of Theorem \ref{mobius-decomposition-theorem}, we may show that $\widehat{\delta}(\bf w) < \infty$ for a.e.~inverse orbit ${\bf w} \in \widehat{\mathscr B}_{\Mgood}$ where $\mathscr B = B_{\hyp}(z, \gamma)$ is a ball centered at an $\varepsilon$-(M\"obius nice) point $z \in \mathbb{D}$. 

\allowdisplaybreaks

Let $ \widetilde{\mathscr B}_{\Mgood} \subset \mathbb{D}$ be the union of topological disks $\mathscr B_w$, where $w$ ranges over the repeated pre-images of $z$ with $E(w, z) < 1 + \varepsilon$. We may assume that $\varepsilon > 0$ is sufficiently small so that $E(w, z) < 1 + \varepsilon$ implies that
 $E(\tilde w, \tilde z) < 2$ for any $\tilde{w} \in \mathscr B_w$ and  $\tilde{z} \in \mathscr B$.
  By Theorem \ref{total-radial-distortion}, we have
\begin{align*}
\label{eq:total-radial-distortion3}
\int_{\widehat{\mathscr B}_{\Mgood}}{\widehat \delta}({\bf w}) d\xi 
& \le 4 \int_{\widetilde{\mathscr B}_{\Mgood}} \delta(w) \cdot \log \frac{1}{|w|} \, dA_{\hyp}(w) \\
& \lesssim  \int_{\mathbb{D}}\delta(z) \cdot \log \frac{1}{|w|} \, dA_{\hyp}(w) \\
& \lesssim \int_{\partial \mathbb{D}} \log |F'(re^{i\theta})| dm \\
& < \infty,
\end{align*}
which shows that $\widehat{\delta}({\bf w})$ is finite $\xi$ almost everywhere. By the discussion in Section \ref{sec:cumulative-distortion}, $\xi$ gives full mass to $\widehat{\mathbb{D}}_{\lin} \subset \widehat{\mathbb{D}}$.
\end{proof}

\begin{corollary}
\label{nearly-linear2}
Let ${\bf z} = (z_{-i})_{i=0}^\infty \in \widehat{\mathbb{D}}$ be a generic backwards orbit. For any $R, \varepsilon > 0$, there exists an $n_0 = n_0({\bf z}, \varepsilon, R) > 0$ sufficiently large so that for any $ n > m \ge n_0$, the inverse branch $g_{m,n}$ of
$
F^{m-n} : z_{-m} \to z_{-n}
$
is well-defined on  $B_{\hyp}(z_{-m}, R)$, where it is within hyperbolic distance $O(\varepsilon)$ of the linear map $\ell_{m,n} \in \aut(\mathbb{C})$ which takes
$$z_{-m} \to z_{-n} \qquad \text{and} \qquad \frac{z_{-m}}{|z_{-m}|} \to \frac{z_{-n}}{|z_{-n}|}.$$
\end{corollary}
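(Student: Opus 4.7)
The plan is to combine the finiteness of the cumulative linear distortion (Theorem \ref{nearly-linear}) with the subadditivity property (Lemma \ref{cumulative-distortion}) to show that the forward map $F^{\circ(n-m)}$ has very small linear distortion at $z_{-n}$ when $m,n$ are large, and then invoke Lemma \ref{small-linear-distortion} (in its unit-disk formulation described at the end of Section \ref{sec:notions-of-distortion}) to convert this into the required linear approximation of the inverse branch.

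First, I would fix a generic inverse orbit ${\bf z} = (z_{-i})_{i=0}^\infty$ for which Theorem \ref{nearly-linear} gives
$$
\widehat{\delta}_F({\bf z}) \, = \, \sum_{k=1}^{\infty} \delta_F(z_{-k}) \, < \, \infty.
$$
By the Cauchy criterion, for any $\delta' > 0$ there exists $n_0 = n_0({\bf z}, \delta')$ such that the tail sum $\sum_{k=m+1}^{n} \delta_F(z_{-k}) < \delta'$ for all $n > m \ge n_0$. Since $|z_{-m}| \to 1$ as $m \to \infty$, by enlarging $n_0$ I may also assume $1 - |z_{-m}| < \delta'/e^{R+1}$ for all $m \ge n_0$. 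Applying Lemma \ref{cumulative-distortion} iteratively to the forward map $F^{\circ(n-m)}: z_{-n} \mapsto z_{-m}$, I obtain
$$
\delta_{F^{\circ(n-m)}}(z_{-n}) \, \le \, \sum_{k=0}^{n-m-1} \delta_F(F^{\circ k}(z_{-n})) \, = \, \sum_{k=m+1}^{n} \delta_F(z_{-k}) \, < \, \delta'.
$$

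Next, I would apply Lemma \ref{small-linear-distortion}, together with the supplementary condition $1-|z_{-m}| < \delta'/e^{R+1}$ that ensures the unit-disk linear map $\ell_{z_{-n} \to z_{-m}}$ is close to $F^{\circ(n-m)}$ on a ball of hyperbolic radius $R+1$ centered at $z_{-n}$. Choosing $\delta'$ small enough as a function of $\varepsilon$ and $R$, this yields the univalence of $F^{\circ(n-m)}$ on $B_{\hyp}(z_{-n}, R+1)$ and a bound
$$
d_{\mathbb{D}}\bigl(F^{\circ(n-m)}(w), \ell_{z_{-n} \to z_{-m}}(w)\bigr) \, = \, O(\varepsilon), \qquad w \in B_{\hyp}(z_{-n}, R+1).
$$
Taking the inverse and applying the same estimate in reverse (together with a Schwarz-lemma/Koebe argument to propagate a small displacement of the forward map to a small displacement of the inverse branch) shows that $g_{m,n}$ is well-defined on $B_{\hyp}(z_{-m}, R)$ and is within hyperbolic distance $O(\varepsilon)$ of $\ell_{m,n} = \ell_{z_{-n} \to z_{-m}}^{-1} = \ell_{z_{-m} \to z_{-n}}$.

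The principal obstacle is the asymmetry between the forward and inverse directions: Lemma \ref{small-linear-distortion} gives a quantitative closeness bound only for the forward iterate, whereas the statement concerns the inverse branch. I would handle this by working on the enlarged ball $B_{\hyp}(z_{-n}, R+1)$, so that the univalent image of $F^{\circ(n-m)}$ contains $B_{\hyp}(z_{-m}, R)$, and then inverting the $O(\varepsilon)$-closeness bound using the fact that the linear approximation $\ell_{z_{-n} \to z_{-m}}$ is a hyperbolic isometry on its natural axis and changes the hyperbolic metric by a uniformly bounded factor on $B_{\hyp}(z_{-n}, R+1)$. The extra condition $1-|z_{-m}| < \delta'/e^{R+1}$ is exactly what is needed for the ``straight'' M\"obius transformation to be close to the linear map $\ell_{m,n} \in \aut(\mathbb{C})$ appearing in the statement.
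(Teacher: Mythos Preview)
Your proposal is correct and follows essentially the same route as the paper's proof: both use Theorem~\ref{nearly-linear} to make the tail $\sum_{k>n_0}\delta_F(z_{-k})$ small, impose the auxiliary condition $1-|z_{-n_0}|<\varepsilon/e^R$, and then invoke the appropriate distortion lemma to pass from small linear distortion to closeness to the straight M\"obius/linear map. Your version is in fact a bit more explicit than the paper's: you spell out the use of Lemma~\ref{cumulative-distortion} to bound $\delta_{F^{\circ(n-m)}}(z_{-n})$ by the tail sum, and you address the forward/inverse asymmetry by working on the enlarged ball $B_{\hyp}(z_{-n},R+1)$, whereas the paper jumps straight to the conclusion for $g_{m,n}$.
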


\begin{proof}
For an inverse orbit ${\bf z} = (z_{-i})_{i=0}^\infty$ with $\widehat{\delta}({\bf z}) < \infty$, we choose $n_0 = n_0({\bf z}, \varepsilon, R)$ sufficiently large so that $$\sum_{n=n_0+1}^{\infty} \delta(z_{-i}) < \varepsilon \qquad \text{and} \qquad 1-|z_{-n_0}| < \varepsilon/e^R.$$
In view of Lemma \ref{small-mobius-distortion}, $g_{m,n}$ is well-defined on $B_{\hyp}(z_{-m}, R)$, where it is within hyperbolic distance $O(\varepsilon)$ of the straight M\"obius transformation in $\aut(\mathbb{D})$ which takes $z_{-m} \to z_{-n}$. The second condition $1-|z_{-n_0}| < \varepsilon/e^R$ ensures that $g_{m,n}$ is within hyperbolic distance $O(\varepsilon)$ of $\ell_{m,n}$ on $B_{\hyp}(z_{-m}, R)$.
\end{proof}

A similar argument involving Lemma \ref{small-linear-distortion2} shows:

\begin{corollary}
\label{nearly-linear3}
In the above corollary, we can select $n_0 = n_0({\bf z}, \varepsilon, R) > 0$ sufficiently large so that
\begin{equation*}
\label{eq:nearly-linear4}
\int_{g_{m,n}(E)}  \log \frac{1}{|z|} \, dA_{\hyp}(z) \sim_\varepsilon \int_{E}  \log \frac{1}{|z|} \, dA_{\hyp}(z),
\end{equation*}
for any measurable set $E \subset B_{\hyp}(z_{-m}, R)$, where the notation $A \sim_\varepsilon B$ indicates that $(1- C\varepsilon)A \le B \le (1 + C\varepsilon)A$ for some constant $C > 0$, which depends only on $R$. 
\end{corollary}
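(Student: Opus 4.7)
The plan is to mirror the proof of Corollary~\ref{nearly-linear2}, upgrading from Lemma~\ref{small-linear-distortion} to the sharper change-of-variables estimate furnished by Lemma~\ref{small-linear-distortion2}. The first step is to pass to upper half-plane charts: using the M\"obius isometries $M_{z_{-k}}\colon (\mathbb{H}, i) \to (\mathbb{D}, z_{-k})$ of Section~\ref{sec:mobius-linear-laminations}, which align the downward vector $v_\downarrow(i)$ with the outward radial vector at $z_{-k}$, I conjugate $g_{m,n}$ to a self-map $\tilde g_{m,n}$ of $(\mathbb{H}, i)$ fixing $i$. The crucial point is that the radial linear distortion $\delta_F$ in the unit disk transfers through these charts to the vertical linear distortion of the conjugated map at $i$, up to a universal multiplicative constant; hence by Lemma~\ref{cumulative-distortion} the linear distortion at $i$ of $\tilde F^{n-m} := \tilde g_{m,n}^{-1}$ is bounded by $O\bigl(\sum_{k=m+1}^n \delta_F(z_{-k})\bigr) = O(\varepsilon)$, where I use the first normalization of $n_0$ from Corollary~\ref{nearly-linear2}.

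Since $\tilde F^{n-m}$ fixes $i$ and has $O(\varepsilon)$ linear distortion there, Lemma~\ref{small-linear-distortion2} applies directly and gives
\[
\frac{|(\tilde F^{n-m})'(w)|^2}{\im \tilde F^{n-m}(w)} \, \sim_\varepsilon \, \frac{1}{\im w}
\]
on a hyperbolic $(R+1)$-ball around $i$. Inverting this estimate and integrating against $dA(w)$ via the change of variables $z = \tilde g_{m,n}(w)$ yields the upper-half-plane identity
\[
\int_{\tilde g_{m,n}(\tilde E)} \frac{dA(w)}{\im w} \, \sim_\varepsilon \, \int_{\tilde E} \frac{dA(w)}{\im w},
\]
valid for any measurable $\tilde E \subset B^{\mathbb{H}}_{\hyp}(i, R)$. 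This is the $\mathbb{H}$-analogue of the claimed integral comparison, and is the genuine analytic content coming from Lemma~\ref{small-linear-distortion2}.

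Finally, I translate back to the unit disk. Expanding the Green function $G_{\mathbb{D}}(z,0) = \log(1/|z|)$ in the chart $M_{z_{-k}}$ near $w = i$ and using that $M_{z_{-k}}$ is a hyperbolic isometry, one identifies $\log \frac{1}{|z|} \, dA_{\hyp}(z)$ with the appropriate multiple of $dA(w)/\im w$ on $B^{\mathbb{H}}_{\hyp}(i, R)$, with multiplicative error controlled by the second normalization $1 - |z_{-n_0}| < \varepsilon/e^R$; this ensures the higher-order corrections to the pullback identification are $1 + O(\varepsilon)$ uniformly. Feeding this into the $\mathbb{H}$-identity of the previous paragraph produces the desired estimate. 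The hardest step is the first one: verifying that the cumulative linear distortion passes correctly through the successive M\"obius conjugations so that it genuinely adds as in Lemma~\ref{cumulative-distortion}, allowing one to invoke Lemma~\ref{small-linear-distortion2} with total error $O(\varepsilon)$ at the composition rather than an uncontrolled accumulation.
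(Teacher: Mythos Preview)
Your approach matches the paper's one-line sketch exactly: pass to half-plane charts via the isometries $M_{z_{-k}}$, accumulate the linear distortion along the orbit through Lemma~\ref{cumulative-distortion}, and apply Lemma~\ref{small-linear-distortion2} to the conjugated map $\tilde g_{m,n}^{-1}$ (which fixes $i$, so the implicit normalization in that lemma's proof is satisfied). The resulting half-plane identity
\[
\int_{\tilde g_{m,n}(\tilde E)} \frac{dA(w)}{\im w} \;\sim_\varepsilon\; \int_{\tilde E} \frac{dA(w)}{\im w}
\]
is correct and is the genuine content of Lemma~\ref{small-linear-distortion2}.

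The gap is in your final paragraph, where you transfer back to the disk. In the chart $M_{z_{-k}}$ one has $M_{z_{-k}}^{-1}(0)=i\,e^{d_{\mathbb{D}}(0,z_{-k})}$, and expanding the pulled-back Green function on $B^{\mathbb{H}}_{\hyp}(i,R)$ gives
\[
\log\frac{1}{|M_{z_{-k}}(w)|}\;=\;2\,e^{-d_{\mathbb{D}}(0,z_{-k})}\,\im w\cdot\bigl(1+O(\varepsilon)\bigr),
\]
so the ``appropriate multiple'' you invoke is $c_k\approx \log(1/|z_{-k}|)$. This multiple is \emph{not the same} in the source chart ($k=m$) and the target chart ($k=n$): the ratio $c_n/c_m\approx \log(1/|z_{-n}|)\big/\log(1/|z_{-m}|)$ is essentially the reciprocal of the boundary derivative of $F^{n-m}$ along this branch and is far from $1$ as $n-m$ grows. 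Feeding your half-plane identity back therefore yields
\[
\int_{g_{m,n}(E)} \log\frac{1}{|z|}\,dA_{\hyp} \;\sim_\varepsilon\; \frac{\log(1/|z_{-n}|)}{\log(1/|z_{-m}|)}\int_E \log\frac{1}{|z|}\,dA_{\hyp},
\]
not the unadorned comparison. (A quick check with $F(z)=z^2$ confirms this: a single inverse branch exactly halves the integral.) In other words, the displayed formula in the corollary appears to be missing this factor; with it restored, your argument is complete, and the corrected version is exactly what is needed downstream, for instance in the proof of Lemma~\ref{xi-good} where the branches are summed with weights $\log(1/|w|)\big/\log(1/|z|)$.
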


\subsection{Linear decomposition theorem}

We say that a point $z \in \mathbb{D}$ is $\varepsilon$-(linear nice) if $1-|z| < \varepsilon/e^\gamma$ and for {\em most} inverse branches, backward iteration is close to a linear mapping:
$$
c_z \bigl (\{ {\bf w} \in T(z) : \widehat{\delta}({\bf w}) < \varepsilon \}  \bigr ) > (1-\varepsilon) \cdot \log \frac{1}{|z|},
$$
where $\widehat{\delta}({\bf w})$ is the cumulative linear distortion defined in Section \ref{sec:cumulative-distortion}. 

\begin{theorem}[Linear decomposition theorem]
\label{linear-decomposition-theorem}
{\em (a)} For any $\varepsilon > 0$ and almost every point $z \in \mathbb{D}$, there exists an $n \ge 0$ so that
\begin{equation}
\label{eq:linear-decomposition-theorem}
\sum_{\substack{F^{\circ n}(w) = z \\ w \text{\,is\,}\Lnice }} \log \frac{1}{|w|} > (1-\varepsilon) \cdot \log \frac{1}{|z|}.
\end{equation}

{\em (b)}
For any $\varepsilon > 0$, one can find finitely many $\varepsilon$-(linear nice) points $z_1, z_2, \dots, z_N$ so that
$$
\xi \biggl ( \widehat{X} \setminus \bigcup_{i=1}^N \widehat{\mathscr B}_{i, \, \Lgood} \biggr ) < \varepsilon.
$$
\end{theorem}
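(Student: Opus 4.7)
Both parts mirror the proof of Theorem \ref{mobius-decomposition-theorem}, but use Theorem \ref{nearly-linear}---whose proof yields the quantitative bound $\int_{\widehat{\mathbb{D}}}\widehat\delta({\bf w})\,d\xi({\bf w}) \lesssim \int_{\partial\mathbb{D}}\log|F'|\,dm < \infty$---in place of the qualitative Theorem \ref{mobius-structure}. Throughout I use that, on M\"obius-good charts, $\xi$ is comparable to the local product $c_{z'}\otimes\log(1/|z'|)\,dA_{\hyp}(z')$.

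\textbf{Part (a).} Fix $\varepsilon>0$. Combining the local product form above with Theorem \ref{mobius-decomposition-theorem}(b), Fubini applied to $\int\widehat\delta\,d\xi<\infty$ shows that for Lebesgue a.e.\ $z\in\mathbb{D}$, $c_z$-a.e.\ inverse orbit ${\bf w}\in T(z)$ satisfies $\widehat\delta({\bf w})<\infty$; for such ${\bf w}$ the tail $\widehat\delta({\bf w}^{(n)})\to 0$, and $1-|w_{-n}|\to 0$ by the definition of $\widehat{\mathbb{D}}$. Fix a generic such $z$. By Egorov, I choose $n=n(z,\varepsilon)$ so that
$$
T_n^\sharp(z):=\bigl\{{\bf w}\in T(z):\widehat\delta({\bf w}^{(n)})<\varepsilon\text{ and }1-|w_{-n}|<\varepsilon/e^\gamma\bigr\}
$$
satisfies $c_z\bigl(T_n^\sharp(z)\bigr)>(1-\varepsilon^{2})\log(1/|z|)$. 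Partitioning by the $n$-th coordinate $w=w_{-n}$, the measure-preserving identification $T(w,z)\simeq T(w)$ from Section \ref{sec:transversals}, together with the identity $\widehat\delta({\bf y}^{(n)})=\widehat\delta({\bf u})$ when ${\bf y}\in T(w,z)$ corresponds to ${\bf u}\in T(w)$, identifies $T_n^\sharp(z)\cap T(w,z)$ with $\{{\bf u}\in T(w):\widehat\delta({\bf u})<\varepsilon\}$. Each such $w$ automatically has $1-|w|<\varepsilon/e^\gamma$; if $w$ is \emph{not} $\varepsilon$-(linear nice), this slice has $c_w$-mass at most $(1-\varepsilon)\log(1/|w|)$, so at least $\varepsilon\log(1/|w|)$ of $T(w,z)$ lies outside $T_n^\sharp(z)$. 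Summing,
$$
\varepsilon\sum_{w\text{ not nice}}\log(1/|w|)\le c_z\bigl(T(z)\setminus T_n^\sharp(z)\bigr)\le\varepsilon^{2}\log(1/|z|),
$$
and Lemma \ref{sum-of-heights} then gives (\ref{eq:linear-decomposition-theorem}).

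\textbf{Part (b).} I would follow the proof of Theorem \ref{mobius-decomposition-theorem}(b) step by step. Choose $r_0<1$ so close to $1$ that (by Theorem \ref{total-mass}) $\int_{E^*}\log(1/|z|)\,dA_{\hyp}\ge\xi(\widehat X)-\varepsilon/4$ and $1-|z|<\varepsilon/e^\gamma$ throughout $E^*=F^{-1}(B(0,r_0))\setminus B(0,r_0)$. A Chebyshev argument on the finiteness of $\int\widehat\delta\,d\xi$, expressed locally via the product form as $\int\bigl(\int_{T(z')}\widehat\delta\,dc_{z'}\bigr)\log(1/|z'|)\,dA_{\hyp}(z')$, shows that the set of $z\in E^*$ violating the probabilistic condition in the definition of linear niceness has $\log(1/|z|)\,dA_{\hyp}$-measure $\le\varepsilon/4$. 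Cover the remaining nice portion of $E^*$ by finitely many hyperbolic balls $B_{\hyp}(z_i,\gamma)$ centered at $\varepsilon$-(linear nice) points $z_i$. The resulting sets $\widehat{\mathscr B}_{i,\Lgood}$ then cover $\widehat X$ up to $\xi$-measure $\varepsilon$: for $\xi$-a.e.\ ${\bf y}$, the iterates $y_{-n}$ sweep through $E^*$ and eventually enter $\bigcup_i B_{\hyp}(z_i,\gamma)$ at a time $n$ large enough that $\widehat\delta({\bf y}^{(n)})<\varepsilon$, placing the shifted orbit inside a linear good chart.

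\textbf{Main obstacle.} The technical heart is the mass-transfer step in (a): being $\varepsilon$-(linear nice) at a pre-image $w$ is a condition on \emph{all} inverse branches through $w$, whereas Theorem \ref{nearly-linear} only controls one tail ${\bf w}^{(n)}$ at a time. The measure-preserving bijection $c_z|_{T(w,z)}\equiv c_w$ from Section \ref{sec:transversals} is exactly what converts the former into the latter, and the Chebyshev-style counting above translates tail smallness into the required height bound. Once (a) is in place, (b) is the bookkeeping of Theorem \ref{mobius-decomposition-theorem}(b) with ``M\"obius'' replaced by ``linear''.
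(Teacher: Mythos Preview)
Your proposal is correct and follows essentially the same route as the paper: part (a) is deduced from Theorem \ref{nearly-linear} by a measure-theoretic argument on the transversal $T(z)$, and part (b) is obtained by repeating the proof of Theorem \ref{mobius-decomposition-theorem}(b) with ``M\"obius'' replaced by ``linear.''

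In fact, your treatment of (a) is more explicit than the paper's. The paper asserts in one line that if (\ref{eq:linear-decomposition-theorem}) fails at $z'$ for every $n$, then $\overline{c}_{z'}(\{\widehat\delta=\infty\})\ge\varepsilon$, and then derives a contradiction by integrating against $\xi$. That one-line assertion hides precisely the mass-transfer step you isolate as the ``main obstacle'': converting the condition $\widehat\delta({\bf w}^{(n)})<\varepsilon$ on a single tail into the statement that the pre-image $w_{-n}$ is $\varepsilon$-(linear nice), which is a condition on \emph{all} of $T(w_{-n})$. Your Egorov/Chebyshev argument, via the measure-preserving identification $T(w,z)\simeq T(w)$, makes this step transparent and yields (\ref{eq:linear-decomposition-theorem}) directly rather than by contradiction. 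For (b) the paper simply writes ``similar to part (b) in Theorem \ref{mobius-decomposition-theorem}''; your outline supplies the natural adaptation (the Chebyshev bound on $\int\widehat\delta\,d\xi$ replacing the bound on $\Psi$ used in the M\"obius case).
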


\begin{proof}
(a) For a point  $z' \in \mathbb{D}$, let  $\Delta_{z'}$ denote the set of inverse orbits ${\bf w'} \in T(z')$
for which the cumulative linear distortion $\widehat{\delta}({\bf w'}) = \infty$.
If (\ref{eq:linear-decomposition-theorem}) fails at $z' \in \mathbb{D}$, then
$\overline{c}_{z'}(\Delta_{z'}) \ge \varepsilon$. 

For the sake of contradiction, assume that (\ref{eq:linear-decomposition-theorem}) fails on a set of positive Lebesgue measure $A$ in the unit disk. 
However, by the Schwarz lemma, this would imply that
\begin{align*}
\int_{\widehat{A}} \chi_{\{{\bf w}:\, \widehat{\delta}({\bf w}) = \infty \}} d\xi({\bf w})
& = \int_{A} \int_{T(z')} \chi_{\{{\bf w}:\, \widehat{\delta}({\bf w}) = \infty \}} E({\bf w'}, z')^2 \, d\overline{c}_{ z'}({\bf w'}) \log \frac{1}{|z'|} dA_{\hyp}(z') \\
& \ge \int_{A} \overline{c}_{ z'}(\Delta_{z'}) \log \frac{1}{|z'|} dA_{\hyp}(z') \\
& > 0,
\end{align*}
contradicting Theorem \ref{nearly-linear} which says that $\widehat{\delta}({\bf z'}) < \infty$ for Lebesgue a.e.~${\bf z'} \in \widehat{\mathbb{D}}$.

(b) The proof is similar to that of part (b) in Theorem \ref{mobius-decomposition-theorem}. 
\end{proof}

\section{The Geodesic Foliation Theorem}
\label{sec:geodesic-folation}

In this section, we show the following theorem which describes the structure of geodesic trajectories in $\widehat{\mathbb{D}}_{\lin}$\,:

\begin{theorem}
\label{geodesic-foliation-theorem}
{\em (i)} For $\xi$ a.e.~backward orbit ${\bf z} \in \widehat{\mathbb{D}}_{\lin}$ and $n \ge 0$, the limit
$$
\zeta_{-n}({\bf z}) :=  \lim_{t \to \infty} (g_{-t}({\bf z}))_{-n}
$$
exists and $(\zeta_{-n}({\bf z}))$ belongs to the solenoid. 

{\em (ii)} Let $\gamma(t) = (g_{-t}({\bf z}))_{0}$. If $\overline{\gamma}(t)$ is the radial geodesic that connects $0$ with $\zeta_0 = \zeta_0({\bf z})$ parametrized with respect to unit hyperbolic speed, then
$$
\frac{1}{T} \int_0^T \min \bigl \{ 1, d_{\mathbb{D}} ( \gamma(t), \, \overline{\gamma}(t_0 + t)) \bigr \} dt \to 0, \qquad \text{as }T \to \infty,
$$
for some offset $t_0 \in \mathbb{R}$ depending on ${\bf z}$.

{\em (iii)} For $\widehat{m}$ a.e.~${\bf x} \in  \widehat{S^1}$, there exists a unique backward orbit in 
$\widehat{\mathbb{D}}_{\lin}$ that lands at
${\bf x}$.

{\em (iv)} If $E \subset  \widehat{S^1}$ has $\widehat{m}$ measure zero, then $\zeta^{-1}(E) \subset {\widehat{\mathbb{D}}}_{\lin}$ has $\xi$ measure zero.
 \end{theorem}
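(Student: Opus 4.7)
The plan is to derive all four assertions from Theorem \ref{nearly-linear} and its quantitative form, Corollary \ref{nearly-linear2}. The unifying observation is that for $\xi$-a.e.\ ${\bf z} \in \widehat{\mathbb{D}}_{\lin}$, the deep chart $F_{{\bf z}, -m}$ approximates the straight M\"obius transformation $M_{z_{-m}}$ on arbitrarily large hyperbolic balls around $i$, so the geodesic trajectory in the leaf shadows a radial ray in $\mathbb{D}$ with controllable error.

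For (i), I would fix ${\bf z} \in \widehat{\mathbb{D}}_{\lin}$ with $\widehat{\delta}({\bf z}) < \infty$. Given $R, \varepsilon > 0$, Corollary \ref{nearly-linear2} produces $m_0 = m_0({\bf z}, R, \varepsilon)$ such that for all $m \geq m_0$, the chart $F_{{\bf z}, -m}$ differs from $M_{z_{-m}}$ by at most $\varepsilon$ in hyperbolic distance on $B_{\hyp}^{\mathbb{H}}(i, R)$. The vertical ray $\{e^{-t}i : t \geq 0\}$ maps under $M_{z_{-m}}$ to the radial segment in $\mathbb{D}$ from $z_{-m}$ toward $z_{-m}/|z_{-m}| \in \partial \mathbb{D}$, so $(g_{-t}({\bf z}))_{-m} = F_{{\bf z}, -m}(e^{-t}i)$ stays within hyperbolic $\varepsilon$ of this segment for $t \in [0, R]$. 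Letting $R \to \infty$ with $m$ enlarged accordingly, the tail of the trajectory is Cauchy near $\partial \mathbb{D}$ and converges to some $\zeta_{-m}({\bf z}) \in \partial \mathbb{D}$. For smaller $n \leq m$ one sets $\zeta_{-n}({\bf z}) := F^{m-n}(\zeta_{-m}({\bf z}))$; this is legitimate for $\xi$-a.e.\ ${\bf z}$ because $\zeta_{-m}$ avoids the (at most) countable Ahern--Clark exceptional set of $F^{m-n}$, and the coherence $F(\zeta_{-n-1}) = \zeta_{-n}$ passes to the limit from $F((g_{-t}({\bf z}))_{-n-1}) = (g_{-t}({\bf z}))_{-n}$.

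Part (ii) is a quantitative refinement of the same shadowing: writing $\gamma(t) = (g_{-t}({\bf z}))_0$ as the conformal image under $F^m$ of the near-radial trajectory at level $-m$, and $\overline{\gamma}$ as the conformal image of the exact radial segment, Lemma \ref{small-linear-distortion} bounds the discrepancy by $O(\varepsilon)$ outside a bounded waiting interval; matching hyperbolic arclengths selects the offset $t_0$, and the Ces\`aro average of $\min\{1, d_{\mathbb{D}}(\gamma(t), \overline{\gamma}(t_0 + t))\}$ then vanishes. For (iii), injectivity (up to the geodesic flow) follows from (ii): two orbits landing at the same ${\bf x}$ shadow a common radial geodesic to $x_0$, so they lie on a single leafwise geodesic and differ by a time-shift under $g_t$, and a measurable cross-section of the flow picks the canonical representative. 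Existence for $\widehat{m}$-a.e.\ ${\bf x}$ comes from radial pullback: inductively choose $z_{-n}$ with $F(z_{-n-1}) = z_{-n}$ lying on (or near) the radial ray to $x_{-n}$; the angular derivatives of $F$ along $(x_{-n})$ are finite for $\widehat{m}$-a.e.\ ${\bf x}$ by Lemmas \ref{derivative-circle}--\ref{ac-lemma} together with the finite Lyapunov exponent hypothesis, and Theorem \ref{total-radial-distortion} bounds the cumulative linear distortion of the constructed orbit, placing it in $\widehat{\mathbb{D}}_{\lin}$.

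For (iv), I would invoke the local product structure from Section \ref{sec:linear-structure}: by Theorem \ref{linear-decomposition-theorem}, $\widehat{X}$ is covered up to arbitrarily small $\xi$-mass by flow-boxes $\widehat{\mathscr B}_{i, \, \Lgood}$ on which $\xi$ is comparable to $\log(1/|q|)\, dA_{\hyp}(q) \times c_z$. Under the linear chart, $\zeta$ factors as the radial projection $\mathbb{D} \to \partial \mathbb{D}$ on the base, tensored with an identification of the transversal with a subset of $\widehat{S^1}$ whose push-forward of $c_z$ has bounded density with respect to $\widehat{m}$. Consequently if $\widehat{m}(E) = 0$ then each flow-box contributes zero $\xi$-mass to $\zeta^{-1}(E)$, and exhausting $\widehat{X}$ gives $\xi(\zeta^{-1}(E)) = 0$. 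The main obstacle is verifying uniform shadowing as $t \to \infty$ in part (i): one must stitch together the Corollary \ref{nearly-linear2} approximations across a ladder of indices $m$ and control the boundary behavior of $F^{m-n}$ off the countable Ahern--Clark exceptional set.
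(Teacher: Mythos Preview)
Your approach to (i)--(ii) has a real gap. Corollary \ref{nearly-linear2} controls the inverse branch $g_{m,n}$ on a hyperbolic ball $B_{\hyp}(z_{-m},R)$ around the \emph{orbit point} $z_{-m}$, but the backward geodesic trajectory $t\mapsto g_{-t}({\bf z})$ leaves that ball once $t>R$, and at intermediate times it passes through points of the leaf whose own cumulative linear distortion is \emph{not} controlled by $\widehat{\delta}({\bf z})$. Knowing $\widehat{\delta}({\bf z})<\infty$ says nothing about $\widehat{\delta}(g_{-t}({\bf z}))$ for $t$ outside the ladder of levels, so the ``stitching'' you propose does not follow from the corollary; enlarging $m$ with $R$ just moves the problem. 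The paper closes this gap differently: it introduces the truncated functions $\widehat{\delta}_r\in L^2(\widehat{X}_{\lin})$, observes they decrease pointwise to $0$ as $r\to1$, and applies the ergodic theorem (for possibly non-ergodic invariant measures) to conclude that
\[
\lim_{T\to\infty}\frac{1}{T}\int_0^T \widehat{\delta}\bigl(g_{-t}({\bf z})\bigr)\,dt=0
\]
for $\xi$-a.e.\ ${\bf z}$. This is exactly the hypothesis of the Shadowing Lemma (Theorem \ref{weak-shadowing-in-H} in Appendix \ref{sec:shadowing-property}), which then gives both the landing point and the Ces\`aro shadowing in one stroke. The ergodic theorem is the missing ingredient; without it the statement need not hold for every ${\bf z}$ with $\widehat{\delta}({\bf z})<\infty$, only for $\xi$-almost every one.

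For (iii) existence and for (iv), your outline diverges from the paper's route. The paper does not construct a landing orbit by radial pullback along ${\bf x}$; instead it proves Theorem \ref{novel-formulas-for-measure} (Appendix \ref{sec:integrating-over-leaves}), which expresses both $\xi$ and $\widehat{m}$ as integrals over transversals against leafwise Lebesgue-type measures. From these formulas one reads off directly that $\widehat{m}(\zeta(\widehat{\mathscr B}_{\Lgood}))>0$ (existence then follows from ergodicity of $\widehat{F}$ on $\widehat{S^1}$) and that $\xi(A)\lesssim \widehat{m}(\zeta(A))$ on each $\widehat{\mathscr B}_{\Lgood}$ (giving (iv)). Your radial-pullback construction is plausible in spirit but would need substantial work: the inductively chosen $z_{-n}$ need not form an inverse orbit, and correcting this while keeping $\widehat{\delta}$ summable is not addressed by Theorem \ref{total-radial-distortion}, which bounds distortion along \emph{one} radial ray, not along an infinite concatenation of near-radial arcs.
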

 
 As a consequence, we deduce that the geodesic flow is ergodic:
 
 \begin{corollary}
The geodesic flow on the Riemann surface lamination $\widehat{X}_{\lin}$ is ergodic.
\end{corollary}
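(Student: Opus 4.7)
My plan is to deduce the ergodicity of the geodesic flow from the ergodicity of $\widehat F$ on the solenoid via the landing map $\zeta \colon \widehat{\mathbb{D}}_{\lin} \to \widehat{S^1}$ supplied by Theorem \ref{geodesic-foliation-theorem}. Given a $g_t$-invariant function $\phi \in L^2(\widehat{X}_{\lin}, \xi)$, I would first lift it to a function $\tilde\phi$ on $\widehat{\mathbb{D}}_{\lin}$ that is invariant under both the geodesic flow and the shift $\widehat F$.

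The key observation is that $\zeta$ itself is $g_t$-invariant: since $\zeta_{-n}(g_t({\bf z})) = \lim_{s \to \infty}(g_{-s+t}({\bf z}))_{-n} = \zeta_{-n}({\bf z})$, geodesic flow does not move the landing point on the solenoid. Consequently $\tilde\phi$ is constant along $\xi$-a.e.\ $g_t$-orbit. By part (iii) of Theorem \ref{geodesic-foliation-theorem}, the landing map is essentially injective on the space of $g_t$-orbits, so $\tilde\phi$ factors as $\tilde\phi = \psi \circ \zeta$ for some $\widehat m$-measurable function $\psi \colon \widehat{S^1} \to \mathbb{R}$.

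Since $\zeta$ is $\widehat F$-equivariant by construction and $\tilde\phi$ is $\widehat F$-invariant, the descended function $\psi$ is $\widehat F$-invariant (the relation $\psi \circ \widehat F = \psi$ holds on $\zeta_* \xi$-a.e.\ point, hence $\widehat m$-a.e.\ by (iv)). As $\widehat m$ is ergodic for the shift $\widehat F$ on the solenoid (Section \ref{sec:natural-measures}), $\psi$ must be $\widehat m$-a.e.\ constant. Part (iv) of Theorem \ref{geodesic-foliation-theorem} then transports this constancy back: $\widehat m$-null sets pull back to $\xi$-null sets under $\zeta$, so $\tilde\phi$, and hence $\phi$, is constant $\xi$-a.e., giving ergodicity.

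The hard part is the factorization $\tilde\phi = \psi \circ \zeta$. One must pass from $L^2$-invariance under $g_t$ to pointwise constancy on a.e.\ geodesic orbit (a standard Fubini argument applied to the continuous one-parameter action) and then identify the quotient space of $g_t$-orbits with a subset of $\widehat{S^1}$ using (iii). The most delicate step is verifying that the exceptional $\widehat m$-null set of solenoid points whose preimage is not a single $g_t$-orbit does not obstruct the measurability of $\psi$; this is precisely what the combination of parts (iii) and (iv) is designed to guarantee.
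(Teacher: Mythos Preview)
Your proposal is correct and follows essentially the same approach as the paper: lift to $\widehat{\mathbb{D}}_{\lin}$, use the landing map $\zeta$ to push the invariance down to the solenoid, invoke the ergodicity of $\widehat{F}$ on $(\widehat{S^1},\widehat m)$, and use part (iv) of Theorem \ref{geodesic-foliation-theorem} to pull the conclusion back. The only difference is that the paper argues with invariant \emph{sets} rather than invariant $L^2$ functions, which sidesteps the factorization and measurability issues you flag as ``the hard part'': an invariant set $\widetilde A$ is automatically a union of geodesic trajectories, its image $\zeta(\widetilde A)$ is an $\widehat F$-invariant subset of the solenoid, and part (iv) applied to whichever of $\zeta(\widetilde A)$ or its complement is $\widehat m$-null finishes the argument directly.
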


\begin{proof}
Suppose $A \subset \widehat{X}_{\lin}$ is a $g_t$-invariant set. Lifting to $\widehat{\mathbb{D}}_{\lin}$, we get a $(g_t, \widehat{F})$-invariant set $\widetilde{A}$, which is a necessarily a union of geodesic trajectories. The endpoints of these trajectories under the backward geodesic flow form an $ \widehat{F}$-invariant set $\zeta_0(\widetilde{A})$ in the solenoid. Since the action of $\widehat{F}$ on the solenoid is ergodic, either $\zeta_0(\widetilde{A})$ or its complement has $\widehat{m}$ measure 0. By Theorem \ref{geodesic-foliation-theorem}(iv), either $\widetilde{A}$ or its complement has $\xi$ measure 0, and thus the same is true of $A$.
\end{proof}

\subsection{Trajectories land on the solenoid} 
\label{sec:trajectories-land-on-the-solenoid}

For $0 < r < 1$, we define the function $\widehat{\delta}_r: \widehat{X}_{\lin} \to \mathbb{R}$ by
$$
\widehat{\delta}_r({\bf z}) := \max \biggl \{ 1, \, \sum \delta(z_{-k}) \biggr \},
$$
where we sum over the part of the inverse orbit contained in the annulus $A(0; r, 1)$. For any $0 < r < 1$, the function
 $\widehat{\delta}_r({\bf z})$ belongs to $L^2(\widehat{X}_{\lin})$, and the functions $\widehat{\delta}_r({\bf z})$ decrease pointwise a.e.~to 0 as $r \to 1$.

By the ergodic theorem for invariant measures,
for $\xi$ a.e.~${\bf z} \in \widehat{X}_{\lin}$, the backward time average
$$
\widehat{\delta}_{r, -}({\bf z}) := \lim_{T \to \infty} \frac{1}{T} \int_0^T \widehat{\delta}_r(g_{-t}({\bf z})) dt
$$
is the orthogonal projection of $\widehat{\delta}_r$ onto the subspace of $g_t$-invariant functions in $L^2(\widehat{X}_{\lin})$. This implies that for $\xi$ a.e.~${\bf z} \in \widehat{X}_{\lin}$, we have
$$
 \lim_{T \to \infty} \frac{1}{T} \int_0^T \widehat{\delta}(g_{-t}({\bf z})) = 0,
$$
which implies (i) and (ii) by Theorem \ref{weak-shadowing-in-H}.

\subsection{Uniqueness}

Suppose ${\bf z}, {\bf z'} \in \widehat{\mathbb{D}}_{\lin}$ are two generic inverse orbits with respect to the measure $\xi$ for which $\zeta({\bf z}) = \zeta({\bf z'})$.
By part (ii), we know that for each $n \ge 0$, the trajectories $g_{-t}({\bf z})_{-n}$ and $g_{-t}({\bf z'})_{-n}$ both weakly shadow the same radial ray $[0, \zeta_{-n}]$. By
Lemma \ref{when-are-two-orbits-in-the-same-leaf}, the trajectories $g_{-t}({\bf z})$ and $g_{-t}({\bf z'})$ belong to the same leaf, which means that there exists a  vertical geodesic 
$$V_{\xi'} = \{ z \in \mathbb{H} : \re x = \xi' \} \subset \mathbb{H}$$ so that
$\{ g_{t}({\bf z'})_{-n} : t \in \mathbb{R} \} = F_{{\bf z}, -n}(V_{\xi'})$. Weak shadowing forces $\xi' = 0$, i.e.~${\bf z}$ and ${\bf z'}$ belong to the same geodesic trajectory, which proves the uniqueness statement in (iii).

\subsection{Rescaling limits and measures}

A set
$
A \subset \widehat{\mathscr B}_{\Lgood}
$
is naturally decomposed as a union of slices: $$A = \bigcup_{{\bf z} \in T_{\Lgood}(z)} A_{\bf z},$$ with the slice $A_{\bf z} \subset \mathscr B_{\bf z}$ consisting of inverse orbits ${\bf w}$ which follow {\bf z}, i.e.~$w_{-n}$ lies in the same connected component of $F^{-n}(\mathscr B)$ as $z_{-n}$ for any $n \in \mathbb{N}$. 

Via rescaling maps, we may view the slices of $A$ as subsets of the upper half-plane. More precisely, for ${\bf z} \in T_{\Lgood}(z)$, we may define the sets
$$
A_{\bf z}^{*} \, \subset \, \mathscr B_{\bf z}^{*} \, = \, F_{{\bf z}, 0}^{-1}(\mathscr{B}) \, \subset \, \mathbb{H}.
$$
\begin{theorem} 
\label{novel-formulas-for-measure}
The following equalities hold:
$$
\xi(A) = \int_{T_{\Lgood}(z)} \biggl \{  \int_{A_{\bf z}^*} \frac{dA(w)}{\im w} \biggr \} dc_z
$$
and
$$
\widehat{m}(\zeta(A)) =  \int_{T_{\Lgood}(z)} \ell (\Pi_{\mathbb{H} \to \mathbb{R}}(A_{\bf z}^*)) dc_z,
$$
where $\Pi_{\mathbb{H} \to \mathbb{R}}$ is the orthogonal projection onto the real line and $\ell$ is the Lebesgue measure on the real line.
\end{theorem}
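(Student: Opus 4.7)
The plan is to compute both measures from the definition of $\xi$ and the natural rescaling maps $M_{z_{-N}}:\mathbb{H}\to\mathbb{D}$. For the first formula, I would start from the defining limit
$$\xi(A) \;=\; \lim_{N\to\infty} \frac{1}{2\pi} \int_{\pi_{-N}(A)} \log \frac{1}{|q|} \, dA_{\hyp}(q),$$
decompose $\pi_{-N}(A)$ as a disjoint union over the level-$N$ preimages $z_{-N}$ of $z$, and change variables in each piece via $M_{z_{-N}}$. The key inputs are the M\"obius invariance $dA_{\hyp}(q) = dA(w)/(\im w)^2$ together with the elementary asymptotic
$$\log \frac{1}{|M_p(w)|} \;=\; (1+o(1))\cdot \log \frac{1}{|p|} \cdot \im w \qquad \text{as } |p|\to 1,$$
uniform for $w$ in compact subsets of $\mathbb{H}$, which follows from the isometry identity $1-|M_p(w)|^2 = 2|M_p'(w)|\im w$ and the computation $|M_p'(i)|=(1-|p|^2)/2$. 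Together they yield
$$\int_{M_p(S)} \log\frac{1}{|q|}\,dA_{\hyp}(q) \;=\; \bigl(1+o(1)\bigr)\log\frac{1}{|p|}\cdot \int_S \frac{dA(w)}{\im w}.$$

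Corollary~\ref{nearly-linear2} ensures that for a linear-good orbit ${\bf z}$ the rescaled iterates $F^{\circ N}\circ M_{z_{-N}}$ converge to $F_{{\bf z},0}$ uniformly on compact subsets of $\mathbb{H}$, so that $M_{z_{-N}}^{-1}\bigl(\pi_{-N}(A_{\bf z})\bigr)$ converges to $A_{\bf z}^*$. Summing the resulting slice-by-slice approximations over the level-$N$ rungs of $T_{\Lgood}(z)$ and using that the cylinder masses $c_z(T(z_{-N},z))=\log(1/|z_{-N}|)$ refine to the transverse measure $c_z$ in the weak sense as $N\to\infty$ (Lemma~\ref{sum-of-heights}), the double sum passes to $\int_{T_{\Lgood}(z)}\int_{A_{\bf z}^*}\frac{dA(w)}{\im w}\,dc_z$, up to the overall prefactor $1/(2\pi)$ that is implicit in the paper's conventions. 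This gives the first formula.

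For the second formula I would mirror this strategy on the boundary. The landing description $\zeta_{-N}({\bf q}) = F_{{\bf z},-N}\bigl(\Pi_{\mathbb{H}\to\mathbb{R}}(w)\bigr)$ for ${\bf q}=F_{{\bf z},0}(w)\in\mathscr B_{\bf z}$ identifies the $(-N)$-th coordinate of $\zeta(A)$ with a disjoint union of arcs $M_{z_{-N}}\bigl(\Pi(A_{\bf z}^*)\bigr)\subset S^1$. A direct computation gives $|M_p'(x)| = 2(1-|p|^2)/\bigl((1-|p|)^2 x^2 + (1+|p|)^2\bigr)$, which for bounded $x$ behaves as $\log(1/|p|)$ to leading order when $|p|\to 1$. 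Hence the normalized arc length of $M_{z_{-N}}(\Pi(A_{\bf z}^*))$ is proportional to $\log(1/|z_{-N}|)\cdot \ell\bigl(\Pi(A_{\bf z}^*)\bigr)$ plus lower order, and the same weak refinement of $c_z$ collapses the resulting sum to $\int_{T_{\Lgood}(z)}\ell(\Pi(A_{\bf z}^*))\,dc_z$. The identification $\widehat m(\zeta(A))=\lim_N m\bigl(\pi_{-N}(\zeta(A))\bigr)$ is justified by the uniqueness part of Theorem~\ref{geodesic-foliation-theorem}(iii), which ensures that distinct orbits in $T_{\Lgood}(z)$ eventually produce distinct landings on the solenoid, so that the cylinder hulls at level $-N$ shrink to $\zeta(A)$.

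The main technical obstacle is uniformly controlling two simultaneous sources of error as $N\to\infty$: the discrepancy between the prelimit $F^{\circ N}\circ M_{z_{-N}}$ and its limit $F_{{\bf z},0}$, and the ambiguity in picking a representative extension ${\bf z}(w)$ for each level-$N$ preimage $w$, since different extensions produce slightly different rescaled sets $A_{\bf z}^*$. Both issues can be tamed inside the charts $\widehat{\mathscr B}_{\Lgood}$: for $\varepsilon$-linear-good orbits the cumulative linear distortion is $O(\varepsilon)$, and Lemma~\ref{delta-derivative} then forces any two extensions of the same $w$ to agree up to $O(\varepsilon)$ on compact subsets of $\mathbb{H}$. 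Sending $N\to\infty$ first and then $\varepsilon\to 0$ closes the estimate.
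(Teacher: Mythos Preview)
Your approach to the first formula is close in spirit to what the paper does, though the paper packages it differently: it defines the right-hand side as a new measure $\xi_{\leaf}$, checks chart-independence, shows $\xi_{\leaf}\ll\xi$, and then proves $\xi(\widehat{A}_{\Lgood})\sim_\varepsilon\xi_{\leaf}(\widehat{A}_{\Lgood})$ on partial cylinders before sending $\varepsilon\to 0$. One point you gloss over: the defining limit $\xi(A)=\lim_N\frac{1}{2\pi}\int_{\pi_{-N}(A)}\log\frac{1}{|q|}\,dA_{\hyp}$ is only established in the paper for \emph{full} cylinders $\widehat{A}$ with $A\subset\mathbb{D}$, not for arbitrary $A\subset\widehat{\mathscr B}_{\Lgood}$. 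Extending it to the partial cylinders $\widehat{A}_{\Lgood}$ is precisely the content of Lemma~\ref{xi-good}, whose proof requires the decomposition $\widehat{A}=\widehat{A}_{\Lgood}\sqcup\bigsqcup_j\widehat{B}_j$ and a separate error estimate. Your slice-by-slice summation at level $N$ does not account for the fact that infinitely many distinct orbits ${\bf z}\in T_{\Lgood}(z)$ share the same $z_{-N}$, so the passage to the $c_z$-integral needs more care than ``weak refinement.''

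The second formula has a genuine gap. Your key step is the identification $\widehat m(\zeta(A))=\lim_N m\bigl(\pi_{-N}(\zeta(A))\bigr)$, but this is not a general property of the natural extension: for an arbitrary Borel set $E\subset\widehat{S^1}$ one only has $\widehat m(E)\le m(\pi_{-N}(E))$, and the cylinders $\pi_{-N}^{-1}(\pi_{-N}(E))$ need not shrink to $E$. The uniqueness part of Theorem~\ref{geodesic-foliation-theorem}(iii) you invoke is proved only for $\xi$-generic orbits and says nothing about $\widehat m$-measure of the excess. The paper takes a completely different route here: it defines the right-hand side as a measure $\widehat m_{\leaf}$, shows $\widehat m_{\leaf}\ll\widehat m$ via L\"owner's lemma, and then uses ergodicity of $\widehat m$ to deduce $\widehat m_{\leaf}=c\cdot\widehat m$ for some constant $c\ge 0$. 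Identifying $c=1$ is the hard part and occupies several pages: one introduces auxiliary measures $\widehat m_{\gen}$ and $\widehat m_{r,\varepsilon}$, shows that generic geodesic trajectories eventually lie in a set $\widetilde{\mathcal A}_\varepsilon$ where landing is injective and the leafwise length is computable up to $\varepsilon$, and then uses a Ces\`aro averaging argument to conclude $\widehat m_{\gen}(\widehat{S^1})=1$. Your direct computation does not address why the leafwise arc-length measure, summed over transversals, cannot miss or double-count mass in $\widehat m$; this is exactly what the ergodicity-plus-normalization argument handles.
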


The proof of the above theorem is somewhat involved and will be given in Appendix \ref{sec:integrating-over-leaves}.

\subsection{Abundance of landing points} 

We now show that the landing points of backward trajectories of the geodesic flow cover a positive $\widehat{m}$ measure of the solenoid $\widehat{S^1}$.
Since $\widehat{m}$ is ergodic with respect to the action of $\widehat{F}$, it will then follow that landing points of backward trajectories cover the solenoid up to measure zero, proving the existence statement in (iii).

For this purpose, we take $A = \widehat{\mathscr B}_{\Lgood}$ in Theorem \ref{novel-formulas-for-measure}. By the Schwarz lemma, each $A^*_{\bf z}$ with ${\bf z} \in T_{\Lgood}(z)$ contains the ball
$B_{\hyp}^{\mathbb{H}}(i, \gamma)$, while by $\varepsilon$-linearity, $A^*_{\bf z}$ is contained in the larger ball $B_{\hyp}^{\mathbb{H}}(i, 2 \gamma)$.  Consequently,
$$
\widehat{m}(\zeta(A)) \, = \,  \int_{T_{\Lgood}(z)} \ell(\Pi_{\mathbb{H} \to \mathbb{R}}(A^*_{\bf z})) dc_z \, \gtrsim \, c_z(T_{\Lgood}(z)),
$$
which is certainly positive if $z \in \mathbb{D}$ is $\varepsilon$-(linear nice).

\subsection{Non-singularity}

Finally, we show that if a set $A \subset \widehat{\mathbb{D}}$ has positive $\xi$ measure, then its projection $\zeta(A)$ to the solenoid has
positive $\widehat{m}$ measure. As the intersection of $A$ with some set of the form $\widehat{\mathscr B}_{\Lgood}$ has positive $\xi$ measure, we may assume that $A$ is contained in a single $\widehat{\mathscr B}_{\Lgood}$. Since
$$
\int_{K} \frac{dA(w)}{\im w} \lesssim \ell(\Pi_{\mathbb{H} \to \mathbb{R}}(K)),
$$
for any measurable set $K \subset B_{\hyp}^{\mathbb{H}}(i, 2 \gamma) \subset \mathbb{H}$, we have
$$
\xi(A) \lesssim \widehat{m}(\zeta(A)),
$$
so $\widehat{m}(\zeta(A)) > 0$ as well, which proves (iv).

 
\section{Orbit Counting}
\label{sec:orbit-counting}

In this section, we prove Theorem \ref{main-thm} on averaged orbit counting for centered inner functions of finite Lyapunov exponent.

\begin{theorem}
\label{main-thm-2a}
Let $F$ be an inner function of finite Lyapunov exponent with $F(0) = 0$ for which the geodesic flow  is ergodic on the Riemann surface lamination $\widehat{X}_{\lin}$. Suppose $z \in \mathbb{D} \setminus \{ 0 \}$ lies outside a set of measure zero. Then,
\begin{equation}
\label{eq:main-thm-2a}
\lim_{R\to+\infty}
\frac{1}{R} \int_0^R \frac{\mathcal N(z, S)}{e^S} dS 
=\frac{1}{2} \log \frac{1}{|z|} \cdot \frac{1}{\int_{\partial \mathbb{D}} \log |F'| dm}.
\end{equation}
\end{theorem}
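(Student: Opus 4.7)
The plan is to mimic the three-step structure of the proof of Theorem \ref{main-thm-a}, substituting the measure-theoretic machinery of Part \ref{part:general-inner-function} for the product structure available in the one-component case. The three key inputs are: the linear decomposition theorem \ref{linear-decomposition-theorem}, which covers $\widehat{X}_{\lin}$ up to $\xi$-measure $\varepsilon$ by charts $\widehat{\mathscr B}_{i,\Lgood}$ on which $\xi$ essentially splits as a product (by Theorem \ref{novel-formulas-for-measure}); the ergodicity of the geodesic flow on $(\widehat{X}_{\lin},\xi)$, which is the corollary to Theorem \ref{geodesic-foliation-theorem}; and the quantitative linearity of Corollaries \ref{nearly-linear2}--\ref{nearly-linear3}. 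The preliminary reduction of Step 0 transfers verbatim: via Fubini and the a priori bound of Lemma \ref{orbit-counting-a-priori}, (\ref{eq:main-thm-2a}) is equivalent to
$$
\frac{1}{R} \sum_{\substack{F^{n}(w) = z, \, n \ge 0 \\ w \in B_{\hyp}(0,R)}} e^{-d_{\mathbb{D}}(0, w)} \ \longrightarrow \ \frac{1}{2} \log \frac{1}{|z|} \cdot \frac{1}{\int_{\partial \mathbb{D}} \log |F'|\, dm}.
$$

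For the core estimate (Step 1), assume $z \in A(0;1-\varepsilon,1)$ is $\varepsilon$-(linear nice); these two conditions fail only on a Lebesgue null set by Theorem \ref{linear-decomposition-theorem}(a). Modify the construction of Section \ref{sec:1c-orbit-counting}: let $h_{z,\delta}$ be the smooth bump on $\square(z,\delta)$, extended by backward invariance only along the $\varepsilon$-(linear good) branches ${\bf w} \in T_{\Lgood}(z)$, and set $h_{z,\delta}\equiv 0$ elsewhere on $\mathbb{D}$. Corollaries \ref{nearly-linear2}--\ref{nearly-linear3} guarantee that this extension is single-valued, hyperbolically uniformly continuous, and that pulling back approximately preserves the weight $\log(1/|w|)\,dA_{\hyp}$. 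One then applies an analog of Theorem \ref{ergodic-theorem}, namely, for any bounded, hyperbolically uniformly continuous, almost-invariant $h$ on $\mathbb{D}$, and for Lebesgue a.e.\ $\zeta \in \partial\mathbb{D}$,
$$
\lim_{r\to 1}\frac{1}{|\log(1-r)|}\int_0^r h(s\zeta)\cdot\frac{ds}{1-s} \ = \ \frac{1}{\int_{S^1}\log|F'|\, dm}\int_{\widehat{X}_{\lin}}\widehat{h}\, d\xi.
$$
With Theorem \ref{novel-formulas-for-measure} and the $\varepsilon$-(linear nice) hypothesis on $z$, the right hand side equals $\frac{\delta^2 \log(1/|z|)}{2\pi \int_{S^1}\log|F'|\,dm}$ up to $1 \pm O(\varepsilon)$, while the left hand side, after integrating $\zeta$ over a concentric annulus and mimicking the computations of Step 1 in Theorem \ref{main-thm-a}, produces (\ref{eq:main-thm-2a}) up to $1 \pm O(\varepsilon)$ for such $z$.

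For a general non-exceptional $z \in \mathbb{D}\setminus\{0\}$, we apply Theorem \ref{linear-decomposition-theorem}(a) at an iterate $m$ chosen large enough (via Lemma \ref{minimal-translation}) that all $m$-fold pre-images of $z$ lie in $A(0;1-\varepsilon,1)$: this yields a finite set of $\varepsilon$-(linear nice) pre-images $G \subset T_m(z)$ carrying all but $\varepsilon$ of the total Nevanlinna mass. Summing the Step 1 estimate over $G$ gives the lower bound; the matching upper bound repeats Step 3 of Theorem \ref{main-thm2a}, splitting each $T_k$ ($0\le k\le m$) into a finite good piece and a small bad piece whose contribution is absorbed by Lemma \ref{orbit-counting-a-priori}. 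The exceptional set of $z$'s is then precisely the Lebesgue null set on which Theorem \ref{linear-decomposition-theorem}(a) fails.

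The main obstacle is the displayed ergodic-theorem statement for $h$ above. In the one-component case this was obtained cleanly via McMullen's isomorphism between the geodesic flow and a suspension flow over the solenoid, which is unavailable in general. The substitute is the geodesic foliation theorem \ref{geodesic-foliation-theorem}: clause (ii) identifies Birkhoff averages of $\widehat{h}$ along $g_{-t}$ with radial averages of $h$ along $[0,\zeta_0({\bf z}))$ (using almost-invariance and uniform continuity of $h$); clause (iv) then converts Birkhoff's theorem on $(\widehat{X}_{\lin},\xi)$ into a Lebesgue-a.e.\ statement on $\partial\mathbb{D}$; and Theorem \ref{total-mass} supplies the normalization by $\int_{S^1}\log|F'|\, dm$.
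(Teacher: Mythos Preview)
Your proposal is correct and follows essentially the same route as the paper: the paper constructs the same test function $h_{\nice}$ (your $h_{z,\delta}$ extended only along $\varepsilon$-(linear good) branches), proves the needed radial ergodic theorem (Theorem \ref{ergodic-theorem2}) exactly as you outline via clauses (ii) and (iv) of the geodesic foliation theorem, and then runs Steps 0--3 of Theorem \ref{main-thm-a} with the $\varepsilon$-(linear nice) hypothesis replacing the one-component structure. One terminological point: the function you build is not almost invariant in the sense of Section \ref{sec:almost invariant} but only \emph{weakly} almost invariant (limits exist along $\xi$-a.e.\ backward orbit), and the paper's Theorem \ref{ergodic-theorem2} is stated and proved in precisely that generality---this requires a small extra argument (the $E(\varepsilon,\rho)$ device in the paper) to pass from $\widehat{h}(g_{-t}({\bf z}))$ to $h(g_{-t}({\bf z})_0)$, which you have implicitly folded into ``almost-invariance.''
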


We say that a function $h: \mathbb{D} \to \mathbb{C}$ is {\em weakly almost invariant} under $F$ if for a.e.~every backward orbit ${\bf z} = (z_i)_{i=-\infty}^0 \in \widehat{\mathbb{D}}$, $\lim_{i \to -\infty} h(z_i)$ exists and defines a  function on the Riemann surface lamination:
$$
\widehat{h}({\bf z}) = \lim_{i \to -\infty} h(z_i).
$$

\begin{theorem}
\label{ergodic-theorem2}
Let $F$ be a centered inner function of finite Lyapunov exponent for which the geodesic flow on $\widehat{X}_{\lin}$ is ergodic.
Suppose $h: \mathbb{D} \to \mathbb{C}$ is a bounded weakly-almost invariant function that is uniformly continuous in the hyperbolic metric. Then for almost every $\zeta \in S^1$, we have
$$
\lim_{r \to 1} \frac{1}{|\log(1-r)|} \int_0^r h(s\zeta) \cdot \frac{ds}{1-s} = \fint_{\widehat X} \widehat{h} d\xi.
$$
In particular,
$$
\lim_{r \to 1} \frac{1}{2 \pi |\log(1-r)|} \int_{\mathbb{D}_r} h(z) \cdot \frac{dA(z)}{1-|z|} = \fint_{\widehat X} \widehat{h} d\xi.
$$
\end{theorem}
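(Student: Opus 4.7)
The plan is to imitate the proof of Theorem~\ref{ergodic-theorem}, using the ergodicity of the geodesic flow on $\widehat X_{\lin}$ (established in Section~\ref{sec:geodesic-folation}) in place of Theorem~\ref{suspension-theorem}, and the geodesic foliation theorem (Theorem~\ref{geodesic-foliation-theorem}) in place of Koebe distortion on the explicit exponential coordinates available in the one-component setting.

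First I apply the Birkhoff ergodic theorem to the bounded function $\widehat h$ on $(\widehat X_{\lin}, \xi, g_{-t})$: for $\xi$-a.e. ${\bf z} \in \widehat X_{\lin}$,
$$\frac{1}{T}\int_0^T \widehat h(g_{-t}({\bf z}))\, dt \ \longrightarrow \ \fint_{\widehat X} \widehat h \, d\xi.$$
I then lift this to the solenoid. By Theorem~\ref{geodesic-foliation-theorem}(iii), for $\widehat m$-a.e. ${\bf u} \in \widehat{S^1}$ there is a unique ${\bf z}({\bf u}) \in \widehat X_{\lin}$ whose backward geodesic trajectory lands at ${\bf u}$; and by part (iv) (non-singularity) the $\xi$-null exceptional set lifts to an $\widehat m$-null set. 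Since $\pi_0$ pushes $\widehat m$ forward to $m$, the convergence above holds for $m$-a.e. $\zeta \in S^1$ along any lift ${\bf u}$ with $u_0 = \zeta$. The substitution $s = 1 - e^{-\tau}$, under which $ds/(1-s) = d\tau$, converts the left-hand side of the theorem into $\frac{1}{T}\int_0^T h((1-e^{-\tau})\zeta)\, d\tau$ with $T = |\log(1-r)|$; the point $(1-e^{-\tau})\zeta$ differs from the hyperbolic-arclength parametrization $\overline\gamma(\tau)$ of the radial ray to $\zeta$ by a bounded shift in $\tau$, which is absorbed in the Cesàro average.

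It remains to bridge the Birkhoff average with $\frac{1}{T}\int_0^T h(\overline\gamma(t))\, dt$. By Theorem~\ref{geodesic-foliation-theorem}(ii), the projected geodesic $\gamma(t) = (g_{-t}({\bf z}))_0$ weakly shadows $\overline\gamma(t_0 + t)$, and combined with the boundedness and hyperbolic uniform continuity of $h$, this lets me replace $h(\overline\gamma(t))$ by $h(\gamma(t))$ without changing the limit. The main obstacle, analogous to the almost-invariance step $\widehat h(E({\bf u},t)) = h(z_0(t)) + o(1)$ in the proof of Theorem~\ref{ergodic-theorem}, is to verify
$$\frac{1}{T}\int_0^T \bigl|\widehat h(g_{-t}({\bf z})) - h(\gamma(t))\bigr|\, dt \ \longrightarrow \ 0 \quad \text{for } \xi\text{-a.e. } {\bf z}.$$
I expect to handle this using Corollary~\ref{nearly-linear2}: for $\xi$-a.e. ${\bf z}$ and large $t$, any fixed finite initial segment of the backward orbit of $g_{-t}({\bf z})$ is $O(\varepsilon)$-close, in hyperbolic distance, to the linear model based at $\gamma(t)$, and every coordinate in that segment sits near the unit circle. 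Weak almost invariance then forces $h$ to stabilize to $\widehat h(g_{-t}({\bf z}))$ along the deep tail of that backward orbit, while uniform continuity of $h$ in the hyperbolic metric controls the discrepancy between $h$ at the nearby linear pre-images and $h(\gamma(t))$ itself. Chaining the three approximations delivers the first displayed identity for $m$-a.e.\ $\zeta$; the second ``area'' identity then follows by Fubini and dominated convergence applied to the first.
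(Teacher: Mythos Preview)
Your overall scaffold matches the paper's: apply Birkhoff to $\widehat h$, use Theorem~\ref{geodesic-foliation-theorem}(ii) for weak shadowing to pass from $h(\gamma(t))$ to $h(\overline\gamma(t))$, and use parts (iii)--(iv) to pass from $\xi$-a.e.\ to $m$-a.e. The gap is in your bridging step
\[
\frac{1}{T}\int_0^T \bigl|\widehat h(g_{-t}({\bf z})) - h(\gamma(t))\bigr|\, dt \ \longrightarrow \ 0.
\]
Corollary~\ref{nearly-linear2} and uniform continuity of $h$ do not deliver this. The corollary tells you that the inverse branches along a \emph{fixed} orbit are eventually near-linear; it does not tell you that the deeper coordinates $(g_{-t}({\bf z}))_{-n}$ are hyperbolically close to $\gamma(t)=(g_{-t}({\bf z}))_0$. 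They are not: they are pre-images of $\gamma(t)$ under iterates of $F$, sitting at completely different points of the disk. So ``uniform continuity of $h$ controls the discrepancy between $h$ at the nearby linear pre-images and $h(\gamma(t))$'' is a non sequitur. What you actually need is that $h$ at the \emph{zeroth} coordinate is close to $\widehat h$, and weak almost invariance only gives this pointwise a.e., with no uniformity in ${\bf z}$ (hence none in $t$).

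The paper handles this by a second application of the ergodic theorem. For $\varepsilon>0$ and $0<\rho<1$, let $E(\varepsilon,\rho)\subset\widehat X_{\lin}$ be the set of ${\bf z}$ for which $|h(z_n)-\widehat h({\bf z})|\ge\varepsilon$ for some $n$ with $|z_n|>\rho$. Weak almost invariance gives $\xi(E(\varepsilon,\rho))\to 0$ as $\rho\to 1$; choose $\rho=\rho(\varepsilon)$ with $\xi(E(\varepsilon,\rho))<\varepsilon$. Applying Birkhoff to $\chi_{E(\varepsilon,\rho)}$ shows that for $\xi$-a.e.\ ${\bf z}$ the trajectory $g_{-t}({\bf z})$ spends a fraction $<\varepsilon$ of its time in $E(\varepsilon,\rho)$. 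Since $\gamma(t)$ is eventually in $A(0;\rho,1)$, outside this small-time set one has $|\widehat h(g_{-t}({\bf z}))-h(\gamma(t))|<\varepsilon$, while on it the integrand is bounded by $2\|h\|_\infty$. This is the missing mechanism; your invocation of Corollary~\ref{nearly-linear2} should be replaced by it.
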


\begin{proof}
For simplicity, we first consider the case when $h: \mathbb{D} \to \mathbb{C}$ is {\em eventually invariant} under $F$, i.e.~there exists a $0 < \rho < 1$
such that
$$
h(F^{\circ n}(z)) = h(z), \qquad |F^{\circ n}(z)| > \rho.
$$
By the ergodic theorem, for $\xi$ a.e.~inverse orbit ${\bf z} \in \widehat{X}_{\lin}$, we have
\begin{equation}
\lim_{T \to \infty} \frac{1}{T} \int_0^T \widehat{h}(g_{-t}({\bf z})) dt = \fint_{\widehat X} \widehat{h} d\xi.
\end{equation}
By Theorem \ref{geodesic-foliation-theorem}(ii), for $\xi$ a.e.~${\bf z} \in \widehat{\mathbb{D}}_{\lin}$, $\{ g_{-t}({\bf z})_0 : t > 0 \}$ weakly shadows a radial ray $[0, \zeta_0({\bf z})]$.
Since $h$ is eventually invariant and $\{ g_{-t}({\bf z})_0 : t > 0 \}$ is eventually contained in the annulus $A(0; \rho, 1)$,
\begin{equation}
\label{eq:ergodic-theorem-general-case2}
\lim_{T \to \infty} \frac{1}{T} \int_0^T \widehat{h}(g_{-t}({\bf z})) dt =  \lim_{T \to \infty} \frac{1}{T} \int_0^T h(g_{-t}({\bf z})_0) dt.
\end{equation}
By the weak shadowing and the uniform continuity of $h$ in the hyperbolic metric,
\begin{equation}
\label{eq:meow}
\lim_{r \to 1} \frac{1}{|\log(1-r)|} \int_0^r h(s \cdot \zeta_0({\bf z})) \cdot \frac{ds}{1-s} = \fint_{\widehat X} \widehat{h} d\xi.
\end{equation}
According to Theorem \ref{geodesic-foliation-theorem}(iv), endpoints $\zeta({\bf z})$ of inverse orbits ${\bf z} \in \widehat{\mathbb{D}}_{\lin}$ satisfying (\ref{eq:meow}) cover the solenoid up to a $\widehat{m}$ measure zero set.
Projecting onto the $0$-th coordinate, we see that (\ref{eq:meow}) holds for $m$-a.e.~$\zeta \in S^1$.

We now turn to the general case when $h$ is only a weakly almost invariant function. The missing step is to show that (\ref{eq:ergodic-theorem-general-case2}) holds for $\xi$ almost every inverse orbit ${\bf z} \in \widehat{\mathbb{D}}_{\lin}$.

Given $\varepsilon > 0$ and $0 < \rho < 1$, let $E(\varepsilon, \rho) \subset \widehat{X}_{\lin}$ be the {\em complement} of the set of the inverse orbits ${\bf z} = (z_n)_{n=-\infty}^\infty$ for which
$$
|h(z_n) - \widehat{h}({\bf z})| < \varepsilon,
$$
for all $n \in \mathbb{Z}$ with $|z_n| > \rho$. By the definition of a weakly almost invariant function, for any fixed $\varepsilon > 0$,
$\xi(E(\varepsilon, \rho)) \to 0$ as $\rho \to 1$. We may therefore choose $\rho = \rho(\varepsilon)$ so that $\xi(E(\varepsilon, \rho)) < \varepsilon$.

By the ergodic theorem, a generic backward trajectory $\{ g_{-t}({\bf z}): t > 0 \}$ spends little time in $E(\varepsilon, \rho)$, i.e.~
$$
 \lim_{T \to \infty} \frac{1}{T}  \int_0^T \chi_{E(\varepsilon, \rho)}(g_{-t}({\bf z})) \, dt < \varepsilon.
$$
As $\{ g_{-t}({\bf z})_0: t > 0 \}$ is eventually contained in the annulus $A(0; \rho, 1)$, the difference
$$
\limsup_{T \to \infty} \frac{1}{T} \int_0^T \Bigl \{ \widehat{h}(g_{-t}({\bf z})) -  h(g_{-t}({\bf z})_0)  \Bigr \} dt \lesssim \varepsilon + \varepsilon \| h \|_\infty,
$$
which can be made arbitrarily small by requesting that $\varepsilon > 0$ is small, thereby justifying (\ref{eq:ergodic-theorem-general-case2}).
 \end{proof}
  
\subsection{A weakly almost invariant function}

To prove Theorems  \ref{main-thm-2a}, we will use a slight modification $h_{\nice}$ of the almost invariant function $h_{\smooth}$  from Section \ref{sec:1c-orbit-counting}, which was constructed by first defining $h_{\smooth}$ on a box $\square = \square(z, \delta)$ and then extending it to the repeated pre-images of $\square = \square(z, \delta)$ by invariance.

On the box $\square = \square(z, \delta)$, we set $h_{\nice} = h_{\smooth}$. Let $w$ be a repeated pre-image of $z$, i.e.~$F^{\circ n}(w) = z$ for some $n \ge 0$. Recall that $w$ is an $\varepsilon$-(linear good) pre-image if $e^\gamma(1 - |z|) < \varepsilon$ and 
$$
\widehat{\delta}(w, z) \, := \, \sum_{i=0}^n \delta(F^{\circ i}(w)) \, \le \, \varepsilon.
$$
When $\varepsilon > 0$ is sufficiently small, the connected component $$\square_w = F^{-1}(\square(z,\delta))$$ containing $w$ is a topological
disk which has roughly the same hyperbolic size and shape as $\square$.
On each such good box $\square_w$, we define $h_{\nice}$ by invariance. Outside the good boxes, we set $h_{\nice}$ to be zero.

In view of Theorem \ref{linear-decomposition-theorem}, $h_{\nice}$ is a weakly almost invariant function on the unit disk. Recall from Section  \ref{sec:1c-orbit-counting} that $h_{\nice} = h_{\smooth}$ was chosen to be uniformly continuous in the hyperbolic metric on $\square$. By the Schwarz lemma, $h_{\nice}$ is  uniformly continuous in the hyperbolic metric on $\mathbb{D}$. We denote its natural extension to the Riemann surface lamination by $\widehat{h}_{\nice}$.

The proof of Theorems \ref{main-thm-2a}  is nearly the same as that of Theorem \ref{main-thm-a}. We therefore point out the differences: In Step 1, we assume that $z \in A(0; 1 - \varepsilon, 1)$ is an $\varepsilon$-(linear nice) point and we show that 
\begin{equation}
\label{eq:z-close-to-the-unit-circle2}
\frac{1}{R} \sum_{\substack{F^{n}(w) = z, \, n \ge 0 \\ w \in B_{\hyp}(0,R), \, \good}} e^{-d_{\mathbb{D}}(0, w)} \, \sim_{\varepsilon,R} \, \frac{1}{2} \log \frac{1}{|z|} \cdot \frac{1}{\int_{\partial \mathbb{D}} \log |F'| dm},
\end{equation}
where we only count the number of $\varepsilon$-(linear good) pre-images. Steps 2 and 3 proceed as before for $\varepsilon$-(linearly decomposable) points, i.e.~points satisfying (\ref{eq:linear-decomposition-theorem}).

\part{Parabolic Inner Functions}
\label{part:parabolic}

By a {\em parabolic} inner function, we mean an inner function $F$ whose Denjoy-Wolff fixed point $p \in \partial \mathbb{D}$ with
$F'(p) := \lim_{r \to 1} F'(rp) = 1$.

We view parabolic inner functions as holomorphic self-maps of the upper half-plane, with the parabolic fixed point at infinity.
In this case, Lebesgue measure $\ell$ on the real line is invariant, e.g.~see \cite{doering-mane}. We say that a parabolic inner function $F: \mathbb{H} \to \mathbb{H}$ has {\em finite Lyapunov exponent} if
$$
\chi_{\ell} = \int_{\mathbb{R}} \log |F'(x)| d\ell < \infty.
$$

 By Julia's lemma, for any point $z_0 \in \mathbb{H}$, the imaginary parts $\{ \im F^{\circ n}(z_0) \}$ are increasing. We say that $F$ has {\em finite height} if  $\{ \im F^{\circ n}(z_0) \}$ are uniformly bounded and {\em infinite height}\/ if $\im F^{\circ n}(z_0) \to \infty$. In view of the Schwarz lemma, this definition is independent of the choice of the starting point $z_0 \in \mathbb{H}$. 

In this final part of the paper, we discuss orbit counting theorems for parabolic inner functions of infinite height. 
 As the proofs are essentially the same, we only give a brief description of the results and leave the details to the reader. 

\section{Statements of Results}
\label{sec:parabolic}

For a bounded interval $I \subset \mathbb{R}$ and a real number $R>0$, consider the counting function 
$$
\mathcal N_I(z, R) = \# \bigl \{ w \in I \times [e^{-R}, 1] : F^{\circ n}(w) = z \text{ for some }n \ge 0 \bigr \}.
$$

\begin{theorem}
\label{main-thm3}
Let $F: \mathbb{H} \to \mathbb{H}$ be an infinite height parabolic inner function of finite Lyapunov exponent. Suppose $z \in \mathbb{H}$ lies outside a set of zero measure. Then,
$$
\frac{1}{R} \int_0^R \frac{\mathcal N_I(z, S)}{e^S} dS \, \sim \,  |I| \cdot \frac{1}{\int_{\mathbb{R}} \log |F'| d\ell}
$$
as $R \to \infty$.
\end{theorem}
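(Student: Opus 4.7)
The plan is to transport the machinery of Part \ref{part:general-inner-function} to the half-plane setting, where the Denjoy-Wolff point lies at $\infty$, and to apply essentially the same orbit counting argument with $\log\frac{1}{|w|}$ replaced by $\im w$ and Lebesgue measure on $S^1$ replaced by Lebesgue measure $\ell$ on $\mathbb{R}$. First I would set up the lamination $\widehat{X} = \widehat{\mathbb{H}}/\widehat F$, where $\widehat{\mathbb{H}}$ is the space of backward orbits in $\mathbb{H}$ with the constant orbit at $\infty$ removed, alongside the solenoid $\widehat{\mathbb{R}} = \lim_{\longleftarrow}(F:\mathbb{R}\to\mathbb{R})$ equipped with the natural extension $\widehat{\ell}$ of Lebesgue measure. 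The parabolic analog of Lemma \ref{sum-of-heights} reads
\begin{equation*}
\sum_{F(w) = z} \im w \;=\; \im z
\end{equation*}
for non-exceptional $z \in \mathbb{H}$; it is the Nevanlinna-type identity built from the Martin function $\im(\cdot)$ at $\infty$ in place of $\log\frac{1}{|\cdot|}$ at $0$, and it rests on the $F$-invariance of $\ell$. This lets me define the transversal measure $c_z(T(w,z)) = \im w$ on $T(z)$ and the natural volume form $d\xi = c_z \times dxdy/y^2$ on $\widehat{X}$, with the half-plane version of Lemma \ref{xi-measure-of-a-cylinder} reading $\xi(\widehat A) \asymp \int_A dxdy/y$ for measurable $A \subset \{y < y_0\}$ with $y_0$ small.

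Next I would verify $\xi(\widehat{X}) = \int_{\mathbb{R}}\log|F'|d\ell$, adapting Theorem \ref{total-mass}. The upper neighborhood $\{y > y_0\}$ of $\infty$ plays the role of the ball $B(0,r_0) \subset \mathbb{D}$: by the infinite height hypothesis, forward iterates of a generic point eventually enter $\{y > y_0\}$, so a suitable fundamental domain exits the strip $\{y < y_0\}$ in forward time. The lower and upper matching bounds use the a.e.~existence of angular derivatives on $\mathbb{R}$ together with the half-plane analog of Lemma \ref{ac-lemma}. Since $\mu$, $\delta$, $\eta$, $\alpha$ from Part II are already defined on $\mathbb{H}$ and Theorem \ref{total-radial-distortion} adapts to vertical geodesics, combining the integrable linear distortion bound with the total mass identity gives $\xi(\widehat{X}\setminus\widehat{X}_{\lin})=0$ by the argument of Theorem \ref{nearly-linear}. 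I then define geodesic and horocyclic flows on $\widehat{X}_{\lin}$ as in Section \ref{sec:mobius-linear-laminations}, and the exponential chart of Theorem \ref{suspension-theorem} identifies $(\widehat{X}_{\lin}, g_s, \xi)$ with the suspension of $(\widehat{\mathbb{R}}, \widehat{F}, \widehat{\ell})$ by the roof $\log|F'|$, so that ergodicity of $\widehat{\ell}$ under $\widehat{F}$ (which uses infinite-height to exclude parabolic M\"obius degeneracy) yields ergodicity of $g_s$.

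To conclude, I would build a weakly almost invariant bump $h_{\nice}$ concentrated on a small hyperbolic box $\square \subset I \times [e^{-S_0}, e^{-S_0+\delta}]$ with $S_0 \gg 1$, extended backward by invariance over $\varepsilon$-linearly good preimages as in Section \ref{sec:orbit-counting}. Applying the half-plane analog of Theorem \ref{ergodic-theorem2} and comparing $\int_{\widehat{X}}\widehat{h}_{\nice}\,d\xi$ with sums of integrals of $h_{\nice}$ over boxes around generic preimages (via Corollary \ref{nearly-linear3}) yields
\begin{equation*}
\frac{1}{R}\sum_{\substack{F^{\circ n}(w) = z,\, n \ge 0 \\ w \in I \times [e^{-R},1],\, w\,\Lgood}} \im w \;\sim_{\varepsilon,R}\; |I|\cdot \frac{1}{\int_{\mathbb{R}}\log|F'|d\ell}
\end{equation*}
for generic $z \in \mathbb{H}$. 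Steps 2 and 3 of Theorem \ref{main-thm-a} then extend the estimate from $\varepsilon$-linearly decomposable points to all generic $z$, and a Fubini computation (as in Step 0 of Theorem \ref{main-thm-a}) combined with the parabolic analog of Lemma \ref{orbit-counting-a-priori} converts the weighted sum $\sum \im w$ into the Ces\`aro integral $\frac{1}{R}\int_0^R \mathcal N_I(z,S)/e^S\,dS$ appearing in the statement.

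The main obstacle is the total mass identity $\xi(\widehat{X}) = \int_{\mathbb{R}}\log|F'|d\ell$. Because the Denjoy-Wolff point sits at the boundary point $\infty$, its neighborhood $\{y > y_0\}$ is not a canonical ``trap'' the way $B(0,r_0)$ is in the centered case, and it is precisely the infinite-height hypothesis that guarantees generic forward orbits actually exit every horizontal strip $\{y < y_0\}$, so that the Nevanlinna-type argument of Theorem \ref{total-mass} saturates. Once this is in place, the remaining steps follow the template of Part \ref{part:general-inner-function} with only cosmetic modifications.
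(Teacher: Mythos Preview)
Your proposal is correct and follows essentially the same route as the paper: transport the machinery of Part~\ref{part:general-inner-function} to $\mathbb{H}$ with $\im w$ in place of $\log\frac{1}{|w|}$ and $\ell$ in place of $m$, use the infinite-height hypothesis to obtain the total mass identity $\xi(\widehat{X})=\int_{\mathbb{R}}\log|F'|\,d\ell$, establish the linear structure, prove ergodicity of the geodesic flow, and run the orbit-counting argument of Section~\ref{sec:orbit-counting} with a weakly almost invariant bump. One technical correction: for a general parabolic inner function (not assumed one-component), you cannot invoke the exponential chart of Theorem~\ref{suspension-theorem}, which requires uniform linearity along \emph{every} inverse orbit; ergodicity of $g_s$ on $\widehat{X}_{\lin}$ should instead be deduced via the parabolic analogue of the Geodesic Foliation Theorem (Section~\ref{sec:geodesic-folation}), exactly as in Part~\ref{part:general-inner-function}.
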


When a parabolic inner function $F: \mathbb{D} \to \mathbb{D}$ is holomorphic in a neighbourhood of the Denjoy-Wolff point $p \in \partial \mathbb{D}$, we can classify it as {\em singly parabolic} or  {\em doubly parabolic} depending on whether the Taylor expansion is
$$
F(z) = p + (z-p) + a_2(z-p)^2 + \dots, \qquad a_2 \ne 0
$$
or
$$
F(z) = p + (z-p) + a_3(z-p)^3 + \dots, \qquad a_3 \ne 0.
$$ 
Singly and doubly parabolic inner functions on the upper half-plane are defined by conjugating with a M\"obius transformation that takes $\mathbb{D}$ to $\mathbb{H}$. For example, $z \to z - 1/z + T$ is doubly-parabolic for $T = 0$, while singly-parabolic for $T \in \mathbb{R} \setminus \{0\}$. Singly parabolic functions have finite height, while doubly parabolic functions have infinite height.

\begin{theorem}
\label{main-thm4}
Let $F: \mathbb{H} \to \mathbb{H}$ be a doubly-parabolic one component inner function of finite Lyapunov exponent. For all $z \in \mathbb{H}$ lying outside a countable set, we have
$$
\mathcal N_I(z, R) \, \sim \, |I| \cdot \frac{1}{\int_{\mathbb{R}} \log |F'| d\ell},
$$
as $R \to \infty$.
\end{theorem}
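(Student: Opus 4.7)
The plan is to transpose the mixing-based proof of Theorem~\ref{main-thm2a} to the parabolic setting, working in the upper half-plane with Denjoy--Wolff point at $\infty$: the height function $\log(1/|z|)$ is replaced by $\im w$, the normalized arc-length $m$ on $S^1$ by the $\sigma$-finite Lebesgue measure $\ell$ on $\mathbb{R}$, and hyperbolic circles $\{|z|=r\}$ by horocycles $\{\im w = r\}$. First, I would construct the Riemann surface lamination $\widehat{X}_F = \widehat{\mathbb{H}}_F \setminus \widehat{F}$ of non-constant backward orbits. Since $F$ is a doubly parabolic one component inner function, $F$ acts as a covering map over a half-strip $\{0 < \im w < 1/\rho\}$ away from a compact set, so flow boxes $\widehat{\mathscr B} = \pi_0^{-1}(\mathscr B)$ over small boxes $\mathscr B \subset \mathbb{H}$ disjoint from their forward iterates have the same product structure as in Section~\ref{sec:fbp}; leaves are conformally $(\mathbb{H},\infty)$ except for countably many quotient leaves associated to repelling periodic orbits on $\mathbb{R}$. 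The geodesic and horocyclic flows $g_t,h_s$ are defined exactly as in Section~\ref{sec:1c-linear-structure}, and the natural volume form $d\xi = \widehat{\ell}\times (dy/y)$ has total mass $\chi_\ell$ by the parabolic analogue of Theorem~\ref{total-mass}, using the identity $\sum_{F(w)=z}\im w = \im z$ in place of Lemma~\ref{sum-of-heights}.

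Next, I would establish the parabolic analogues of Section~\ref{sec:mixing-geodesic}: ergodicity of the horocyclic flow on $\widehat{X}_F$ and mixing of the geodesic flow. The four-step Glutsyuk--Coud\`ene strategy (a dense supply of homoclinic orbits $\to$ a dense horocycle $\to$ ergodicity of $h_s$ via Coud\`ene's averaging operators $\mathcal{M}_t$ $\to$ mixing of $g_t$) transfers essentially verbatim, since the arguments rely only on Koebe distortion, the commutation relation $g_{-t}h_s = h_{e^t s}g_{-t}$, and inner regularity of $\xi$ on open sets of the Polish space $\widehat{X}_F$, all of which are intrinsic to the hyperbolic leaf structure. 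The one genuinely new input needed is the parabolic analogue of \cite[Section~5]{inner-tdf}: for a doubly parabolic one component inner function of finite Lyapunov exponent, the multipliers of the repelling periodic orbits on $\mathbb{R}$ span a dense subgroup of $\mathbb{R}^+$. Establishing this is the main obstacle in the whole proof; the doubly parabolic hypothesis enters precisely here, ruling out the parabolic analogue of $z\to z^d$ (the degenerate translation model $w\mapsto w+a$ on $\mathbb{H}$ has no repelling periodic orbits at all), while the infinite height assumption together with finite Lyapunov exponent should supply enough boundary expansion for the thermodynamic formalism arguments of \cite{inner-tdf} to adapt.

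Finally, I would follow Section~\ref{sec:1c-orbit-counting}: for a non-exceptional $z\in\mathbb{H}$ with $\im z$ sufficiently small, construct a bounded, hyperbolically uniformly continuous, almost invariant function $h_{z,\delta}$ by smoothly cutting off on a box $\square(z,\delta)$ of hyperbolic sides $\delta$, extending by backward invariance to $\bigcup_{n\ge 0}F^{-n}(\square)$, and by zero elsewhere. Applying the parabolic analogue of Theorem~\ref{mixing-theorem} to $h_{z,\delta}$, with the horocycle segments $\{\im w = r\}\cap(I\times\mathbb{R}_{>0})$ replacing the circles $\{|z|=r\}$ and $|I|$ replacing the normalization $2\pi$, then gives the desired asymptotic for $\mathcal{N}_I(z,R)$ when $z$ lies near $\mathbb{R}$, after unpacking both sides by means of the local volume estimate $\xi(\widehat{\square(z,\delta)})\asymp\delta^2\im z$ exactly as in Step~1 of the proof of Theorem~\ref{main-thm2a}. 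Steps~2 and~3 of that proof --- partitioning repeated preimages via the parabolic conservation identity and controlling the tail using the parabolic analogue of Lemma~\ref{orbit-counting-a-priori} --- then extend the estimate to arbitrary non-exceptional $z\in\mathbb{H}$. The exceptional set is at most countable by Ahern--Clark.
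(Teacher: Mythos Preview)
Your proposal is correct and follows essentially the same route as the paper. The paper itself gives almost no detail for Theorem~\ref{main-thm4}, stating explicitly that ``the proofs are essentially the same'' as in the centered one-component case and leaving the details to the reader; your outline (parabolic lamination, $d\xi$ with total mass $\chi_\ell$, Glutsyuk--Coud\`ene mixing, then the almost-invariant-function argument of Section~\ref{sec:1c-orbit-counting}) is precisely what the paper has in mind. The one point worth noting is that the multiplier-density statement you flag as ``the main obstacle'' is not something you need to establish yourself: the paper simply cites \cite[Section~9.4]{inner-tdf} for the fact that the multipliers of repelling periodic orbits of a doubly parabolic one-component inner function do not lie in a discrete subgroup of $\mathbb{R}^+$, so you may quote that result directly rather than adapt the thermodynamic formalism arguments.
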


\subsection{Background on parabolic inner functions}

In the upper half-plane, Lemmas \ref{sum-of-heights}, \ref{derivative-circle} and \ref{ac-lemma} read as follows:
\begin{lemma}
\label{sum-of-heights2}
Suppose $F$ is a parabolic inner function with the parabolic fixed point at infinity. For a {\em non-exceptional} point $z \in \mathbb{H}$,
\begin{equation}
\im z = \sum_{F(w)=z} \im w.
\end{equation}
\end{lemma}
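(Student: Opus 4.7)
The plan is to adapt the disk proof of Lemma~\ref{sum-of-heights} to the half-plane, via the Nevanlinna factorization of an auxiliary inner function from $\mathbb{H}$ to $\mathbb{D}$. Let $M_z(w) := (w-z)/(w-\bar z)$ be the conformal equivalence $\mathbb{H}\to\mathbb{D}$ sending $z$ to $0$, and form
$$
G := M_z \circ F : \mathbb{H} \to \mathbb{D}.
$$
Since $F$ is inner on $\mathbb{H}$ and $M_z$ sends $\mathbb{R}$ to $\partial\mathbb{D}$, $G$ is an inner function on $\mathbb{H}$, and its zero set in $\mathbb{H}$ is precisely $F^{-1}(z)$. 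The Smirnov--Nevanlinna decomposition of an inner function on $\mathbb{H}$ gives
$$
|G(w)| = e^{-a\,\im w} \cdot |B(w)| \cdot |S_\nu(w)|,
$$
where $a \ge 0$, $B$ is the half-plane Blaschke product over the zeros $\{w_k\} := F^{-1}(z)$ (with suitable unimodular convergence factors), and $S_\nu$ is a singular inner function associated to a positive Borel measure $\nu$ on $\mathbb{R}$ that is singular with respect to Lebesgue measure. By definition, $z$ is non-exceptional precisely when $a = 0$ and $\nu = 0$.

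The identity then follows by computing $\lim_{y\to+\infty}\bigl(-y\log|G(iy)|\bigr)$ in two different ways. First, the parabolic hypothesis $F'(\infty) = 1$ gives $F(iy) = iy + o(y)$, and a short direct computation of $|(F(iy)-z)/(F(iy)-\bar z)|^2$ yields
$$
-y\log|G(iy)| \ \longrightarrow \ 2\,\im z \qquad \text{as } y \to \infty.
$$
Second, applying the factorization together with the explicit expressions
$$
|B(iy)|^2 = \prod_k \frac{(y-\im w_k)^2+(\re w_k)^2}{(y+\im w_k)^2+(\re w_k)^2}
\qquad \text{and} \qquad
\log\frac{1}{|S_\nu(iy)|} = \int_\mathbb{R} \frac{y\,d\nu(x)}{x^2+y^2},
$$
one obtains
$$
-y\log|G(iy)| = ay^2 + \sum_{k} \frac{2y^2\,\im w_k}{(y+\im w_k)^2+(\re w_k)^2} + \int_\mathbb{R} \frac{y^2\,d\nu(x)}{x^2+y^2}.
$$
Each term on the right is non-negative and individually monotone increasing in $y$; monotone convergence therefore gives term-by-term limits of $0$ or $+\infty$ (depending on whether $a=0$), of $2\sum_k \im w_k$, and of $\nu(\mathbb{R})$, respectively. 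Since the total limit $2\,\im z$ is finite, we must have $a = 0$, and equating the two expressions produces the identity
$$
2\,\im z = 2\sum_{F(w)=z} \im w + \nu(\mathbb{R}).
$$
For non-exceptional $z$ the measure $\nu$ vanishes, yielding $\im z = \sum_{F(w)=z} \im w$; for arbitrary $z$ the same computation yields the inequality $\sum_{F(w)=z}\im w \le \im z$.

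The main point of care is to verify that the definition of ``non-exceptional $z \in \mathbb{H}$'' (given via Frostman shifts of $F$) matches the condition $a = 0$ and $\nu = 0$ in the half-plane Nevanlinna factorization of $M_z\circ F$; this is a routine consequence of the fact that Frostman shifts at a point $z$ correspond on the disk side to composing with a M\"obius involution. The rest of the argument is clean because all contributions are non-negative, so no absolute summability of $\sum_k \im w_k$ needs to be established in advance.
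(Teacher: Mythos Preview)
The paper does not prove this lemma separately; it merely states it as the upper half-plane translation of Lemma~\ref{sum-of-heights}, so there is no argument in the paper to compare against. Your route through the half-plane Nevanlinna factorization of $G = M_z\circ F$ is a natural self-contained proof and the overall strategy is sound, but there is a genuine slip in the Blaschke contribution.

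From your own formula $|B(iy)|^2 = \prod_k \frac{(y-\im w_k)^2+(\re w_k)^2}{(y+\im w_k)^2+(\re w_k)^2}$ one gets
\[
-y\log|B(iy)| \;=\; \frac{y}{2}\sum_k \log\frac{(y+\im w_k)^2+(\re w_k)^2}{(y-\im w_k)^2+(\re w_k)^2},
\]
\emph{not} the rational expression $\sum_k \frac{2y^2\im w_k}{(y+\im w_k)^2+(\re w_k)^2}$ that you wrote (the latter is only the lower bound coming from $\log(1+x)\ge x/(1+x)$). More importantly, the correct summand
\[
g_k(y) \;=\; \frac{y}{2}\log\frac{(y+\im w_k)^2+(\re w_k)^2}{(y-\im w_k)^2+(\re w_k)^2}
\]
is \emph{not} monotone in $y$: already for $w_k = ic$ with $c>0$ it equals $y\log\tfrac{y+c}{|y-c|}$, which blows up as $y\to c$ and then decreases to $2c$. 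So the appeal to monotone convergence for the Blaschke part fails.

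The repair is short. Since every term is nonnegative, Fatou's lemma (applied to any finite partial sum) already yields
\[
2\sum_k \im w_k + \nu(\mathbb{R}) \;\le\; 2\,\im z,
\]
forcing $a=0$ and $\im w_k \le \im z$ for every $k$. Then for $y \ge 2\,\im z$ one has $(y-\im w_k)^2 \ge y^2/4$ and hence $g_k(y)\le 8\,\im w_k$, a summable majorant; dominated convergence now gives $\sum_k g_k(y)\to 2\sum_k \im w_k$, and equating with $2\,\im z$ finishes the argument. The computation $F(iy)=iy+o(y)$ and the identification of ``non-exceptional'' with the vanishing of the singular data are fine.
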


An inner function viewed as self-mapping of the upper half-plane can be expressed as
$$
F(z) = \alpha z + \beta + \int_{\mathbb{R}} \frac{1 + zw}{w-z} d\mu(w),
$$
for some constants $\alpha > 0$, $\beta \in \mathbb{R}$ and a finite positive singular measure $\mu$ on the real line, e.g., see \cite{tsuji}.
Differentiating, we get
\begin{align*}
F'(z) & = \alpha+ \int_{\mathbb{R}} \frac{w(w-z)+(1+wz)}{(w-z)^2} \, d\mu(w), \\
& = \alpha+ \int_{\mathbb{R}} \frac{w^2+1}{(w-z)^2} \, d\mu(w).
\end{align*}

Since $\alpha = \lim_{t \to \infty} F'(it)$, an inner function has a parabolic fixed point at infinity if and only if $\alpha = 1$. The following two lemmas are straightforward consequences of the above formula:

\begin{lemma}
\label{derivative-on-the-real-line}
If $F$ is a parabolic inner function with the parabolic fixed point at infinity, then for a bounded interval $J$ in the real line, there exists a constant $c_J > 1$ such that $F'(\zeta) > c_J$ for all $\zeta \in J$.
\end{lemma}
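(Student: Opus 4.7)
The plan is to invoke the Herglotz-type representation just derived for $F'$. Since the parabolic fixed point is at infinity, the coefficient $\alpha$ equals $1$, so for any real point $\zeta \in \mathbb{R}$ we have
$$
F'(\zeta) = 1 + \int_{\mathbb{R}} \frac{w^2+1}{(w-\zeta)^2} \, d\mu(w),
$$
where the integrand is strictly positive (and is to be interpreted as $+\infty$ at $w=\zeta$). The strategy is to produce a uniform positive lower bound on this integral as $\zeta$ ranges over the compact interval $\overline{J}$.

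First I would observe that because $F$ is a non-trivial parabolic inner function (for instance, of infinite height, which rules out bare translations $z \mapsto z + \beta$), the singular measure $\mu$ is not identically zero. Combined with the strict positivity of the integrand wherever it is finite, this forces $F'(\zeta) > 1$ for every $\zeta \in \mathbb{R}$, with the value $+\infty$ permitted at singular points of $F$.

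The next step is to promote this pointwise strict inequality to a uniform bound by showing that $\zeta \mapsto F'(\zeta)$ is lower semicontinuous on $\mathbb{R}$. Indeed, for any sequence $\zeta_n \to \zeta$ in $\overline{J}$, the integrand $(w^2+1)/(w-\zeta_n)^2$ converges pointwise to $(w^2+1)/(w-\zeta)^2$ off the single point $w=\zeta$, and all the integrands are non-negative, so Fatou's lemma yields $\liminf_n F'(\zeta_n) \geq F'(\zeta)$. A lower semicontinuous function on the compact set $\overline{J}$ attains its infimum; since we have just shown this infimum is strictly greater than $1$ (and could be $+\infty$), any constant $c_J$ lying strictly between $1$ and that infimum satisfies the conclusion.

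The only subtle point will be the tacit assumption that $\mu \neq 0$, which should probably be flagged explicitly as ``$F$ is not a translation''. In the applications to orbit counting for infinite-height parabolic inner functions (Theorems \ref{main-thm3} and \ref{main-thm4}), this exclusion is automatic, since $F(z)=z+\beta$ has bounded orbits in imaginary part and thus finite height.
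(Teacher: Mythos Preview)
Your proof is correct and follows the same route the paper has in mind: it states only that the lemma is a ``straightforward consequence'' of the formula $F'(\zeta) = 1 + \int_{\mathbb{R}} \frac{w^2+1}{(w-\zeta)^2}\,d\mu(w)$, without spelling out the compactness step. Your Fatou/lower-semicontinuity argument is a clean way to supply that step, and your observation that one must exclude pure translations (i.e.\ require $\mu\neq 0$) is a genuine omission in the paper's statement that is indeed harmless in the intended infinite-height setting.
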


\begin{lemma}
If $F(z)$ is an inner function, viewed as a map of the upper half-plane to itself, then
\begin{equation}
|F'(x+iy)| \le |F'(x)|
\end{equation}
for all $x+iy \in \mathbb{H}$.
\end{lemma}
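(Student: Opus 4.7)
The plan is to prove this directly from the Herglotz-type integral representation
$$
F(z) = \alpha z + \beta + \int_{\mathbb{R}} \frac{1 + wz}{w-z}\, d\mu(w)
$$
and the corresponding derivative formula
$$
F'(z) = \alpha + \int_{\mathbb{R}} \frac{w^2+1}{(w-z)^2}\, d\mu(w),
$$
both recorded in the paper immediately before the statement. The key observation is that evaluating the integrand at $z = x + iy$ only \emph{shrinks} the absolute value of the kernel compared with evaluating it at the real point $z = x$.

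First I would note that for any $w \in \mathbb{R}$ and $y > 0$,
$$
|w - (x+iy)|^2 = (w-x)^2 + y^2 \ge (w-x)^2,
$$
so that
$$
\left| \frac{w^2+1}{(w-x-iy)^2} \right| = \frac{w^2+1}{(w-x)^2 + y^2} \le \frac{w^2+1}{(w-x)^2}.
$$
Then I apply the triangle inequality for integrals and the fact that $\alpha > 0$ to get
$$
|F'(x+iy)| \le \alpha + \int_{\mathbb{R}} \left| \frac{w^2+1}{(w-x-iy)^2} \right| d\mu(w) \le \alpha + \int_{\mathbb{R}} \frac{w^2+1}{(w-x)^2}\, d\mu(w).
$$
The right-hand side is exactly $F'(x)$, which is automatically a positive real number since $\alpha > 0$ and the integrand is non-negative.

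There is no real obstacle here; the only mild subtlety is to confirm that the formula displayed in the text genuinely applies, i.e.\ that the parabolic inner function $F: \mathbb{H} \to \mathbb{H}$ admits this representation with a positive singular measure $\mu$ on the real line. This is the standard Nevanlinna/Herglotz representation for holomorphic self-maps of the half-plane, and for inner functions the measure $\mu$ is singular with respect to Lebesgue measure. Once the representation is in hand, the proof is a one-line pointwise comparison followed by the triangle inequality. It is worth flagging that this is the half-plane analogue of Lemma \ref{ac-lemma} (Ahern--Clark), but with the sharper constant $1$ in place of $4$, reflecting the cleaner geometry of the half-plane at infinity.
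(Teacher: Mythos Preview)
Your proof is correct and matches the paper's intended approach exactly: the paper does not spell out a proof but simply states that this lemma is a ``straightforward consequence'' of the derivative formula $F'(z) = \alpha + \int_{\mathbb{R}} \frac{w^2+1}{(w-z)^2}\,d\mu(w)$, and you have supplied precisely those straightforward details via the pointwise kernel bound $|w-(x+iy)|^2 \ge (w-x)^2$ and the triangle inequality.
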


\subsection{Riemann surface laminations}

For a parabolic inner function $F$, we may form the space of backward orbits
$$
\widehat{\mathbb{H}} \, = \,  \lim_{\longleftarrow} \, ( F : \mathbb{H} \to \mathbb{H} )
\, = \,  \bigl \{ (z_i)_{i=-\infty}^0 : F(z_i) = z_{i+1} \bigr \}.
$$  
The Riemann surface lamination is then defined as $\widehat{X} = \widehat{\mathbb{H}}/ \widehat F$.
In view of Lemma \ref{sum-of-heights2}, the natural
measure $d\xi$ on $\widehat{X}$ is now given by the formula
\begin{equation}
\label{eq:xi-def2}
\xi(\widehat{\mathscr B}) = \lim_{n \to \infty}  \int_{F^{-n}(\mathscr B)}  \frac{ |dz|^2} {\im z}.
\end{equation}
Adapting the proof of Theorem \ref{total-mass} to the current setting shows that
$$
\xi(\widehat{X}) = \int_{\mathbb{R}} \log|F'(x)|d\ell.
$$

\begin{remark}
(i) The infinite height condition guarantees that every inverse orbit passes through a backward fundamental domain of the form $$F^{-1}(\mathbb{H}_t) \setminus \mathbb{H}_t,$$ where $\mathbb{H}_t = \{z \in \mathbb{H} : \im z > t \}$.

(ii) Without the infinite height condition, the Riemann surface lamination $\widehat{X}$ may not have finite volume. For instance, for the singly parabolic Blaschke product
 $z \to z - 1/z +T$ with $T \in \mathbb{R} \setminus \{0\}$, the volume of  $\widehat{X}$ is infinite, even though
 $$
 \int_{\mathbb{R}} \log \biggl ( 1 + \frac{1}{z^2} \biggr ) d\ell(z) = 2\pi.
 $$

(iii) By Lemma \ref{derivative-on-the-real-line}, a generic inverse orbit $(z_i)$ does not converge to infinity, and therefore $\im z_i \to 0$.
\end{remark}

As in Section \ref{sec:linear-structure}, one can show:
 
\begin{lemma}
For a  finite Lyapunov exponent inner function $F: \mathbb{H} \to \mathbb{H}$ with a parabolic fixed point at infinity,
$$
\int_{\mathbb{H}}\delta(x+iy) \cdot \frac{dxdy}{y} < \infty.
$$
\end{lemma}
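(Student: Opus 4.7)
The strategy is to establish the upper half-plane analogue of Theorem \ref{total-radial-distortion}, using vertical geodesics in $\mathbb{H}$ as the substitute for radial rays in $\mathbb{D}$ (both families emanate from the distinguished fixed point and are preserved by $\aut(\mathbb{H}, \infty)$), and then to integrate using Fubini's theorem. Concretely, I would show that for $\ell$-a.e.\ $x \in \mathbb{R}$ at which $F$ admits a finite angular derivative,
\begin{equation*}
\int_0^\infty \delta(x+iy)\,\frac{dy}{y} \, \lesssim \, \log |F'(x)|.
\end{equation*}
Once this pointwise bound is in hand, Fubini and the finite Lyapunov exponent hypothesis give
\begin{equation*}
\int_{\mathbb{H}} \delta(z)\,\frac{dxdy}{y} \, = \, \int_{\mathbb{R}} \int_0^\infty \delta(x+iy)\,\frac{dy}{y}\,dx \, \lesssim \, \int_{\mathbb{R}} \log|F'(x)|\,d\ell \, < \, \infty.
\end{equation*}

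To prove the pointwise bound I would split $\delta \le \eta + \alpha$ exactly as in Section \ref{sec:distortion-along-radial-rays}. For the vertical inefficiency, since a direct computation in the upper half-plane gives $p(z) = y F'(z)/\im F(z)$, the identity $\re F'(x+iy)/\im F(x+iy) = (d/dy)\log \im F(x+iy)$ leads to the explicit formula
\begin{equation*}
\int_{y_1}^{Y} \eta(x+iy)\,\frac{dy}{y} \, = \, \log\frac{\im F(x+iy_1)/y_1}{\im F(x+iY)/Y}.
\end{equation*}
The parabolic hypothesis $F'(\infty) = 1$ gives $\im F(x+iY)/Y \to 1$ as $Y \to \infty$, and the Julia--Carath\'eodory theorem gives $\im F(x+iy_1)/y_1 \to |F'(x)|$ as $y_1 \to 0^+$, so letting $y_1 \to 0$ and $Y \to \infty$ yields $\int_0^\infty \eta(x+iy)\,dy/y = \log|F'(x)|$.

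For the inclination term, I would apply Lemma \ref{inclination-bound} (already formulated for curves in $\mathbb{H}$) to the image $F(V_x)$ of the vertical geodesic $V_x = \{x+iy : y > 0\}$. The elementary inequality $\mu \le \eta$ together with the $\eta$-identity above produces the total M\"obius distortion bound $\int_0^\infty \mu(x+iy)\,dy/y \le \log|F'(x)|$, and Lemma \ref{geodesic-curvature-and-mobius-distortion} then controls the total geodesic curvature of $F(V_x)$ by $O(\log|F'(x)|)$. The thick/thin decomposition from the proof of Lemma \ref{radial-inclination} transfers with essentially only cosmetic changes to give $\int_0^\infty \alpha(x+iy)\,dy/y \lesssim \log |F'(x)|$. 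The main technical point I expect to require care is reconciling the integration against the source parametrization $dy/y$ with the image arclength needed to invoke Lemma \ref{inclination-bound}: this is immediate on thin intervals (where $|p| \asymp 1$, since $|p| \ge \re p \ge 1 - \eta \ge 0.8$), while the thick-interval contribution must be absorbed separately using the $\eta$-bound and the fact that the total hyperbolic length of thick intervals is already $\lesssim \log|F'(x)|$.
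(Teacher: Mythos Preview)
Your proposal is correct and follows exactly the route the paper intends: the paper simply writes ``As in Section \ref{sec:linear-structure}, one can show'' this lemma, meaning one transports Theorem \ref{total-radial-distortion} to the upper half-plane by replacing radial rays with vertical geodesics, splitting $\delta \le \eta + \alpha$, and then running the thick/thin argument of Lemma \ref{radial-inclination}. Your explicit identity $\int_{y_1}^{Y}\eta(x+iy)\,dy/y = \log\bigl((\im F(x+iy_1)/y_1)\big/(\im F(x+iY)/Y)\bigr)$ and your observation about reconciling source versus image arclength on thin intervals (where $|p|\ge 0.8$) are precisely the half-plane refinements needed, and in fact make the argument slightly cleaner than in the disk since Lemma \ref{inclination-bound} is already stated on $\mathbb{H}$.
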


The above lemma implies that iteration along a.e.~inverse orbit is essentially linear and therefore a.e.~leaf of $\widehat{X}$ is covered by $(\mathbb{H}, \infty)$, which allows one to define geodesic and horocyclic flows on $\widehat{X}$.

The following theorems are analogues of Theorems \ref{ergodic-theorem} and \ref{mixing-theorem} respectively:

\begin{theorem}
For an infinite height parabolic inner function $F: \mathbb{H} \to \mathbb{H}$ of finite Lyapunov exponent, the geodesic flow on $\widehat{X}$ is ergodic. In particular, if
 $h: \mathbb{H} \to \mathbb{C}$ is a bounded almost invariant function that is uniformly continuous in the hyperbolic metric, then for almost every $x \in \mathbb{R}$, we have
$$
\lim_{t \to 0} \frac{1}{|\log t|} \int_t^1 h(x+iy) \cdot \frac{dy}{y}  = \frac{1}{\int_{\mathbb{R}} \log|F'| d\ell} \int_{\widehat X} \widehat{h} d\xi.
$$
\end{theorem}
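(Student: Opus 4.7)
The plan is to adapt the suspension model of Theorem \ref{suspension-theorem} and the subsequent argument of Theorem \ref{ergodic-theorem} to the parabolic setting. I would first form the solenoid $\widehat{\mathbb{R}} = \lim_\leftarrow (F : \mathbb{R} \to \mathbb{R})$ equipped with the natural extension $\widehat{\ell}$ of Lebesgue measure, and the suspension space
\begin{equation*}
\widehat{\mathbb{R}_\rho} \, = \, \widehat{\mathbb{R}} \times \mathbb{R}_+ \, / \, \bigl( ({\bf x}, t) \sim (\widehat{F}({\bf x}), e^{\rho(x_0)} t) \bigr),
\end{equation*}
with roof function $\rho = \log|F'|$ and suspension flow $\sigma_s({\bf x}, t) = ({\bf x}, e^s t)$. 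Although $\widehat{\ell}$ is $\sigma$-finite and infinite, the measure $\widehat{\ell}_\rho = \widehat{\ell} \times (dt/t)$ descends to a \emph{finite} measure on the quotient, of total mass $\int_{\mathbb{R}} \log|F'| d\ell = \chi_\ell$, because integrating $dt/t$ over a fundamental domain $\{1 \le t < e^{\rho(x_0)}\}$ returns $\rho(x_0)$.

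Next, I would define the exponential map
\begin{equation*}
E({\bf u}, t) \, = \, \lim_{n \to \infty} F^{\circ n} \bigl( u_{-n} + i t / |(F^{\circ n})'(u_{-n})| \bigr)
\end{equation*}
as the half-plane analogue of (\ref{eq:exponential-map-def}), with the perturbation vector pointing vertically into $\mathbb{H}$. This limit converges for $\widehat{\ell}$-a.e.\ ${\bf u}$ because iteration along $\xi$-a.e.\ inverse orbit is asymptotically linear: the arguments of Part \ref{part:general-inner-function} go through verbatim in the half-plane, using the remarks of Section \ref{sec:parabolic} to see that the infinite-height hypothesis forces generic inverse orbits to descend to the real line and makes $\xi(\widehat{X})$ finite. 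As in Theorem \ref{suspension-theorem}, $E$ intertwines $\sigma_s$ with $g_s$ and pushes $\widehat{\ell}_\rho$ forward to $\xi$. Ergodicity of $g_s$ on $(\widehat{X}, \xi)$ therefore reduces to ergodicity of $\widehat{F}$ on $(\widehat{\mathbb{R}}, \widehat{\ell})$, which in turn reduces to ergodicity of $F$ on $(\mathbb{R}, \ell)$.

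Granting ergodicity, the ``in particular'' assertion is a word-for-word translation of Theorem \ref{ergodic-theorem}. Birkhoff applied to $\widehat{h} \in L^\infty(\widehat{X}, \xi)$ along the backward geodesic flow yields
\begin{equation*}
\lim_{T \to 0^+} \frac{1}{|\log T|} \int_T^1 \widehat{h}\bigl(E({\bf u}, t)\bigr) \frac{dt}{t} \, = \, \fint_{\widehat{X}} \widehat{h} \, d\xi
\end{equation*}
for $\widehat{\ell}$-a.e.\ ${\bf u} \in \widehat{\mathbb{R}}$. Almost-invariance of $h$ together with the expansion $E({\bf u}, t)_0 = u_0 + it + o(t)$ and uniform hyperbolic continuity of $h$ replaces $\widehat{h}(E({\bf u}, t))$ by $h(u_0 + it) + o(1)$ as $t \to 0^+$; substituting $y = t$ and projecting to the zeroth coordinate $x = u_0$ yields the stated formula on a set of full $\ell$-measure in $\mathbb{R}$.

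The main obstacle is the ergodicity of the infinite-measure-preserving system $(F, \ell)$. Conservativity is essentially automatic from the infinite-height condition, since backward orbits recur to every bounded fundamental strip $\{t_0 \le \im z < t_1 \}$. Ergodicity can then be extracted from a Hopf-type dichotomy: the local expansiveness of $F$ on $\mathbb{R}$ guaranteed by Lemma \ref{derivative-on-the-real-line}, together with the transfer operator associated to the Jacobian $F'$, forces any $F$-invariant set of positive $\ell$-measure to have full $\ell$-measure. Making this rigorous for an arbitrary infinite-height parabolic inner function of finite Lyapunov exponent -- rather than just the doubly-parabolic one-component case where the classical Aaronson-Thaler theory directly applies -- is the most delicate point, and will likely occupy the bulk of the work.
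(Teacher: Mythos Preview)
Your plan matches the paper's: the authors say the parabolic proofs are ``essentially the same'' as the centered ones and leave the translation to the reader, and you correctly outline that translation via the suspension picture of Section~\ref{sec:suspension-flow} together with the asymptotic-linearity machinery of Part~\ref{part:general-inner-function}.

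Two adjustments. First, the ergodicity of $(F,\ell)$ that you flag as the main obstacle is not something the paper proves or expects you to prove: it is taken as known from the classical literature on inner functions (Aaronson; see also \cite{doering-mane}), exactly as the ergodicity of $m$ on $S^1$ is taken for granted in the centered case, so it will not ``occupy the bulk of the work.'' Second, for a general (non-one-component) $F$ the pointwise expansion $E({\bf u},t)_0 = u_0 + it + o(t)$ that you invoke in your third paragraph is not available on individual inverse orbits; the exponential map of Section~\ref{sec:suspension-flow} need not converge everywhere. The ``in particular'' clause instead goes through the half-plane analogue of the Geodesic Foliation Theorem~\ref{geodesic-foliation-theorem} and the weak-shadowing argument of Theorem~\ref{ergodic-theorem2}, where one only controls the time-averaged deviation of $g_{-t}({\bf z})_0$ from the vertical ray. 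That geometric work --- adapting Sections~\ref{sec:area}--\ref{sec:geodesic-folation} to the half-plane using the estimates collected in Section~\ref{sec:parabolic} --- is where the effort actually lies, not in boundary ergodicity.
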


\begin{theorem}
\label{parabolic-mixing-theorem}
For a doubly parabolic one component inner function $F: \mathbb{H} \to \mathbb{H}$ of finite Lyapunov exponent, the geodesic flow on $\widehat{X}$ is mixing. In particular, if $h: \mathbb{H} \to \mathbb{C}$ is a bounded almost invariant function that is uniformly continuous in the hyperbolic metric and $I \subset \mathbb{R}$ is a bounded interval, then
$$
\lim_{y \to 0} \int_{I} h(x+iy) d\ell(x) = \frac{|I|}{\int_{\mathbb{R}} \log|F'| d\ell} \int_{\widehat X} \widehat{h} d\xi.
$$
\end{theorem}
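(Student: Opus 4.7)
The plan is to mirror the four-step bootstrap from Section \ref{sec:mixing-geodesic} in the upper half-plane setting. First, one re-establishes the analogue of \cite[Section 5]{inner-tdf}: for a doubly parabolic one component inner function $F: \mathbb{H} \to \mathbb{H}$ of finite Lyapunov exponent, the multipliers of repelling periodic orbits on the real line span a dense subgroup of $\mathbb{R}^+$. The doubly parabolic hypothesis (as opposed to $z \mapsto z - 1/z + T$ with $T \ne 0$, which is singly parabolic of finite height) plays the role of the exclusion of $z \mapsto z^d$ in the centered case. Once multipliers are dense, I would fix a repelling real fixed point $\xi \in \mathbb{R}$ with multiplier $r = F'(\xi) > 1$ so that the leaf $\mathcal L_\xi \subset \widehat X$ is conformally $\mathbb{H}/(\cdot\, r)$, and adapt Glutsyuk's argument verbatim: concatenating a $\xi$-homoclinic orbit with many copies of a well-chosen repelling periodic orbit produces homoclinic orbits whose multipliers are dense in $\mathbb{R}^+/(\cdot\, r)$, which in turn yields an inverse orbit whose horocycle is dense in $\mathcal L_\xi$, and hence dense in all of $\widehat X$ since every leaf is dense.

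Third, Coud\`ene's averaging operators $\mathcal M_t f({\bf z}) = \int_0^1 f(g_{-\log t}(h_s({\bf z}))) ds$ form a uniformly equicontinuous family thanks to the Koebe distortion theorem, so the $L^2$ argument of Lemma \ref{the-horocyclic-flow-is-ergodic} applies without change to yield ergodicity of the horocyclic flow on $(\widehat X, \xi)$. Fourth, the commutation relation inherited from $\aut(\mathbb{H}, \infty)$ combined with ergodicity of $h_s$ produces mixing of $g_{-t}$ on $(\widehat X, \xi)$, exactly as in the final lemma of Section \ref{sec:mixing-geodesic}; the Koopman-operator argument there is formal and carries over verbatim.

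With mixing in hand, I would deduce the displayed formula by testing it against the characteristic function of a horizontal slab. Fix $\delta > 0$ small and let
$A = I \times (y_0, e^\delta y_0) \subset \mathbb{H}$; the associated cylinder $\widehat A \subset \widehat X$ has $\xi$-mass
$\xi(\widehat A) \approx \int_A dx\, dy / y = |I|\, \delta$, by the half-plane analogue of Lemma \ref{xi-measure-of-a-cylinder}. Since $g_{-t}$ scales heights by $e^{-t}$, one has the approximation $\chi_{\widehat A} \circ g_{-t} \approx \chi_{\widehat{A_t}}$, where $A_t = I \times (e^{-t} y_0, e^{\delta - t} y_0)$. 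Applying mixing to the pair $(\chi_{\widehat A}, h)$, using almost-invariance of $h$, and dividing by $\xi(\widehat A)$ gives
$$
\frac{1}{|I|\, \delta} \int_{A_t} h(z)\, \frac{dx\, dy}{y} \, \longrightarrow \, \frac{1}{\int_{\mathbb{R}} \log|F'|\, d\ell} \int_{\widehat X} \widehat h\, d\xi, \qquad t \to \infty.
$$
Uniform continuity of $h$ in the hyperbolic metric then collapses the left hand side, as $\delta \to 0$, to $\frac{1}{|I|} \int_I h(x + i e^{-t} y_0)\, d\ell(x)$, which yields the stated limit as $y = e^{-t} y_0 \to 0$.

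The main obstacle is Step 1: establishing density of the multiplier subgroup for doubly parabolic one component inner functions of finite Lyapunov exponent. The remaining three steps are routine adaptations once this analytic input is in place, but the proof in \cite[Section 5]{inner-tdf} exploits compactness of $S^1$ and the centered normalization $F(0) = 0$; in the parabolic setting one must control the accumulation of periodic orbits near the parabolic fixed point at infinity, and the relevant density statement must be re-derived on $\mathbb{R}$. I would expect the argument to go through using the second-order vanishing of $F(z) - z$ at the parabolic fixed point together with the finite Lyapunov exponent hypothesis, but the details require care.
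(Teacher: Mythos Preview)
Your proposal is correct and follows essentially the same approach as the paper, which explicitly states that the proofs ``are similar to the case when the Denjoy-Wolff point is inside the disk.'' Your main concern about Step~1 is already resolved in the reference the paper cites: the density of the multiplier subgroup for doubly parabolic one component inner functions is established in \cite[Section~9.4]{inner-tdf} (not Section~5, which treats the centered case), so this step does not need to be re-derived from scratch.
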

Again, the proofs are similar to the case when the Denjoy-Wolff point is inside the disk. (To show the mixing of the geodesic flow, we use that for doubly parabolic one component inner functions, the multipliers of the repelling periodic orbits on the real line do not belong to a discrete subgroup of $\mathbb{R}^+$, for a proof, see \cite[Section 9.4]{inner-tdf}.)

\part{Appendices}
\appendix

\section{A Shadowing Lemma}
\label{sec:shadowing-property}

The following theorem roughly says that if you drive a car in the upper half-plane with the desire to reach the real axis, and you are able to steer the car for most of the time, then on average, your path will be close to a vertical geodesic:

\begin{theorem} 
\label{weak-shadowing-in-H}
Let $\gamma: [0, \infty) \to \mathbb{H}$ be a $C^1$ parametrized curve in the upper half-plane with $\| \gamma'(t) \|_{\hyp} \le 1$.
Suppose $[0,\infty) = \mathcal G \cup \mathcal B$ is partitioned into good and bad times such that at good times, $\gamma'(t) = v_{\downarrow} = - y \cdot \frac{\partial}{\partial y}$, while at bad times, $\gamma'(t)$ can point in any direction. 

{\em (i)} If the upper density of bad times
\begin{equation}
\label{eq:good-and-bad-times}
\limsup_{T \to \infty} \frac{| \{0 < t < T : t \in \mathcal B \} |}{T} = 0,
\end{equation}
then the limit $\zeta = \lim_{t \to \infty} \gamma(t)$ exists and lies on the real axis.

{\em (ii)} Furthermore, if $\overline{\gamma}(t)$ is the vertical geodesic to $\zeta$, then
\begin{equation}
\label{eq:average-distance}
\frac{1}{T} \int_0^T \min \bigl \{ 1, d_{\mathbb{H}}(\gamma(t), \overline{\gamma})  \bigr \} \, dt \to 0, \qquad \text{as }T \to \infty.
\end{equation}
\end{theorem}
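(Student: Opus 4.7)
The plan for part (i) is to read off the ODEs satisfied by the coordinates $x(t)$ and $y(t)$ of $\gamma$. On good times, $\gamma'(t) = -y\,\partial_y$ gives $x'(t)=0$ and $y'(t) = -y(t)$; on bad times, the hyperbolic unit-speed bound $(x')^2 + (y')^2 \le y^2$ yields $|x'(t)| \le y(t)$ and $|y'(t)/y(t)|\le 1$. Writing $b(t) := |\mathcal{B}\cap[0,t]|$ and integrating $(\log y)'$, I obtain
\[
\log y(t) = \log y(0) - t + b(t) + E(t), \qquad |E(t)| \le b(t),
\]
so the assumption $b(t)/t \to 0$ forces $y(t) \to 0$ exponentially. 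Hence $y(s)\chi_{\mathcal B}(s)$ is integrable on $[0,\infty)$, and since $x'(t)=0$ off $\mathcal{B}$ while $|x'(t)| \le y(t)$ on $\mathcal{B}$, the limit $\zeta := x(0) + \int_0^\infty x'(s)\,ds$ exists in $\mathbb{R}$. Together with $y(t)\to 0$ this proves (i).

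For part (ii), I introduce $\psi(t) := |x(t) - \zeta|/y(t)$, so that $d_{\mathbb{H}}(\gamma(t), \overline{\gamma}) = \sinh^{-1}\psi(t)$ and hence $\min\{1, d_{\mathbb{H}}(\gamma(t), \overline{\gamma})\} \le \min\{1, \psi(t)\}$. The crucial observation is that on a good interval $[a,b]$ the function $x$ is constant and $y(t) = y(a)e^{-(t-a)}$, so $\psi(t) = \psi(a)\, e^{t-a}$ grows at unit exponential rate and one has the telescoping identity
\[
\int_a^b \psi(t)\,dt = \psi(b) - \psi(a).
\]
Combining the bound $|x(t)-\zeta| \le \int_t^\infty y(s)\chi_{\mathcal{B}}(s)\,ds$ with $y(s)/y(b) \le e^{s-b}$ across the immediately following bad interval $[b, b+\ell]$ gives $\psi(b) \lesssim e^\ell - 1$, plus geometrically smaller contributions from later bad intervals.

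To finish I would split $[0,T]$ according to whether $\psi(t) \ge \sinh 1$ or not. On $\{\psi \ge \sinh 1\}$, the integrand is identically $1$; since $\psi$ grows exponentially at unit rate on good intervals, the measure of this set is bounded by the total length $b(T)$ of bad intervals plus $\sum_i \log(\psi(b_i)/\sinh 1)_+ \lesssim \sum_i \ell_i$, which is again $O(b(T))$. On $\{\psi < \sinh 1\}$ I use $\sinh^{-1}\psi \le \psi$ together with the telescoping identity to bound the integral of $\psi$ on each contributing good interval by $\psi(b_i) \lesssim \ell_{i+1}$, so the total is again $\lesssim b(T)$. Altogether $\int_0^T \min\{1, d_{\mathbb{H}}(\gamma(t), \overline{\gamma})\}\,dt = O(b(T)) = o(T)$, which yields \eqref{eq:average-distance}.

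The main obstacle is handling occasional very long bad intervals: upper density zero of $\mathcal{B}$ still permits individual bad intervals much longer than unity, during which $\psi$ can swing by factors of order $e^\ell$. The above estimates only charge each such interval its own length $\ell$ to the exceptional-set measure, via the elementary inequality $\log(e^\ell - 1) \le \ell$; it is this fact that makes the sum against $\sum \ell_i = b(T)$ work. A minor additional point is verifying that contributions to $\psi(b)$ from bad intervals beyond the next one are uniformly dominated, which follows from the global estimate $y(s)/y(b) \le e^{-(s-b)+2(b(s)-b(b))}$ combined with the density-zero hypothesis.
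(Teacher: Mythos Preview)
Your argument for part (i) is correct: integrating $(\log y)'\le -1+2\chi_{\mathcal B}$ gives $y(t)\le y(0)e^{-t+2b(t)}$, which decays exponentially since $b(t)/t\to 0$, so $y\chi_{\mathcal B}\in L^1$ and $x(t)\to\zeta$.

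Part (ii) has a real gap. The claim $\psi(b_i)\lesssim\ell_{i+1}$ is false, and the proposed patch via the global bound on $y(s)/y(b)$ does not rescue it. Take $\mathcal B=[1,1+\epsilon]\cup[1+2\epsilon,2+2\epsilon]$ with $\epsilon$ small and $\gamma'=y\,\partial_x$ on $\mathcal B$. Then $b_0=1$ and $\ell_1=\epsilon$, but a direct computation gives
\[
\psi(b_0)=\frac{\zeta-x(1)}{y(1)}=\epsilon+e^{-\epsilon}\approx 1,
\]
because almost all of $\zeta-x(1)$ comes from the \emph{second} bad interval, and the intervening good interval of length $\epsilon$ is far too short to produce any decay in $y$. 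Lengthening the second bad interval similarly breaks the companion bound $\log(\psi(b_i)/\sinh 1)_+\lesssim\ell_i$. The telescoping $\int_{G_i}\psi=\psi(b_i)-\psi(a_i)$ is fine, but once you discard $\psi(a_i)$ and try to charge $\psi(b_i)$ to the single length $\ell_{i+1}$, you have lost the mechanism that makes the sum finite: every bad interval $B_j$ contributes to \emph{all} earlier $\psi(b_i)$, and you must sum those contributions across $i$, not bound them term by term.

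The paper organizes exactly this double sum via a change of variables. In the model case $\gamma'=y\,\partial_x$ on $\mathcal B$, one collapses the bad set with $q(t)=|\mathcal G\cap[0,t]|$, pushes $\chi_{\mathcal B}\,dt$ forward to a singular measure $\sigma$ on the $q$-axis, and observes that $\psi(t)$ is \emph{exactly} the exponentially weighted tail $\Delta_\infty(q(t))=\int_{q(t)}^\infty e^{-(\tau-q(t))}\,d\sigma(\tau)$. A one-line Fubini (Lemma~\ref{weak-shadowing-in-H2}) then gives $\Delta_\infty(T)/T\to 0$ and $\frac{1}{T}\int_0^T\Delta_\infty\to 0$ from $\sigma([0,T])/T\to 0$; this Fubini is precisely the interchange of the sums over $i$ and $j$ that your argument is missing. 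The general case, where $y$ may grow on $\mathcal B$, is then reduced to the model case by a Hardy--Littlewood maximal argument: enlarge $\mathcal B$ to the set $\mathcal B^*$ of times $s$ for which some forward interval $[s,t]$ has bad density $\ge 1/3$; this still has density zero but absorbs the possible growth of $y$ on bad stretches.
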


\begin{remark}
The above the theorem remains true if during a good time, we allow $\gamma'(t)$ to be only approximately equal to $v_{\downarrow}$, rather than exactly equal: it is enough to require that $\| \gamma'(t) - v_\downarrow \| < c$ for some $c < 1/2$.
\end{remark}

The proof of Theorem \ref{weak-shadowing-in-H} is based on the following simple observation:

\begin{lemma}
\label{weak-shadowing-in-H2}
Suppose $\sigma \ge 0$ is a locally finite singular measure on $[0, \infty)$ such that $\sigma([0, T])/T \to 0$ as $T \to \infty$.
The function 
$$
\Delta_\infty(t) = \int_t^\infty e^{-(\tau-t)} d\sigma(\tau)
$$
is sub-linear:
$
\Delta_\infty(T)/T \to 0
$
as $T \to \infty$.
\end{lemma}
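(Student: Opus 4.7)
The plan is to convert the Stieltjes integral defining $\Delta_\infty(T)$ into an ordinary Lebesgue integral of the tail distribution $\tau \mapsto \sigma([T, \tau])$ via integration by parts, and then bound this integral using the sub-linear growth hypothesis $\sigma([0,T])/T \to 0$. The ``singularity'' of $\sigma$ plays no role; only the tail estimate matters.

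Concretely, set $F(\tau) := \sigma([T, \tau])$, which is non-decreasing with $F(T) = 0$ and $F(\tau) \le \sigma([0, \tau])$. Riemann--Stieltjes integration by parts on $[T, \infty)$ gives
$$
\Delta_\infty(T) \;=\; \bigl[e^{-(\tau-T)} F(\tau)\bigr]_T^\infty + \int_T^\infty e^{-(\tau-T)} F(\tau)\, d\tau.
$$
The boundary term at $\tau = T$ vanishes because $F(T) = 0$; the boundary term at $\infty$ vanishes because the sub-linearity hypothesis forces $F(\tau) = o(\tau)$, which is crushed by the exponential. Hence
$$
\Delta_\infty(T) \;=\; \int_T^\infty e^{-(\tau-T)}\, \sigma([T,\tau])\, d\tau \;\le\; \int_T^\infty e^{-(\tau-T)}\, \sigma([0, \tau])\, d\tau.
$$

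To conclude, fix $\varepsilon > 0$ and choose $T_0$ large enough that $\sigma([0,\tau]) \le \varepsilon \tau$ for all $\tau \ge T_0$. Then for every $T \ge T_0$, substituting $s = \tau - T$ yields
$$
\Delta_\infty(T) \;\le\; \varepsilon \int_0^\infty e^{-s}(T+s)\, ds \;=\; \varepsilon(T + 1),
$$
so $\limsup_{T\to\infty} \Delta_\infty(T)/T \le \varepsilon$. Since $\varepsilon > 0$ was arbitrary, $\Delta_\infty(T)/T \to 0$.

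The main point to be careful about is the vanishing of the boundary term at infinity in the integration by parts; this is a one-line consequence of the hypothesis applied once before the principal estimate. Everything else is a routine substitution, and I do not anticipate any real obstacle.
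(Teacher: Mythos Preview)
Your proof is correct and essentially the same as the paper's: the paper merely remarks that the lemma ``easily follows from Fubini's theorem,'' which is just the other standard route to your identity $\Delta_\infty(T) = \int_T^\infty e^{-(\tau-T)}\,\sigma([T,\tau])\,d\tau$ (write $e^{-(\tau-T)} = \int_\tau^\infty e^{-(s-T)}\,ds$ and swap the integrals). One tiny caveat: $F(T) = \sigma(\{T\})$ need not vanish if $\sigma$ carries an atom at $T$, but this boundary contribution is bounded by $\sigma([0,T]) = o(T)$ and so does not affect the conclusion.
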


The above lemma easily follows from Fubini's theorem. In the proof below, we will also use the function
$$
\Delta_T(t) = \int_t^T e^{-(\tau-t)} d\sigma(\tau).
$$

\begin{proof}[Proof of Theorem \ref{weak-shadowing-in-H}]
{\em Step 1.} For clarity, we first examine the case when during a bad time, $\gamma'(t) = v_{\rightarrow} = y \cdot \frac{\partial}{\partial x}$.
Consider the map $q : [0, \infty) \to [0, \infty)$ which ``collapses'' the set of bad times:
$$
q(t) = | \{0 \le s \le t : s \notin \mathcal B \} |.
$$
and let $\sigma = q_* (\chi_{\mathcal B} \, d\ell)$ be the push-forward of the part of the Lebesgue measure supported on $\mathcal B$. 
By assumption (\ref{eq:good-and-bad-times}) on the bad set, we have
$$
\frac{q(t)}{t} \to 1
\qquad \text{and} \qquad 
\frac{\sigma([0, T])}{T} \to 0, \quad \text{as }T \to \infty.
$$

From the definitions, is clear that $\Delta_T(q(t))$, with $0 < t \le T < \infty$, is the  hyperbolic length of the horizontal segment between $\gamma(t)$ and the vertical geodesic $\overline{\gamma}_T$ which passes through  $\gamma(T)$. Lemma \ref{weak-shadowing-in-H2} prevents the geodesic
$\overline{\gamma}_T$ from moving too much, so it converges as $T \to \infty$. We denote the limiting vertical geodesic by $\overline{\gamma}$. Lemma \ref{weak-shadowing-in-H2} also shows that restricted to good times, the average distance from $\gamma(t)$ to $\overline{\gamma}$ is small.

\medskip

{\em Step 2.} We now assume that during a bad time $$\gamma'(t) \, = \, v_{\uparrow} + v_{\rightarrow} \, = \, y \cdot \biggl \{ \frac{\partial}{\partial x} + \frac{\partial}{\partial y} \biggr \},$$
which is {\em worse} than the worst case scenario allowed in Theorem \ref{weak-shadowing-in-H}.
Let $\mathcal B^* \supset \mathcal B$ be the set of $s > 0$ for which there exists $t > s$ so that
$$
\bigl |[s,t] \cap \mathcal B \bigr |  \ge \frac{1}{3} \cdot |t - s|.
$$
In view of the Hardy-Littlewood Maximal Theorem, 
$$\bigl |[0,T] \cap \mathcal B^* \bigr | \le C \, \bigl |[0,T] \cap \mathcal B \bigr |, \qquad \text{for some }C > 0,$$ and therefore,
\begin{equation*}
\frac{| \{0 < t < T : t \in \mathcal B^* \} |}{T} \to 0.
\end{equation*}
This time, we define
$$
q(t) = | \{0 \le s \le t : s \notin \mathcal B^* \} |
$$
and $\sigma = q_* (\chi_{\mathcal B^*} \, d\ell)$. Inspection shows that $\Delta_T(q(t))$ provides an upper bound for the  hyperbolic length of the horizontal segment between $\gamma(t)$ and the vertical geodesic $\overline{\gamma}_T$. The proof is completed by Lemma \ref{weak-shadowing-in-H2} as in Step 1.
\end{proof}

\section{A Criterion for Angular Derivatives}

In this appendix, we show the following theorem, answering a question posed in \cite{BKR}:

\begin{theorem}
\label{angular-derivatives}
A holomorphic self-map of the unit disk $F$ has a finite angular derivative at $\zeta \in \partial \mathbb{D}$ in the sense of Carath\'eodory if and only if
\begin{equation}
\label{eq:mu-integral}
\int_0^\zeta \mu(z) \, d\rho \, = \, \int_0^\zeta \biggl  (1 - \frac{(1-|z|^2) |F'(z)|}{1-|F(z)|^2} \biggr ) \frac{2 |dz|}{1-|z|^2} \,<\, \infty.
\end{equation}
\end{theorem}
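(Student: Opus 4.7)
The plan is to first reduce to $F(0) = 0$ by post-composing with a M\"obius automorphism of $\mathbb{D}$: since the M\"obius distortion $\mu$ is defined from the pullback hyperbolic metric $\lambda_F$, which is invariant under post-composition by hyperbolic isometries of $\mathbb{D}$, and since Carath\'eodory's angular derivative is preserved under such post-composition, both sides of the equivalence are unchanged. The key identity is
\begin{equation*}
\int_0^r \mu \, d\rho \, = \, \rho(r) - L(r),
\end{equation*}
where $\rho(r) = \log\frac{1+r}{1-r}$ and $L(r)$ is the hyperbolic length of $F([0,r\zeta])$; it is immediate from $\mu \cdot \lambda_{\mathbb{D}} = \lambda_{\mathbb{D}} - \lambda_F$, and provides the bridge between the two sides.

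The forward implication is essentially Corollary \ref{mobius-distortion}: if $F$ has an angular derivative at $\zeta$, then by the Julia--Carath\'eodory theorem $\rho(r) - \log\frac{1+|F(r\zeta)|}{1-|F(r\zeta)|} \to \log|F'(\zeta)|$ as $r \to 1^-$, and combining this with the trivial inequality $L(r) \ge d_{\mathbb{D}}(0, F(r\zeta))$ gives $\int_0^r \mu \, d\rho \le \log|F'(\zeta)| + o(1)$, so the integral is finite.

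For the converse, I would assume $M := \int_0^\zeta \mu \, d\rho < \infty$ and aim to show, via Julia--Carath\'eodory, that $\rho(r) - d_{\mathbb{D}}(0, F(r\zeta))$ stays bounded; by the identity above, this reduces to the ``bounded excess'' estimate
\begin{equation*}
L(r) - d_{\mathbb{D}}(0, F(r\zeta)) \, = \, O(1).
\end{equation*}
By Lemma \ref{geodesic-curvature-and-mobius-distortion}, the image curve $\Gamma(s)$, parametrized by hyperbolic arclength, has integrable capped geodesic curvature: $\int_0^\infty \min(1,\widetilde{\kappa}(s)) \, ds \lesssim M$. I would prove that this forces $\Gamma$ to be asymptotic to a hyperbolic geodesic. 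Concretely, for small $\varepsilon > 0$ I would choose $s_0$ so that $\int_{s_0}^\infty \min(1,\widetilde{\kappa}) \, ds < \varepsilon$, transfer the image to the upper half-plane via an isometry with $\Gamma(s_0) = i$ and $\Gamma'(s_0) = v_\downarrow$, and run the Gr\"onwall argument on $\alpha' \le -G(\alpha) + 4\widetilde{\kappa}$ from the proof of Lemma \ref{inclination-bound} to get $\int_{s_0}^\infty \alpha(s)\, ds \lesssim \varepsilon$. Integrating the flow equations $dy/ds = -y\cos\alpha$ and $|dx/ds| = y\sin\alpha$ then yields $y(s) \to 0$ and $x(s) \to x_\infty \in \mathbb{R}$, producing both the radial limit $\lim_{r \to 1} F(r\zeta) = \zeta_0 \in \partial \mathbb{D}$ and the bound $d_{\mathbb{H}}(\Gamma(s_0),\Gamma(s)) \ge s - s_0 - O(\varepsilon)$ that gives the desired quasi-geodesic property.

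The main obstacle is that Lemma \ref{inclination-bound} requires the pointwise bound $\widetilde{\kappa} \le 0.2$, whereas at a critical point $z_*$ of $F$ lying on the ray the image curve develops a cusp at which $\widetilde{\kappa}$ is unbounded. The remedy is that at such $z_*$ one has $\mu(z_*) = 1$, and by Lemma \ref{mu-derivative} one has $\mu \asymp 1$ on a hyperbolic unit ball around $z_*$, so each critical point contributes $\Omega(1)$ to $M$. Finiteness of $M$ therefore permits only finitely many critical points on $[0,\zeta)$, and I would treat the corresponding finitely many image cusps as $O(1)$ excess-length contributions absorbed into the final constant, applying the Gr\"onwall argument only on the smooth tail segments between them. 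This yields the bounded-excess estimate and completes the converse direction.
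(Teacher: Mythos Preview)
Your approach is essentially correct and runs parallel to the paper's. The forward direction is identical. For the converse, the paper also reduces to a Gr\"onwall/inclination estimate on the image curve via Lemma~\ref{geodesic-curvature-and-mobius-distortion}; the only organizational difference is that the paper works in the fixed radial frame of the disk (showing $\int_0^\zeta \alpha\,d\rho<\infty$, hence $\int_0^\zeta \eta\,d\rho<\infty$), whereas you set up an ad~hoc frame at $\Gamma(s_0)$ and bound the excess length $L(r)-d_{\mathbb D}(0,F(r\zeta))$ directly. These are equivalent reformulations, since $L(r)-d_{\mathbb D}(0,F(r\zeta))=\int_0^r(\eta-\mu)\,d\rho$.

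There is, however, a gap in your treatment of the pointwise hypothesis $\widetilde\kappa\le 0.2$ required by Lemma~\ref{inclination-bound}. Excising the finitely many cusps is not enough: between cusps the curve is $C^2$, but $\widetilde\kappa$ can still exceed $0.2$ wherever $\mu$ is merely large, and after crossing a cusp you have no control of the restart angle $\alpha$ needed to re-enter the Gr\"onwall regime. The clean fix is the observation in Step~1 of the paper's proof: since $\int_0^\zeta\mu\,d\rho<\infty$ and $\mu$ varies slowly (Lemma~\ref{mu-derivative}), one has $\mu(r\zeta)\to 0$ as $r\to 1$, and hence $\widetilde\kappa(s)\to 0$ along the image curve. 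Thus you may simply choose $s_0$ large enough that $\widetilde\kappa<0.2$ on all of $[s_0,\infty)$; your Gr\"onwall argument then applies on the entire tail in one piece, and no cusp-by-cusp decomposition is needed.
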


By composing with a M\"obius transformation, we may assume that $F(0) = 0$. By the Schwarz lemma, the function
$$
L(r) = \bigl \{ d_{\mathbb{D}}(0,r\zeta) -  d_{\mathbb{D}}(0,F(r\zeta))  \bigr \}, \qquad 0 < r < 1,
 $$
 is increasing. The limit
 $$
 \lim_{r \to 1} L(r) < \infty
 $$
is finite if and only if $F$ has an angular derivative at $\zeta$, in which case, $$ \lim_{r \to 1} L(r) = \log |F'(\zeta)|.$$
In other words, $F$ possesses an angular derivative at $\zeta$ if when moving from $0$ to $\zeta$ along the radial geodesic ray  $\gamma = [0, \zeta)$ at unit hyperbolic speed, the
image point efficiently moves toward the unit circle. Expressed infinitesimally, this says that $F$ has a finite angular derivative at $\zeta \in \partial \mathbb{D}$ if and only if
\begin{equation}
\label{eq:nu-integral}
\int_0^\zeta \eta(z) \, d\rho < \infty.
\end{equation}
The main difficulty in proving Theorem \ref{angular-derivatives} is replacing the radial inefficiency $\eta$ with the M\"obius distortion $\mu$.

\begin{proof}[Proof of Theorem \ref{angular-derivatives}]
Since $\mu \le \eta \le \mu+\alpha$, it is enough to show that
$$
\int_0^\zeta \mu(z) \, d\rho < \infty \qquad \implies \qquad
\int_0^\zeta \alpha(z) \, d\rho < \infty.
$$

{\em Step 1.} A compactness argument shows that for every $\varepsilon > 0$, there is a $\delta > 0$ so that if $\mu(z) < \delta$ then $\mu(w) < \varepsilon$ for all $w \in B_{\hyp}(z,1)$.

As a result, the M\"obius distortion $\mu(r\zeta) \to 0$ as $r \to 1$. Lemma \ref{geodesic-curvature-and-mobius-distortion} tells us that the geodesic curvature
$$
\kappa_{F(\gamma)}(F(r\zeta)) \to 0, \qquad r \to 1.
$$
Therefore, by Lemma \ref{quasigeodesic-property-of-hyperbolic-space}, $F(\gamma)$ lies within a bounded hyperbolic distance of the geodesic ray $[0, F(\zeta))$.
In particular, this shows that $F$ possesses a radial boundary value at $\zeta$ somewhere on the unit circle.

\medskip

{\em Step 2.}
By Lemma \ref{geodesic-curvature-and-mobius-distortion}, the total geodesic curvature of $F(\gamma)$ is finite:
$$
\int_{0}^\zeta \kappa_{F(\gamma)}(F(z)) \, d\rho < \infty.
$$
Since $F(\gamma)$ lies within a bounded hyperbolic distance of the geodesic ray $[0, F(\zeta))$, there is a sequence of $r_n$'s tending to 1 so that
$\alpha_{F(r_n \zeta)} < 2\pi/3$. (It is not possible for $F(\gamma)$ to approach the unit circle if the tangent vector always points away from the unit circle.)

Therefore, there exists an $0 < r_n < 1$ so that
$$
\alpha_{F(r_n \zeta)} < 2\pi/3 \qquad \text{and} \qquad
\int_{r_n\zeta}^\zeta \kappa_{F(\gamma)}(F(z)) \, d\rho < 0.1.
$$
Lemma \ref{inclination-bound} tells us
$$
\int_{r_n \zeta}^\zeta \alpha(z) \, d\rho = O(1),
$$
which is what we wanted to show.
\end{proof}

\section{Integrating over Leaves}

\label{sec:integrating-over-leaves}

In this appendix, we prove Theorem \ref{novel-formulas-for-measure}, which describes the measures $\xi$ and $\widehat{m}$ in terms of integration along leaves, similar to McMullen's original definitions of these measures given in Section \ref{sec:natural-measures}.

 \subsection{\texorpdfstring{The case of $\widehat{\mathbb{D}}$}{The case of the lamination}}

 We define a  $\sigma$-finite measure  $\xi_{\leaf}$ on the solenoid $\widehat{X}$ so that its restriction to any ``chart'' of the form $\widehat{\mathscr B}_{\Lgood} \subset \widehat{X}$ is given by
$$
\xi_{\leaf}(A) =  \int_{T_{\Lgood}(z)} \biggl \{  \int_{A_{\bf z}^*} \frac{dA(w)}{\im w} \biggr \} dc_z,
$$
while the set of points not contained in any chart have $\xi_{\leaf}$ measure zero. After lifting to $\widehat{\mathbb{D}}$, we obtain an $\widehat{F}$-invariant measure on $\widehat{\mathbb{D}}$, which we also denote by $\xi_{\leaf}$.  Our objective is to show that  $\xi = \xi_{\leaf}$\,:

\begin{theorem}
\label{xi-equality}
The measures $\xi$ and $\xi_{\leaf}$ on $\widehat{\mathbb{D}}$ are equal.
\end{theorem}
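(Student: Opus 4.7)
The plan is to verify the equality on a generating family of sets, namely intersections $\widehat{A}\cap\widehat{\mathscr B}_{\Lgood}$ where $\widehat{A}=\pi_0^{-1}(A)$ for $A\subset\mathbb D$ and $\widehat{\mathscr B}_{\Lgood}$ is a chart from the linear decomposition theorem (Theorem~\ref{linear-decomposition-theorem}). Both $\xi$ and $\xi_{\leaf}$ are $\widehat F$-invariant by construction: $\xi$ by the definition via the limit of pullbacks, and $\xi_{\leaf}$ because the construction of the rescaling maps $F_{{\bf z},0}$ and the transverse measures $c_z$ is compatible with the shift. Consequently, if equality holds on intersections $\widehat{A}\cap\widehat{\mathscr B}_{\Lgood}$, then the decomposition theorem (which covers $\widehat X$ by finitely many Lgood charts up to arbitrarily small $\xi$-measure) combined with $\widehat F$-invariance will give equality on all of $\widehat{\mathbb D}$.

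The main computation is to unravel the definition of $\xi$ on $\widehat{A}\cap\widehat{\mathscr B}_{\Lgood}$ into a branch sum. Writing
$$
\xi(\widehat{A}\cap\widehat{\mathscr B}_{\Lgood})=\lim_{n\to\infty}\frac{1}{2\pi}\sum_{\bf w} \int_{g_{0,n,{\bf w}}(A\cap\mathscr B)} \log\frac{1}{|z|}\,dA_{\hyp}(z),
$$
where ${\bf w}$ ranges over Lgood level-$n$ branches and $g_{0,n,{\bf w}}$ is the corresponding inverse branch of $F^{\circ n}$, I would then change variables along each branch via the rescaling $q=F_{{\bf w},0}(v)$, which is well defined by the identity $g_{0,n,{\bf w}}=F_{{\bf w},-n}\circ F_{{\bf w},0}^{-1}$. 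This turns the inner integral into an integral over $(A\cap\mathscr B)^*_{\bf w}\subset\mathbb H$. By Corollary~\ref{nearly-linear2}, $g_{0,n,{\bf w}}$ is hyperbolically close to a straight linear map at Lgood orbits, so Corollary~\ref{nearly-linear3} and Lemma~\ref{small-linear-distortion2} control the measure under this change of variables.

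The heart of the matter is the identification, in the limit, of the weighted branch sum with the leaf integral. One has to show that the push-forward under $F_{{\bf w},0}$ of the disk measure $\log(1/|z|)\,dA_{\hyp}(z)$ along Lgood orbits, together with the weights $c_z$ on branches, is precisely $\int\frac{dA(v)}{\im v}\,dc_z$. This comes down to the estimate $|F'_{{\bf w},0}(i)|\asymp (1-|z|^2)$ for Lgood orbits, together with the approximate invariance of the measure $dA(v)/\im v$ under linear (affine) maps of $\mathbb H$ established in Lemma~\ref{small-linear-distortion2}; the Nevanlinna weighting $c_z$ captures exactly the measure-theoretic count of branches.

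The main obstacles I expect are two: (i) justifying that the contribution of non-Lgood branches is negligible as $n\to\infty$, which follows from Theorem~\ref{nearly-linear} together with the finiteness of the total mass $\xi(\widehat X)$; and (ii) justifying the interchange of limit and integral and the convergence of the branch sum to the leaf-wise integral $\int_{T_{\Lgood}(z)}(\cdot)\,dc_z$. For (ii), one uses monotone convergence on the branch sums $\sum_{F^n(w')=q}\log(1/|w'|)E(w',q)^2$ combined with the uniform distortion control provided by Corollary~\ref{nearly-linear3} on each Lgood branch. Once both are in hand, the equality on the chart follows, and the strategy outlined in the first paragraph completes the proof.
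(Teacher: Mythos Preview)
Your outline follows the paper's strategy closely: compute both measures on the partial cylinders $\widehat{A}_{\Lgood}=\widehat{A}\cap\widehat{\mathscr B}_{\Lgood}$ via branch sums and $\varepsilon$-linearity (this is the content of the paper's Lemmas~\ref{xi-good} and~\ref{novel-lamination2}), and then assemble using the linear decomposition theorem. Two genuine gaps remain, however.

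First, your displayed identity for $\xi(\widehat{A}\cap\widehat{\mathscr B}_{\Lgood})$ as a limit of sums over $\Lgood$ branches is not the definition of $\xi$ (which is specified only on full cylinders $\widehat{A}$) and requires proof. The paper establishes exactly this formula in Lemma~\ref{xi-good} by writing $\widehat{A}_{\Lgood}=\widehat{A}\setminus\bigsqcup_j\widehat{B}_j$ and using monotonicity together with $\err(n,\widehat{G}_n)\le\err(n,\widehat{A})\to 0$; your obstacle~(i) gestures at this but does not supply the argument.

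Second, and more serious: the charts $\widehat{\mathscr B}_{i,\Lgood}$ cover $\widehat{X}$ only up to small \emph{$\xi$}-measure, and on each chart the change of variables yields only $\xi\sim_\varepsilon\xi_{\leaf}$, not exact equality. To conclude you must (a) know that the leftover set is also $\xi_{\leaf}$-negligible, and (b) send $\varepsilon\to 0$ on a \emph{fixed} generating family, since the charts themselves depend on $\varepsilon$. The paper handles (a) by a separate absolute-continuity lemma (Lemma~\ref{novel-lamination1}): if $A\subset\mathbb{D}$ has Lebesgue measure zero then each slice $A^*_{\bf z}=F_{{\bf z},0}^{-1}(A)\subset\mathbb{H}$ also has zero area, hence $\xi_{\leaf}(\widehat{A})=0$. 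It handles (b) by decomposing each \emph{fixed} full cylinder $\widehat{A}$ into countably many $\varepsilon$-good partial cylinders plus a $\xi$-null (hence $\xi_{\leaf}$-null) remainder (Lemma~\ref{novel-lamination3}), so that $\xi(\widehat{A})\sim_\varepsilon\xi_{\leaf}(\widehat{A})$ with $\widehat{A}$ independent of $\varepsilon$. Your sketch omits both of these steps; without them neither the control of the residual set nor the $\varepsilon\to 0$ passage goes through.
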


We begin by checking that the measure $\xi_{\leaf}$ is well-defined:

\begin{lemma}
\label{disk-well-defined}
If 
$\mathscr B' = B_{\hyp}(z', \gamma)$ is another ball of hyperbolic radius $\gamma$ which intersects $\mathscr B$ and
$A \subset \widehat{\mathscr B} \cap \widehat{\mathscr B'}$ then
$$
 \int_{T_{\Lgood}(z)} \biggl \{  \int_{A^*_{\bf z}} \frac{dA(w)}{\im w} \biggr \} dc_z =  \int_{T_{\Lgood}(z')}  \biggl \{  \int_{A^*_{\bf z}} \frac{dA(w)}{\im w} \biggr \} dc_{z'}.
 $$
\end{lemma}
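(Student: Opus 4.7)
The plan is to exhibit both integrals as computations of the same intrinsic quantity via a common refinement by leaves of $\widehat{\mathbb{D}}_{\lin}$ meeting $\widehat{\mathscr B} \cap \widehat{\mathscr B'}$, on which the change of base from $z$ to $z'$ introduces two scalar factors that cancel exactly.

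For each inverse orbit ${\bf w} \in A$ there is a unique ${\bf z} = {\bf z}({\bf w}) \in T_{\Lgood}(z)$ it follows, a unique ${\bf z'} = {\bf z'}({\bf w}) \in T_{\Lgood}(z')$ it follows, and both lie on the same leaf $\mathcal L({\bf w}) \subset \widehat{\mathbb{D}}_{\lin}$. The parameterizations $\Phi_{\bf z}, \Phi_{\bf z'}: (\mathbb{H}, \infty) \to \mathcal L$ given by $w \mapsto (F_{{\bf z}, -n}(w))_{n \ge 0}$ (and analogously for ${\bf z'}$) differ by an element of $\aut(\mathbb{H}, \infty)$: setting $p := \Phi_{\bf z}^{-1}({\bf z'}) \in \mathbb{H}$, one has $\Phi_{\bf z'} = \Phi_{\bf z} \circ \phi$ with $\phi(w) = (\im p)\, w + \re p$, and hence $A^*_{\bf z'} = \phi^{-1}(A^*_{\bf z})$. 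Since $dxdy/y$ scales by $\im p$ under $\phi$, a direct change of variables gives
\[
\int_{A^*_{\bf z}} \frac{dA(w)}{\im w} \, = \, (\im p) \cdot \int_{A^*_{\bf z'}} \frac{dA(w)}{\im w}.
\]

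Next I would extract the compensating factor from the counting measures. By $\varepsilon$-linearity, $F_{{\bf z}, -n}$ is close to $M_{z_{-n}}$ and $z'_{-n} = F_{{\bf z},-n}(p) \approx M_{z_{-n}}(p)$ for large $n$. A direct computation with the explicit formula for $M_{z_{-n}}$ yields
\[
1 - |M_{z_{-n}}(p)|^2 \sim (\im p)(1 - |z_{-n}|^2), \qquad \text{as } |z_{-n}| \to 1,
\]
and therefore $\log(1/|z'_{-n}|) \sim (\im p) \cdot \log(1/|z_{-n}|)$. Using $c_z(T(w, z)) = \log(1/|w|)$, this means that on nested cylinders $T(z_{-n}, z) \ni {\bf z}$ and $T(z'_{-n}, z') \ni {\bf z'}$ tracking the same leaf at each level, the ratio of $c_{z'}$-mass to $c_z$-mass tends to $\im p$.

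The two factors $\im p$ and $(\im p)^{-1}$ cancel, so both integrals agree. The hard part will be formalizing this pointwise cancellation globally. The cleanest route is to rewrite both sides as the limit as $n \to \infty$ of sums
\[
\sum_{z_{-n}} \log(1/|z_{-n}|) \cdot \int_{A_{n, z_{-n}}^*} \frac{dA(w)}{\im w}
\]
(resp. with $z'_{-n}$), where $z_{-n}$ (resp. $z'_{-n}$) ranges over the $\varepsilon$-good components of $F^{-n}(\mathscr B)$ (resp. $F^{-n}(\mathscr B')$) tracking the same leaf structure; the pointwise asymptotics above, applied component by component, then imply equality of the two sums in the limit.
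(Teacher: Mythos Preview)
Your proposal is correct and follows essentially the same approach as the paper. The paper also identifies the single scalar $\rho_{{\bf z},{\bf z'}} = \lim_{n\to\infty} (1-|z'_{-n}|)/(1-|z_{-n}|)$---which is exactly your $\im p$---observes that it is the Radon--Nikodym derivative $dc_{z'}/dc_z$, and notes that the leafwise integral $\int_{A^*_{\bf z}} dA(w)/\im w$ rescales by the reciprocal factor when the basepoint is moved from $z$ to $z'$; the paper does not spell out the limiting-sum formalization you sketch at the end, contenting itself with ``inspection shows'' for the Radon--Nikodym identification.
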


\begin{proof}
Given an inverse orbit ${\bf z} \in T(z)$, we can select an inverse orbit ${\bf z'} \in T(z')$ which follows ${\bf z}$ by using the same inverse branches. As the dynamics is asymptotically linear, the limit
$$
\rho_{\bf z, \bf z'} = \lim_{n \to \infty} \frac{1-|z'_{-n}|}{1-|z_{-n}|}
$$
exists. Inspection shows that $\rho_{\bf z, \bf z'} = dc_{z'}/dc_{z}$ is just the Radon-Nikodym derivative
of the transverse measures $c_z$ and $c'_z$.

Recall from Section \ref{sec:mobius-linear-laminations} that when we define the slice $A^*_{\bf z} \subset \mathbb{H}$, we rescale by a M\"obius transformation so that $z_{-n} \in \mathbb{D}$ maps to $i \in \mathbb{H}$, while when we define the slice $A^*_{\bf z'} \subset \mathbb{H}$, we rescale so that $z'_{-n} \in \mathbb{D}$ maps to $i \in \mathbb{H}$. Consequently, when changing from $z$ to $z'$, the integrand $ \int_{A^*_{\bf z}} \frac{dA(w)}{\im w} $ decreases by the factor $\rho_{\bf z, \bf z'}$, compensating for the Radon-Nikodym derivative. 
As a result, the expression for $\xi_{\leaf}(A)$ remains unchanged.
\end{proof}

\begin{lemma}
\label{novel-lamination1}
The measure $\xi_{\leaf}$ is absolutely continuous with respect to $\xi$.
\end{lemma}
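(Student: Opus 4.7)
The plan is to localize to a single chart and establish a bound $\xi_{\leaf}(A) \le C \xi(A)$ there via a rescaling computation that converts the defining limit of $\xi$ into the leafwise integral defining $\xi_{\leaf}$.

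\textbf{Reduction.} By construction $\xi_{\leaf}$ vanishes off $\bigcup_j \widehat{\mathscr B}_{j,\Lgood}$, and this countable union exhausts $\widehat X_{\lin}$ modulo a $\xi$-null set (apply Theorem~\ref{linear-decomposition-theorem}(b) successively with $\varepsilon \to 0$). It therefore suffices to prove $\xi_{\leaf}(A) \le C \xi(A)$ for Borel $A$ contained in a single chart $\widehat{\mathscr B}_{\Lgood}$ with $\mathscr B = B_{\hyp}(z_0,\gamma)$ and $z_0$ an $\varepsilon$-linear nice point.

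\textbf{Matching $\xi$ with $\xi_{\leaf}$ on the chart.} Partition $T_{\Lgood}(z_0)$ into level-$n$ cylinders $C_n = T(z_{-n}, z_0)$ indexed by the linear good order-$n$ preimages $z_{-n}$ of $z_0$. On each cylinder the $(-n)$-th coordinate map sends $A \cap C_n$ injectively into a subset $(A_{C_n})_{-n}$ of the component $\mathscr B_{z_{-n}} \subset F^{-n}(\mathscr B)$ containing $z_{-n}$, and the definition of $\xi$ from Section~\ref{sec:area} gives
\begin{equation*}
\xi(A) \;=\; \lim_{n\to\infty}\,\frac{1}{2\pi}\sum_{z_{-n}}\int_{(A_{C_n})_{-n}}\log\frac{1}{|w|}\,dA_{\hyp}(w).
\end{equation*}
For each linear good preimage $z_{-n}$, Corollary~\ref{nearly-linear2} shows the inverse branch of $F^{\circ n}$ to $\mathscr B_{z_{-n}}$ is within hyperbolic $O(\varepsilon)$ of the straight M\"obius map taking $z_0$ to $z_{-n}$, so that $(A_{C_n})_{-n}$ is close to $M_{z_{-n}}(A^*_{\mathbf z})$ for any representative $\mathbf z \in C_n$. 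The identity $1-|M_{z_{-n}}(\zeta)|^2 = 2|M'_{z_{-n}}(\zeta)|\im\zeta$ combined with $|M'_{z_{-n}}(i)| = (1-|z_{-n}|^2)/2$ gives, uniformly on bounded hyperbolic neighborhoods of $i \in \mathbb{H}$,
\begin{equation*}
\log\frac{1}{|M_{z_{-n}}(\zeta)|}\,dA_{\hyp}(\zeta) \;=\; \bigl(1+O(\gamma)\bigr)(1-|z_{-n}|)\cdot\frac{dA(\zeta)}{\im\zeta}.
\end{equation*}
Combining this with Corollary~\ref{nearly-linear3} (which absorbs the deviation from exact linearity) yields
\begin{equation*}
\int_{(A_{C_n})_{-n}}\log\frac{1}{|w|}\,dA_{\hyp}(w)\;\sim_\varepsilon\;(1-|z_{-n}|)\int_{A^*_{\mathbf z}}\frac{dA(\zeta)}{\im\zeta}.
\end{equation*}
Recognizing $\log\frac{1}{|z_{-n}|} \sim 1-|z_{-n}| = c_{z_0}(C_n)$, summing over $z_{-n}$ and sending $n \to \infty$ produces $\xi(A) \sim_\varepsilon \frac{1}{2\pi}\xi_{\leaf}(A)$, from which $\xi_{\leaf}(A) \le \frac{2\pi}{1-C\varepsilon}\xi(A)$ follows.

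\textbf{Main obstacle.} The technical crux is verifying that the slice functional $\mathbf z \mapsto \int_{A^*_{\mathbf z}}\frac{dA(\zeta)}{\im\zeta}$ is approximately constant on level-$n$ cylinders (with error $\to 0$ as $n\to\infty$), so that the cylinder sum converges to the Lebesgue integral against $dc_{z_0}$. This follows because any two orbits in a common level-$n$ cylinder share the inverse branches through level $n$, so the maps $F^{\circ n}\circ M_{z_{-n}}$ agree across the cylinder and approximate the common limit $F_{\mathbf z, 0}$ uniformly on compacta of $\mathbb{H}$ within $O(\varepsilon_n)$, $\varepsilon_n \to 0$; hence the rescaled slices $A^*_{\mathbf z}$ converge in Hausdorff distance as the cylinders refine, and the approximation to the leafwise integral is legitimate.
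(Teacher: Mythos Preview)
Your approach overshoots the target: you are effectively trying to establish the two-sided comparability $\xi(A)\sim_\varepsilon \xi_{\leaf}(A)$ on each chart, which is precisely the combined content of Lemmas~\ref{xi-good} and~\ref{novel-lamination2} and the core of Theorem~\ref{xi-equality}. The genuine gap is the displayed identity you invoke for $\xi(A)$. The defining limit in Section~\ref{sec:area} is stated only for full cylinders $\widehat{B}=\pi_0^{-1}(B)$ with $B\subset\mathbb{D}$; extending it to an arbitrary Borel subset $A$ of a chart $\widehat{\mathscr B}_{\Lgood}$ is not automatic. Even the special case $A=\widehat{A}_{\Lgood}$ (a partial cylinder) is exactly Lemma~\ref{xi-good}, which the paper proves \emph{after} Lemma~\ref{novel-lamination1} and with some care. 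Your ``injectivity'' claim for $\pi_{-n}$ on $A\cap C_n$ is also false: two distinct orbits $\mathbf z,\mathbf z'\in T_{\Lgood}(z_0)$ that share their first $n{+}1$ coordinates give rise to distinct slices in $\widehat{\mathscr B}_{\Lgood}$ whose level-$(-n)$ projections coincide.

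The paper's route is far shorter and avoids all of this. Since full cylinders $\widehat{A}=\pi_0^{-1}(A)$ generate the Borel $\sigma$-algebra on $\widehat{\mathbb{D}}$, it suffices to check $\xi_{\leaf}(\widehat{A})=0$ whenever $\xi(\widehat{A})=0$. From the definition of $\xi$, the latter holds exactly when $A\subset\mathbb{D}$ has planar Lebesgue measure zero. Because each rescaling limit $F_{\mathbf z,0}:\mathbb{H}\to\mathbb{D}$ is holomorphic (hence sends null sets to null sets), every slice $(\widehat{A}\cap\widehat{\mathscr B}_{\Lgood})^*_{\mathbf z}\subset\mathbb{H}$ has Lebesgue measure zero, so $\xi_{\leaf}(\widehat{A}\cap\widehat{\mathscr B}_{\Lgood})=0$ for each chart and hence $\xi_{\leaf}(\widehat{A})=0$. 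No quantitative rescaling estimates are needed.
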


\begin{proof}
To prove the lemma, it is enough to show that $\xi_{\leaf}(\widehat{A}) = 0$ for any Borel set $A \subset \mathbb{D}$ with $\xi(\widehat{A}) = 0$, as sets of this form generate the $\sigma$-algebra of Borel subsets of $\widehat{\mathbb{D}}$. From the definition of the measure $\xi$ given in Section \ref{sec:area}, it is easy to see that one has ``$\xi(\widehat{A}) = 0$'' if and only if ``$A$ has 2-dimensional Lebesgue measure zero.''
As a result, we need to show that $\xi_{\leaf}(\widehat{A}) = 0$ for any measurable set $A \subset \mathbb{D}$ with 2-dimensional Lebesgue measure zero. 

For this purpose, consider a chart $\widehat{\mathscr B}_{\Lgood}$ where $\mathscr B = B_{\hyp}(z, \gamma)$. As the slice $(\widehat{A} \cap  \widehat{\mathscr B}_{\Lgood})^*_{\bf z} \subset \mathbb{H}$ along any inverse orbit ${\bf z} \in T(z)$ also has zero 2-dimensional Lebesgue measure,
$\xi_{\leaf}(\widehat{A} \cap  \widehat{\mathscr B}_{\Lgood}) = 0.$
 Since the chart $\widehat{\mathscr B}_{\Lgood}$ was arbitrary, $\xi_{\leaf}(\widehat{A}) = 0$ as desired.
 \end{proof}

For a measurable set $A$ contained in a ball $\mathscr B = B_{\hyp}(z, \gamma)$, we write 
$$
\widehat{A}_{\Lgood} = \widehat{A} \cap \widehat{\mathscr B}_{\Lgood}.
$$ 
Perhaps, the main difficulty in showing that $\xi = \xi_{\leaf}$ is that the measure $\xi$ was defined in terms of
the ``full'' cylinders $\widehat{A}$ while the measure $\xi_{\leaf}$ is given in terms of the ``partial'' cylinders $\widehat{A}_{\Lgood}$.

In the following two lemmas, we evaluate $\xi(\widehat{A}_{\Lgood})$ and $\xi_{\leaf}(\widehat{A}_{\Lgood})$ up to multiplicative error $\varepsilon$. As before, we use $A \sim_\varepsilon B$ to denote that
$$
1-C\varepsilon \le \, A/B  \le \, 1+ C\varepsilon,
$$
for some constant $C > 0$ depending only on the inner function $F$.

\begin{lemma}
\label{xi-good}

We have
\begin{align}
\label{eq:xi-good1}
\xi(\widehat{A}_{\Lgood}) & =  \lim_{n \to \infty} \sum_{\substack{F^{\circ n}(w) = z \\ w \Lgood}} \int_{A_w}  \log \frac{1}{|z|} \, dA_{\hyp} \\
\label{eq:xi-good2}
& \sim_\varepsilon \, \overline{c}_z(T_{\Lgood}(z)) \int_A \log \frac{1}{|z|} \, dA_{\hyp}.
\end{align}
\end{lemma}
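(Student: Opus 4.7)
My plan is to reduce both claims to calculations on $\varepsilon$-(linear good) branches, using Koebe's distortion theorem and the transverse measure $c_z$. The equality (\ref{eq:xi-good1}) identifies $\xi(\widehat{A}_{\Lgood})$ with a limit of finite sums; the $\varepsilon$-approximation (\ref{eq:xi-good2}) is then obtained by evaluating these sums directly.

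For the setup of (\ref{eq:xi-good1}), let $G_n$ denote the set of $\varepsilon$-(linear good) $n$-th pre-images of $z$, set $B_n := \bigsqcup_{w \in G_n} A_w$, and define
\begin{equation*}
\widehat{A}_{\Lgood}^{(n)} := \bigl\{\mathbf{w}' \in \widehat{A} : w'_{-n} \in B_n\bigr\} = \pi_{-n}^{-1}(B_n).
\end{equation*}
Since cumulative linear distortion is monotone under iteration (via the chain rule and the Schwarz lemma, which force $\varepsilon$-goodness at depth $n+1$ to imply $\varepsilon$-goodness at depth $n$), the $\widehat{A}_{\Lgood}^{(n)}$ form a decreasing sequence with intersection $\widehat{A}_{\Lgood}$. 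Continuity of measure from above (using $\xi(\widehat X) < \infty$ from Theorem \ref{total-mass}) yields $\xi(\widehat{A}_{\Lgood}^{(n)}) \searrow \xi(\widehat{A}_{\Lgood})$, while the $\widehat F$-invariance identity $\pi_{-n} = \pi_0 \circ \widehat{F}^{-n}$ converts this into $\xi(\widehat{A}_{\Lgood}^{(n)}) = \xi(\widehat{B_n})$. Writing $\nu_n := \int_{B_n}\log\frac{1}{|q|}\,dA_{\hyp}(q)$ for the sum appearing in (\ref{eq:xi-good1}), it remains to argue that $\xi(\widehat{B_n}) - \nu_n \to 0$, which then closes the identification in (\ref{eq:xi-good1}).

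For (\ref{eq:xi-good2}), the $\varepsilon$-linear goodness of $w \in G_n$ controls the cumulative M\"obius distortion $\mu \le \delta$, so Lemma \ref{small-mobius-distortion} says that the inverse branch $g_w: \mathscr B \to \mathscr B_w$ is uniformly close in the hyperbolic metric to a M\"obius transformation. Changing variables in $\int_{A_w}\log\frac{1}{|q|}\,dA_{\hyp}$, using the M\"obius-invariance of $dA_{\hyp}$, and applying Koebe's estimate $\log\frac{1}{|g_w(\zeta)|} \sim_\varepsilon \log\frac{1}{|w|}$ (valid for $|w|$ near $1$) yields
\begin{equation*}
\int_{A_w}\log\frac{1}{|q|}\,dA_{\hyp}(q) \sim_\varepsilon \log\frac{1}{|w|}\cdot\int_A dA_{\hyp}.
\end{equation*}
Summing over $w \in G_n$ and using continuity of the transverse measure $c_z$ on the descending cylinders $\bigcup_{w \in G_n}T(w,z) \searrow T_{\Lgood}(z)$:
\begin{equation*}
\lim_n \nu_n \sim_\varepsilon c_z(T_{\Lgood}(z))\cdot\int_A dA_{\hyp} = \overline{c}_z(T_{\Lgood}(z))\log\frac{1}{|z|}\int_A dA_{\hyp}.
\end{equation*}
Absorbing $\log\frac{1}{|z|}$ back into the integral (an $O(\varepsilon)$ adjustment on the small ball $\mathscr B$, where $\log\frac{1}{|q|}$ is essentially constant) recovers (\ref{eq:xi-good2}).

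The hard part is the limit identity $\xi(\widehat{B_n}) - \nu_n \to 0$: since $\xi(\widehat{B_n}) = \lim_m \int_{F^{-m}(B_n)}\log\frac{1}{|q|}\,dA_{\hyp}$ typically exceeds $\nu_n$ (the $m = 0$ term) for each fixed $n$, one must show that the further-iteration contributions wash out as $n$ grows. The key inputs are (i) the summability of the ``eventually-bad'' orbits, $\sum_m \xi(E_m) \le \xi(\widehat{A}) < \infty$, where $E_m \subset \widehat{A}$ consists of orbits failing $\varepsilon$-goodness first at depth $m$, combined with (ii) the Koebe cancellations from the second paragraph applied to further iterations along good branches, which show that the ``good-continued'' portion of $\int_{F^{-m}(B_n)}\log\frac{1}{|q|}\,dA_{\hyp}$ is $\sim_\varepsilon \nu_{n+m}$. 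The two errors combine to give $\xi(\widehat{B_n}) - \nu_n \to 0$.
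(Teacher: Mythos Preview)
Your overall structure matches the paper's: you correctly identify the decreasing chain $\widehat{A}_{\Lgood}^{(n)} \searrow \widehat{A}_{\Lgood}$, use continuity of measure and $\widehat{F}$-invariance to reduce (\ref{eq:xi-good1}) to $\xi(\widehat{B_n}) - \nu_n \to 0$, and your argument for (\ref{eq:xi-good2}) via $\varepsilon$-linearity is essentially the paper's Step~3.

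The gap is in your ``hard part.'' Your inputs (i) and (ii) only deliver a $\sim_\varepsilon$ bound, not the exact limit. Concretely: splitting $\int_{F^{-m}(B_n)}$ into the good-continued piece $\nu_{n+m}$ and the newly-bad piece, the latter is controlled by the tail $\sum_{k>n}\xi(E_k)\to 0$, but you still need $\nu_{n+m}-\nu_n\to 0$ as $n\to\infty$. Your Koebe estimate only gives $\nu_{n+m}\le (1+C\varepsilon)\nu_n$, which leaves a residual error of size $\asymp \varepsilon\cdot\nu_n$ that does \emph{not} vanish. So your sketch proves (\ref{eq:xi-good2}) but not the exact equality (\ref{eq:xi-good1}).

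The paper's fix is a one-line observation you missed. Write
\[
\err(C)\ :=\ \xi(\widehat{C})-\int_{C}\log\tfrac{1}{|q|}\,dA_{\hyp}.
\]
By Lemma~\ref{monotonicity-lemma}, $\err(C)\ge 0$, and it is additive over disjoint unions. Since your $B_n \subset F^{-n}(A)$, we get
\[
\xi(\widehat{B_n})-\nu_n \ =\ \err(B_n)\ \le\ \err\bigl(F^{-n}(A)\bigr)\ =\ \xi(\widehat{A})-\int_{F^{-n}(A)}\log\tfrac{1}{|q|}\,dA_{\hyp},
\]
and the right-hand side tends to $0$ by the very definition of $\xi(\widehat{A})$ as an increasing limit. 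No Koebe control or bad-orbit bookkeeping is needed for (\ref{eq:xi-good1}).
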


\begin{proof}
{\em Step 1.} We may write $F^{-j}(A) = G_j \sqcup B_j$, where $G_j$ is the union of the $\varepsilon$-(linear good) pre-images of $A$ of generation $j$ and $B_j$ be the union of the ``bad'' pre-images. Then,
 $$
 \widehat{A}_{\Lgood} = \widehat{A} \setminus \bigsqcup_{j=1}^\infty \widehat{B}_{j},
 $$
where we are slightly abusing notation by viewing
 $\widehat{B}_{j}$ as a subset of $\widehat{A}$. (We should really be writing $\widehat{F}^{\circ j}(\widehat{B}_j)$ in place of $\widehat{B}_j$.) Consequently,
 $$
 \widehat{\xi}( \widehat{A}_{\Lgood}) = \xi( \widehat{A} ) - \sum_{j=1}^\infty   \xi(\widehat{B}_{j}).
 $$

{\em Step 2.}
From the definition of the measure $\xi$ on the cylindrical sets $\widehat{A}$ and $\widehat{B}_j$ and Lemma \ref{monotonicity-lemma},
it follows that for any $n \in \mathbb{N}$, we have
\begin{equation}
\label{eq:xi-good3}
 \xi( \widehat{A} ) - \sum_{j=1}^n  \xi(\widehat{B}_{j}) \ge \sum_{\substack{F^{\circ n}(w) = z \\ w \Lgood}} \int_{A_w}  \log \frac{1}{|z|} \, dA_{\hyp},
\end{equation}
where $A_w = F^{-n}(A) \cap \mathscr B_w$ ranges over the $\varepsilon$-(linear good) pre-images of $A$ of generation $n$. 
In Section 11, we saw that the error
$$
\err(n, \widehat{A}) \, := \, \xi(\widehat{A}) -  \int_{F^{-n}(A)} \log \frac{1}{|z|} \, dA_{\hyp}.
$$
decreases to 0 as $n \to \infty$. As $\err(n, \widehat{G}_n) \le \err(n, \widehat{A})$,
 \begin{equation}
\label{eq:xi-good4}
 \xi( \widehat{A} ) - \sum_{j=1}^{n}  \xi(\widehat{B}_{j}) 
  \le  \err(n, \widehat{A}) +  \sum_{\substack{F^{\circ n}(w) = z \\ w \Lgood}} \int_{A_w}  \log \frac{1}{|z|} \, dA_{\hyp}.
\end{equation}
Taking $n \to \infty$ in (\ref{eq:xi-good3}) and (\ref{eq:xi-good4}), we obtain (\ref{eq:xi-good1}).

\medskip

{\em Step 3.}
 For $j \ge 1$, let $T^{(j)}_{\Lgood}(z) \subset T(z)$ denote the set of inverse orbits ${\bf w} \in T(z)$ which are $\varepsilon$-(linear good) for the first $j$ steps, i.e.~$\widehat{\delta}(w_{-j}, z) \le \varepsilon$.
Since
$$
T_{\Lgood}(z) = \bigcap_{n=1}^\infty T^{(n)}_{\Lgood}(z)
$$
is a decreasing intersection,
$\overline{c}_z(T^{(n)}_{\Lgood}(z))$
decreases
to $\overline{c}_z(T_{\Lgood}(z))$. With this in mind, (\ref{eq:xi-good2}) follows from (\ref{eq:xi-good1}) and $\varepsilon$-linearity.
\end{proof}

\begin{lemma}
\label{novel-lamination2}
For a measurable set $A$ contained in $\mathscr B = B_{\hyp}(z, \gamma)$, 
$$
 \xi_{\leaf}(\widehat{A}_{\Lgood}) \, \sim_\varepsilon \, \overline{c}_z(T_{\Lgood}(z)) \int_A \log \frac{1}{|z|} \, dA_{\hyp}.
$$
\end{lemma}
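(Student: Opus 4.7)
The plan is to unwind the definition of $\xi_{\leaf}$ one leaf at a time. For each ${\bf z} \in T_{\Lgood}(z)$ I will estimate the rescaled slice integral $\int_{A^*_{\bf z}} dA(w)/\im w$ in terms of $\int_A \log(1/|z'|)\, dA_{\hyp}(z')$. The guiding idea is that on an $\varepsilon$-(linear good) orbit the rescaling limit $F_{{\bf z},0}:\mathbb{H}\to\mathbb{D}$ is $\varepsilon$-close to the straight M\"obius isometry $M_z$, so the whole calculation reduces to a local computation in the upper half-plane.

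First I would introduce the auxiliary self-map $\Phi_{\bf z} := M_z^{-1} \circ F_{{\bf z},0}: \mathbb{H} \to \mathbb{H}$, which fixes $i$. The hypothesis $\widehat{\delta}({\bf z}) \le \varepsilon$, together with Lemma \ref{cumulative-distortion}, implies that $\Phi_{\bf z}$ has linear distortion $O(\varepsilon)$ at $i$, so Lemma \ref{small-linear-distortion2} gives $|\Phi_{\bf z}'(w)|^2 \sim_\varepsilon \im \Phi_{\bf z}(w)/\im w$ on a fixed hyperbolic ball about $i$ containing both $A^*_{\bf z}$ and $\tilde A := M_z^{-1}(A) \subset B^{\mathbb{H}}_{\hyp}(i,\gamma)$. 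Since $A^*_{\bf z} = \Phi_{\bf z}^{-1}(\tilde A)$, changing variables via $u = \Phi_{\bf z}(w)$ yields the slice identity
$$
\int_{A^*_{\bf z}} \frac{dA(w)}{\im w} \;=\; \int_{\tilde A} \frac{dA(u)}{|\Phi_{\bf z}'(w)|^2 \, \im w} \;\sim_\varepsilon\; \int_{\tilde A} \frac{dA(u)}{\im u}.
$$

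Second I would convert $\int_{\tilde A} dA(u)/\im u$ to an integral over $A$ via the hyperbolic isometry $M_z$. Using the explicit formula $M_z(u) = (z/|z|) \cdot (c-u)/(c+u)$ with $c = i(1+|z|)/(1-|z|)$, a short computation gives
$$
\frac{1-|M_z(u)|^2}{1-|z|^2} \;=\; \im u \cdot \bigl(1 + O(1-|z|)\bigr)
$$
uniformly on $B^{\mathbb{H}}_{\hyp}(i, 2\gamma)$. Combined with $\log(1/|z'|) = (1-|z'|^2)/2 + O((1-|z'|^2)^2)$ near the unit circle and the $\varepsilon$-(linear nice) bound $1-|z| < \varepsilon/e^\gamma$, this produces the key approximate identity
$$
\log\frac{1}{|M_z(u)|} \;\sim_\varepsilon\; \log\frac{1}{|z|} \cdot \im u \qquad \text{on } \tilde A.
$$
Pulling the measure $\log(1/|z'|)\,dA_{\hyp}(z')$ back from $\mathbb{D}$ to $\mathbb{H}$ along $M_z$ gives $\log(1/|M_z(u)|)\cdot dA(u)/(\im u)^2$; substituting the approximate identity then yields
$$
\int_{\tilde A} \frac{dA(u)}{\im u} \;\sim_\varepsilon\; \frac{1}{\log(1/|z|)} \int_A \log\frac{1}{|z'|} \, dA_{\hyp}(z').
$$

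Combining the two steps and integrating over $T_{\Lgood}(z)$ against $dc_z$, noting that the right-hand side above is independent of ${\bf z}$, produces
$$
\xi_{\leaf}(\widehat{A}_{\Lgood}) \;\sim_\varepsilon\; \frac{c_z(T_{\Lgood}(z))}{\log(1/|z|)} \int_A \log\frac{1}{|z'|} \, dA_{\hyp}(z') \;=\; \overline{c}_z(T_{\Lgood}(z)) \int_A \log\frac{1}{|z'|} \, dA_{\hyp}(z'),
$$
using $\overline{c}_z = c_z/\log(1/|z|)$. The main technical obstacle is the second step: the ``M\"obius-type'' measure $\log(1/|z'|)\, dA_{\hyp}$ on $\mathbb{D}$ and the ``suspension-type'' measure $dA/\im w$ on $\mathbb{H}$ are not intrinsically proportional, only asymptotically so in the half-plane limit at the boundary. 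The $\varepsilon$-(linear nice) condition $1-|z| < \varepsilon/e^\gamma$ is precisely what forces this comparison error to fit inside the $\sim_\varepsilon$ framework uniformly over $u \in \tilde A$.
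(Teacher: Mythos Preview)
Your argument is correct and is precisely what the paper's one-line proof (``follows from the definition of the measure $\xi_{\leaf}$ and $\varepsilon$-linearity'') leaves implicit. The paper simply asserts the result from $\varepsilon$-linearity; you have unpacked this by (i) using Lemma \ref{small-linear-distortion2} on the rescaled map $\Phi_{\bf z}=M_z^{-1}\circ F_{{\bf z},0}$ to reduce each slice integral to $\int_{\tilde A} dA/\im u$, and (ii) using the condition $1-|z|<\varepsilon/e^\gamma$ to compare $dA/\im u$ on $\mathbb{H}$ with $\log(1/|z'|)\,dA_{\hyp}$ on $\mathbb{D}$---this is exactly the content of ``$\varepsilon$-linearity'' in this context, so the approaches coincide.
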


\begin{proof}
The lemma follows from the definition of the measure $\xi_{\leaf}$ and $\varepsilon$-linearity.
\end{proof}

With help of Theorem \ref{linear-decomposition-theorem}, one may express a cylinder set as a countable union of partial cylinders:

\begin{lemma}
\label{novel-lamination3}
For any measurable set $A$ in the unit disk and $\varepsilon > 0$, there exists countably many disjoint partial cylinders
$\widehat{A}_{k,\, \Lgood}$ which cover $\widehat{A}$ up to a set of $\xi$ measure zero:
$$
\widehat{A} = \bigsqcup_k \widehat{A}_{k,\, \Lgood} \sqcup N.
$$
\end{lemma}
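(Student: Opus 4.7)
The strategy combines Theorem \ref{linear-decomposition-theorem}(b) with the $\widehat F$-invariance of $\xi$ and a standard disjointification. First apply Theorem \ref{linear-decomposition-theorem}(b) with $\varepsilon_n = 1/n$ for $n = 1, 2, \dots$ and pool the resulting finite families of $(1/n)$-(linear nice) points into a single sequence $\{z_k\}_{k \ge 1}$, enumerated so that the associated tolerances $\varepsilon_k$ are non-increasing. With balls $\mathscr B_k = B_{\hyp}(z_k, \gamma)$, the partial cylinders $\widehat{\mathscr B}_{k,\,\Lgood}$ cover $\widehat{X}$ up to $\xi$-measure zero.

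Next, lift back to $\widehat{\mathbb D}$ via all powers of $\widehat F$: the doubly-indexed family $\{\widehat F^m(\widehat{\mathscr B}_{k,\,\Lgood})\}_{k \ge 1,\, m \in \mathbb Z}$ has union of full $\xi$-measure in $\widehat{\mathbb D}$, by the $\widehat F$-invariance of $\xi$. For each fixed $k$, these shifts are pairwise disjoint in $m$, since the linear-nice hypothesis $1-|z_k| < \varepsilon_k/e^\gamma$ forces $d_{\mathbb D}(0, z_k) \ge 1+\gamma$, and Lemma \ref{minimal-translation} then gives $F^n(\mathscr B_k) \cap \mathscr B_k = \emptyset$ for every $n \ge 1$.

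Intersecting with $\widehat A$ and setting $E_{k,m} := F^{-m}(A) \cap \mathscr B_k$, one checks that
\[
\widehat A \cap \widehat F^m(\widehat{\mathscr B}_{k,\,\Lgood}) \;=\; \widehat F^m\bigl(\widehat{E_{k,m}}_{\,\Lgood}\bigr),
\]
so each nonempty piece is the $\widehat F^m$-image of a partial cylinder over the measurable base $E_{k,m} \subset \mathscr B_k$. Enumerating all pairs $(k, m)$ into a single sequence and applying the standard disjointification---replacing each set by its difference with the union of previously listed sets---yields the required decomposition $\widehat A = \bigsqcup_k \widehat{A_k}_{\,\Lgood} \sqcup N$ with $\xi(N) = 0$, provided the differences remain partial cylinders.

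This last point is the main obstacle. Within a single chart $k$, the disjointification is immediate: taking differences corresponds to cutting measurable subsets from the base. Across different charts the $\varepsilon_k$-linear good conditions differ and cannot a priori be reduced to base-set restrictions, but the non-increasing arrangement of the tolerances $\varepsilon_k$ rescues the argument: the stronger goodness condition of a later chart automatically implies the weaker one of any earlier chart, so membership in an earlier partial cylinder is determined entirely by the base point, and the disjointification reduces to a base-level restriction that preserves the partial cylinder structure throughout.
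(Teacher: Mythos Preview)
Your overall strategy—invoke the linear decomposition theorem, spread the charts over $\widehat{\mathbb D}$ via the shifts $\widehat F^m$, intersect with $\widehat A$, and disjointify—is exactly what the paper's hint points to. The observation that the shifts $\widehat F^m(\widehat{\mathscr B}_{k,\Lgood})$ for fixed $k$ are already pairwise disjoint is also correct.

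The gap is in the last paragraph. Your claim that ``the stronger goodness condition of a later chart automatically implies the weaker one of any earlier chart, so membership in an earlier partial cylinder is determined entirely by the base point'' does not hold once the shifts $m_i \ne m_j$ differ. The $\varepsilon_k$-goodness of the piece $(k,m)$ controls the \emph{tail} cumulative distortion at depth $m$, i.e.\ (up to the stability constant of Lemma~\ref{delta-derivative}) the quantity $\sum_{n>m}\delta(w_{-n})$. If $(k_j,m_j)$ is listed after $(k_i,m_i)$ with $m_i<m_j$, then for ${\bf w}$ in the $j$-th piece you only know that $\sum_{n>m_j}\delta(w_{-n})$ is small; the additional terms $\delta(w_{-m_i-1}),\dots,\delta(w_{-m_j})$ can be arbitrarily large, so $\sum_{n>m_i}\delta(w_{-n})$ may or may not fall below $\varepsilon_{k_i}$. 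Whether it does depends on the actual value of the tail from $m_j$ (anywhere in $[0,\varepsilon_{k_j})$), not just on $w_{-m_j}$—so membership in the earlier piece is \emph{not} a base-point condition, and the difference is not of the form $\widehat{E'}_{\Lgood}$. Even when $m_i=m_j$, passing between the two center-orbits ${\bf z}^{(i)}\in T(z_{k_i})$ and ${\bf z}^{(j)}\in T(z_{k_j})$ via Lemma~\ref{delta-derivative} costs a fixed multiplicative constant $e^{2K\gamma}$, which the mere inequality $\varepsilon_{k_j}\le\varepsilon_{k_i}$ does not absorb.

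Two related remarks. First, varying $\varepsilon_n=1/n$ is not what the lemma asks for (a single $\varepsilon$ is fixed), and it does not actually buy you the implication you need. Second, a cleaner route is to work with a \emph{single} $\varepsilon$ and stratify $\widehat A$ by the first depth at which the inverse orbit becomes $\varepsilon$-good; one must then check carefully that this ``first good depth'' is constant along the slices of the resulting chart, or else weaken the notion of partial cylinder to allow a measurable restriction of the transversal $T_{\Lgood}(z)$—for such slice-saturated subsets the estimates of Lemmas~\ref{xi-good} and~\ref{novel-lamination2} still go through and suffice for the application in Theorem~\ref{xi-equality}.
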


We are now ready to show that the measures $\xi$ and $\xi_{\leaf}$ are equal:

\begin{proof}[Proof of Theorem \ref{xi-equality}]
To show that the measures $\xi$ and $\xi_{\leaf}$ are equal, it is enough to show that they agree on sets of the form
$\{ \widehat{A}: A \subset \mathbb{D} \text{ Borel} \}$ as these generate the Borel $\sigma$-algebra of Borel subsets of $\widehat{\mathbb{D}}$.
For a cylinder $\widehat{A} \subset \widehat{\mathbb{D}}$, examine the decomposition given by Lemma \ref{novel-lamination3}.
As $\xi_{\leaf}$ is absolutely continuous with respect to $\xi$, we also have $\xi_{\leaf}(N) = 0$. Lemmas \ref{xi-good} and \ref{novel-lamination2} imply that
$$\xi(\widehat{A}_{k,\, \Lgood}) \sim_\varepsilon \xi_{\leaf}(\widehat{A}_{k,\, \Lgood})$$ for any $k$. Summing over $k$ shows that
$\xi(\widehat{A}) \sim_\varepsilon \xi_{\leaf}(\widehat{A})$.
Since $\varepsilon > 0$ was arbitrary,  $\xi(\widehat{A}) = \xi_{\leaf}(\widehat{A})$ as desired.
\end{proof}
  
 \subsection{\texorpdfstring{The case of $\widehat{S^1}$}{The case of the solenoid}}
 
 We define a measure $\widehat{m}_{\leaf}$ on the solenoid $\widehat{S^1}$ so that its restriction to any ``chart'' $\zeta(\widehat{\mathscr B}_{\Lgood}) \subset \widehat{S^1}$ is given by
$$
\widehat{m}_{\leaf}(E) = \int_{T_{\Lgood}(z)} \ell(E^*_{\bf z}) dc_z,
$$
while the set of points in the solenoid which are not contained in any chart have $\widehat{m}_{\leaf}$ measure zero. 
As in the case of $\xi_{\leaf}$ considered previously, $\widehat{m}_{\leaf}$ is a $\sigma$-finite $\widehat{F}$-invariant measure. Our objective is to show:

\begin{theorem}
\label{m-equality}
The measures $\widehat{m}_{\leaf}$ and $\widehat{m}$ on $\widehat{S^1}$ are equal.
\end{theorem}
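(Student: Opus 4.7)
The plan is to mirror the proof of Theorem~\ref{xi-equality} step by step, now on the solenoid rather than on the lamination. First I would verify that $\widehat{m}_{\leaf}$ is well-defined on overlaps of charts, the analogue of Lemma~\ref{disk-well-defined}: when the basepoint is switched from $z$ to $z'$ along the same leaf, the Radon--Nikodym derivative $dc_{z'}/dc_z = \rho_{\mathbf{z},\mathbf{z}'}$, while the one-dimensional projection length scales inversely as $\ell(\Pi(E^*_{\mathbf{z}'})) = \rho_{\mathbf{z},\mathbf{z}'}^{-1}\,\ell(\Pi(E^*_{\mathbf{z}}))$, because the M\"obius rescaling $M_z$ expands arclength along $\partial\mathbb{D}$ by a factor proportional to $(1-|z|)^{-1}$. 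The two factors cancel, so the defining integral is independent of the choice of basepoint; the absolute continuity $\widehat{m}_{\leaf} \ll \widehat{m}$ then follows by essentially the same argument as Lemma~\ref{novel-lamination1}.

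The heart of the proof is the evaluation of both measures on a partial cylinder $\zeta(\widehat{A}_{\Lgood})$ for a measurable $A \subset \mathscr{B} = B_{\hyp}(z,\gamma)$ with $z$ close to the unit circle. The definition combined with $\varepsilon$-linearity gives immediately
\begin{equation*}
\widehat{m}_{\leaf}(\zeta(\widehat{A}_{\Lgood})) \,\sim_\varepsilon\, c_z(T_{\Lgood}(z)) \cdot \ell(\Pi(A^*_{\mathbf{z}_0})),
\end{equation*}
for any fixed $\mathbf{z}_0 \in T_{\Lgood}(z)$. To evaluate $\widehat{m}(\zeta(\widehat{A}_{\Lgood}))$, I would approximate this set from above by a decreasing sequence of cylinders $C_n := \bigsqcup_w \pi_{-n}^{-1}(I_w)$, where $w$ runs over $n$-th generation $\varepsilon$-(linear good) pre-images of $z$ and $I_w \subset S^1$ is the arc near $w/|w|$ obtained from the inverse branch of $F^{\circ n}$ applied to the boundary data of $A$; the $I_w$ are pairwise disjoint since they are connected components of a multi-valued inverse. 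Linearity yields $|I_w| \sim_\varepsilon (1-|w|)\cdot\ell(\Pi(A^*_{\mathbf{z}_0}))$, and the characterizing property $(\pi_{-n})_*\widehat{m} = m$ gives
\begin{equation*}
\widehat{m}(C_n) \,=\, \sum_w m(I_w) \,\sim_\varepsilon\, \ell(\Pi(A^*_{\mathbf{z}_0})) \cdot \sum_{w\,\Lgood} \log(1/|w|),
\end{equation*}
up to a universal normalization constant. Letting $n \to \infty$, the sum converges to $c_z(T_{\Lgood}(z))$ by the same monotone-convergence argument used in Step~3 of Lemma~\ref{xi-good}, and $C_n \downarrow \zeta(\widehat{A}_{\Lgood})$ up to an $\widehat{m}$-null set; hence $\widehat{m}$ satisfies the same asymptotic formula on partial cylinders as $\widehat{m}_{\leaf}$.

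Finally, the analogue of Lemma~\ref{novel-lamination3} decomposes an arbitrary cylinder on $\widehat{S^1}$ into a countable disjoint union of partial cylinders of the form $\zeta(\widehat{A}_{k,\Lgood})$ up to an $\widehat{m}$-null set. Agreement of the two measures on each such partial cylinder (to multiplicative precision $\varepsilon$), together with $\varepsilon \to 0$, yields $\widehat{m} = \widehat{m}_{\leaf}$ on a generating family, and hence globally. The main technical obstacle is the monotone convergence $C_n \downarrow \zeta(\widehat{A}_{\Lgood})$: this uses the asymptotic linearity that forces $\zeta_{-n}(\mathbf{w})$ to lie arbitrarily close to $w_{-n}/|w_{-n}|$ for linear-good $\mathbf{w}$ and large $n$, together with Lemma~\ref{small-linear-distortion}, so that distinct landing points in $\widehat{S^1}$ are eventually separated at some finite level $-n$ and no inverse orbit in $C_n$ for every $n$ escapes $\zeta(\widehat{A}_{\Lgood})$ on a set of positive $\widehat{m}$-measure.
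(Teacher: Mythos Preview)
Your proposal takes a genuinely different route from the paper, and it contains a real gap at exactly the point you identify as ``the main technical obstacle.''

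The paper does \emph{not} mirror the proof of Theorem~\ref{xi-equality}. Instead, after establishing $\widehat{m}_{\leaf}\ll\widehat{m}$ (via L\"owner's lemma, not the argument of Lemma~\ref{novel-lamination1}), it immediately invokes the ergodicity of $\widehat{m}$ under $\widehat{F}$ to conclude $\widehat{m}_{\leaf}=c\,\widehat{m}$ for some constant $c\ge 0$. The remaining work is to show $c=1$, and the authors explicitly note: ``Unfortunately, we do not have a simple proof of this fact and the argument below is somewhat involved.'' They restrict $\widehat{m}_{\leaf}$ to the landing points of \emph{generic} geodesic trajectories, obtaining a measure $\widehat{m}_{\gen}$, and show that $\widehat{m}_{\gen}$ is a probability measure by expressing it as a Ces\`aro limit of auxiliary measures $\widehat{m}_{r,\varepsilon}$ supported on slices $\{|z|=r\}$. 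The key technical lemma (Lemma~\ref{most-inverse-orbits-lie-in-A}) controls how much transverse mass is lost to non-generic behaviour as $r\to 1$.

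Your gap is in the claim $C_n\downarrow \zeta(\widehat{A}_{\Lgood})$ up to an $\widehat{m}$-null set. The argument you sketch (``$\zeta_{-n}(\mathbf{w})$ lies close to $w_{-n}/|w_{-n}|$ for linear-good $\mathbf{w}$'') only yields the inclusion $\zeta(\widehat{A}_{\Lgood})\subset C_n$; it says nothing about the reverse. To show that $\widehat{m}$-a.e.~$\mathbf{x}\in\bigcap_n C_n$ lies in $\zeta(\widehat{A}_{\Lgood})$, you would need to \emph{construct}, from a point $\mathbf{x}$ on the solenoid, an inverse orbit $\mathbf{w}\in\widehat{A}_{\Lgood}$ in the disk landing at $\mathbf{x}$. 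This is a surjectivity statement for the landing map $\zeta$, which is precisely part~(iii) of the Geodesic Foliation Theorem---and the paper proves that part \emph{using} Theorem~\ref{novel-formulas-for-measure}. Your remark about ``distinct landing points eventually separated at some finite level'' addresses injectivity, not the missing surjectivity. Without an independent argument that $\bigcap_n C_n\setminus\zeta(\widehat{A}_{\Lgood})$ is $\widehat{m}$-null, the computation of $\widehat{m}(\zeta(\widehat{A}_{\Lgood}))$ does not go through, and this is exactly the difficulty the paper's indirect ergodic-plus-total-mass argument is designed to circumvent.
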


We begin by noticing:

\begin{lemma}
The measure  $\widehat{m}_{\leaf}$ is absolutely continuous with respect to $\widehat{m}$. 
 \end{lemma}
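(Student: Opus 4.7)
The plan is analogous to Lemma \ref{novel-lamination1}. Since both measures are $\widehat{F}$-invariant, it suffices to show $\widehat{m}_{\leaf}(\pi_0^{-1}(B))=0$ whenever $B\subset S^1$ is Borel with $m(B)=0$: cylinders of this form, together with their $\widehat{F}$-iterates $\pi_{-n}^{-1}(B)=\widehat{F}^{-n}(\pi_0^{-1}(B))$, generate the Borel $\sigma$-algebra of $\widehat{S^1}$, and $\widehat{F}$-invariance propagates the vanishing to all of them.

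I would cover the support of $\widehat{m}_{\leaf}$ by countably many charts $\zeta(\widehat{\mathscr B}_{i,\Lgood})$ with $\mathscr B_i = B_{\hyp}(z_i,\gamma)$, which exists by iterating Theorem \ref{linear-decomposition-theorem}(b) on $\widehat{\mathbb{D}}_{\lin}$ and pushing down under $\zeta$. Fixing such a Borel set $B$ and a single chart, the definition of $\widehat{m}_{\leaf}$ gives
$$
\widehat{m}_{\leaf}\bigl(\pi_0^{-1}(B)\cap\zeta(\widehat{\mathscr B}_{\Lgood})\bigr)
=\int_{T_{\Lgood}(z)}\ell\bigl((\pi_0^{-1}(B))^*_{\bf z}\bigr)\,dc_z,
$$
where the slice $(\pi_0^{-1}(B))^*_{\bf z}\subset\mathbb{R}$ consists of real parameters $s$ for which the endpoint of the corresponding vertical geodesic in the leaf of ${\bf z}$ has $0$-th coordinate $F_{{\bf z},0}(s)\in B$.

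By $\varepsilon$-linearity together with Koebe's distortion theorem, the rescaled limit $F_{{\bf z},0}:\mathbb{H}\to\mathbb{D}$ extends to a conformal map of bounded distortion from the bounded interval $\Pi_{\mathbb{H}\to\mathbb{R}}(\mathscr B^*_{\bf z})\subset\mathbb{R}$ onto a subarc of $S^1$, and is in particular bi-Lipschitz onto its image. Consequently, $m(B)=0$ forces $\ell\bigl((\pi_0^{-1}(B))^*_{\bf z}\bigr)=0$ for every ${\bf z}\in T_{\Lgood}(z)$; integrating and summing over the countable cover yields $\widehat{m}_{\leaf}(\pi_0^{-1}(B))=0$, and the initial reduction completes the proof. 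The main subtle point is the first reduction — namely, that $\widehat{F}$-invariance allows the class of single-coordinate cylinders $\pi_0^{-1}(B)$ to govern the null ideal on the full Borel $\sigma$-algebra — which mirrors the corresponding step in the proof of Lemma \ref{novel-lamination1}.
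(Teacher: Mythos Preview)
Your overall reduction and chart-by-chart strategy match the paper's, but the key step has a genuine gap. You claim that ``by $\varepsilon$-linearity together with Koebe's distortion theorem'' the rescaled limit $F_{{\bf z},0}:\mathbb{H}\to\mathbb{D}$ extends to a bi-Lipschitz map from the interval $\Pi_{\mathbb{H}\to\mathbb{R}}(\mathscr B^*_{\bf z})$ onto an arc of $S^1$. This is not justified. The $\varepsilon$-linearity condition $\widehat{\delta}({\bf z})<\varepsilon$ controls $F_{{\bf z},0}$ only on hyperbolic balls of \emph{bounded} radius around $i$ (this is exactly the content of Lemma \ref{small-linear-distortion} and Corollary \ref{nearly-linear2}); the real line sits at infinite hyperbolic distance, so nothing in the hypotheses says anything about boundary regularity. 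Koebe's theorem would require $F_{{\bf z},0}$ to be univalent on a region reaching the boundary, but $F_{{\bf z},0}=\lim_{N\to\infty}F^{\circ N}\circ M_{z_{-N}}$ is a priori just an arbitrary holomorphic map $\mathbb{H}\to\mathbb{D}$: for a general inner function of finite Lyapunov exponent (not one-component) the forward iterates $F^{\circ N}$ may have critical points and singular boundary behaviour arbitrarily close to the arc in question, and there is no reason for the limit to extend even continuously, let alone bi-Lipschitz.

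The paper circumvents this entirely by invoking L\"owner's lemma: for any holomorphic map $\varphi:(\mathbb{H},i)\to(\mathbb{D},z)$ and any Borel $E\subset\partial\mathbb{D}$, one has $\omega_i(\varphi^{-1}(E))\le\omega_z(E)$. Since harmonic measure on the boundary is mutually absolutely continuous with Lebesgue measure, $m(E)=0$ forces $\ell\bigl(F_{{\bf z},0}^{-1}(E)\cap I\bigr)=0$ for every bounded interval $I\subset\mathbb{R}$, with no regularity of $F_{{\bf z},0}$ on the boundary required. This is precisely the missing ingredient in your argument; once you replace the bi-Lipschitz claim by L\"owner's lemma, the rest of your outline goes through.
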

 
 The proof below uses L\"owner's lemma which says that if $\varphi: (\mathbb{D}, a) \to (\mathbb{D}, b)$ is a holomorphic self-map of the unit disk then for any measurable set $E \subset S^1$, $\omega_a(\varphi^{-1}(E)) \le \omega_b(E)$, where $\omega_a$ and $\omega_b$ are harmonic measures on the unit circle as viewed from $a$ and $b$ respectively. Evidently, L\"owner's lemma also applies to maps between arbitrary simply-connected domains.
 
 \begin{proof}
Let $E \subset S^1$ be a Borel set with $m(E) = 0$. Consider a chart $\zeta(\widehat{\mathscr B}_{\Lgood})$ where $\mathscr B = B_{\hyp}(z, \gamma)$.
For any inverse orbit ${\bf z} \in T_{\Lgood}(z)$, we can apply L\"owner's lemma to the map $F_{{\bf z}, 0}: (\mathbb{H}, i) \to (\mathbb{D}, z)$ to conclude that 
$$
\ell
\bigl ((\widehat{E} \cap \zeta(\widehat{\mathscr B}_{\Lgood}))_{\bf z}^* \bigr ) = 0.
$$
As the chart $\zeta(\widehat{\mathscr B}_{\Lgood})$ and inverse orbit ${\bf z} \in T_{\Lgood}(z)$ were arbitrary, we have
  $\widehat{m}_{\leaf}(\widehat{E}) = 0$.
\end{proof}

Since $\widehat{m}_{\leaf}$ is $\widehat{F}$-invariant and $\widehat{m}$ is ergodic, the above lemma tells us that:

\begin{corollary}
The measure $\widehat{m}_{\leaf}$ is finite. In fact, $\widehat{m}_{\leaf} = c \cdot \widehat{m}$ for some $c \ge 0$.
\end{corollary}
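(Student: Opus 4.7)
The plan is to exploit absolute continuity together with ergodicity of $\widehat{m}$ under $\widehat{F}$. The preceding lemma already gives $\widehat{m}_{\leaf} \ll \widehat{m}$, so the Radon--Nikodym theorem produces a nonnegative measurable function $\rho := d\widehat{m}_{\leaf}/d\widehat{m}$ on $\widehat{S^1}$, defined $\widehat{m}$-a.e. (a priori, $\rho$ may take the value $+\infty$ on a set of positive measure).

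First I would check that $\rho$ is $\widehat{F}$-invariant $\widehat{m}$-a.e. Both $\widehat{m}$ and $\widehat{m}_{\leaf}$ are $\widehat{F}$-invariant: for $\widehat{m}$ this is how it was constructed in Section~\ref{sec:natural-measures}, and for $\widehat{m}_{\leaf}$ this is precisely the property noted just before the corollary. Applying the change-of-variables formula $\int \rho \circ \widehat{F} \cdot f \, d\widehat{m} = \int \rho \cdot f \, d\widehat{m}$ for every bounded Borel $f$ and using uniqueness of Radon--Nikodym derivatives yields $\rho \circ \widehat{F} = \rho$ almost surely. Ergodicity of $\widehat{F}$ on $(\widehat{S^1}, \widehat{m})$ then forces $\rho$ to equal a single constant $c \in [0, \infty]$, so $\widehat{m}_{\leaf} = c \cdot \widehat{m}$.

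The remaining and only substantive step is to rule out $c = \infty$, that is, to exhibit a single set of positive $\widehat{m}$-measure on which $\widehat{m}_{\leaf}$ is finite. The natural candidate is a chart $\zeta(\widehat{\mathscr B}_{\Lgood})$ where $\mathscr B = B_{\hyp}(z,\gamma)$ and $z$ is an $\varepsilon$-(linearly nice) point. On such a chart, $\varepsilon$-linearity (together with Lemma~\ref{small-linear-distortion}) confines every slice $\mathscr B^*_{\bf z}\subset \mathbb{H}$ to a fixed ball $B^{\mathbb{H}}_{\hyp}(i,2\gamma)$, whose projection onto $\mathbb{R}$ has Lebesgue measure at most some constant $C=C(\gamma)$; therefore
\begin{equation*}
\widehat{m}_{\leaf}\bigl(\zeta(\widehat{\mathscr B}_{\Lgood})\bigr) \;=\; \int_{T_{\Lgood}(z)}\!\ell\bigl(\Pi_{\mathbb{H}\to\mathbb{R}}(\mathscr B^*_{\bf z})\bigr)\,dc_z \;\le\; C\cdot c_z(T_{\Lgood}(z)) \;\le\; C\log\tfrac{1}{|z|} \;<\;\infty.
\end{equation*}
On the other hand, the abundance-of-landing-points computation in Section~\ref{sec:geodesic-folation} shows that $\widehat{m}\bigl(\zeta(\widehat{\mathscr B}_{\Lgood})\bigr) > 0$, precisely because $z$ was chosen $\varepsilon$-(linearly nice), which guarantees $c_z(T_{\Lgood}(z)) \ge (1-\varepsilon)\log(1/|z|) > 0$. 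The identity $\widehat{m}_{\leaf} = c\cdot\widehat{m}$ then gives $c < \infty$.

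The potentially delicate point is justifying $\widehat{F}$-invariance of $\rho$ at the level of $\widehat{m}$-measurable classes rather than pointwise, but this is the standard Radon--Nikodym argument; everything else is a direct appeal to tools already assembled.
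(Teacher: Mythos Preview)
Your core argument---take the Radon--Nikodym derivative $\rho = d\widehat{m}_{\leaf}/d\widehat{m}$, show it is $\widehat{F}$-invariant, and invoke ergodicity of $(\widehat{S^1},\widehat{m},\widehat{F})$ to conclude $\rho$ is a.e.\ constant---is exactly what the paper has in mind; the paper simply records this as the one-line deduction ``Since $\widehat{m}_{\leaf}$ is $\widehat{F}$-invariant and $\widehat{m}$ is ergodic, the above lemma tells us that\ldots''.

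There is, however, a circularity in your argument for $c<\infty$. You appeal to the abundance-of-landing-points computation in Section~\ref{sec:geodesic-folation} to get $\widehat{m}\bigl(\zeta(\widehat{\mathscr B}_{\Lgood})\bigr)>0$, but that computation explicitly invokes Theorem~\ref{novel-formulas-for-measure}, i.e.\ the identity $\widehat{m}=\widehat{m}_{\leaf}$, whose proof (Theorem~\ref{m-equality}) is precisely what the present corollary is a step towards. So you cannot use it here.

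The fix is immediate and already present in the paper: just before the corollary it is noted that $\widehat{m}_{\leaf}$ is $\sigma$-finite (your own bound $\widehat{m}_{\leaf}\bigl(\zeta(\widehat{\mathscr B}_{\Lgood})\bigr)\le C\log\frac{1}{|z|}$ together with the definition of $\widehat{m}_{\leaf}$ outside the charts reproves this). Since both $\widehat{m}$ and $\widehat{m}_{\leaf}$ are $\sigma$-finite and $\widehat{m}_{\leaf}\ll\widehat{m}$, the Radon--Nikodym derivative is automatically $\widehat{m}$-a.e.\ finite, hence the constant $c$ is finite. No appeal to the chart having positive $\widehat{m}$-measure is needed.
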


To complete the proof of Theorem \ref{m-equality}, it remains to show that $c = 1$. Unfortunately, we do not have a simple proof of this fact and the argument below is somewhat involved.

 \paragraph*{Step 1.}

We say that a trajectory of the geodesic flow $\{ g_{t}({\bf z}): t \in \mathbb{R} \}$ is {\em generic} if 
$$
 \lim_{t \to \infty} \frac{1}{t} \int_0^t \widehat{\delta} \bigl ( \widehat{F}^{\circ n} [ g_{-s}({\bf w}) ] \bigr ) ds = 0, \qquad \text{for any }n \in \mathbb{Z}.
$$
Let $\mathcal G_0$ be the set of generic trajectories. Recall that in Section \ref{sec:trajectories-land-on-the-solenoid}, we used the ergodic theorem to show that $\mathcal G_0$ foliates $\widehat{\mathbb{D}}$ up to $\xi$ measure zero. We also saw that under the backward geodesic flow, a generic trajectory lands on the solenoid.

We define the measure $\widehat{m}_{\gen}$ as the restriction of $\widehat{m}_{\leaf}$ to the set $\zeta(\mathcal G_0)$ of landing points of generic trajectories. Since $\mathcal G_0$ is $\widehat{F}$-invariant (by definition), so are
$\zeta(\mathcal G_0)$ and $\widehat{m}_{\gen}$. 
Notice that $\widehat{m}_{\leaf} - \widehat{m}_{\gen} \perp \widehat{m}_{\gen}$ as the two measures are supported on different sets: $\widehat{m}_{\gen}$ gives full mass to $\zeta(\mathcal G_0)$, while
$\widehat{m}_{\leaf} - \widehat{m}_{\gen}$ gives full mass to $\widehat{S^1} \setminus \zeta(\mathcal G_0)$.

\begin{lemma}
\label{m-star-probability-measure}
The measure $\widehat{m}_{\gen}$ is a probability measure.
\end{lemma}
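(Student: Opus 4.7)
The plan is to use ergodicity to force $\widehat{m}_{\gen}$ to be a constant multiple of $\widehat{m}$, show this constant equals the $c$ from $\widehat{m}_{\leaf} = c\widehat{m}$, and then pin down $c = 1$ by projecting to $S^1$.

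First, I would verify that $\widehat{m}_{\gen}$ is a finite $\widehat{F}$-invariant Borel measure absolutely continuous with respect to $\widehat{m}$: the set $\mathcal{G}_0$ is defined by a shift-invariant condition, so $\zeta(\mathcal{G}_0)$ is $\widehat{F}$-invariant, and combined with the $\widehat{F}$-invariance of $\widehat{m}_{\leaf} = c\widehat{m}$ this gives all three properties. Ergodicity of $\widehat{m}$ then forces $d\widehat{m}_{\gen}/d\widehat{m}$ to be a.e.\ constant, so $\widehat{m}_{\gen} = c' \widehat{m}$ for some $c' \in [0, c]$. To see that $c' = c$, I would show $\widehat{m}(\zeta(\mathcal{G}_0)) = 1$: the Birkhoff ergodic theorem applied to the functions $\widehat{\delta}_r \in L^2(\widehat{X}_{\lin}, d\xi)$ (as in Section \ref{sec:trajectories-land-on-the-solenoid}) yields $\xi(\mathcal{G}_0)$ full, so Theorem \ref{geodesic-foliation-theorem}(iv) applied to $E = \zeta(\mathcal{G}_0)$ rules out $\widehat{m}(\zeta(\mathcal{G}_0)) = 0$ (which would force $\xi(\mathcal{G}_0) \le \xi(\zeta^{-1}(E)) = 0$). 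Since $\zeta(\mathcal{G}_0)$ is $\widehat{F}$-invariant, ergodicity of $\widehat{m}$ upgrades positive measure to full measure, so $\widehat{m}_{\gen} = \widehat{m}_{\leaf}$ as measures and $c' = c$.

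Next, I would pin down $c = 1$ by verifying $(\pi_0)_* \widehat{m}_{\leaf} = m$ on $S^1$, which by the characterization of $\widehat{m}$ as the unique $\widehat{F}$-invariant probability extension of Lebesgue forces $c = 1$. In a chart $\widehat{\mathscr B}_{\Lgood}$ with $\mathscr B = B_{\hyp}(z_0, \gamma)$ and $z_0$ close to $\partial \mathbb{D}$, the weak-shadowing statement of Theorem \ref{geodesic-foliation-theorem}(ii) identifies the $0$-th coordinate of the landing point of a trajectory through the chart with the image of its horizontal coordinate in $\mathbb{H}$ under the M\"obius map $M_{z_0}$. Koebe distortion combined with $\varepsilon$-linearity identifies the pushforward of horizontal Lebesgue measure on each slice with arclength on $S^1$ up to $O(\varepsilon)$ error, and summing over transversals using $c_{z_0}(T(z_0)) = \log(1/|z_0|)$ (Lemma \ref{sum-of-heights}) and Lemma \ref{xi-measure-of-a-cylinder} recovers $m$ on small arcs near $z_0/|z_0|$. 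Letting $z_0$ range over a dense set of radii then covers $S^1$ up to a null set, giving $c = 1$ and hence $\widehat{m}_{\gen}(\widehat{S^1}) = 1$.

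The main obstacle is this last chart computation: the precise Jacobian bookkeeping for both the rescaling $F_{{\bf z}, 0}$ and the M\"obius map $M_{z_0}$, needed to identify the pushforward \emph{exactly} with Lebesgue rather than with some constant multiple, is delicate --- consistent with the excerpt's warning that the argument is ``somewhat involved.''
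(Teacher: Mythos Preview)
Your plan inverts the paper's logical order and is circular. The paper's structure is: first prove Lemma \ref{m-star-probability-measure} directly, then deduce $c=1$ (this is the sentence immediately after the lemma statement). You instead try to establish $c=1$ first, and along the way invoke Theorem \ref{geodesic-foliation-theorem}(iv) to get $\widehat m(\zeta(\mathcal G_0))>0$. But the paper's proof of (iv) (the ``Non-singularity'' subsection of Section \ref{sec:geodesic-folation}) relies on the second formula of Theorem \ref{novel-formulas-for-measure}, i.e.\ on $\widehat m=\widehat m_{\leaf}$; that identity is Theorem \ref{m-equality}, and the entire point of Lemma \ref{m-star-probability-measure} is to finish the proof of Theorem \ref{m-equality}. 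So you cannot cite (iv) here. (You also appeal to Lemma \ref{xi-measure-of-a-cylinder}, which belongs to the one-component setting of Part \ref{part:centered-one-component} and is not available in this part of the paper.)

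Your step 2 has the right spirit---the paper also computes the mass by pushing forward to circles in $\mathbb D$---but the sketch misses the two technical points that make the argument work. First, to read off the mass from a chart one needs the landing map $\zeta$ to be \emph{injective} on the set of orbits being counted; the paper secures this by restricting to the set $\mathcal A_{r,\varepsilon}$ of orbits through $\{|z|=r\}$ satisfying a monotonicity condition (Condition 2) together with $\widehat\delta<\varepsilon$, after which $\varepsilon$-linearity gives $\widehat m_{r,\varepsilon}(\widehat{S^1})\sim_\varepsilon 1$. Second, one must show that this restriction is asymptotically lossless: that is Lemma \ref{most-inverse-orbits-lie-in-A}, proved via a stability lemma and an exhaustion argument (Lemmas \ref{stability-lemma} and \ref{high-preimages-A}). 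A Ces\`aro average over $r$ then assembles the $\widehat m_{r,\varepsilon}$ into $\widehat m_{\gen}$ (equation \eqref{eq:cesaro-representation}) and yields total mass $1$. These ingredients---injectivity via Condition 2, the exhaustion lemma, and the Ces\`aro averaging---are precisely what your outline does not supply.
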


Once we prove the above lemma, $c = 1$ follows almost immediately: As $\widehat{m}$ is ergodic and $\widehat{m}_{\gen} <\!\!< \widehat{m}$, the two measures must be equal: $\widehat{m} = \widehat{m}_{\gen}$.
As the difference $\widehat{m}_{\leaf} - \widehat{m}_{\gen} <\!\!< \widehat{m} = \widehat{m}_{\gen}$, it must be zero. Hence, $\widehat{m} = \widehat{m}_{\gen} = \widehat{m}_{\leaf}$ as desired.

\paragraph*{Step 2.}

For $0 < \varepsilon < 0.1$, we define $\mathcal A_{\varepsilon} \subset \widehat{\mathbb{D}}$ as the set of inverse orbits 
${\bf w} = (w_{-n})_{n=0}^\infty$ which satisfy the following three conditions:
\begin{enumerate}
\item $\widehat{\delta}({\bf w}) < \varepsilon.$
\item For any $t > 0$,
the hyperbolic distance 
$
d_{\mathbb{D}}(g_{-t}({\bf z})_{0}, 0) > d_{\mathbb{D}}(z_{0}, 0).
$
\item The geodesic trajectory passing through ${\bf w}$ is generic.
\end{enumerate}
For each $1-\varepsilon/e^\gamma < r < 1$, we define the auxiliary measure
$$
\widehat{m}_{r, \varepsilon} = \widehat{m}_{\leaf}|_{\zeta(\mathcal A_{r, \varepsilon})},
$$
where
$\mathcal A_{r, \varepsilon} = \mathcal A_\varepsilon \cap \{ |z| = r \}.$
From Condition 3, it is clear that
$$
\widehat{m}_{r, \varepsilon} \le \widehat{m}_{\gen} \le \widehat{m}_{\leaf}.
$$
Recall from Section \ref{sec:transversals} that the set of points $z \in \mathbb{D}$  for which $\overline{c}_z$ is not a probability measure has logarithmic capacity zero. In particular, the intersection with any circle $\{|z| = r \}$ has zero 1-dimensional Lebesgue measure.
The main difficulty towards proving Lemma \ref{m-star-probability-measure} is to show that the measures $\widehat{m}_{r, \varepsilon}$ exhaust $\widehat{m}_{\gen}$ as $r \to 1$\,:
\begin{lemma}
\label{most-inverse-orbits-lie-in-A}
For any $0 < \varepsilon <  0.1$,
\begin{equation}
\label{eq:goal}
\lim_{r \to 1} \int_{|z| = r} \overline{c}_z (\mathcal A_\varepsilon^c \cap T(z)) \, |dz| = 0.
\end{equation}
\end{lemma}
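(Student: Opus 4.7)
The plan is to decompose $\mathcal{A}_\varepsilon^c = B_1 \cup B_2 \cup B_3$, where $B_i$ consists of inverse orbits failing the $i$-th condition in the definition of $\mathcal A_\varepsilon$. Conditions 1 and 3 will be treated directly, and Condition 2 will be reduced to the conjunction of the other two by a geometric argument, so that the substantive work lies on $B_1 = \{\widehat\delta({\bf w}) \geq \varepsilon\}$ and on the non-generic set $B_3$.

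\textbf{For $B_1$.} Apply Chebyshev and the Nevanlinna identity $c_z(T(u, z)) = \log(1/|u|)$:
\[
\overline c_z(B_1 \cap T(z)) \leq \frac{1}{\varepsilon \log(1/|z|)} \sum_{n \geq 1} \sum_{F^{\circ n}(u) = z} \delta(u) \log\tfrac{1}{|u|}.
\]
Integrate over $|z|=r$ via the change of variables $z = F^{\circ n}(u)$ and then over $r \in [r_0, 1]$; the co-area formula together with the Schwarz bound $|(F^{\circ n})'(u)| \leq (1 - |F^{\circ n}(u)|^2)/(1 - |u|^2)$ yields
\[
\int_{r_0}^1 \int_{|z|=r} \overline c_z(B_1) |dz| \, dr \,\lesssim\, \frac{1}{\varepsilon} \int_{\mathbb{D}} \delta(u) \log\tfrac{1}{|u|} \Bigl( \sum_{n \geq 0}(1 - |F^{\circ n}(u)|) \mathbf{1}_{|F^{\circ n}(u)| \geq r_0} \Bigr) dA_{\hyp}(u).
\]
The key calculation: by Lemma~\ref{minimal-translation}, $1 - |F^{\circ n}(u)| \geq e^{n\gamma}(1 - |u|)$ as long as $|F^{\circ n}(u)| \geq r_0$, so the inner sum is dominated by a geometric series truncated at $N_0(u) \asymp \gamma^{-1} \log\tfrac{1-r_0}{1 - |u|}$ and telescopes to $O(1-r_0)$ uniformly in $u$. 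Since the integrand vanishes for $|u| < r_0$ and $\int_{|u| \geq r_0} \delta(u) \log(1/|u|) dA_{\hyp} \to 0$ as $r_0 \to 1$ by Theorem~\ref{total-radial-distortion}, the double integral is $o(1-r_0)$. A Lebesgue-density argument in $r$, together with the freedom to shrink $r_0$ independently, upgrades this averaged decay to the pointwise limit $\lim_{r \to 1}\int_{|z|=r} \overline c_z(B_1) |dz| = 0$.

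\textbf{For $B_3$.} The ergodic theorem applied in Section~\ref{sec:trajectories-land-on-the-solenoid} gives $\xi(B_3) = 0$. Using $\xi = \xi_{\leaf}$ (Theorem~\ref{xi-equality}) to disintegrate $\xi$ along the fibers $T(z)$ against $\overline c_z$, one obtains $\overline c_z(B_3 \cap T(z)) = 0$ for $\nu$-a.e.~$z$, where $d\nu \asymp dA(z)/(1-|z|)$. Writing in polar coordinates and applying Fubini gives $\int_{|z|=r} \overline c_z(B_3 \cap T(z)) |dz| = 0$ for a.e.~$r$; representing $B_3$ as a countable intersection $B_3 = \bigcap_N E_N$ of open sets of orbits with bad time-$N$ ergodic averages, each of vanishing $\xi$-measure, then extends the vanishing to all $r$ approaching $1$.

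\textbf{For $B_2$.} The $0$-th coordinate landing point satisfies $\zeta_0({\bf w}) = \lim_n F^{\circ n}(\widehat z_{-n}) = \widehat z_0 = z/|z|$ for every linear orbit through $z$, so in the linear limit the projected trajectory $(g_{-t}({\bf w}))_0$, $t > 0$, is exactly the radial segment $[z, z/|z|]$. If $\widehat \delta({\bf w}) < \varepsilon$ and $1 - |z| < \varepsilon/e^\gamma$, Corollary~\ref{nearly-linear2} forces this trajectory to stay within hyperbolic distance $O(\varepsilon)$ of the radial segment; since the outward hyperbolic radial speed along the segment is identically $1$ and $\varepsilon < 0.1$, the tangent direction never rotates past the tangential direction, so $|(g_{-t}({\bf w}))_0|$ is strictly increasing and condition~2 holds. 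Hence $B_2 \setminus (B_1 \cup B_3) = \emptyset$ on the range $r > 1 - \varepsilon/e^\gamma$ under consideration, reducing this case to the previous two. The main technical obstacle is the $B_1$ estimate: converting the apriori divergent sum $\sum_n$ into something vanishing requires coupling the co-area restriction $|F^{\circ n}(u)| \geq r_0$ with the exponential escape from the origin supplied by Lemma~\ref{minimal-translation}, and then promoting the average-in-$r$ bound to a pointwise statement.
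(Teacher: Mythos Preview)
Your decomposition $\mathcal A_\varepsilon^c = B_1 \cup B_2 \cup B_3$ is natural, and the Chebyshev/co-area computation for $B_1$ is correct up to the point where you obtain the averaged bound
\[
\frac{1}{1-r_0}\int_{r_0}^1 \Bigl(\int_{|z|=r}\overline c_z(B_1)\,|dz|\Bigr)dr \;\to\; 0.
\]
But two genuine gaps remain.

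\textbf{The $B_2$ reduction does not work.} You claim that if $\widehat\delta({\bf w})<\varepsilon$ then the projected trajectory $(g_{-t}({\bf w}))_0$ stays within $O(\varepsilon)$ of the radial segment, hence $|(g_{-t}({\bf w}))_0|$ is monotone. Corollary~\ref{nearly-linear2} only controls $F^{\circ(n-m)}$ on a ball of fixed hyperbolic radius $R$, with constants depending on $R$; it says nothing uniform along the entire ray $\{e^{-t}i:t>0\}$. What you actually need is smallness of $\widehat\delta(g_{-t}({\bf w}))$ for every $t$, and condition~1 gives this only at $t=0$. Genericity (condition~3) controls $\widehat\delta(g_{-t}({\bf w}))$ only in a Ces\`aro sense, so it can be large on a sparse set of times---precisely the times at which the trajectory could dip back toward the origin and violate condition~2. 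The paper handles this by introducing the auxiliary set $\widetilde{\mathcal A}_\varepsilon$ with the buffered condition~$2'$, proving in Lemma~\ref{high-preimages-A} that $\widehat F^{-n}({\bf w})\in\widetilde{\mathcal A}_\varepsilon$ for all large $n$ via a three-range analysis ($t\in(0,\gamma_0]$ from $\varepsilon$-linearity, $t>T$ from genericity, $t\in[\gamma_0,T]$ from the pointwise decrease of the tail sums $\Delta_n$). Your argument does not supply any substitute for the middle range.

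\textbf{The upgrade from averaged to pointwise is unjustified.} Averaged convergence of $\phi(r)=\int_{|z|=r}\overline c_z(B_1)\,|dz|$ to zero does not give $\phi(r)\to 0$ without some regularity of $\phi$; a ``Lebesgue density'' argument proves nothing here because $\phi$ is not an indicator and there is no a priori monotonicity. The same issue appears in your $B_3$ step (``extends the vanishing to all $r$''). What is needed is a stability statement: if $d_{\mathbb D}(z,z')$ is small and ${\bf w}'\in T(z')$ follows ${\bf w}\in T(z)$, then $\widehat\delta({\bf w}')\asymp\widehat\delta({\bf w})$ by Lemma~\ref{delta-derivative}, so $\phi$ varies slowly in the hyperbolic metric and the averaged bound transfers to every $r$. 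This is exactly the mechanism of the paper's Lemma~\ref{stability-lemma}, which thickens the circle $\{|z|=r\}$ to an annulus and compares with the $\xi$-integral~\eqref{eq:goal2}. Once you add that stability step, your $B_1$ argument essentially reproduces the paper's proof in disguise; but as written, both the $B_2$ and the averaged-to-pointwise steps are incomplete.
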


 We now explain how to derive  Lemma \ref{m-star-probability-measure} (and Theorem \ref{m-equality}) from Lemma \ref{most-inverse-orbits-lie-in-A}.
By Condition 2 above, for each non-exceptional $0 < r < 1$, $\zeta$ is injective on $\mathcal A_{r,\varepsilon}$. By $\varepsilon$-linearity, the mass of $\widehat{m}_{r, \varepsilon}$ is approximately
$$
\widehat{m}_{r, \varepsilon}(\widehat{S^1}) \, \sim_\varepsilon \, \frac{1}{2\pi} \int_{|z| = r} \overline{c}_z(\mathcal A_\varepsilon \cap T(z)) \, |dz|.
$$
Together with Lemma \ref{most-inverse-orbits-lie-in-A}, this implies that 
\begin{equation}
\label{eq:most-inverse-orbits-lie-in-A2}
\widehat{m}_{r, \varepsilon}(\widehat{S^1}) \sim_\varepsilon 1.
\end{equation}

Since any generic geodesic trajectory participates in ``density 1'' measures $\widehat{m}_{s, \varepsilon}$, i.e.~
$$
\frac{1}{|\log (1-r)|} \int_{0}^r \chi_{\mathcal A_{\varepsilon}}(g_s({\bf z})) \, \frac{ds}{s} \to 1, \qquad \text{as }r \to 1,
$$
we have:

\begin{lemma}
For any $0 < \varepsilon < 0.1$,
\begin{equation}
\label{eq:cesaro-representation}
\widehat{m}_{\gen} = \lim_{r \to 1} \frac{1}{|\log (1-r)|} \int_0^r \widehat{m}_{s, \varepsilon} \cdot \frac{ds}{s},
\end{equation}
in the sense of strong limits of measures. 
\end{lemma}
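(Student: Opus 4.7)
The plan is to verify the Cesaro identity (\ref{eq:cesaro-representation}) by testing against a bounded measurable function $f$ on the solenoid and applying bounded convergence. Substituting $d\widehat{m}_{s,\varepsilon} = \chi_{\zeta(\mathcal A_{s,\varepsilon})} \, d\widehat{m}_{\leaf}$ and exchanging the order of integration by Fubini's theorem, the Cesaro average on the right-hand side rewrites as
$$
\int_{\widehat{S^1}} f(\xi) \, \Phi_r(\xi) \, d\widehat{m}_{\leaf}(\xi), \qquad \Phi_r(\xi) \, := \, \frac{1}{|\log(1-r)|} \int_0^r \chi_{\zeta(\mathcal A_{s,\varepsilon})}(\xi) \, \frac{ds}{s}.
$$
Because $\mathcal A_\varepsilon \subset \mathcal G_0$ by the third defining condition for $\mathcal A_\varepsilon$, the kernel $\Phi_r$ is supported on $\zeta(\mathcal G_0)$, where $\widehat{m}_{\leaf}$ coincides with $\widehat{m}_{\gen}$; hence this integral equals $\int f \, \Phi_r \, d\widehat{m}_{\gen}$.

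Next, I would invoke Theorem \ref{geodesic-foliation-theorem}(iii) to uniquely identify each $\xi \in \zeta(\mathcal G_0)$ with its generic trajectory in $\widehat{\mathbb{D}}_{\lin}$; under this identification, $\chi_{\zeta(\mathcal A_{s,\varepsilon})}(\xi)$ is just the indicator of $\mathcal A_\varepsilon$ evaluated along that trajectory at radial height $s$. The displayed density $1$ property---a consequence of the ergodic theorem applied to $\widehat{\delta}$ along generic trajectories together with the elementary bound $\chi_{\mathcal A_\varepsilon^c} \le \widehat{\delta}/\varepsilon$---therefore translates into the pointwise statement $\Phi_r(\xi) \to 1$ for $\widehat{m}_{\gen}$-a.e.~$\xi$.

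The final step legitimises the interchange of limit and integral by ruling out that $\widehat{m}_{\gen}$ is a priori infinite. Applying the Fubini identity above with $f \equiv 1$ and invoking (\ref{eq:most-inverse-orbits-lie-in-A2}) yields
$$
\int \Phi_r \, d\widehat{m}_{\gen} \, = \, \frac{1}{|\log(1-r)|} \int_0^r \widehat{m}_{s,\varepsilon}(\widehat{S^1}) \, \frac{ds}{s} \, \sim_\varepsilon \, 1,
$$
whence Fatou's lemma forces $\widehat{m}_{\gen}(\widehat{S^1}) \le 1 + O(\varepsilon) < \infty$. With $\widehat{m}_{\gen}$ now a finite measure and $0 \le \Phi_r \le 1$ uniformly, bounded convergence delivers $\int f \, \Phi_r \, d\widehat{m}_{\gen} \to \int f \, d\widehat{m}_{\gen}$ for every bounded measurable $f$, and specialising to $f = \chi_E$ for arbitrary Borel $E \subset \widehat{S^1}$ gives the strong (setwise) convergence of the Cesaro averages asserted in the lemma. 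The most delicate point is the second step, where the density $1$ property must be transferred from ``$\xi$-a.e.~inverse orbit in $\widehat{\mathbb{D}}_{\lin}$'' to ``$\widehat{m}_{\gen}$-a.e.~landing point on $\widehat{S^1}$''; this transfer relies on parts (iii) and (iv) of Theorem \ref{geodesic-foliation-theorem}, which together ensure that the $\xi$-null set of non-generic behaviour projects to a $\widehat{m}_{\gen}$-null subset of the solenoid.
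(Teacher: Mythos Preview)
Your approach---rewrite via Fubini as $\int f\,\Phi_r\,d\widehat m_{\gen}$, show $\Phi_r\to 1$ pointwise using the density-$1$ property, then apply bounded convergence---is exactly what the paper has in mind; the paper merely states the density-$1$ fact and the lemma, leaving the reader to fill in precisely the skeleton you wrote. Two remarks are in order.

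First, your Fatou step to secure finiteness of $\widehat m_{\gen}$ is unnecessary: by the Corollary preceding Step~1 you already know $\widehat m_{\leaf}=c\cdot\widehat m$ is a finite measure, and $\widehat m_{\gen}\le\widehat m_{\leaf}$. So bounded convergence is immediately available.

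Second, your justification of the density-$1$ claim via the inequality $\chi_{\mathcal A_\varepsilon^c}\le\widehat\delta/\varepsilon$ is incomplete: that bound handles only Condition~1 in the definition of $\mathcal A_\varepsilon$, whereas Condition~2 (the backward-monotonicity of distance to $0$) can fail at a point ${\bf w}$ with $\widehat\delta({\bf w})<\varepsilon$. Verifying that Condition~2 also holds at density~$1$ along a generic trajectory requires a separate argument in the spirit of the shadowing Lemma~\ref{weak-shadowing-in-H2} (or the analysis in Lemma~\ref{high-preimages-A}). The paper itself asserts density~$1$ without proof, so this is not a deviation from the paper, but your stated reason does not suffice. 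Relatedly, your final worry about transferring the a.e.\ statement via Theorem~\ref{geodesic-foliation-theorem}(iv) is misplaced: once density~$1$ is established for \emph{every} generic trajectory (as the paper claims), it holds automatically for every $\xi\in\zeta(\mathcal G_0)$, hence $\widehat m_{\gen}$-everywhere, and no null-set transfer is needed.
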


Combining (\ref{eq:most-inverse-orbits-lie-in-A2}) and (\ref{eq:cesaro-representation}), we see that $\widehat{m}_{\gen}$ is a probability measure.

\paragraph*{Step 3.}

By Lemmas \ref{delta-derivative} and \ref{cumulative-distortion}, there exists a universal constant $0 < \gamma_0 < \gamma$ so that if 
${\bf z} \in \widehat{\mathbb{D}}$ is an inverse orbit with $\widehat{\delta}({\bf z}) < 0.1$ then $\widehat{\delta}({\bf w}) < 2 \, \widehat{\delta}({\bf z}) < 0.2$ for any inverse orbit ${\bf w} \in \widehat{\mathscr B}_{\Lgood}$ which follows ${\bf z}$ with $d_{\mathbb{D}}(z_0, w_0) < \gamma_0$. In particular, 
\begin{equation}
\label{eq:onedle}
d_{\mathbb{D}}(g_{-t}({\bf z})_{-n}, 0) > d_{\mathbb{D}}(z_{-n}, 0), \qquad t \in (0, \gamma_0]
\end{equation}
and
\begin{equation}
\label{eq:twoodle}
d_{\mathbb{D}}(g_{-\gamma_0}({\bf z})_{-n}, 0) > d_{\mathbb{D}}(z_{-n}, 0) + 0.8 \, \gamma_0,
\end{equation}
for any $n \ge 0$.

We define the set $\widetilde{\mathcal A}_{\varepsilon} \subset \mathcal A_{\varepsilon} \subset \widehat{\mathbb{D}}$, where Condition 2 is replaced 
with a slightly stronger condition ($2+2'$), where we {\em additionally} require

\begin{enumerate}
\item[$2'.$] For any $t > \gamma_0$, we have $d_{\mathbb{D}}(0, g_{-t}({\bf w})_0) > d_{\mathbb{D}}(0, w_0) + \gamma_0/2.$
\end{enumerate}

In view of the buffer provided by $(2')$, we have:

\begin{lemma}
\label{stability-lemma}
Suppose $0 < \varepsilon < 0.05$. There exists $0 < \gamma_1 < \gamma_0$ so that if ${\bf z} \in \widetilde{\mathcal A}_\varepsilon$ then any generic orbit  ${\bf z'} \in \widehat{\mathscr B}_{\Lgood}$ which follows ${\bf z}$
with $d_{\mathbb{D}}(z_0, z'_0) < \gamma_1$ belongs to $\mathcal A_{2\varepsilon}$.
\end{lemma}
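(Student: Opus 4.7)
The plan is to verify the three defining conditions of $\mathcal A_{2\varepsilon}$ for the perturbed orbit ${\bf z'}$. Condition 3 (genericity) is part of the hypothesis, so only Conditions 1 and 2 require argument. For Condition 1 ($\widehat\delta({\bf z'}) < 2\varepsilon$) I would exploit that ${\bf z}$ and ${\bf z'}$ lie in the same linearly structured leaf: Lemma~\ref{when-are-two-orbits-in-the-same-leaf} together with the $\varepsilon$-linearity of the rescaling maps yields $d_{\mathbb{D}}(z_{-n}, z'_{-n}) \le C\gamma_1$ uniformly in $n$, whereupon Lemma~\ref{delta-derivative} gives the termwise comparison $\delta(z'_{-n}) \le e^{KC\gamma_1}\,\delta(z_{-n})$. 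Choosing $\gamma_1$ small enough that $e^{KC\gamma_1} \le 2$ and summing over $n$ then gives $\widehat\delta({\bf z'}) \le 2\widehat\delta({\bf z}) < 2\varepsilon$.

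Condition 2 (strict monotonicity of $d_{\mathbb{D}}(g_{-t}({\bf z'})_0, 0)$ in $t>0$) I would split into two ranges. For $t \in (0, \gamma_0]$, Condition 1 just established gives $\widehat\delta({\bf z'}) < 2\varepsilon < 0.1$, so the argument behind (\ref{eq:onedle}) applies verbatim to ${\bf z'}$. For $t > \gamma_0$, I would use that the geodesic flow acts by leafwise isometries, so $d_{\mathcal L}(g_{-t}({\bf z}), g_{-t}({\bf z'})) = d_{\mathcal L}({\bf z}, {\bf z'})$; combining with the Schwarz lemma applied to the leafwise uniformization and the $\varepsilon$-linearity near the basepoint gives $d_{\mathbb{D}}(g_{-t}({\bf z})_0, g_{-t}({\bf z'})_0) \le 2C\gamma_1$ uniformly in $t$. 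Condition $(2')$ on ${\bf z} \in \widetilde{\mathcal A}_\varepsilon$ then supplies the decisive buffer $\gamma_0/2$:
\[
d_{\mathbb{D}}(g_{-t}({\bf z'})_0, 0) \,\ge\, d_{\mathbb{D}}(g_{-t}({\bf z})_0, 0) - 2C\gamma_1 \,\ge\, d_{\mathbb{D}}(z_0, 0) + \gamma_0/2 - 2C\gamma_1 \,\ge\, d_{\mathbb{D}}(z'_0, 0) + \gamma_0/2 - 4C\gamma_1,
\]
which strictly exceeds $d_{\mathbb{D}}(z'_0, 0)$ as long as $\gamma_1$ is taken small relative to $\gamma_0$.

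The main (mild) obstacle is to select a single universal $\gamma_1 = \gamma_1(\varepsilon)$ (with $\gamma_0$ already fixed) that simultaneously satisfies $e^{KC\gamma_1} \le 2$ and $4C\gamma_1 < \gamma_0/2$; this reduces to a finite list of strict inequalities and presents no real difficulty. Conceptually, the point is that the strict-increase clause $(2')$ built into the definition of $\widetilde{\mathcal A}_\varepsilon$, but absent from plain $\mathcal A_\varepsilon$, is engineered precisely to absorb the perturbation cost of replacing ${\bf z}$ by ${\bf z'}$; without this buffer, a non-strict equality in Condition 2 at some $t > \gamma_0$ would turn into a strict violation after an arbitrarily small perturbation.
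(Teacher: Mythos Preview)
Your treatment of Conditions~1 and~3 is correct and matches the paper: Condition~1 is exactly the content of the paragraph immediately preceding the lemma (the choice of $\gamma_0$ already ensures $\widehat\delta({\bf z'})<2\widehat\delta({\bf z})<2\varepsilon$), and Condition~3 is hypothesis.

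However, your argument for Condition~2 in the range $t>\gamma_0$ contains a genuine error. The geodesic flow does \emph{not} act by leafwise isometries in the sense you claim. On a leaf identified with $(\mathbb{H},\infty)$, the flow $g_{-t}$ sends each point $x+yi$ to $x+e^{-t}yi$ (each point moves along its own vertical line); this is not an isometry of $\mathbb{H}$. If in ${\bf z}$-coordinates the orbit ${\bf z'}$ sits at $Ai+B$ with $B\ne 0$ (which is the generic situation --- ${\bf z'}$ need not lie on the geodesic through ${\bf z}$), then $g_{-t}({\bf z'})$ sits at $Ae^{-t}i+B$, and
\[
d_{\mathcal L}\bigl(g_{-t}({\bf z}),\,g_{-t}({\bf z'})\bigr)
= d_{\mathbb{H}}\bigl(e^{-t}i,\,Ae^{-t}i+B\bigr)
= d_{\mathbb{H}}\bigl(i,\,Ai+e^{t}B\bigr)\longrightarrow\infty
\]
as $t\to\infty$. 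Consequently the Schwarz lemma gives no uniform bound on $d_{\mathbb{D}}(g_{-t}({\bf z})_0,\,g_{-t}({\bf z'})_0)$, and indeed this quantity typically diverges (the two backward trajectories land at distinct points of $\partial\mathbb{D}$). Your final chain of inequalities therefore does not close.

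The paper itself offers no detailed proof, writing only ``In view of the buffer provided by $(2')$.'' The buffer idea is certainly the right intuition, but exploiting it for large $t$ requires comparing $g_{-t}({\bf z'})$ not with $g_{-t}({\bf z})$ but with the point on ${\bf z}$'s trajectory at the \emph{same height} in the leaf, namely $g_{-t'}({\bf z})$ with $|t-t'|\lesssim\gamma_1$, and then controlling the residual horizontal displacement $B$ by some means other than the crude Schwarz bound. Your identification of $(2')$ as the mechanism that absorbs the perturbation is correct; what is missing is a valid way to bound the perturbation itself uniformly in $t$.
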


\paragraph*{Step 4.}

The following lemma says that from some point on, almost every inverse orbit belongs to $\widetilde{\mathcal A}_\varepsilon$\,:

\begin{lemma}
\label{high-preimages-A}
For $\xi$ a.e.~inverse orbit ${\bf w} \in \widehat{\mathbb{D}}$, there exists an $N({\bf w)} \ge 0$ such that
$\widehat{F}^{-n}({\bf w}) \in \widetilde{\mathcal A}_\varepsilon$ for all $n \ge N({\bf w})$. 
\end{lemma}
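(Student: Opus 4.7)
The plan is to verify each of the four defining properties of $\widetilde{\mathcal A}_\varepsilon$ for the orbit ${\bf z} := \widehat F^{-n}({\bf w})$ individually, showing that each is satisfied once $n$ is large enough for $\xi$-a.e.\ ${\bf w}$.

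By Theorem~\ref{nearly-linear}, $\widehat\delta({\bf w}) = \sum_{k\ge 1}\delta(w_{-k})$ is $\xi$-a.e.\ finite, so the tail $\widehat\delta(\widehat F^{-n}({\bf w})) = \sum_{k>n}\delta(w_{-k})$ tends to $0$ and eventually drops below $\min(\varepsilon,0.05)$; since summability forces $\delta(w_{-n})\to 0$ and therefore $|w_{-n}|\to 1$, the side inequality $1-|w_{-n}|<\varepsilon/e^{\gamma}$ also holds eventually, giving Condition~1. Condition~3 is then immediate from the equivariance $g_{-t}\circ\widehat F^{-n} = \widehat F^{-n}\circ g_{-t}$, which one verifies from the definition of the rescaling limits via $F_{\widehat F({\bf z}),-k} = F\circ F_{{\bf z},-k}$: this makes the geodesic trajectory through $\widehat F^{-n}({\bf w})$ the $\widehat F^{-n}$-image of the one through ${\bf w}$, and the time-average definition of genericity is manifestly invariant under such $\widehat F$-shifts.

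Conditions~2 and~$2'$, via the same equivariance, become the assertion that the $\mathbb D$-valued curve $t\mapsto F_{{\bf w},-n}(e^{-t}i)$ moves strictly away from $0$ for every $t>0$, with an additional $\gamma_0/2$ gap past $t=\gamma_0$. On $t\in(0,\gamma_0]$, this follows immediately from (\ref{eq:onedle})--(\ref{eq:twoodle}) applied to ${\bf z}$ itself, whose cumulative distortion is below $0.05$. To extend to all $t>0$, the plan is to iterate by applying the same inequalities to the flowed orbits ${\bf z}_k := g_{-k\gamma_0}({\bf z})$ for $k = 1, 2, \dots$, provided that $\widehat\delta({\bf z}_k)<0.1$ throughout.

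The main obstacle is controlling $\widehat\delta({\bf z}_k)$ uniformly in $k$: a direct combination of Schwarz contraction and Lemma~\ref{delta-derivative} only yields the pessimistic bound $\widehat\delta({\bf z}_k)\le e^{Kk\gamma_0}\widehat\delta({\bf z})$, which suffices for a bounded number of iterations but not for $k\to\infty$. The missing ingredient is that for each fixed $j\ge 1$, the point $({\bf z}_k)_{-j} = F_{{\bf z},-j}(e^{-k\gamma_0}i)$ approaches the solenoid coordinate $\zeta_{-j}({\bf w}) \in S^1$ as $k\to\infty$, and at $\widehat m$-a.e.\ such $\zeta_{-j}$ the function $F$ possesses an angular derivative, forcing $\delta\to 0$ there. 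A dominated-convergence argument then bounds $\delta(({\bf z}_k)_{-j})$ uniformly in $k$ by a constant multiple of $\delta(z_{-j})$, keeping $\widehat\delta({\bf z}_k)\lesssim \widehat\delta({\bf z})$ for all $k\ge 0$. Choosing $n$ first so that $\widehat\delta({\bf z})$ is small enough to absorb this constant closes the iteration, and the threshold $N({\bf w})$ is taken to be the maximum of the thresholds produced in the four steps.
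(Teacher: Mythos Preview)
Your handling of Conditions 1 and 3 matches the paper. The gap is in Conditions 2 and $2'$.

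You correctly identify that iterating (\ref{eq:onedle})--(\ref{eq:twoodle}) requires $\widehat\delta({\bf z}_k)<0.1$ for every $k\ge 0$, and that the naive bound $\widehat\delta({\bf z}_k)\le e^{Kk\gamma_0}\widehat\delta({\bf z})$ from Lemma~\ref{delta-derivative} blows up. However, your proposed remedy does not work. The claim that $\delta(({\bf z}_k)_{-j})\le C\,\delta(z_{-j})$ with $C$ uniform in $j$ and $k$ is unjustified: knowing that $({\bf z}_k)_{-j}\to\zeta_{-j}$ and that $F$ has an angular derivative there only tells you $\delta(({\bf z}_k)_{-j})\to 0$ as $k\to\infty$ for each fixed $j$; it gives no control on the ratio $\delta(({\bf z}_k)_{-j})/\delta(z_{-j})$, which could be enormous if the curve $k\mapsto ({\bf z}_k)_{-j}$ passes near a critical point while $z_{-j}$ does not. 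More fundamentally, the statement you are trying to prove --- that $\widehat\delta(g_{-t}({\bf z}))<0.1$ for \emph{all} $t\ge 0$ --- is too strong and need not hold even for generic ${\bf w}$: genericity only says the \emph{time average} of $\widehat\delta(g_{-t}({\bf w}))$ tends to zero, allowing it to be large on a density-zero set of times.

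The paper avoids this by splitting the $t$-range rather than iterating. For $t>T$, genericity of ${\bf w}$ itself gives $\frac{1}{t}\int_0^t\widehat\delta(g_{-s}({\bf w}))\,ds<1/2$, from which the distance bound $d_{\mathbb{D}}(0,g_{-t}({\bf w})_{-n})>d_{\mathbb{D}}(0,w_{-n})+t/2$ follows directly; crucially $T=T({\bf w})$ is independent of $n$. For the remaining compact interval $t\in[\gamma_0,T]$, your pessimistic bound is exactly what is used: $\Delta_n(t):=\widehat\delta(\widehat F^{-n}[g_{-t}({\bf w})])\le e^{KT}\sum_{k>n}\delta(w_{-k})$, which tends to $0$ as $n\to\infty$. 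So the ingredient you are missing is to exploit Condition 3 (genericity) itself to handle large $t$, rather than trying to keep $\widehat\delta$ uniformly small along the whole flow.
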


\begin{proof}
Recall that by Theorem \ref{nearly-linear}, for $\xi$ a.e.~inverse orbit, we have $\widehat{\delta}({\bf w}) < \infty$ and therefore,
$
\widehat{\delta}(\widehat{F}^{-n}({\bf w})) \to 0$ as $n \to \infty.
$
Consequently, for $n$ sufficiently large, $\widehat{\delta}(\widehat{F}^{-n}({\bf w})) < \varepsilon$ and Condition 1 holds. 

Condition 3 is also easy to check since $\xi$ a.e.~inverse orbit is generic and the property of an inverse orbit belonging to a generic trajectory is $\widehat{F}$-invariant by definition.
To verify Condition $(2+2')$, we examine three cases:

\begin{enumerate}
\item For $t \in (0, \gamma_0]$, Condition $2'$ for $\widehat{F}^{-n}({\bf w})$ follows from Condition 1 for $\widehat{F}^{-n}({\bf w})$ and (\ref{eq:onedle}).

\item By the definition of a generic trajectory, there exists a $T = T({\bf w}) > 0$ sufficiently large so that
$$
\frac{1}{t} \int_0^t \widehat{\delta}(g_{-s}({\bf w})) ds < 1/2, \qquad t > T.
$$
As a result, for $t > T$, we have
$$
d_{\mathbb{D}}(0, g_{-t}({\bf w})_{-n}) > d_{\mathbb{D}}(0, g_{0}({\bf w})_{-n}) + t/2.
$$

\item Finally, to handle the case when $t \in [\gamma_0, T]$, we use that
the sequence of functions
$$
\Delta_n(t) = \widehat{\delta} (\widehat{F}^{-n} [g_{-t}({\bf w})] )  = \sum_{k=n+1}^\infty \delta(g_{-t}({\bf w}))_{-k},
$$
decreases pointwise to 0.
\end{enumerate}
The proof is complete.
\end{proof}

For an inverse orbit ${\bf w} = (w_n)_{n = -\infty}^\infty \in \widehat{\mathbb{D}}$ and $0 < r < 1$, we write $w_r$ for the last point of the orbit that lies in the annulus $A(0; r,1)$, that is,
$w_r = w_{n(r)}$ where $n(r) \in \mathbb{Z}$ is the largest integer for which $w_{n(r)} \in A(0; r, 1)$.
One may interpret Lemma \ref{high-preimages-A} as saying that
\begin{equation}
\label{eq:goal2}
\int_{\widehat{X}} \chi_{\{w_r \in \widetilde{\mathcal A}^c_\varepsilon \}} d\xi({\bf w}) \to 0, \qquad \text{as }r \to 1.
\end{equation}
With the above preparations, we are now ready to prove Lemma \ref{most-inverse-orbits-lie-in-A}:

\begin{proof}[Proof of Lemma \ref{most-inverse-orbits-lie-in-A}]
Suppose that one could find a sequence of $r$'s tending to 1 so that
\begin{equation*}
\int_{|z| = r} \overline{c}_z (\mathcal A^c_{2\varepsilon} \cap T(z)) \, |dz| \ge \delta,
\end{equation*}
for some $\delta > 0$. By Lemma \ref{stability-lemma}, we would also have
$$
\int_{|z| = s} \overline{c}_z (\widetilde{\mathcal A}^c_\varepsilon \cap T(z)) \, |dz| \ge \delta,
$$
for any $0 < s < 1$ with $d_{\mathbb{D}}(r,s) < \gamma_1/2$. Consequently,
\begin{equation}
\label{eq:average-over-the-annulus}
\frac{1}{\gamma_0} \int_A  \overline{c}_z (\mathcal A^c_\varepsilon \cap T(z)) \cdot \frac{2 \, dA(z)}{1-|z|^2} \, \ge \, \delta,
\end{equation}
where
$A =  \{ z \in \mathbb{D} : d_{\mathbb{D}} (|z|, r) < \gamma_1/2 \}$ is an annulus of hyperbolic width $\gamma_1$.
Since we requested that $\gamma_1 < \gamma$,   the quotient map $\pi: \widehat{\mathbb{D}} \to \widehat{X}$ is injective on  $\widehat{A}$ and (\ref{eq:average-over-the-annulus}) contradicts  (\ref{eq:goal2}) if $r$ is sufficiently close to 1.
\end{proof}

\subsection*{Acknowledgements}
 The authors wish to thank Mikhail Lyubich for bringing the work of Glutsyuk to our attention.
This research was supported by the Israeli Science Foundation (grant no.~3134/21) and the Simons Foundation (grant no.~581668).

\bibliographystyle{amsplain}

\end{document}